\documentclass[preprint,11pt]{elsarticle}

\usepackage[T1]{fontenc}
\usepackage[utf8]{inputenc}
\usepackage{amsmath}
\usepackage{amsthm}
\usepackage{amssymb}
\usepackage{amsfonts}
\usepackage{mathrsfs}
\usepackage{graphicx}
\usepackage{array}
\usepackage{url}
\usepackage{float}
\usepackage{booktabs}
\usepackage{tabularx}
\usepackage[lmargin=2.5cm,tmargin=2.5cm,bmargin=2.5cm,rmargin=2.5cm]{geometry}
\usepackage{indentfirst}
\usepackage[hidelinks]{hyperref}
\hypersetup{colorlinks=true,linkcolor=red,citecolor=blue,urlcolor=blue}
\usepackage{xcolor}
\usepackage{subcaption}
\biboptions{numbers,sort&compress}
\journal{}

\newif\ifshowparnum
\showparnumfalse
\newcounter{parnum}
\newcommand{\PN}{\ifshowparnum\refstepcounter{parnum}{\footnotesize\textcolor{gray!70!black}{[\theparnum]}}~\fi}

\newcommand{\CLtf}{\mathfrak{X}_{\text{\scriptsize T}}}
\newcommand{\CLequi}{\mathfrak{X}_{\text{\scriptsize S}}}
\newcommand{\CLzero}{\mathfrak{X}_{\text{\scriptsize S}}^{0}}
\newcommand{\CL}{\mathfrak{X}_{\text{\scriptsize$L$}}}
\DeclareMathOperator{\Fix}{Fix}

\newtheorem{theorem}{Theorem}
\newtheorem{definition}{Definition}
\newtheorem{proposition}{Proposition}
\newtheorem{corollary}{Corollary}
\newtheorem{lemma}{Lemma}
\theoremstyle{definition}

\newtheorem{remark}{Remark}

\begin{document}

\begin{frontmatter}

\title{Global classification of oscillatory dynamics in symmetric zero-divergence 3D piecewise-linear Filippov systems with a visible--visible two-fold}

\author{Samuel Carlos S. Ferreira}
\ead{ferreira.samuelcarlos@gmail.com}
\address{Independent researcher, Goi\^ania, Brazil}
\begin{abstract}
\PN A global classification of the asymptotic oscillatory dynamics is established for a symmetric zero-divergence class of three-dimensional piecewise-linear Filippov systems, up to a set of initial conditions of zero Lebesgue measure. The affine fields are related by an involution, and the switching plane contains a visible--visible two-fold. In canonical coordinates, the eigenvalues are \(\mu\pm i\) and \(-2\mu\), while \(H\) measures the focal-line inclination. For every \(\mu>0\), a simple-period crossing cycle exists if and only if \(H\in\mathcal I_\mu\), and is unique, symmetric, hyperbolic, and orbitally asymptotically stable. Its half-period parametrizes \(\mathcal I_\mu\) and determines the crossing points, period, and Floquet multipliers. Global dissipation excludes crossing cycles with any higher number of crossings and makes the classified cycle the \(\omega\)-limit set of every crossing-only trajectory. If attractive sliding has no interior pseudo-equilibria, sliding is transient unless the trajectory reaches the two-fold. The initial conditions leading to the two-fold lie in a countable union of analytic surfaces and curves. Under sufficiently small perturbations in the Whitney \(C^1\) topology, the cycle persists as the unique simple-period crossing limit cycle and remains hyperbolic and orbitally asymptotically stable.
\end{abstract}

\begin{keyword} Filippov systems \sep piecewise-linear systems \sep crossing limit cycles \sep polynomial first integrals \sep orbital asymptotic stability

\MSC[2020]
34A36 \sep 34C25 \sep 34C14 \sep 34D20 \sep 34A05 \sep 34C15
\end{keyword}

\end{frontmatter}

\PN Self-sustained oscillations are among the fundamental recurrent regimes in dynamical systems, and an attracting limit cycle provides their standard mathematical representation \cite{andronov1966,guckenheimer1983}. Their qualitative study involves determining when these periodic regimes exist, whether they are unique, whether they attract or repel nearby trajectories or exhibit saddle-type behavior, and how they respond to perturbations. In a parametrized family, these questions are also linked to the dependence of the oscillatory regime on the system parameters. The resulting problem is to identify the region of parameter space that supports the oscillatory regime and to characterize the corresponding limit cycle for each admissible parameter.

\PN Piecewise-smooth dynamical systems arise naturally in the modeling of physical systems and processes whose governing dynamics changes abruptly. In such systems \cite{Filippov,diBernardo2008}, a trajectory passes from one dynamical regime to another when it crosses the switching manifold, so the orbits are determined by successive flight segments generated by different vector fields. The switching structure gives rise to dynamical phenomena including isolated limit cycles in the linear case \cite{JBM,RBM,ferreira2026}, invariant cones foliated by periodic orbits \cite{Carmona,freitas3}, and bifurcations associated with two-fold singularities \cite{cristiano2019}. These systems also arise in relay control, switched electronic circuits, mechanical systems with impacts or friction, and idealized neuronal dynamics; see, for example, \cite{Brogliato,belykh,vanSoest,coombes2023}.

\PN Among the periodic regimes that arise in piecewise-smooth systems, crossing limit cycles are characterized by successive smooth flights through the regions separated by the switching manifold, with transverse intersections at each change of regime. Their study has used local bifurcation theory, averaging, and asymptotic methods near finite or infinite equilibria \cite{Freire,Freire2020_1,Freire2020_2,Freire2023,llibre2015a,llibre2015b}. A complementary route uses first integrals to reduce closure to algebraic matching on the switching manifold \cite{jaume,jaumedurval,jaumemarco}. The present work first gives a complete classification of simple-period crossing cycles within the symmetric zero-divergence class, including the admissible parameter region, uniqueness, symmetry, stability, and orbit reconstruction, and then extends this analysis to the global asymptotic dynamics.

\PN This paper considers a class of three-dimensional piecewise-linear discontinuous systems defined by two affine linear vector fields \(X\) and \(Y\) on the half-spaces separated by the switching manifold \(\Sigma=\{(x,y,z)\in\mathbb{R}^{3}\mid z=0\}\). The matrices \(DX\) and \(DY\) each have a nonzero real eigenvalue and a nonreal complex-conjugate pair. The tangency sets \(L_X\) and \(L_Y\) are nonempty lines, and every point of \(L_X\) and \(L_Y\) is a visible quadratic fold for the corresponding vector field. The invariant affine planes \(W^X\) and \(W^Y\), associated with the respective complex-conjugate eigenvalue pairs, intersect \(\Sigma\) along the focal lines \(r^X\) and \(r^Y\). The four lines \(L_X,L_Y,r^X,r^Y\) are pairwise distinct and concurrent at a visible--visible two-fold point \(p_\ast\in\Sigma\), with \(r^X\setminus\{p_\ast\}\) and \(r^Y\setminus\{p_\ast\}\) contained in the crossing region. The two vector fields are related by the involutive symmetry
\[
S(x,y,z)=(-y,-x,-z),
\qquad
Y=S\circ X\circ S.
\]
Consequently, \(L_Y=S(L_X)\) and \(W^Y=S(W^X)\). The dynamics generated by \(Y\) is therefore determined from that of \(X\), reducing the return construction to the analysis of a single vector field.

\PN Under these hypotheses, the symmetric class admits a normal form obtained by a common affine change of coordinates, together with positive amplitude and time rescalings, chosen so as to preserve the Filippov sliding dynamics. This form is parametrized by \(\lambda\), \(\mu\), \(H\), and \(\kappa\). The parameter \(\kappa\) retains the information relevant to the sliding dynamics, whereas the crossing flights are represented by the canonical member \(\kappa=0\). In this representative, the eigenvalues of \(DX\) are \(\lambda\) and \(\mu\pm i\), while \(H\) measures the inclination of the focal line \(r^X\). Figure~\ref{fig:focal_geometry_3d} illustrates this configuration.
\begin{figure}[ht]
	\centering
	\includegraphics[width=0.5\textwidth]{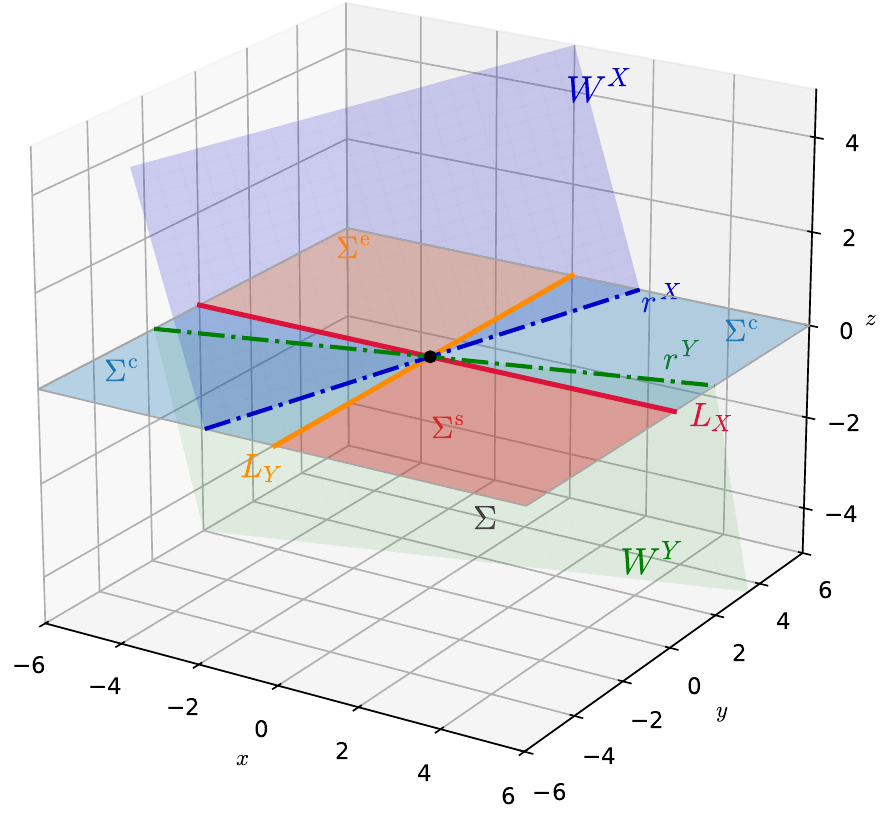}
\caption{Canonical geometric configuration. The tangency lines $L_X$ and $L_Y$ divide $\Sigma$ into crossing, sliding, and escaping regions, while the focal planes $W^X$ and $W^Y$ intersect $\Sigma$ along $r^X$ and $r^Y$.}
	\label{fig:focal_geometry_3d}
\end{figure}

\PN In the zero-divergence subclass, the absence of singular points in attractive sliding is equivalent to \(\kappa\le0\). This condition will enter only when sliding is incorporated into the global analysis.

\PN Theorem~\ref{teo:main1} identifies the interval \(\mathcal I_\mu\) in which a simple-period crossing cycle exists. For each admissible \((\mu,H)\), this cycle is unique within the simple-period crossing class, symmetric, hyperbolic, and orbitally asymptotically stable; its half-period determines its crossing points, period, and nontrivial Floquet multipliers.

\PN When the attractive sliding region has no interior pseudo-equilibria, every sliding episode ends in finite time, either at a regular exit fold or at the two-fold. If the trajectory does not reach the two-fold, only finitely many sliding episodes occur and it eventually becomes crossing-only. Theorem~\ref{thm-global-crossing} shows that every crossing-only trajectory has the classified crossing cycle as its \(\omega\)-limit set. Theorem~\ref{thm-global-hybrid} extends this conclusion to trajectories with sliding episodes. Proposition~\ref{prop-terminal-null} shows that the set of initial conditions whose trajectories reach the two-fold in finite time has zero three-dimensional Lebesgue measure. Consequently, Theorem~\ref{thm-almost-global} establishes that the classified cycle attracts every initial condition in \(\mathbb R^3\), with the exception of a set of zero Lebesgue measure. Corollary~\ref{cor:C1_persistence} gives local \(C^1\) persistence of the classified cycle, while Corollary~\ref{cor:global_C1_uniqueness} shows that its uniqueness within the simple-period crossing class is preserved under sufficiently small perturbations in the Whitney \(C^1\) topology. 

\PN Section~\ref{sec:dvf} defines the Filippov class and derives its canonical form. Sections~\ref{sec:caralimitcyle} and~\ref{sec:parametrization} construct the return map and classify the simple-period crossing cycles. Section~\ref{sec:stability} establishes their stability and local persistence. Section~\ref{sec-global-crossing} develops the global crossing and sliding dynamics, establishes the global asymptotic classification up to a set of initial conditions of zero Lebesgue measure, and analyzes the robustness of uniqueness in the Whitney \(C^1\) topology. Section~\ref{sec:edges} describe the oscillatory region in parameter space.

\section{Filippov class and canonical reduction}\label{sec:dvf}

\PN This section introduces the Filippov setting and the notion of simple-period crossing orbits, then derives the canonical form for the symmetric class. In these coordinates, \(\lambda\) is the real eigenvalue and \(\mu\) is the real part of the complex eigenvalue pair and \(H\) is the focal-line inclination. The zero-divergence subclass is then identified, reducing the closure problem to an algebraic relation. For general background on piecewise-smooth vector fields, see \cite{Filippov}.

\subsection{Filippov setting and simple-period orbits}\label{subsec:filippov_setting}

\PN Let \(\mathfrak X\) be the set of affine linear vector fields on \(\mathbb R^3\). Thus \(X=(X_1,X_2,X_3)\in\mathfrak X\), with \(X_j(x,y,z)=A_jx+B_jy+C_jz+D_j\) for real constants \(A_j,B_j,C_j,D_j\). A point \(p\in\mathbb R^3\) is a singularity of \(X\) when \(X(p)=0\).

\PN The analysis is carried out within the following class of piecewise-linear vector fields.

\begin{definition}\label{def:cl2z} \PN Let \(\CL\) be the set of the piecewise--linear vector fields \(Z\), 
\begin{equation}\label{eq:geral}
Z(\mathbf{x})=\left(X(\mathbf{x}),Y(\mathbf{x})\right)=\begin{cases} X(\mathbf{x})\ \text{if} \ \mathbf{x} \in \Sigma^{+}, \\ Y(\mathbf{x})\ \text{if} \ \mathbf{x} \in \Sigma^{-}, \end{cases} 
\end{equation} 
where \(\mathbf{x}=\left(x,y,z\right)\), \(X,Y\in\mathfrak{X}\) such that the matrices $DX$ and $DY$ each have a nonzero real eigenvalue and a pair of conjugate complex eigenvalues with a nonzero imaginary part. The  switching set is the plane \(\Sigma=\{\mathbf{x}\in\mathbb{R}^3 \ | \ f\left(\mathbf{x}\right)=z=0\}\),  \(\Sigma^{+}=\{\mathbf{x}\in\mathbb{R}^3 \ | \ f\left(\mathbf{x}\right)\geq 0\}\) and  \(\Sigma^{-}=\{\mathbf{x}\in\mathbb{R}^3 \ | \ f\left(\mathbf{x}\right)< 0\}\).
\end{definition}

\PN The switching plane \(\Sigma\) contains three distinct regions, determined by the way the vector fields \(X\) and \(Y\) meet \(\Sigma\). Their classification depends on the Lie derivatives \(Xf(p)=\left<X(p),\nabla f(p)\right>\) and \(Yf(p)=\left<Y(p),\nabla f(p)\right>\).

\begin{definition}\label{def:switching_regions}
\PN For \(Z=(X,Y)\in\CL\), define
\[
\Sigma^{\rm c}:=\{p\in\Sigma\mid Xf(p)Yf(p)>0\},
\]
\[
\Sigma^{\rm s}:=\{p\in\Sigma\mid Xf(p)<0<Yf(p)\},
\qquad
\Sigma^{\rm e}:=\{p\in\Sigma\mid Yf(p)<0<Xf(p)\}.
\]
These are called the \emph{crossing}, \emph{sliding}, and \emph{escaping} regions, respectively.
\end{definition}

\PN The \emph{tangency points} are the points \(p\in\Sigma\) at which \(Xf(p)=0\) or \(Yf(p)=0\). In the present piecewise-linear setting these points form lines, denoted by \(L_X\) and \(L_Y\) for the fields \(X\) and \(Y\), respectively.
 
\begin{definition}\label{def:tangencia_quadratica}
\PN A point $p\in L_{X}$ ($q\in L_{Y}$) is said to be a fold if $X^2f(p)\neq0$ ($Y^2f(q)\neq0$). A fold $p\in L_{X}$  ($q\in L_{Y}$) is called a visible fold point for the vector field $X$ (resp., $Y$) if $X^2f(p)>0$ (resp., $Y^2f(q)<0$) and invisible fold if $X^2f(p)<0$ (resp., $Y^2f(q)>0$). A point $p\in\Sigma$ is a double fold  point if $p$ is a tangency point for both vector fields $X$ and $Y$. A point $p\in L_{X}$ ($q\in L_{Y}$) is said to be a cusp if $X^2f(p)=0$ and $X^3f(p)\neq0$ ($Y^2f(q)=0$ and $Y^3f(q)\neq0$). 
\end{definition}

\PN For $Z=(X,Y)\in \CL$, write \(X(\mathbf{x}) = \sum_{j=1}^{3} (A_j x + B_j y + C_j z + D_j) \frac{\partial}{\partial x_j}\) and \(Y(\mathbf{x}) = \sum_{j=1}^{3} (a_j x + b_j y + c_j z + d_j) \frac{\partial}{\partial x_j}\), with \(\mathbf{x} = (x, y, z)\) and \((x_1,x_2,x_3)=(x,y,z)\). The tangency lines are then given by \(L_X = \{(x, y, z) \in \Sigma: A_3 x + B_3 y + D_3 = 0\}\) and \(L_Y = \{(x, y, z) \in \Sigma: a_3 x + b_3 y + d_3 = 0\}\). Within \(\Sigma\), the lines \(L_X\) and \(L_Y\) can be concurrent, parallel, or coincident.

\PN Since the configuration studied here requires nonempty tangency lines, it is assumed throughout that $A_3^2+B_3^2\neq 0$ and $a_3^2+b_3^2\neq 0$, so that $L_X$ and $L_Y$ are well-defined lines.

\PN Consider $X\in\mathfrak{X}$ and the differential system
\begin{equation}\label{eq:sistema_diferencial}
\dot{\mathbf{x}}\left(t\right)=X\left(\mathbf{x}\left(t\right)\right), 
\end{equation} 
where $\mathbf{x}=\left(x,y,z\right)$. Let $\varphi_X\left(t,p_0\right)$ be the flow of \eqref{eq:sistema_diferencial} through $p_0\in\mathbb{R}^3$. Thus, \[ \begin{cases} \dfrac{d}{dt}\varphi_X\left(t,p_0\right)=X\left(\varphi_X\left(t,p_0\right)\right),\\ \varphi_X\left(0,p_0\right)=p_0. \end{cases} \]

\PN The flow $\varphi_X(t,p_0)$ is defined on its maximal interval $I(p_0,X)$. The notation $\varphi_Y$ is used analogously. The classification below uses crossing trajectories. In each open half-space they follow the corresponding smooth flow, and at a point of $\Sigma^{\rm c}$ the incoming and outgoing arcs concatenate uniquely according to the signs of $Xf$ and $Yf$. The resulting maximal piecewise trajectory is denoted by $\varphi_Z(t,p_0)$. The later global result also uses the Filippov convex combination on $\Sigma^{\rm s}$ and the outgoing smooth characteristic at a regular visible fold. That construction is stopped at the two-fold. No continuation through the two-fold or within $\Sigma^{\rm e}$ is selected. These conventions agree with the standard Filippov construction \cite{Filippov}.

\begin{definition}\label{def:orbit}
	\PN Let \(p_0\) lie on a crossing trajectory described above, and let \(I_{p_0}\) be the maximal interval containing \(0\) on which the trajectory
	\(t\mapsto\varphi_Z(t,p_0)\) is defined.
	\begin{itemize}
		\item The \emph{orbit} through \(p_0\) is
		\[
		\gamma(p_0)=\{\varphi_Z(t,p_0):t\in I_{p_0}\}.
		\]
		
		\item A nonconstant orbit \(\gamma(p_0)\) is \emph{periodic} if
		\(I_{p_0}=\mathbb R\) and there exists \(T>0\) such that
		\[
		\varphi_Z(t+T,p_0)=\varphi_Z(t,p_0)
		\]
		for every \(t\in\mathbb R\). The least such \(T\) is called its \emph{period}.
	\end{itemize}
\end{definition}

\begin{definition}\label{def:simple_period}
	\PN Let \(\gamma\) be a periodic orbit of \(Z\).
	\begin{itemize}
		\item The orbit \(\gamma\) is a \emph{crossing periodic orbit} if
		\[
		\varnothing\neq\gamma\cap\Sigma\subset\Sigma^{\rm c}.
		\]
		
		\item A crossing periodic orbit \(\gamma\) has \emph{simple period} if
		\[
		\gamma\cap\Sigma=\{p_0,p_1\},
		\]
		with \(p_0\neq p_1\).
		
		\item The orbit \(\gamma\) is a \emph{limit cycle} if it is isolated among the periodic orbits of \(Z\).
	\end{itemize}
\end{definition}

\PN A parameter value belongs to the \emph{simple-period oscillatory regime} when the corresponding system possesses a hyperbolic, orbitally asymptotically stable crossing limit cycle of simple period. The parameter region, the family of cycles that it supports, and each individual cycle will be distinguished throughout the analysis.

\subsection{Symmetry and canonical form}\label{sec:cf}

\PN A canonical form is derived for the class of three-dimensional symmetric piecewise-linear Filippov systems under consideration. Since \(Y=S\circ X\circ S\), it is enough to normalize \(X\) on \(\Sigma^+\) by transformations that preserve the switching manifold and the symmetry; the corresponding field on \(\Sigma^-\) is then determined by equivariance.

\PN The geometric configuration described in the Introduction is first isolated within \(\CL\). At this stage no symmetry between the two sides is imposed: the defining conditions involve only the tangency and focal lines on \(\Sigma\). The \(S\)-equivariant subclass is introduced afterward.

\begin{definition}\label{def:CLperp} 
\PN Let \(\CLtf\subset\CL\) denote the class of vector fields \(Z=(X,Y)\) for which every point of \(L_X\) and \(L_Y\) is a visible fold for the corresponding vector field, in the sense of Definition~\ref{def:tangencia_quadratica}. The four lines \(L_X,L_Y,r^X,r^Y\) are pairwise distinct and concurrent at the visible--visible two-fold point \(p_\ast\in\Sigma\), and the focal-plane traces \(r^X\setminus\{p_\ast\}\) and \(r^Y\setminus\{p_\ast\}\) lie in the crossing region \(\Sigma^{\rm c}\).
\end{definition}

\PN At the level of configurations of four ordered lines in \(\Sigma\), the concurrency condition has codimension two: two distinct lines determine the common point, and requiring each of the remaining two lines to pass through it imposes one additional scalar condition. The geometric conditions in Definition~\ref{def:CLperp} determine the local configuration of the switching manifold but do not yet relate the dynamics on its two sides. The latter is achieved by imposing a linear involutive symmetry, which defines the equivariant subclass considered throughout this work.

\begin{definition}\label{def:equiva}
	\PN Let \(\CLequi\subset\CLtf\) be the class of systems \(Z=(X,Y)\) that are equivariant under the involution \(S\) introduced above, that is,
	\begin{equation}\label{eq:def_equiv_final}
		Y=S\circ X\circ S.
	\end{equation}
	Since \(S\) preserves \(\Sigma\) and exchanges the two open half-spaces, the vector field on \(\Sigma^-\) is completely determined by the vector field on \(\Sigma^+\).
\end{definition}

\PN A periodic orbit \(\gamma\) of a system \(Z\in\CLequi\) is called \emph{symmetric} if
\[
S(\gamma)=\gamma.
\]

\PN The involution relates the two tangency configurations and constrains which sidewise translations preserve their common switching geometry. These restrictions permit normalization of one field while retaining the symmetry of the pair.

\begin{remark}\label{rem:fix_S}
	\PN The involution \(S\) satisfies
	\[
	S^2=I,
	\qquad
	\det S=1,
	\]
	and is therefore the half-turn about the axis
	\[
	\Fix(S)=\{(x,-x,0)\mid x\in\mathbb{R}\}\subset\Sigma.
	\]
	Since \(f\circ S=-f\), equivariance implies
	\[
	Yf=-\,(Xf)\circ S,
	\qquad
	Y^2f=-\,(X^2f)\circ S.
	\]
	Hence \(p\in L_X\) is a visible fold for \(X\) if and only if \(S(p)\in L_Y\) is a visible fold for \(Y\). Thus, within the equivariant class, the two visibility requirements in Definition~\ref{def:CLperp} are equivalent. Moreover, a translation
	\[
	\mathbf{x}\mapsto\mathbf{x}+\mathbf{c}
	\]
	commutes with \(S\) if and only if
	\[
	\mathbf{c}\in\Fix(S).
	\]
\end{remark}

\PN The canonical reduction pairs affine coordinate changes on the two half-spaces by conjugation with \(S\). Their restrictions to \(\Sigma\) agree and commute with \(S|_\Sigma\), which preserves the equivariance of the pair.

\begin{proposition}\label{prop:equivariance_preserved}
\PN Suppose \(T_{+}:\mathbb R^3\to\mathbb R^3\) is an affine diffeomorphism satisfying \(T_{+}(\Sigma)=\Sigma\) and \(T_{+}(\Sigma^{+})=\Sigma^{+}\), and associate with it the map
\[
T_{-}:=S\circ T_{+}\circ S\quad\text{on }\Sigma^{-}.
\]
Assume that \(T_{+}|_{\Sigma}\) commutes with \(S|_{\Sigma}\), so that \(T_{+}\) and \(T_{-}\) agree on \(\Sigma\). If \(Z=(X,Y)\in\CLequi\), then the piecewise pushforward obtained from \(T_{+}\) on \(\Sigma^{+}\) and \(T_{-}\) on \(\Sigma^{-}\) is again \(S\)-equivariant.
\end{proposition}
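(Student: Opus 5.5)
The plan is to write the transformed pair explicitly and check equivariance by a direct computation with the chain rule. The pushforward of \(X\) by \(T_+\) is \(\widetilde X=(T_+)_*X=DT_+\circ X\circ T_+^{-1}\) on \(\Sigma^+\). Since \(T_+\) is affine, \(DT_+\) is a constant matrix \(M_+\). Likewise the new field on \(\Sigma^-\) is \(\widetilde Y=(T_-)_*Y=DT_-\circ Y\circ T_-^{-1}\). Here \(T_-=S\circ T_+\circ S\) is affine with constant linear part \(M_-=SM_+S\), because \(S\) is linear. The goal is to show \(\widetilde Y=S\circ\widetilde X\circ S\), with \(\widetilde X,\widetilde Y\) read as affine fields on all of \(\mathbb R^3\), just as in \eqref{eq:def_equiv_final}.

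First I check that the piecewise map is well defined and respects the half-spaces. The map \(T_-\) sends \(\Sigma^-\) to \(\Sigma^-\), since \(S\) exchanges \(\Sigma^\pm\) (using \(f\circ S=-f\), Remark~\ref{rem:fix_S}) and \(T_+\) preserves \(\Sigma^+\) and \(\Sigma\), hence the open lower side. On \(\Sigma\), the hypothesis \(T_+S=ST_+\) gives \(T_-=ST_+S=T_+S^2=T_+\), because \(S^2=I\). So the glued map is a homeomorphism preserving \(\Sigma\), and the new system has the same switching plane.

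Next comes the main identity. Using \(T_-^{-1}=S\,T_+^{-1}S\), \(Y=S\circ X\circ S\) and \(S^2=I\), we get
\[
\widetilde Y=SM_+S\circ S\circ X\circ S\circ S\,T_+^{-1}S=S\,M_+\,X\circ T_+^{-1}\circ S=S\circ\widetilde X\circ S .
\]
Each step uses only linearity of \(S\) (so it commutes with taking derivatives) and \(S^2=I\).

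The only delicate point is interpretation. Equivariance in Definition~\ref{def:equiva} is an identity between affine fields on \(\mathbb R^3\), whereas \(T_-\) is prescribed only on \(\Sigma^-\). I resolve this by noting that \(S T_+ S\) is a global affine map, so the computation above holds on all of \(\mathbb R^3\). Its restriction to \(\Sigma^-\) is the pushforward actually used.

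Finally, I remark why the commutation hypothesis on \(\Sigma\) is needed. It guarantees that the two pushforwards are taken with respect to the same coordinates on \(\Sigma\). Consequently the sign conditions \(\widetilde Xf\), \(\widetilde Yf\) defining the switching regions are computed on the same transformed plane. Remark~\ref{rem:fix_S} shows that commuting translations are exactly those along \(\Fix(S)\), which is the family used in the subsequent normalization.
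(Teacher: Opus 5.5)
Your proposal is correct and is essentially the paper's proof: it is the same chain-rule computation, using \(T_-^{-1}=S\circ T_+^{-1}\circ S\), \(DT_-=S\,DT_+\,S\), \(Y=S\circ X\circ S\) and \(S^2=I\), to get \(\widetilde Y=S\circ\widetilde X\circ S\), which the paper writes pointwise as \(\widetilde Y(S\mathbf u)=S\widetilde X(\mathbf u)\) for \(\mathbf u\in\Sigma^+\). Your extra checks that \(T_-\) preserves \(\Sigma^-\) and agrees with \(T_+\) on \(\Sigma\) are harmless additions; note that the equivariance identity itself never uses the commutation hypothesis, which only makes the glued map consistent on \(\Sigma\).
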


\begin{proof}
\PN Write \(\widetilde X=(T_{+})_{*}X\) and
\(\widetilde Y=(T_{-})_{*}Y\). Since
\(T_{-}=S\circ T_{+}\circ S\), one has
\[
T_{-}^{-1}\circ S=S\circ T_{+}^{-1},
\qquad
DT_{-}[S]=[S]DT_{+}.
\]
Hence, using \(Y=S\circ X\circ S\), for
\(\mathbf u\in\Sigma^{+}\),
\[
\begin{aligned}
	\widetilde Y(S\mathbf u)
	&=
	DT_{-}\,
	Y\bigl(T_{-}^{-1}(S\mathbf u)\bigr)\\
	&=
	DT_{-}[S]\,
	X\bigl(T_{+}^{-1}(\mathbf u)\bigr)\\
	&=
	[S]DT_{+}\,
	X\bigl(T_{+}^{-1}(\mathbf u)\bigr)\\
	&=
	S\bigl(\widetilde X(\mathbf u)\bigr).
\end{aligned}
\]
Therefore the transformed piecewise vector field is \(S\)-equivariant.
\end{proof}

\PN The normalization will separate a common coordinate reduction, which preserves the Filippov field, from a final sidewise shear, which fixes the smooth-flight representative. Equivariance determines the lower field throughout.

\begin{proposition}\label{prop:canonical_form}
\PN Consider \(Z=(X,Y)\in\CLequi\). There exists an affine change of coordinates applied to both half-spaces, preserving \(\Sigma^\pm\) and commuting with \(S\), together with a positive amplitude rescaling and a positive linear rescaling of time, such that the transformed Filippov system has the Filippov-preserving normal form \(Z_\kappa=(X_\kappa,Y_\kappa)\), where
\begin{equation}\label{eq:residual_normal_form}
\begin{aligned}
X_\kappa(x,y,z)
&=\bigl(
\lambda x-\kappa y-
\{H[(\lambda-\mu)^2+1]-\lambda\kappa\}z+H,\;
2\mu y-(\mu^2+1)z+1,\;y
\bigr),\\
Y_\kappa(x,y,z)&=S\circ X_\kappa\circ S(x,y,z).
\end{aligned}
\end{equation}
Here \(\lambda\) is the normalized real eigenvalue, the remaining eigenvalues are \(\mu\pm i\), \(H\) is the inclination of the focal line
\[
r^X=\{(x,y,0)\in\Sigma:x=Hy\},
\]
and the corresponding focal plane is
\[
W_\kappa^X=
\{(x,y,z)\in\mathbb R^3:
 x-Hy+[\kappa+H(2\mu-\lambda)]z=0\}.
\]
Moreover, the sidewise shears
\begin{equation}\label{eq:residual_shear_maps}
T_+^\kappa(x,y,z)=(x+\kappa z,y,z),\qquad
T_-^\kappa(x,y,z)=(x,y+\kappa z,z)
\end{equation}
satisfy \(T_-^\kappa=S\circ T_+^\kappa\circ S\), agree with the identity on \(\Sigma\), and send \(Z_\kappa\) to the canonical representative \(Z_0\), whose upper field is
\begin{equation}\label{eq:canonica_ortogonal}
X_0(\mathbf{x})
=
\bigl(
\lambda x-H\bigl(((\lambda-\mu)^2+1)z-1\bigr),
\;2\mu y-(\mu^2+1)z+1,
\;y
\bigr),
\end{equation}
with \(Y_0=S\circ X_0\circ S\). The focal plane of this representative is
\[
W_0^X=\{(x,y,z)\in\mathbb R^3:x-Hy+H(2\mu-\lambda)z=0\}.
\]
The Filippov-preserving normal form \(Z_\kappa\) and the canonical representative \(Z_0\) have the same normalized smooth flight times and endpoints on \(\Sigma\), and therefore induce the same section maps wherever these maps are defined.
\end{proposition}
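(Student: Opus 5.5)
The proof proceeds in two stages: a normalization of the upper field \(X\) by transformations compatible with \(S\), and then a direct verification of the shear statements.

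\emph{Stage one: the tangency line and the focal geometry.} By Remark~\ref{rem:fix_S}, a translation commutes with \(S\) exactly when \(\mathbf c\in\Fix(S)\). Since \(p_\ast\) is a two-fold, \(p_\ast\in L_X\cap L_Y=L_X\cap S(L_X)\), so \(p_\ast\) is fixed by \(S|_\Sigma\) up to the line structure. Because \(L_X\neq L_Y\), the point \(p_\ast\) is the unique intersection of \(L_X\) and \(S(L_X)\), and it is \(S\)-invariant; hence \(p_\ast\in\Fix(S)\). Translating by \(-p_\ast\) places the two-fold at the origin.

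Next use linear maps \(T_+\) with \(T_+(\Sigma)=\Sigma\), \(T_+(\Sigma^+)=\Sigma^+\), and \(T_+|_\Sigma\) commuting with \(S|_\Sigma\). On \(\Sigma\), \(S\) acts as \((x,y)\mapsto(-y,-x)\), so the admissible restrictions form the commutant: matrices \(\begin{pmatrix}a&b\\ b&a\end{pmatrix}\). The out-of-plane column \((c_1,c_2,c_3)\), with \(c_3>0\), is free. By Proposition~\ref{prop:equivariance_preserved}, every such \(T_+\) paired with \(T_-=ST_+S\) preserves equivariance.

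With these maps, \(Xf=A_3x+B_3y+D_3\) is brought to \(Xf=y\) on \(\Sigma\). The coefficient \(D_3\) already vanishes after the translation. The commutant acts on the covector \((A_3,B_3)\) by \((A_3,B_3)\mapsto(aA_3+bB_3,\,bA_3+aB_3)\), which is onto unless \(A_3=\pm B_3\). That degenerate case is excluded: \(L_X\) would then equal \(\Fix(S)\) or its \(S\)-perpendicular, forcing \(L_X=L_Y\). The remaining scalar freedom, combined with the amplitude rescaling, normalizes the third component \(X_3=y+C_3z\).

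\emph{Stage two: the remaining components and the eigenvalues.} The column \((c_1,c_2,c_3)\) is used to kill \(C_3\) and to place the focal plane. A time rescaling makes the imaginary part of the complex pair equal to \(1\). Write \(X\) in terms of the unknown entries and impose the following.
\begin{itemize}
\item Visibility: \(X^2f=X_2\) on \(L_X=\{y=0\}\) equals \(D_2\), and the positive amplitude rescaling sets \(D_2=1\).
\item The characteristic polynomial must be \((s-\lambda)\bigl((s-\mu)^2+1\bigr)\).
\item The focal plane \(W^X\) must meet \(\Sigma\) along \(x=Hy\), using that \(r^X\) passes through \(p_\ast\).
\end{itemize}
Solving these conditions leaves exactly the one-parameter residue \(\kappa\) in \eqref{eq:residual_normal_form}. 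I would confirm this by checking directly that the left eigenvector for \(\mu\pm i\) annihilates the focal plane \(W^X_\kappa\) as written. Concretely, \((1,-H,\kappa+H(2\mu-\lambda))\) should span, together with \(DX^{T}\), a \(2\)-dimensional invariant subspace orthogonal to the \(\lambda\)-eigenvector. Also check that \(X\) vanishes at a point of \(W^X_\kappa\).

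\emph{Shears.} Compute \(ST_+^\kappa S(x,y,z)=S(-y+\kappa z\cdot(-1)\ldots)\) explicitly to get \(T_-^\kappa\). Then verify \((T_+^\kappa)_*X_\kappa=X_0\) by substitution: \(DT_+\,X_\kappa(x-\kappa z,y,z)\). The first component gains \(\kappa y\) from the \(z\)-component \(y\), and the \(\lambda\kappa z\) term cancels, leaving \eqref{eq:canonica_ortogonal}. The lower field follows from Proposition~\ref{prop:equivariance_preserved}.

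Since \(T_\pm^\kappa\) are identity on \(\Sigma\) and preserve \(\Sigma^\pm\), smooth flights of \(X_\kappa\) and \(X_0\) between points of \(\Sigma\) correspond with equal times and endpoints. Hence the section maps coincide. They do not preserve sliding, because the shears change \(X\) on \(\Sigma\) off the normal direction; only the normal component \(Xf\) is preserved.

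The main obstacle is stage two. One must show that the algebraic system for the entries of \(X\) has a solution space exactly parametrized by \((\lambda,\mu,H,\kappa)\), with no sign ambiguity. The orientation \(c_3>0\) and the positive rescalings restrict the available choices, so I would verify this carefully.
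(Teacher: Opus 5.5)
\textbf{The transformation group is the central gap.} You normalize with pairs \((T_+,\,ST_+S)\) in which only \(T_+|_\Sigma\) commutes with \(S|_\Sigma\), and you leave the out-of-plane column \((c_1,c_2,c_3)\) free. But \(ST_+S\) has out-of-plane column \((c_2,c_1,c_3)\). So the pair is a single affine map commuting with \(S\) only when \(c_1=c_2\). Otherwise it is a sidewise change of exactly the same type as \(T^\kappa_\pm\) (whose column is \((\kappa,0,1)\)), and it does not transport the Filippov sliding field.

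The statement requires the first stage to be one map \(G\) with \(GS=SG\), i.e.\ with linear part \(\begin{pmatrix}a&b&c\\ b&a&c\\ 0&0&c_3\end{pmatrix}\). Only then does \(DG[\theta X+(1-\theta)Y]=\theta\,DGX+(1-\theta)\,DGY\) hold with both normal components multiplied by the same \(c_3>0\). That is what carries sliding trajectories and pseudo-equilibria over, and your proposal never gives this argument.

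With your larger group, using \(c_1\) to ``place the focal plane'' eliminates \(\kappa\) entirely. So the claim that stage two ``leaves exactly the one-parameter residue \(\kappa\)'' contradicts the freedom you use, and the map that would achieve it is not Filippov-preserving. Lemma~\ref{lem:nonsingular_sliding_parameter} shows that no sliding-preserving change can remove \(\kappa\): \(Z_\kappa\) has a pseudo-equilibrium for \(\kappa>0\), while \(Z_0\) has none.

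The correct bookkeeping, as in the paper, is as follows.
\begin{itemize}
\item The in-plane block aligns \(L_X\) with the \(x\)-axis, after which only \(b=0\) preserves it.
\item \(a\) and \(c_3\), together with the amplitude and time scalings, normalize \(X_3\), \(D_2\) and the imaginary part.
\item The single common shear parameter is spent on \(G_0(x,y,z)=(x+C_3z,\,y+C_3z,\,z)\), which kills \(C_3\) and puts the \((y,z)\)-block in companion form.
\item The \(x\)-shear that \(G_0\) is obliged to carry turns the \(y\)-coefficient of the first component into \(B_1+C_3\). This is the surviving \(\kappa=-(B_1+C_3)/\Omega\).
\end{itemize}

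\textbf{Two further steps need repair.} First, writing that \(X^2f=X_2\) on \(L_X\) ``equals \(D_2\)'' silently assumes \(A_2=0\). On \(L_X=\{y=z=0\}\) one has \(X^2f=A_2x+D_2\), and it is the hypothesis that \emph{every} point of \(L_X\) is a visible fold that forces \(A_2=0\) and \(D_2>0\). This makes \(DX\) block upper triangular and removes the parameter your count would otherwise leave over.

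Second, your proposed focal-plane check is misidentified. Since \(W^X\) is the invariant plane of the complex pair, its normal \((1,-H,a)\) is a left eigenvector of \(DX\) for the real eigenvalue \(\lambda\). It is not built from left eigenvectors for \(\mu\pm i\). The efficient route is to write \(\ell=x-Hy+az\) and compare coefficients in \(X\ell=\lambda\ell\). The \(x\)-coefficient is nonzero, since otherwise the \((y,z)\)-block would have the real eigenvalue \(\lambda\). The constant term gives \(D=H\) by concurrency at the origin. The \(y\)- and \(z\)-terms give \(a=\kappa+H(2\mu-\lambda)\) and \(C=-H[(\lambda-\mu)^2+1]+\lambda\kappa\). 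Once the group is restricted to common maps and \(A_2=0\) is derived, this is the whole of the stage you defer as ``the main obstacle''.

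Your argument that \(p_\ast\in\Fix(S)\), your treatment of the degenerate case \(A_3=\pm B_3\), and your verification of the final shears and of the equality of section maps are fine.
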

\begin{proof}
\PN First use the common translation and tangency alignment already dictated by the geometric hypotheses. After placing the concurrency point at the origin, the same linear change on both sides sends \(L_X\) to the \(x\)-axis and commutes with \(S\). As in the visibility argument, one obtains
\[
X=(A_1x+B_1y+C_1z+D_1,\;B_2y+C_2z+\Lambda,\;y+C_3z),
\qquad \Lambda>0.
\]
Let \(\lambda_0\) be the real eigenvalue and \(\alpha\pm i\Omega\), with \(\Omega>0\), the complex-conjugate pair. The triangular linear part gives
\[
A_1=\lambda_0,\qquad B_2+C_3=2\alpha,\qquad
B_2C_3-C_2=\alpha^2+\Omega^2.
\]

\PN Apply now the same affine shear on the two half-spaces,
\[
G_0(x,y,z)=(x+C_3z,y+C_3z,z).
\]
It is invertible, preserves \(\Sigma^\pm\), and commutes with \(S\). Its pushforward of the upper field is
\[
(G_0)_*X=
\bigl(
\lambda_0x+(B_1+C_3)y+[C_1-C_3(\lambda_0+B_1)]z+D_1,\;
2\alpha y-(\alpha^2+\Omega^2)z+\Lambda,\;y
\bigr).
\]
Use the positive normalizations
\[
\widehat t=\Omega t,\qquad
(\widehat x,\widehat y,\widehat z)
=(\Omega x/\Lambda,\Omega y/\Lambda,\Omega^2z/\Lambda)
\]
and set
\[
\lambda=\frac{\lambda_0}{\Omega},\qquad
\mu=\frac{\alpha}{\Omega},\qquad
\kappa=-\frac{B_1+C_3}{\Omega}.
\]
After dropping the hats, the upper field has the form
\[
X=(\lambda x-\kappa y+Cz+D,\;2\mu y-(1+\mu^2)z+1,\;y).
\]

\PN Let \(\ell\) define the invariant focal plane associated with the complex pair. Its coefficient of \(x\) is nonzero, because otherwise the planar \((y,z)\)-block would have the real eigenvalue \(\lambda\); hence normalize it as
\[
\ell=x-Hy+az.
\]
The trace passes through the origin by concurrency. Since \(X\ell=\lambda\ell\), comparison of the constant, \(y\)-, and \(z\)-coefficients gives
\[
D=H,\qquad
a=\kappa+H(2\mu-\lambda),\qquad
C=-H[(\lambda-\mu)^2+1]+\lambda\kappa.
\]
This is precisely \eqref{eq:residual_normal_form}; the lower field follows from equivariance.

\PN Every coordinate change used up to this point is common to both sides. For a common affine diffeomorphism \(G\), linearity gives
\[
DG[\theta X+(1-\theta)Y]
=\theta\,DGX+(1-\theta)\,DGY.
\]
Because \(G\) preserves \(\Sigma\) and its two sides, the normal components acquire the same positive factor, so the Filippov weight cancelling the normal component is transported. Thus the common reduction carries the sliding field, its oriented trajectories, and its zeros to those of \(Z_\kappa\). The positive constant time rescaling also preserves oriented trajectories and finiteness of physical times.

\PN Finally, direct substitution in \eqref{eq:residual_normal_form} gives
\[
DT_+^\kappa X_\kappa=X_0\circ T_+^\kappa,\qquad
DT_-^\kappa Y_\kappa=Y_0\circ T_-^\kappa.
\]
The maps in \eqref{eq:residual_shear_maps} keep \(z\) fixed, are the identity on \(\Sigma\), and do not change the normalized time. They therefore identify the smooth flights, their first-return times, and their endpoints on \(\Sigma\), yielding the canonical representative \eqref{eq:canonica_ortogonal}. Their normal derivatives differ when \(\kappa\ne0\), so this final sidewise step does not assert a conjugacy of the sliding fields.
\end{proof}

\PN The common reduction preserves Filippov sliding, whereas the final sidewise shears identify the smooth flights with those of \(Z_0\). We use \(Z_0\) for crossing calculations and retain \(\kappa\) when noncrossing motion enters the argument.

\PN The planar focus of the canonical field has \(z\)-coordinate \(\zeta_\mu\), and its quadratic matching factor contains the shift \(\beta_\mu\). Their values are
\begin{equation}\label{eq:zeta_mu}
\zeta_\mu:=\frac{1}{1+\mu^2}
\end{equation}
and
\begin{equation}\label{eq:beta_mu}
\beta_\mu:=\frac{2\mu}{1+\mu^2}=2\mu\zeta_\mu.
\end{equation}

\PN The focal-line traces must be distinct and lie in the crossing region. Their positions in canonical coordinates therefore restrict the inclination \(H\) directly.

\begin{corollary}\label{cor:admissible_H}
\PN Every \(Z\in\CLequi\) written in the canonical form~\eqref{eq:canonica_ortogonal} has inclination parameter
\[
H\in(0,1)\cup(1,\infty).
\]
\end{corollary}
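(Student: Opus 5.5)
The plan is to read off the positions of the four lines \(L_X,L_Y,r^X,r^Y\) in the canonical coordinates of Proposition~\ref{prop:canonical_form}. The defining conditions of \(\CLtf\) in Definition~\ref{def:CLperp} then become elementary sign and inequality conditions on \(H\). Since the crossing region, the tangency lines and the focal traces all live on \(\Sigma\), and the sidewise shears \(T_\pm^\kappa\) are the identity on \(\Sigma\), it suffices to work with the representative \(Z_0\), using \eqref{eq:canonica_ortogonal}.

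First I will compute the Lie derivatives on \(\Sigma\). From \eqref{eq:canonica_ortogonal}, \(Xf=X_3=y\), so \(L_X=\{y=0\}\). For the lower field I will use Remark~\ref{rem:fix_S}: \(Yf=-(Xf)\circ S\). At \(p=(x,y,0)\) we have \(S(p)=(-y,-x,0)\), hence \(Yf(p)=x\) and \(L_Y=\{x=0\}\). By Definition~\ref{def:switching_regions} this gives
\[
\Sigma^{\rm c}=\{(x,y,0): xy>0\},
\]
the open first and third quadrants. As a consistency check, \(X^2f=2\mu y-(\mu^2+1)z+1\) equals \(1>0\) on \(L_X\), so the visibility hypothesis is automatically satisfied and imposes nothing on \(H\).

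Next I will locate the focal traces. Proposition~\ref{prop:canonical_form} gives \(r^X=\{x=Hy\}\). Since \(W^Y=S(W^X)\) and \(S\) preserves \(\Sigma\), we get \(r^Y=S(r^X)\). The image of \((Hy,y,0)\) is \((-y,-Hy,0)\), so \(r^Y=\{y=Hx\}\).

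The conditions on \(H\) then follow one at a time.
\begin{itemize}
\item \(r^X\setminus\{0\}\subset\Sigma^{\rm c}\) requires \(xy=Hy^2>0\) for all \(y\neq0\), which holds if and only if \(H>0\). By the symmetry the same condition gives \(r^Y\setminus\{0\}\subset\Sigma^{\rm c}\).
\item \(r^X\neq L_Y\) requires \(H\neq0\), which is already implied. The distinctions \(r^X\neq L_X\) and \(r^Y\neq L_X\) hold automatically, by the form of the equations.
\item \(r^X\neq r^Y\) requires that \(x=Hy\) and \(y=Hx\) not define the same line, i.e.\ \(H^2\neq1\). Together with \(H>0\) this means \(H\neq1\).
\end{itemize}
Combining these yields \(H\in(0,1)\cup(1,\infty)\).

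The only delicate point is justifying that the normalization \(\ell=x-Hy+az\) made in Proposition~\ref{prop:canonical_form} loses no admissible configuration. In particular, a focal trace parallel to the \(x\)-axis would correspond to ``\(H=\infty\)''. The proof of that proposition already shows that the \(x\)-coefficient of \(\ell\) is nonzero, so \(r^X\) is never the line \(y=0\). I will therefore only need to cite that step, and the remaining argument is the elementary computation above.
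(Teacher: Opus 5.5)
Your proposal is correct and follows the paper's proof. Both compute \(Xf=y\), \(Yf=x\), \(r^X=\{x=Hy\}\) and \(r^Y=\{y=Hx\}\) on \(\Sigma\), obtain \(H>0\) from \(Hy^2>0\) on \(r^X\setminus\{0\}\subset\Sigma^{\rm c}\), and exclude \(H=1\) using \(r^X\neq r^Y\). One harmless slip: \(r^Y\neq L_X\) is not automatic, since it fails at \(H=0\); like \(r^X\neq L_Y\), it follows from \(H>0\), and the pair that holds automatically is \(r^X\neq L_X\), \(r^Y\neq L_Y\).
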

\begin{proof}
\PN On \(\Sigma\), one has \(Xf=y\) and \(Yf=x\), while
\[
r^X=\{x=Hy,\ z=0\}, \qquad r^Y=\{y=Hx,\ z=0\}.
\]
Since \(r^X\setminus\{0\}\subset\Sigma^{\rm c}\) by Definition~\ref{def:CLperp}, a nonzero point \((Hy,y,0)\in r^X\) satisfies \(Xf\,Yf=Hy^2>0\), and therefore \(H>0\). The same definition requires \(r^X\neq r^Y\); with \(H>0\), this excludes \(H=1\).
\end{proof}

\subsection{The zero-divergence subclass and the main result}\label{subsec:main_result}

\PN The classification below is restricted to \(\mu>0\) and \(\operatorname{div}X=0\). The case \(\mu=0\), for which the complex pair is purely imaginary, and the case \(\mu<0\), for which the planar focus is contracting in forward time, are not considered here. Systems with \(\operatorname{div}X\neq0\) also lie outside the present classification. Since \(\operatorname{div}X=\lambda+2\mu\), the zero-divergence condition is equivalent to \(\lambda=-2\mu\).

\begin{definition}\label{def:zero_divergence_class}
	\PN Let \(\CLzero\subset\CLequi\) be the class of systems written in the canonical form~\eqref{eq:canonica_ortogonal} with \(\mu>0\) and \(\operatorname{div}X=0\).
\end{definition}

\PN Setting \(\lambda=-2\mu\) in the Filippov-preserving normal form \eqref{eq:residual_normal_form} gives
\begin{equation}\label{eq:zero_divergence_residual_fields}
\begin{aligned}
X_\kappa(x,y,z)
&=\bigl(
-2\mu x-\kappa y-[H(9\mu^2+1)+2\mu\kappa]z+H,\;
2\mu y-(1+\mu^2)z+1,\;y
\bigr),\\
Y_\kappa(x,y,z)
&=\bigl(
2\mu x-(1+\mu^2)z-1,\;
-2\mu y-\kappa x-[H(9\mu^2+1)+2\mu\kappa]z-H,\;x
\bigr).
\end{aligned}
\end{equation}
The member \(\kappa=0\) is the canonical representative used throughout the crossing classification:
\begin{equation}\label{eq:zero_divergence_fields}
\begin{aligned}
X(x,y,z)
&=
\bigl(
-2\mu x-H(9\mu^2+1)z+H,
\ 2\mu y-(1+\mu^2)z+1,
\ y
\bigr),\\
Y(x,y,z)
&=
\bigl(
2\mu x-(1+\mu^2)z-1,
\ -2\mu y-H(9\mu^2+1)z-H,
\ x
\bigr).
\end{aligned}
\end{equation}
In this representative the eigenvalues and the Darboux cofactors are explicit. Proposition~\ref{prop:canonical_form} shows that choosing \(\kappa=0\) loses no smooth crossing-flight information, while \(Z_\kappa\) retains the information needed when sliding is considered.

\PN For the canonical member, the Filippov combination \(Z^{\rm s}=(xX-yY)/(x-y)\big|_{z=0}\) in \(x>0>y\), with \(s=x-y\) and \(d=x+y\), gives
\begin{equation}\label{eq:canonical_sliding_sd}
\dot s=H-1-2\mu\frac{d^2}{s},
\qquad
\dot d=d\left(\frac{1+H}{s}-2\mu\right).
\end{equation}

\PN The global argument requires attractive sliding without interior pseudo-equilibria. This requirement can be imposed on the original system, before choosing coordinates:
\begin{equation}\tag{NS}\label{eq:nonsingular_sliding}
Z^{\rm s}(p)\ne0\qquad\text{for every }p\in\Sigma^{\rm s}.
\end{equation}
Here \(\Sigma^{\rm s}\) is the strict attractive sliding region of Definition~\ref{def:switching_regions}, excluding its boundary. The common stage of Proposition~\ref{prop:canonical_form} preserves this condition; the following calculation characterizes this condition in the Filippov-preserving normal form.

\begin{lemma}\label{lem:nonsingular_sliding_parameter}
\PN Let \(\mu>0\) and \(0<H<1\). For the family \eqref{eq:zero_divergence_residual_fields}, condition \eqref{eq:nonsingular_sliding} holds if and only if \(\kappa\le0\).
\end{lemma}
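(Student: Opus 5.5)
The plan is to compute the Filippov sliding vector field of \eqref{eq:zero_divergence_residual_fields} on \(\Sigma^{\rm s}\) explicitly and locate its zeros. On \(z=0\) we have \(Xf=y\) and \(Yf=x\), so \(\Sigma^{\rm s}=\{x>0>y\}\), and
\[
Z^{\rm s}=\frac{xX_\kappa-yY_\kappa}{x-y}.
\]
The denominator \(x-y\) is positive there, so zeros of \(Z^{\rm s}\) coincide with common zeros of the two planar components of the numerator. Its third component vanishes identically, since it equals \(xy-yx\).

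The first step is to pass to \(s=x-y>0\) and \(d=x+y\), as in \eqref{eq:canonical_sliding_sd}. In these variables \(\Sigma^{\rm s}\) becomes \(\{|d|<s\}\). A short computation should give the following. The sum of the two numerator components is \(d\,(1+H-2\mu s)\), and the \(\kappa\)-terms cancel in it. The difference is \(-2\mu d^2-2\kappa xy+(H-1)s\), with \(xy=(d^2-s^2)/4\). This recovers \eqref{eq:canonical_sliding_sd} when \(\kappa=0\), which serves as a check. Pseudo-equilibria are therefore the solutions with \(|d|<s\) of
\[
d\,(1+H-2\mu s)=0,\qquad -2\mu d^2+\tfrac{\kappa}{2}(s^2-d^2)+(H-1)s=0 .
\]

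The analysis then splits according to which factor of the first equation vanishes.

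\emph{Branch \(d=0\).} The second equation becomes \(\tfrac{\kappa}{2}s^2=(1-H)s\), so \(s=2(1-H)/\kappa\). Because \(0<H<1\), this is a positive solution exactly when \(\kappa>0\), and \(d=0<s\) places it in \(\Sigma^{\rm s}\). This branch yields the ``only if'' direction: \(\kappa>0\) produces an interior pseudo-equilibrium on the symmetry axis \(\Fix(S)\).

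\emph{Branch \(s=(1+H)/(2\mu)\), \(d\neq0\).} This branch must be excluded when \(\kappa\le0\). Solving the second equation gives \(d^2(2\mu+\kappa/2)=\kappa s^2/2+(H-1)s\). The right-hand side is negative for \(\kappa\le0\), so there are three sub-cases.
\begin{itemize}
\item If \(2\mu+\kappa/2>0\), there is no real solution.
\item If \(2\mu+\kappa/2=0\), the equation is inconsistent.
\item If \(2\mu+\kappa/2<0\), which happens for \(\kappa<-4\mu\), a real \(d\) exists. The key observation is then
\[
d^2-s^2=\frac{(H-1)s-2\mu s^2}{2\mu+\kappa/2}>0,
\]
since both numerator and denominator are negative. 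So \(|d|>s\) and the point lies outside \(\Sigma^{\rm s}\).
\end{itemize}
This gives the ``if'' direction.

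I expect the main obstacle to be this last sub-case with very negative \(\kappa\). There a formal zero does exist, and one must check carefully that it falls outside the strict sliding region rather than on its boundary. The difference identity for \(d^2-s^2\) is what I would rely on, and I would double-check the sign bookkeeping in the \(\kappa xy\) term there.
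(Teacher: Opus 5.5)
Your proof is correct. Your sum and difference of the numerator components are right, and for \(\kappa>0\) you exhibit the same pseudo-equilibrium \((s,d)=(2(1-H)/\kappa,0)\) on \(\Fix(S)\) as the paper.

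For \(\kappa\le0\), however, the paper does not solve the equilibrium system at all. Dividing your difference equation by \(s\) gives
\[
\dot s=H-1-2\mu\frac{d^2}{s}+\frac{\kappa}{2s}(s^2-d^2).
\]
On the strict region \(|d|<s\), every term after \(H-1\) is nonpositive when \(\kappa\le0\). Hence \(\dot s\le-(1-H)<0\), and the \(s\)-component alone cannot vanish. The paper imposes the constraint \(|d|<s\) from the start rather than checking it afterwards. This makes your branch split on the factor \(d(1+H-2\mu s)\) unnecessary. It also removes your anticipated obstacle, the sub-case \(\kappa<-4\mu\), entirely.

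Each route buys something different. Yours gives a complete description of the zeros of the numerator in the whole plane, including the spurious ones with \(|d|>s\) lying outside \(\Sigma^{\rm s}\). The paper's sign argument is shorter and yields the uniform bound \(\dot s\le-(1-H)\). That bound is exactly what Lemma~\ref{lem:finite-sliding-exit} reuses to force exit from sliding in finite time.
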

\begin{proof}
\PN On strict attractive sliding, \(x>0>y\), the normal components are \(X_\kappa f=y\) and \(Y_\kappa f=x\). Hence
\[
Z_\kappa^{\rm s}
=\left.\frac{xX_\kappa-yY_\kappa}{x-y}\right|_{z=0}.
\]
With \(s=x-y>0\), \(d=x+y\), and \(|d|<s\), this gives
\begin{equation}\label{eq:residual_sliding_sd}
\dot s=H-1-2\mu\frac{d^2}{s}
+\frac{\kappa}{2s}(s^2-d^2),\qquad
\dot d=d\left(\frac{1+H}{s}-2\mu\right).
\end{equation}
If \(\kappa\le0\), then \(s^2-d^2>0\) and
\[
\dot s\le H-1=-(1-H)<0,
\]
so no interior pseudo-equilibrium exists. If \(\kappa>0\), the point
\[
(s_*,d_*)=\left(\frac{2(1-H)}{\kappa},0\right),
\qquad
p_\kappa=\left(\frac{1-H}{\kappa},-\frac{1-H}{\kappa},0\right)
\]
lies in strict attractive sliding and makes both components vanish. This proves both directions.
\end{proof}
The stationary sliding trajectory at \(p_\kappa\) prevents a two-fold-or-crossing-cycle alternative for every sliding initial condition when \(\kappa>0\). In escaping, the lateral fields point away from \(\Sigma\); no tangential forward continuation is selected. Sliding is used below as a transient mechanism in the global argument, while the periodic-orbit classification concerns crossing cycles.

\PN All crossing calculations below are carried out for \(Z_0\). For the cycle \(\gamma_{(\mu,H)}\) classified in Theorem~\ref{teo:main1}, denote by \(\gamma_{(\mu,H)}^\kappa\) its inverse sidewise image under \(T_\pm^\kappa\), and by \(\gamma_Z\) its further inverse image under the common coordinate reduction. The identical section maps preserve the crossing points in normalized coordinates, the normalized period, and the return spectrum for every \(\kappa\); the relation \(T_-^\kappa=S\circ T_+^\kappa\circ S\) also preserves symmetry. Orbital convergence of crossing flight arcs transfers because each inverse shear is a fixed linear map, and the common affine change is invertible. No conjugacy of the sliding fields with \(Z_0\) is asserted.

\PN Corollary~\ref{cor:admissible_H} restricts the inclination of the canonical geometry, but does not determine which values yield a crossing flight closing after two intersections with \(\Sigma\). In the zero-divergence subclass, both the closure condition and the stability of the resulting cycles admit a characterization.

\begin{theorem}\label{teo:main1}
\PN Consider systems \(Z\in\CLzero\) in the representation
given by~\eqref{eq:canonica_ortogonal}. For each \(\mu>0\) let \(t_{\mathrm{g}}(\mu)\in(\pi,2\pi)\) denote the solution of
\[
\frac{\cos t-e^{-\mu t}}{\sin t}=\mu,
\]
and define
\begin{equation}\label{eq:H_g_definition}
H_{\mathrm{g}}(\mu)=e^{-2\mu t_{\mathrm{g}}(\mu)}
\end{equation}
and
\begin{equation}\label{eq:H_crit_definition}
H_{\mathrm{crit}}(\mu)
=\frac{1}{2\cosh(\pi \mu)-1}.
\end{equation}
Set \(\mathcal I_\mu:= \bigl(H_{\mathrm{g}}(\mu),H_{\mathrm{crit}}(\mu)\bigr).\) Then the following statements hold.
\begin{enumerate}
\item[(a)] For \(t\in(\pi,t_{\mathrm g}(\mu))\), define
\begin{equation}\label{eq:xy_of_t}
	y_0(t):=\zeta_\mu\left(\frac{\cos t-e^{-\mu t}}{\sin t}-\mu\right),
	\qquad
	x_0(t):=\zeta_\mu\left(\mu-\frac{e^{\mu t}-\cos t}{\sin t}\right),
\end{equation}
and
\begin{equation}\label{eq:H_of_t}
	\eta(t):=
	\frac{x_0(t)y_0(t)+\zeta_{\mu}}
	{x_0(t)^2+y_0(t)^2-x_0(t)y_0(t)
		-\beta_{\mu}\bigl(x_0(t)-y_0(t)\bigr)+\zeta_{\mu}}.
\end{equation}

Then \(\eta:(\pi,t_{\mathrm g}(\mu))\to\mathcal I_{\mu}\) is a strictly decreasing bijection. The map \(t\mapsto p_0(t)=(x_0(t),y_0(t),0)\) parametrizes a curve of crossing points in \(\Sigma\). The trajectory from \(p_0(t)\) is a crossing periodic orbit of simple period if and only if \(H=\eta(t)\). It returns to \(\Sigma\) at \(S(p_0(t))\), and its period is \(2t\). For each \(H\in\mathcal I_\mu\), this orbit is unique within the simple-period crossing class. No crossing periodic orbit of simple period exists when \(H\notin\mathcal I_\mu\). Denote the classified orbit by \(\gamma_{(\mu,H)}\).
	
\item[(b)] Let \(\Pi_X\) denote the half-return map of \(X\) and define the reduced map
\begin{equation}\label{eq:reduced_map}
	F:=S\circ\Pi_X.
\end{equation}
For every \(H\in\mathcal I_{\mu}\), let \(t=\eta^{-1}(H)\) and set
\begin{equation}\label{eq:dif_reduced_map}
	N(t):=DF\bigl(p_0(t)\bigr).
\end{equation}
Let \(\lambda_1(t)\) and \(\lambda_2(t)\) be the roots of the characteristic polynomial of \(N(t)\). Then the two nontrivial Floquet multipliers of \(\gamma_{(\mu,H)}\) are \(\mu_j=\lambda_j(t)^2\), \(j=1,2\), and both have modulus strictly smaller than \(1\). Hence \(\gamma_{(\mu,H)}\) is isolated, hyperbolic, and orbitally asymptotically stable.
\end{enumerate}
\end{theorem}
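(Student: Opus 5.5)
The plan is to reduce everything to the flight of the single field \(X\) in \eqref{eq:zero_divergence_fields} and then use the symmetry \(S\). I first show that a simple-period crossing orbit must be symmetric. Let \(\gamma\cap\Sigma=\{p_0,p_1\}\), with an \(X\)-arc from \(p_0\) to \(p_1\) in \(\Sigma^+\) and a \(Y\)-arc from \(p_1\) to \(p_0\) in \(\Sigma^-\). Since \(Y=S\circ X\circ S\), the \(Y\)-arc is the \(S\)-image of an \(X\)-arc from \(S(p_1)\) to \(S(p_0)\). Closure therefore reads \(\Pi_X(p_0)=p_1\) and \(\Pi_X(S p_1)=S p_0\). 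I expect to exclude the asymmetric case \(p_1\neq S(p_0)\) using the explicit first integrals of \(X\), as follows. In the zero-divergence case \(X\) has the invariant focal plane \(\ell\) with cofactor \(-2\mu\). The planar \((y,z)\)-block is linear with eigenvalues \(\mu\pm i\), so its quadratic form \(Q(y,z)\), centred at the focus \((0,\zeta_\mu)\), satisfies \(XQ=2\mu Q\). Hence \(\ell^2 Q\) is a polynomial first integral, and the angle in the \((y,z)\)-plane advances uniformly in time. For a flight time \(t\in(\pi,2\pi)\), the \((y,z)\)-block alone gives \(y_1\) as an explicit function of \(y_0\) and \(t\), since \(z=0\) at both ends. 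This should reproduce the formulas \eqref{eq:xy_of_t} with \(y_1=-x_0\), and similarly for the \(x\)-component.

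The closure system becomes two scalar equations in \((x_0,y_0,t)\): the planar relation \(e^{2\mu t}Q(y_0,0)=Q(y_1,0)\), and the invariance of \(\ell^2 Q\), which forces \((x_1-Hy_1)^2Q(y_1,0)=(x_0-Hy_0)^2Q(y_0,0)\) together with the sign condition \(\ell(p_1)=e^{-2\mu t}\ell(p_0)\). Writing these out, I expect to obtain a function \(\mathcal{F}(p_0,p_1)\) whose symmetric solution branch \(p_1=S p_0\) gives exactly \(H=\eta(t)\) in \eqref{eq:H_of_t}. I would also show that any solution of the combined system with its \(S\)-partner forces \(p_1=S p_0\), by a monotonicity argument in the planar return. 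The map \(y_0\mapsto y_1\) is strictly monotone, and combining the two half-flights shows that the composite of \(y\)-coordinate maps can have only the symmetric fixed point. This gives symmetry, the return point \(S(p_0(t))\), and the period \(2t\).

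Next I would analyse \(\eta\) on \((\pi,t_{\mathrm g}(\mu))\). The upper endpoint \(t_{\mathrm g}\) is precisely where \(y_0(t)=0\), so \(p_0\) reaches \(L_X\) and the orbit becomes tangent. There \(\eta\to H_{\mathrm g}\), with \(x_0,y_0\) simplifying to give \(e^{-2\mu t_{\mathrm g}}\). As \(t\to\pi^+\), both \(x_0,y_0\) blow up like \(1/\sin t\). The quadratic ratio then tends to its leading-order limit, which should evaluate to \(1/(2\cosh\pi\mu-1)\). I would also verify that \(p_0(t)\) lies in \(\Sigma^{\rm c}\), i.e.\ \(x_0<0\), \(y_0>0\) or the appropriate signs, for every \(t\) in the interval. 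Strict monotonicity of \(\eta\) is \textbf{the main obstacle}. My first attempt would be to write \(\eta=1/(1+R(t))\) with \(R=(x_0^2+y_0^2-2x_0y_0-\beta_\mu(x_0-y_0))/(x_0y_0+\zeta_\mu)\), and to substitute \(u=x_0-y_0\) and \(v=x_0y_0\) expressed through \(\cosh\mu t\), \(\sinh\mu t\), \(\cos t\), and \(\sin t\). I would then show \(R'>0\) by reducing the derivative to a sign of an explicit transcendental expression on \((\pi,2\pi)\), using convexity estimates of \(e^{\mu t}\) against the trigonometric terms. Bijectivity then follows from continuity and the endpoint limits, and uniqueness and the nonexistence outside \(\mathcal I_\mu\) follow from injectivity together with the symmetry reduction.

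For (b), the relation \(S\Pi_Y S=\Pi_X\) gives full return \(P=\Pi_Y\circ\Pi_X=S\Pi_XS\Pi_X=F^2\). Thus \(DP(p_0)=N(t)^2\), because \(p_0\) is a fixed point of \(F\), and the multipliers are \(\lambda_j^2\). I would compute \(N(t)\) by differentiating \(\Pi_X\) with the standard flight-time correction \(DP=(I-X\,\nabla f^{T}/Xf)D\varphi\). The determinant of \(N\) comes from Liouville with zero divergence and the ratio \(Xf(p_0)/Xf(p_1)\), i.e.\ \(y_0/x_0\) up to sign. The trace is then explicit in \(t\). The multipliers have modulus less than \(1\) when \(|\det N|<1\) and \(|\operatorname{tr}N|<1+\det N\), which I would verify from the formulas on \((\pi,t_{\mathrm g})\). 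Hyperbolicity, isolation, and orbital asymptotic stability then follow by the standard Poincaré-map argument.
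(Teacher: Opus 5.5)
\paragraph{Main gap: excluding asymmetric simple-period orbits.} This step is not established, and both the uniqueness claim and the nonexistence claim for $H\notin\mathcal I_\mu$ in (a) depend on it. The mechanism you propose is a monotone composite of $y$-coordinate maps that could have only the symmetric fixed point. No such one-dimensional composite exists.

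The planar block of $X$ transports $y$, giving $y_1=g(y_0)$. Since $Y=S\circ X\circ S$, the planar block of $Y$ lives in $(x,z)$ and transports $x$, giving $x_0=-g(-x_1)$. These two planar relations are coupled only through the $H$-dependent focal-factor relations
\[
x_1-Hy_1=\mathcal E(y_0)(x_0-Hy_0),\qquad y_0-Hx_0=\mathcal E(-x_1)(y_1-Hx_1).
\]
So an asymmetric orbit is a genuine two-cycle of the two-dimensional map $F(x,y)=\bigl(-g(y),-Hg(y)-\mathcal E(y)(x-Hy)\bigr)$. Write it as $p_0=(-g(b),a)$, $q_0=(-g(a),b)$ with $a<b$. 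Subtracting the two second-component equations gives
\[
(b-a)+H\bigl(m(b)-m(a)\bigr)=\mathcal E(b)\bigl(-g(a)\bigr)-\mathcal E(a)\bigl(-g(b)\bigr),\qquad m(y)=-g(y)+\mathcal E(y)y .
\]
The left side is positive. Monotonicity alone cannot sign the right side, because $\mathcal E$ and $-g$ are both increasing.

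What is needed is a quantitative bound. Write the right side as $\mathcal E(a)\mathcal E(b)\bigl(h(a)-h(b)\bigr)$ with $h=-g/\mathcal E$; after the mean value theorem one must show $\mathcal E(a)\mathcal E(b)(-h')<1$. The paper obtains this from four ingredients:
\begin{itemize}
\item the estimate $\mathcal E(y)(-h'(y))<\beta_\mu(-g(y))/Q_\mu(y)$;
\item the amplitude bound on $-g/Q_\mu$ in terms of $H_{\mathrm g}$;
\item the bound $\mathcal E(b)<e^{-2\pi\mu}$;
\item the explicit inequality $\mathcal K(\mu)<1$.
\end{itemize}
Without an argument of this kind, your proof classifies only the symmetric orbits, i.e.\ the fixed points of $F$.

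\paragraph{Further corrections.}
\begin{itemize}
\item The first integral is wrong. With cofactors $-2\mu$ for $\ell$ and $2\mu$ for $Q$, one has $X(\ell^2Q)=-2\mu\,\ell^2Q$, so $\ell^2Q$ is not a first integral. The correct integral is $\ell Q$, which is the paper's $P_X=f_1f_2$. Your squared matching relation also contradicts the cofactor relations $\ell(p_1)=e^{-2\mu t}\ell(p_0)$ and $Q(y_1,0)=e^{2\mu t}Q(y_0,0)$; use those two relations directly.
\item The sign condition is off. $p_0\in\Sigma^{\rm c}$ requires $x_0y_0>0$, and here $x_0,y_0>0$.
\item Monotonicity of $\eta$ need not be a brute-force transcendental sign computation. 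Reparametrize the family by $y=y_0$ and differentiate the fixed-point identity $F(p(y))=p(y)$ with $H=\eta$. This gives $\frac{d\eta}{dy}\,(x_0+\mathcal E y_0)=\det(I-N)$. So the Schur inequality $\det(I-N)>0$, which you need anyway in (b), yields strict monotonicity.
\item The companion inequality $\det(I+N)>0$ is not automatic. It uses $x_0>x_{\mathrm g}>2\beta_\mu$ to get $\beta_\mu(x_0-y_0)/Q_\mu(-x_0)<1$.
\end{itemize}
Your flux (Liouville) derivation of $\det N=y_0/x_0$ is correct, and it is a neat alternative to the paper's computation through $g'$.
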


\section{Algebraic closure and the reduced map}\label{sec:caralimitcyle}

\PN For a symmetric orbit of simple period, closure is determined by a single \(X\)-flight. After returning to \(\Sigma\), the trajectory must land at the image under \(S\) of its initial crossing point. In the zero-divergence subclass, the first integrals provide an algebraic matching relation on \(\Sigma\), while the planar subsystem determines the landing coordinate. This leads naturally to the reduced map \(F\) defined in~\eqref{eq:reduced_map}, whose fixed points correspond to symmetric closures. At this stage, the fixed-point analysis therefore classifies only the symmetric family. The remaining simple-period alternative is given by points of minimal period two of \(F\), which are treated later and shown not to occur.

\subsection{Symmetry and the closure condition}

\PN The geometric closure condition depends only on the equivariance of the system and therefore holds throughout the symmetric class \(\CLequi\), independently of the zero-divergence condition.

\begin{proposition}\label{prop:symmetric_closure} 
\PN For \(Z\in\CLequi\), a periodic orbit \(\gamma\) of simple period is symmetric if and only if its successive intersection points \((x_0,y_0,0)\) and \((x_1,y_1,0)\) with \(\Sigma\) satisfy
\[ 
x_1=-y_0 \qquad\text{and}\qquad y_1=-x_0. 
\] 
\end{proposition}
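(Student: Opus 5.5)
Both implications follow from the facts that \(S\) is an involution with \(S(\Sigma^\pm)=\Sigma^\mp\) and \(S(x,y,0)=(-y,-x,0)\), together with equivariance. The key observation is that equivariance \(Y=S\circ X\circ S\) gives, for every \(p\),
\[
\varphi_Y(t,S(p))=S\bigl(\varphi_X(t,p)\bigr),
\]
wherever either side is defined. Moreover, \(S\) maps crossing points to crossing points, because \(Yf=-(Xf)\circ S\) by Remark~\ref{rem:fix_S}, so \(Xf\,Yf\) is \(S\)-invariant. Hence \(S\) maps crossing trajectories of \(Z\) to crossing trajectories of \(Z\), preserving time and the order of intersections with \(\Sigma\). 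In particular, \(S(\gamma)\) is a crossing periodic orbit of simple period whenever \(\gamma\) is, and \(S(\gamma)\cap\Sigma=S(\gamma\cap\Sigma)\).

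\emph{Forward direction.} Suppose \(S(\gamma)=\gamma\), and write \(\gamma\cap\Sigma=\{p_0,p_1\}\) with \(p_0\neq p_1\). Then \(S\) permutes the two-point set \(\{p_0,p_1\}\). I would rule out \(S(p_0)=p_0\) as follows. If \(p_0\) were fixed, then \(p_0\in\Fix(S)\), and by the \(S\)-invariance of \(Xf\,Yf\) so would be \(p_1\). Since \(S\) swaps the two half-spaces, the \(X\)-arc leaving one of these points is carried to the \(Y\)-arc through the same point. Concretely, if the arc from \(p_0\) into \(\Sigma^+\) ends at \(p_1\), its \(S\)-image is an arc in \(\Sigma^-\) joining \(p_0\) and \(p_1\). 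This forces \(\gamma\) to run through both points in a way that is consistent only if the \(X\)-flight from \(p_0\) to \(p_1\) is reversed by \(S\) in time. That is impossible, because \(S\) conjugates the flows without reversing time. A cleaner way to phrase this: the orbit of \(p_0\) leaves into one half-space, while the orbit of \(S(p_0)=p_0\) leaves into the other, which contradicts the fact that at a crossing point the exit side is determined uniquely. Indeed, at \(p_0\in\Sigma^{\rm c}\) the trajectory exits into \(\Sigma^+\) if \(Xf>0\), whereas the image trajectory exits into \(\Sigma^-\). So the leaving direction through the same point would differ, a contradiction. Hence \(S(p_0)=p_1\), i.e.
\[
(x_1,y_1)=(-y_0,-x_0).
\]
\textbf{This exclusion of fixed crossing points is the one step requiring care.}

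\emph{Converse.} Suppose \(p_1=S(p_0)\), and let the flight from \(p_0\) to \(p_1\) lie, say, in \(\Sigma^+\) under \(X\). By the conjugacy above, the \(S\)-image of this arc is a \(Y\)-arc in \(\Sigma^-\) from \(S(p_0)=p_1\) to \(S(p_1)=p_0\). Since \(\gamma\) has simple period, its \(Y\)-flight starting at \(p_1\) is unique and returns to \(p_0\). By uniqueness of trajectories, this flight coincides with the image arc. Therefore \(S(\gamma)\) is the union of the image of the \(X\)-arc (which is the \(Y\)-arc of \(\gamma\)) and the image of the \(Y\)-arc (which is the \(X\)-arc of \(\gamma\), since \(S\) is an involution). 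Hence \(S(\gamma)=\gamma\). The same argument applies with the roles of the half-spaces exchanged.
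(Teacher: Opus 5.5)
Your proof is correct and follows the paper's route: \(S\) must swap the two crossing points because no crossing point is \(S\)-fixed, and for the converse, equivariance plus uniqueness of the lateral flows identifies the \(S\)-image of the \(X\)-arc with the \(Y\)-arc. The paper excludes fixed crossing points in one line: on \(\Fix(S)\cap\Sigma\) one has \(Yf(p)=-Xf(p)\), hence \(Xf(p)Yf(p)=-(Xf(p))^2\le0\), which is exactly what your ``cleaner'' exit-direction argument encodes. Your first attempt at that step is muddled and should be dropped: the ``reversed in time'' remark does not make sense, and that \(p_1\) is fixed follows from \(S\) being a bijection of \(\{p_0,p_1\}\), not from the \(S\)-invariance of \(Xf\,Yf\).
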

\begin{proof}
\PN Suppose first that \(\gamma\) is symmetric. Since \(S(\Sigma)=\Sigma\), the set
\[
\gamma\cap\Sigma=\{p_0,p_1\}
\]
is invariant under \(S\). No crossing point is fixed by \(S\). Indeed, if
\(p\in\Fix(S)\cap\Sigma\), then equivariance gives
\[
Y(p)=S(X(p)).
\]
Since \(S\) reverses the \(z\)-component,
\[
Yf(p)=-Xf(p),
\]
and consequently
\[
Xf(p)Yf(p)=-\bigl(Xf(p)\bigr)^2\leq0.
\]
Thus \(p\notin\Sigma^{\rm c}\). Hence \(S\) cannot fix either of the two
crossing points and must exchange them, so
\[
p_1=S(p_0)=(-y_0,-x_0,0),
\]
which gives
\[
x_1=-y_0,
\qquad
y_1=-x_0.
\]
Conversely, suppose that \(p_1=S(p_0)\). By equivariance, \(S\) maps the flight arc from \(p_0\) to \(p_1\) to an orbit arc of the opposite side from
\(S(p_0)=p_1\) to \(S(p_1)=p_0\). By uniqueness of the crossing trajectory, this is the complementary flight arc of \(\gamma\). Hence \(S(\gamma)=\gamma\), so \(\gamma\) is symmetric.
\end{proof}

\PN Proposition~\ref{prop:symmetric_closure} determines how a symmetric orbit of simple period must close, but it does not yet provide an algebraic condition for such a closure. The first integrals of \(X\) and \(Y\) provide this condition on \(\Sigma\).

\subsection{First integrals and the matching relation}\label{sec:invcurve}

\PN A differentiable function \(f\) is a Darboux factor of \(X\) when \(Xf=Kf\); the function \(K\) is its cofactor. Zero divergence gives the two factors below the opposite cofactors \(+2\mu\) and \(-2\mu\). Their cancellation makes the product a polynomial first integral of \(X\), and symmetry transfers a corresponding integral to \(Y\). Restricting these integrals to \(\Sigma\) supplies the algebraic matching relation. For related uses of first integrals in piecewise systems, see \cite{jaumedurval,jaume}.

\PN For fixed \(\mu>0\), set
\begin{equation}\label{eq:Q_mu}
Q_{\mu}(w):=w^2+\beta_{\mu} w+\zeta_{\mu}.
\end{equation}
Its discriminant satisfies
\[
\operatorname{disc}(Q_{\mu})=\beta_{\mu}^2-4\zeta_{\mu}=-4\zeta_{\mu}^2<0.
\]
Since \(Q_{\mu}\) is monic, \(Q_{\mu}(w)>0\) for every \(w\in\mathbb R\).

\begin{proposition}\label{prop:zero_divergence_first_integrals}
\PN For \(Z=(X,Y)\in\CLzero\), define
\[
f_1(x,y,z)=(z-\zeta_{\mu})^2+
\bigl(y-\mu(z-\zeta_{\mu})\bigr)^2
\]
and
\[
f_2(x,y,z)=x-Hy+4\mu Hz.
\]
Then
\[
X(f_1)=2\mu f_1,
\qquad
X(f_2)=-2\mu f_2.
\]
Consequently,
\[
P_X=f_1f_2,
\qquad
P_Y=-P_X\circ S
\]
are cubic first integrals of \(X\) and \(Y\), respectively. Their restrictions to the switching plane are
\[
P_X(x,y,0)=Q_{\mu}(y)(x-Hy),
\qquad
P_Y(x,y,0)=Q_{\mu}(-x)(y-Hx).
\]
Moreover, \(f_2\) defines the focal plane \(W^X\), and \(W^X\cap\Sigma=r^X=\{x=Hy\}\).
\end{proposition}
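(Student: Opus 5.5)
The plan is a direct verification in the canonical coordinates~\eqref{eq:zero_divergence_fields}, where \(X=(-2\mu x-H(9\mu^2+1)z+H,\ 2\mu y-(1+\mu^2)z+1,\ y)\). First I check the Darboux relation for \(f_2\), since it is linear and the computation is immediate:
\[
X(f_2)=\bigl(-2\mu x-H(9\mu^2+1)z+H\bigr)-H\bigl(2\mu y-(1+\mu^2)z+1\bigr)+4\mu H y .
\]
The constants cancel. The \(x\)-term is \(-2\mu x\), and the \(y\)-terms combine to \(2\mu Hy\). The \(z\)-terms give \(-H(9\mu^2+1)z+H(1+\mu^2)z=-8\mu^2Hz\). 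Altogether this is \(-2\mu(x-Hy+4\mu Hz)=-2\mu f_2\), as claimed. It also shows that \(f_2=0\) is an invariant plane on which \(X\) acts with exponent \(\lambda=-2\mu\). It therefore coincides with the focal plane \(W_0^X\) of Proposition~\ref{prop:canonical_form}: substituting \(\lambda=-2\mu\) there gives \(x-Hy+H(2\mu+2\mu)z\). Setting \(z=0\) then yields \(r^X=\{x=Hy\}\).

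For \(f_1\), I work in the planar \((y,z)\)-subsystem, which decouples from \(x\):
\[
\dot y=2\mu y-(1+\mu^2)z+1,\qquad \dot z=y .
\]
Its equilibrium is \((0,\zeta_\mu)\). I shift to \(u=z-\zeta_\mu\), \(v=y\), which gives \(\dot u=v\) and \(\dot v=2\mu v-(1+\mu^2)u\). Next I set \(w=v-\mu u\). Then \(\dot u=\mu u+w\) and \(\dot w=2\mu v-(1+\mu^2)u-\mu v=\mu w-u\). This is the standard focus form, so
\[
\tfrac{d}{dt}(u^2+w^2)=2u(\mu u+w)+2w(\mu w-u)=2\mu(u^2+w^2).
\]
Since \(f_1=u^2+w^2\), this gives \(X(f_1)=2\mu f_1\).

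The product rule then gives \(X(f_1f_2)=(2\mu-2\mu)f_1f_2=0\), so \(P_X\) is a cubic first integral. For \(Y=S\circ X\circ S\), the flow of \(Y\) is \(S\) conjugate to that of \(X\). Hence \(P_X\circ S\) is constant along \(Y\)-orbits, and so is \(-P_X\circ S\); the sign is only a convenient normalization.

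It remains to compute the restrictions to \(\Sigma\). At \(z=0\), \(f_1=\zeta_\mu^2+(y+\mu\zeta_\mu)^2=y^2+2\mu\zeta_\mu y+\zeta_\mu^2(1+\mu^2)=y^2+\beta_\mu y+\zeta_\mu\), which is \(Q_\mu(y)\). Together with \(f_2=x-Hy\), this gives \(P_X(x,y,0)=Q_\mu(y)(x-Hy)\). Since \(S(x,y,0)=(-y,-x,0)\),
\[
P_X(S(x,y,0))=Q_\mu(-x)(-y+Hx),
\]
and therefore \(P_Y(x,y,0)=Q_\mu(-x)(y-Hx)\).

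No step is a genuine obstacle. The only point needing care is finding the right coordinates \(w=y-\mu(z-\zeta_\mu)\) that put the planar block in rotation form. After that, keeping track of the \(z\)-coefficient cancellation \(9\mu^2+1-(1+\mu^2)=8\mu^2\) in \(X(f_2)\) is the only delicate bookkeeping.
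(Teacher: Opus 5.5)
Your proof is correct and follows the same route as the paper. The paper also verifies the two cofactor identities by direct differentiation in the canonical coordinates; your shifted variables \(u=z-\zeta_\mu\), \(w=y-\mu u\) are exactly the spectral coordinates it introduces right after the proposition. The remaining steps match as well: the product rule for \(P_X\), conjugation by \(S\) (equivalently, the chain rule) for \(P_Y\), restriction to \(z=0\), and identification of \(f_2\) with the focal-plane generator of Proposition~\ref{prop:canonical_form} at \(\lambda=-2\mu\).
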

\begin{proof}
\PN Since \(\operatorname{div}X=0\), one has \(\lambda=-2\mu\). Substitution in the canonical field and direct differentiation give the two cofactor identities. Their sum vanishes in the derivative of \(f_1f_2\), so \(X(P_X)=0\). Equation~\eqref{eq:def_equiv_final} and the chain rule give \(Y(P_X\circ S)=0\), and therefore \(P_Y=-P_X\circ S\) is a first integral of \(Y\). On \(\Sigma\), one has
\[
f_1(x,y,0)=\zeta_{\mu}^2+(y+\mu\zeta_{\mu})^2=Q_{\mu}(y)
\]
and \(f_2=x-Hy\), which gives the displayed restrictions. The expression for \(f_2\) is the focal-plane generator obtained in Proposition~\ref{prop:canonical_form} after setting \(\lambda=-2\mu\). Since \(f_1\) is quadratic and \(f_2\) is linear, both products are cubic.
\end{proof}

\PN The same factors expose the geometry behind the matching relation. Set
\[
v=z-\zeta_{\mu},
\qquad
w=y-\mu v,
\qquad
\xi=x-Hy+4\mu Hz.
\]
Along the upper field,
\begin{equation}\label{eq:spectral_coordinates}
\dot\xi=-2\mu\xi,
\qquad
\dot v=\mu v+w,
\qquad
\dot w=-v+\mu w.
\end{equation}
Thus \(f_1=v^2+w^2\), \(f_2=\xi\), and the first integral is
\[
P_X=\xi(v^2+w^2).
\]
The focal plane is \(W^X=\{\xi=0\}\), the upper equilibrium and its stable line are
\[
q_X=\left(-4\mu H\zeta_{\mu},0,\zeta_{\mu}\right),
\qquad
W^s(q_X)=\{(x,0,\zeta_{\mu})\mid x\in\mathbb R\}.
\]
Equation~\eqref{eq:spectral_coordinates} gives a contraction toward \(W^X\) by the factor \(e^{-2\mu t}\), while the adapted radial distance \(R=(v^2+w^2)^{1/2}\) grows by \(e^{\mu t}\). The contraction and expansion explain the cancellation of the two Darboux cofactors in \(P_X\).

\PN Away from \(R=0\), set \(R=\sqrt{v^2+w^2}\) and choose a local branch of \(\theta=\arg(v+iw)\). Equation~\eqref{eq:spectral_coordinates} gives \(\dot R=\mu R\) and \(\dot\theta=-1\), so \(\theta+\mu^{-1}\log R\) is a locally defined transcendental first integral independent of \(P_X\). Its angular branch makes it less convenient globally, and it supplies no simple polynomial matching condition on \(\Sigma\). The single-valued integral \(P_X=\xi(v^2+w^2)\), whose restriction to \(\Sigma\) is algebraic, is therefore the one used for closure.

\PN By Proposition~\ref{prop:symmetric_closure}, a symmetric periodic orbit meets \(\Sigma\) at \((x,y,0)\) and \((-y,-x,0)\), so each first integral takes the same value at these two points. Thus
\[
P_X(x,y,0)-P_X(-y,-x,0)=0,
\qquad
P_Y(x,y,0)-P_Y(-y,-x,0)=0.
\]
These conditions coincide because \(P_Y=-P_X\circ S\).

\subsection{The planar subsystem}\label{subsec:PX_structure}

\PN The canonical form~\eqref{eq:canonica_ortogonal} contains the autonomous planar subsystem
\begin{equation}\label{eq:planar_subsystem}
\dot y=2\mu y-(\mu^{2}+1)z+1,
\qquad
\dot z=y.
\end{equation}
It is independent of the zero-divergence condition and has a focus at
\[
(y,z)=(0,\zeta_{\mu})
\]
with eigenvalues \(\mu\pm i\). Eliminating \(y\) gives
\[
\ddot z-2\mu\dot z+(1+\mu^2)z=1.
\]
For the solution issued from \((y_0,0)\), one obtains
\begin{equation}\label{eq:z_planar_solution}
z(t;y_0)
=
\zeta_{\mu}\left[1-e^{\mu t}\bigl(\cos t-\mu\sin t\bigr)\right]
+y_0e^{\mu t}\sin t.
\end{equation}
The corresponding \(y\)-component is recovered from
\[
y=\dot z.
\]

\PN The auxiliary coordinate \(\widetilde y=y-2\mu z\) agrees with \(y\) on \(\Sigma\) and satisfies
\[
\dot{\widetilde y}=1-(1+\mu^2)z,
\qquad
\dot z=\widetilde y+2\mu z.
\]
It therefore leaves the first-return time, the endpoint coordinate, and the chord map unchanged. The planar and three-dimensional flight illustrations below use this sidewise auxiliary coordinate, whose focus is \((-\beta_{\mu},\zeta_{\mu})\).

\PN Since the planar subsystem is independent of \(x\), the first-return time and the corresponding planar landing coordinate depend only on \(y_0\). For \(t\in(\pi,2\pi)\), imposing the return condition \(z(t;y_0)=0\) in \eqref{eq:z_planar_solution} gives
\[
y_0
=
\zeta_{\mu}
\left(
\frac{\cos t-e^{-\mu t}}{\sin t}-\mu
\right).
\]
This motivates the definition
\begin{equation}\label{eq:Phi_definition}
\Phi_{\mu}(t):=
\frac{\cos t-e^{-\mu t}}{\sin t},
\qquad
t\in(\pi,2\pi).
\end{equation}
for which the return equation becomes
\[
y_0=\zeta_{\mu}\bigl(\Phi_{\mu}(t)-\mu\bigr).
\]
Once the planar flight is determined, the \(x\)-component is obtained along the same arc, yielding the triangular structure of the \(X\)-half-return map \(\Pi_X\).

\begin{lemma}\label{lem:Phi_properties}
\PN For every \(\mu>0\), the function \(\Phi_{\mu}\) defined by \eqref{eq:Phi_definition} is strictly decreasing on \((\pi,2\pi)\) and satisfies
\[
\lim_{t\downarrow\pi}\Phi_{\mu}(t)=+\infty,
\qquad
\lim_{t\uparrow2\pi}\Phi_{\mu}(t)=-\infty.
\]
Consequently, for every \(x\in\mathbb R\), the equation
\[
\Phi_{\mu}(t)=x
\]
has a unique solution \(t\in(\pi,2\pi)\).
\end{lemma}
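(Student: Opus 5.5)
Differentiate \(\Phi_{\mu}(t)=(\cos t-e^{-\mu t})/\sin t\) directly and show that the numerator of the derivative has a fixed sign on \((\pi,2\pi)\). By the quotient rule,
\[
\Phi_{\mu}'(t)=\frac{(-\sin t+\mu e^{-\mu t})\sin t-(\cos t-e^{-\mu t})\cos t}{\sin^2 t}
=\frac{e^{-\mu t}\bigl(\mu\sin t+\cos t\bigr)-1}{\sin^2 t}.
\]
So it suffices to prove that \(g(t):=e^{-\mu t}(\mu\sin t+\cos t)<1\) for every \(t\in(\pi,2\pi)\).

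For the bound on \(g\), compute \(g'(t)=e^{-\mu t}\bigl(-\mu^2\sin t-\mu\cos t+\mu\cos t-\sin t\bigr)=-(1+\mu^2)e^{-\mu t}\sin t\). This is positive on \((\pi,2\pi)\), since \(\sin t<0\) there. Hence \(g\) is increasing on \([\pi,2\pi]\), and \(g(t)<g(2\pi)=e^{-2\pi\mu}<1\) because \(\mu>0\). Thus \(\Phi_{\mu}'<0\) on the open interval, and \(\Phi_{\mu}\) is strictly decreasing. An equivalent and perhaps cleaner route observes that \(1-g(t)=(1+\mu^2)\int_0^t e^{-\mu s}\sin s\,ds\). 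Positivity of this integral for \(t\in(\pi,2\pi)\) then follows from the fact that the integral over \((0,\pi)\) dominates the negative contribution over \((\pi,t)\) by the monotone decay of \(e^{-\mu s}\). I would present the first, monotonicity-based argument.

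For the limits, note that \(\sin t\to0^-\) at both endpoints. As \(t\downarrow\pi\), the numerator tends to \(-1-e^{-\mu\pi}<0\), so \(\Phi_{\mu}\to+\infty\). As \(t\uparrow2\pi\), the numerator tends to \(1-e^{-2\pi\mu}>0\), where strict positivity uses \(\mu>0\), so \(\Phi_{\mu}\to-\infty\). Since \(\Phi_{\mu}\) is continuous and strictly decreasing on \((\pi,2\pi)\), the intermediate value theorem gives that it is a bijection onto \(\mathbb R\). This yields existence and uniqueness of the solution of \(\Phi_{\mu}(t)=x\) for each \(x\in\mathbb R\).

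The only delicate point is the sign of the derivative's numerator, which is not obvious pointwise. The key step that resolves it is the identity \(g'=-(1+\mu^2)e^{-\mu t}\sin t\), which reduces the question to evaluating \(g\) at the single endpoint \(t=2\pi\). The condition \(\mu>0\) is used both there and in the right-endpoint limit. When \(\mu=0\), \(g(2\pi)=1\) and the right-endpoint numerator vanishes.
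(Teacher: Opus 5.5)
Your proposal is correct and follows essentially the same route as the paper: compute \(\Phi_{\mu}'\), show that \(e^{-\mu t}(\cos t+\mu\sin t)\) is increasing on \((\pi,2\pi)\) via its derivative \(-(1+\mu^2)e^{-\mu t}\sin t\), and bound it by its value \(e^{-2\pi\mu}<1\) at \(t=2\pi\). You then read off the endpoint limits from the signs of the numerator and of \(\sin t\to0^-\), and conclude with continuity and strict monotonicity; your version simply spells out the limit computations that the paper leaves as direct.
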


\begin{proof}
\PN Differentiation gives
\[
\Phi_{\mu}'(t)
=
\frac{e^{-\mu t}(\cos t+\mu\sin t)-1}{\sin^2 t}.
\]
The function
\[
t\longmapsto e^{-\mu t}(\cos t+\mu\sin t)
\]
is strictly increasing on \((\pi,2\pi)\), since
\[
\frac{d}{dt}\left[e^{-\mu t}(\cos t+\mu\sin t)\right]
=
-(1+\mu^2)e^{-\mu t}\sin t>0,
\]
and its value at \(2\pi\) is \(e^{-2\pi \mu}<1\). Hence
\[
\Phi_{\mu}'(t)<0
\]
throughout \((\pi,2\pi)\). The endpoint limits follow directly from the definition of \(\Phi_{\mu}\), and the final assertion follows from continuity and strict monotonicity.
\end{proof}
\PN By \eqref{eq:Phi_definition}, the equation defining \(t_{\mathrm g}(\mu)\) in Theorem~\ref{teo:main1} is equivalent to
\[
\Phi_{\mu}\bigl(t_{\mathrm g}(\mu)\bigr)=\mu.
\]
In view of the return relation above, this corresponds to the initial value \(y_0=0\).

\begin{definition}\label{def:grazing_time}
\PN The quantity \(t_{\mathrm g}(\mu)\) characterized above is called the \emph{grazing time}.
\end{definition}

\PN The following lemma identifies \(t_{\mathrm g}(\mu)\) as the endpoint of the planar first-return times and establishes uniqueness and transversality of the return.

\begin{lemma}\label{lem:first_return}
\PN For \(Z\in\CLzero\) and \(y_0\geq0\), consider the solution \((y(t),z(t))\) of \eqref{eq:planar_subsystem} issued from \((y_0,0)\). Its second component has a unique zero \(t^X(y_0)\in(\pi,t_{\mathrm g}(\mu)]\), with \(t^X(y_0)=t_{\mathrm g}(\mu)\) if and only if \(y_0=0\). Moreover, \(z(t)>0\) for \(0<t<t^X(y_0)\), and \(\dot z(t^X(y_0))<0\).
\end{lemma}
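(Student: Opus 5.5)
The plan is to work directly with the explicit solution \eqref{eq:z_planar_solution}, written as
\[
z(t;y_0)=\zeta_\mu\bigl[1-e^{\mu t}(\cos t-\mu\sin t)\bigr]+y_0e^{\mu t}\sin t
=e^{\mu t}\sin t\,\bigl[y_0-\zeta_\mu(\Phi_\mu(t)-\mu)\bigr],
\]
where the factorization holds for \(\sin t\neq0\); one checks that \(\zeta_\mu e^{\mu t}[e^{-\mu t}-\cos t+\mu\sin t]=-\zeta_\mu e^{\mu t}\sin t\,(\Phi_\mu(t)-\mu)\). From this I will locate the zeros of \(z\) on three ranges of \(t\) separately.

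\emph{Range \(0<t\le\pi\).} Here I use the derivative \(\dot z=y\). Differentiating gives
\[
\dot z(t)=e^{\mu t}\bigl[\zeta_\mu(1+\mu^2)\sin t+y_0(\mu\sin t+\cos t)\bigr]=e^{\mu t}\bigl[\sin t+y_0(\mu\sin t+\cos t)\bigr].
\]
A cleaner route is to note directly that
\[
z(t)=\zeta_\mu\bigl[1-e^{\mu t}\cos t\bigr]+e^{\mu t}\sin t\,(\mu\zeta_\mu+y_0).
\]
On \((0,\pi/2]\) the second term is positive. The first term is positive because \(e^{\mu t}\cos t<1\) there; this follows since \((e^{\mu t}\cos t)'=e^{\mu t}(\mu\cos t-\sin t)\) and \(g(t)=\mu\cos t-\sin t\) makes \(e^{\mu t}\cos t\) increase and then decrease, but the bound needs care. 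Instead, I would write \(1-e^{\mu t}(\cos t-\mu\sin t)\) and note that \(h(t)=e^{\mu t}(\cos t-\mu\sin t)\) has \(h(0)=1\) and \(h'(t)=-(1+\mu^2)e^{\mu t}\sin t<0\) on \((0,\pi)\). Hence \(1-h>0\) on \((0,\pi]\), while \(y_0e^{\mu t}\sin t\ge0\). This gives \(z>0\) on \((0,\pi]\), with \(z(\pi)=\zeta_\mu(1+e^{\mu\pi})>0\).

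\emph{Range \(\pi<t<2\pi\).} Here \(e^{\mu t}\sin t<0\), so \(z(t)=0\) if and only if \(\Phi_\mu(t)=\mu+y_0/\zeta_\mu\), and \(z>0\) if and only if \(\Phi_\mu(t)>\mu+y_0/\zeta_\mu\). By Lemma~\ref{lem:Phi_properties}, \(\Phi_\mu\) is a strictly decreasing bijection onto \(\mathbb R\). There is therefore a unique root \(t^X(y_0)\), with \(z>0\) before it and \(z<0\) after it (up to \(2\pi\)). Since \(y_0\ge0\) and \(\Phi_\mu(t_{\mathrm g})=\mu\), monotonicity gives \(t^X(y_0)\le t_{\mathrm g}(\mu)\), with equality exactly when \(y_0=0\). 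This also shows that \(z\) has no zero in \((0,t^X)\), so \(t^X\) is the first zero.

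\emph{Transversality.} At \(t^X\), differentiating the factorized form gives
\[
\dot z(t^X)=-\zeta_\mu e^{\mu t}\sin t\,\Phi_\mu'(t)\big|_{t^X}.
\]
Here \(\sin t^X<0\) and \(\Phi_\mu'<0\), so \(\dot z(t^X)<0\).

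\emph{Main obstacle.} The only delicate point is "unique zero in \((\pi,t_{\mathrm g}]\)". This is read as the first zero, which is handled by the positivity shown on \((0,\pi]\). Uniqueness on all of \((\pi,t_{\mathrm g}]\) comes from the bijectivity of \(\Phi_\mu\).
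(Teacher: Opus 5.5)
Your proposal is correct and follows essentially the same route as the paper. On \((\pi,2\pi)\) you factor \(z(t;y_0)=e^{\mu t}\sin t\,[y_0-\zeta_\mu(\Phi_\mu(t)-\mu)]\) and apply Lemma~\ref{lem:Phi_properties}; on \((0,\pi]\) you get positivity from the strictly decreasing function \(e^{\mu t}(\cos t-\mu\sin t)\); and you obtain transversality by differentiating the factorized form at \(t^X\). In a write-up, drop the abandoned detours in the \((0,\pi]\) step: the \(\dot z\) computation and the auxiliary function you call \(g\), whose name also clashes with the chord map. The argument via \(h\) already suffices.
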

\begin{proof}
\PN For \(t\in(\pi,2\pi)\), equation~\eqref{eq:z_planar_solution} can be written as
\[
z(t;y_0)
=
-\zeta_{\mu} e^{\mu t}\sin t
\left[
\Phi_{\mu}(t)-\mu-(1+\mu^2)y_0
\right].
\]
Hence \(z(t;y_0)=0\) if and only if \(\Phi_{\mu}(t)=\mu+(1+\mu^2)y_0\). Lemma~\ref{lem:Phi_properties} gives a unique solution \(t^X(y_0)\in(\pi,2\pi)\). Since the right-hand side is at least \(\mu\), strict monotonicity of \(\Phi_{\mu}\) yields \(t^X(y_0)\leq t_{\mathrm g}(\mu)\), with equality only when \(y_0=0\).

\PN For \(0<t\leq\pi\), equation~\eqref{eq:z_planar_solution} and \(y_0\geq0\) give
\[
z(t;y_0)
\geq
\zeta_{\mu}
\left[
1-e^{\mu t}(\cos t-\mu\sin t)
\right].
\]
The function \(e^{\mu t}(\cos t-\mu\sin t)\) decreases strictly from \(1\) on \((0,\pi)\), since its derivative is \(-(1+\mu^2)e^{\mu t}\sin t<0\). Thus \(z(t;y_0)>0\) on \((0,\pi]\). For \(t\in(\pi,t^X(y_0))\), Lemma~\ref{lem:Phi_properties} gives \(\Phi_{\mu}(t)>\Phi_{\mu}(t^X(y_0))\), and the preceding representation of \(z\) again gives \(z(t;y_0)>0\). Finally,
\[
\dot z\bigl(t^X(y_0)\bigr)
=
-\zeta_{\mu} e^{\mu t^X(y_0)}
\sin\bigl(t^X(y_0)\bigr)
\Phi_{\mu}'\bigl(t^X(y_0)\bigr)
<0.
\]
\end{proof}

\PN The unique return established in Lemma~\ref{lem:first_return} defines the planar chord map and the corresponding \(X\)-return to \(\Sigma\).

\begin{definition}\label{def:chord_map}
\PN Let \(Z\in\CLzero\). For \(y_0\geq0\), let \(t^{X}(y_0)\) denote the first-return time given by Lemma~\ref{lem:first_return}. The \emph{planar chord map} is
\[
g(y_0):=y\bigl(t^{X}(y_0);y_0\bigr),
\]
the \(y\)-coordinate at which the arc of \(X\) issued from \((y_0,0)\) lands back on \(\{z=0\}\). For \(p=(x_0,y_0,0)\in\Sigma\) with \(y_0\geq0\), the \(X\)-half-return map is defined by
\[
\Pi_X(p):=\varphi^X\bigl(t^X(y_0);p\bigr).
\]
\end{definition}

\PN The endpoint \(y_0=0\) of the planar chord map corresponds to the grazing return shown below. By Lemma~\ref{lem:first_return}, its flight time is \(t_{\mathrm g}(\mu)\) and \(g(0)<0\). Define
\begin{equation}\label{eq:x_g_definition}
	x_{\mathrm g}(\mu):=-g(0)>0.
\end{equation}
The corresponding grazing threshold is \(H_{\mathrm g}(\mu)\), as defined in Theorem~\ref{teo:main1}.

\begin{lemma}\label{lem:grazing_data}
	\PN The maps \(t^X\) and \(g\) are real-analytic for \(y_0>0\) and extend real-analytically across \(y_0=0\). Moreover, \(H_{\mathrm g}(\mu)=\zeta_{\mu}/Q_{\mu}(-x_{\mathrm g}(\mu))\).
\end{lemma}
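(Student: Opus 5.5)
The plan is to obtain analyticity from the implicit return relation already derived in the proof of Lemma~\ref{lem:first_return}, and to obtain the identity for \(H_{\mathrm g}(\mu)\) from the Darboux factor \(f_1\) of Proposition~\ref{prop:zero_divergence_first_integrals}.

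\emph{Analyticity.} The proof of Lemma~\ref{lem:first_return} shows that, for \(t\in(\pi,2\pi)\), the return condition \(z(t;y_0)=0\) is equivalent to
\[
\Phi_{\mu}(t)=\mu+(1+\mu^2)y_0 .
\]
The function \(\Phi_{\mu}\) is real-analytic on \((\pi,2\pi)\), and Lemma~\ref{lem:Phi_properties} gives \(\Phi_{\mu}'<0\) there. Its range is all of \(\mathbb R\), so \(\Phi_\mu\) is an analytic diffeomorphism of \((\pi,2\pi)\) onto \(\mathbb R\), with analytic inverse \(\Phi_{\mu}^{-1}\) by the analytic inverse function theorem. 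Hence
\[
t^X(y_0)=\Phi_{\mu}^{-1}\bigl(\mu+(1+\mu^2)y_0\bigr).
\]
This formula is real-analytic for every real \(y_0\). In particular it is analytic on a neighbourhood of \(y_0=0\), which furnishes the extension across \(y_0=0\), and it agrees with the first-return time of Lemma~\ref{lem:first_return} for \(y_0\ge0\).

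Equivalently, one can apply the analytic implicit function theorem to the entire function \((t,y_0)\mapsto z(t;y_0)\) of \eqref{eq:z_planar_solution}. The required transversality is \(\dot z(t^X(y_0))<0\) from Lemma~\ref{lem:first_return}, which holds also at \(y_0=0\).

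For the chord map, \(g(y_0)=\dot z\bigl(t^X(y_0);y_0\bigr)\). The function \(\dot z(t;y_0)\) is entire in both variables by \eqref{eq:z_planar_solution}. So \(g\) is a composition of analytic maps and inherits the analyticity and the extension.

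\emph{Identity for \(H_{\mathrm g}(\mu)\).} The function \(f_1\) depends only on \((y,z)\). It therefore satisfies \(\dot f_1=2\mu f_1\) along the planar subsystem \eqref{eq:planar_subsystem} itself, so that
\[
f_1\bigl(\text{solution at time }t\bigr)=e^{2\mu t}f_1\bigl(\text{initial point}\bigr).
\]
On \(\Sigma\), Proposition~\ref{prop:zero_divergence_first_integrals} gives \(f_1(\cdot,y,0)=Q_{\mu}(y)\).

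Apply this to the grazing flight, which starts at \((y,z)=(0,0)\) and lands at \((g(0),0)\) after time \(t_{\mathrm g}(\mu)\). This gives
\[
Q_{\mu}\bigl(g(0)\bigr)=e^{2\mu t_{\mathrm g}(\mu)}Q_{\mu}(0)=\zeta_{\mu}e^{2\mu t_{\mathrm g}(\mu)}.
\]
Since \(g(0)=-x_{\mathrm g}(\mu)\) and \(Q_{\mu}>0\), dividing yields
\[
\frac{\zeta_{\mu}}{Q_{\mu}(-x_{\mathrm g}(\mu))}=e^{-2\mu t_{\mathrm g}(\mu)}=H_{\mathrm g}(\mu),
\]
using \eqref{eq:H_g_definition}.

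As a consistency check, differentiating \eqref{eq:z_planar_solution} at \(y_0=0\) gives \(g(0)=e^{\mu t_{\mathrm g}}\sin t_{\mathrm g}<0\). This confirms \(x_{\mathrm g}>0\).

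\emph{Expected obstacle.} The only delicate point is stating precisely what "extends across \(y_0=0\)" means. The plan handles it by defining the extension through the global inverse \(\Phi_{\mu}^{-1}\), and noting that for small \(y_0<0\) the value \(t^X(y_0)\) is still a transversal zero of \(z\) near \(t_{\mathrm g}\). No first-return claim for negative \(y_0\) is needed or made.
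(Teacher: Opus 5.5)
Your proof is correct and follows the paper's route. The identity \(H_{\mathrm g}(\mu)=\zeta_{\mu}/Q_{\mu}(-x_{\mathrm g}(\mu))\) comes from the Darboux factor \(f_1\) along the grazing flight exactly as in the paper, and analyticity comes from the return relation with a nonvanishing transversality condition. Your global analytic inverse \(t^X(y_0)=\Phi_{\mu}^{-1}\bigl(\mu+(1+\mu^2)y_0\bigr)\) is a packaged form of the paper's implicit-function-theorem argument, with \(\Phi_{\mu}'<0\) playing the role of \(\dot z<0\); its small advantage is that it gives the extension on all of \(\mathbb R\) at once, without appealing to local uniqueness of the return time.
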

\begin{proof}
\PN Lemma~\ref{lem:first_return} gives \(\dot z(t^X(y_0);y_0)<0\) for \(y_0\geq0\). Since \(z(t;y_0)\) is real-analytic, the real-analytic implicit function theorem \cite{Teschl2012,KrantzParks2002} and uniqueness of the first-return time imply that \(t^X\), and hence \(g(y_0)=y(t^X(y_0);y_0)\), extend real-analytically across \(y_0=0\). At \(y_0=0\), one has \(t^X(0)=t_{\mathrm g}(\mu)\) and \(g(0)=-x_{\mathrm g}(\mu)\). Since \(f_1(x,y,0)=Q_{\mu}(y)\) and \(X(f_1)=2\mu f_1\), Proposition~\ref{prop:zero_divergence_first_integrals} gives
	\[
	Q_{\mu}(-x_{\mathrm g})=e^{2\mu t_{\mathrm g}}\zeta_{\mu},
	\qquad
	\frac{\zeta_{\mu}}{Q_{\mu}(-x_{\mathrm g})}=e^{-2\mu t_{\mathrm g}}=H_{\mathrm g}(\mu).
	\]
\end{proof}

\PN Figure~\ref{fig:first_return_geometry} illustrates the planar first-return construction and the corresponding three-dimensional \(X\)-flight.

\begin{figure}[H]	
	\centering
	\begin{subfigure}[t]{0.47\textwidth}
		\centering
		\includegraphics[width=\linewidth]{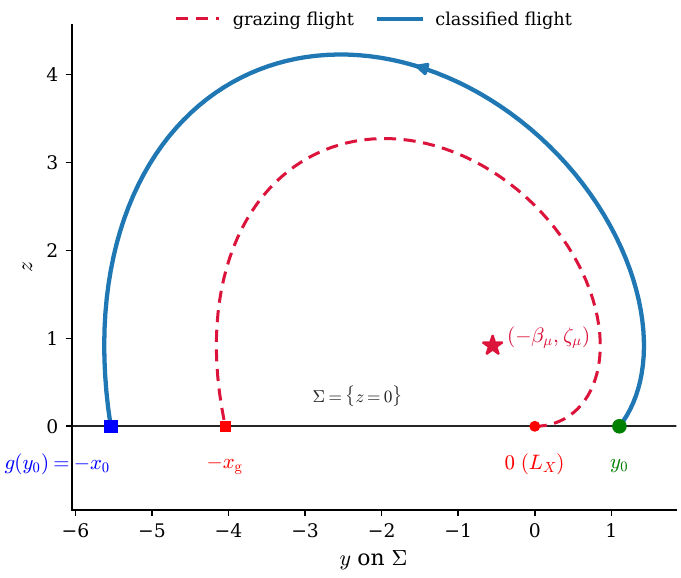}
		\caption{Planar first-return geometry in the auxiliary coordinate \(\widetilde y=y-2\mu z\), with the chord \(y_0\mapsto g(y_0)\), the grazing return from \(y_0=0\), and the unstable focus \((-\beta_{\mu},\zeta_{\mu})\).}
		\label{fig:first_return_planar}
	\end{subfigure}
	\hfill
	\begin{subfigure}[t]{0.50\textwidth}
		\centering
		\includegraphics[width=\linewidth]{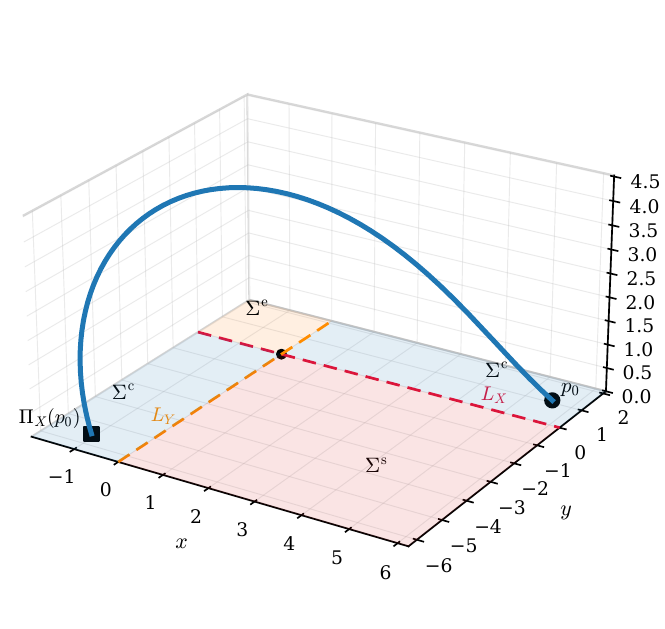}
		\caption{Three-dimensional \(X\)-flight in the sidewise auxiliary coordinates from \(p_0\) to its first return \(\Pi_X(p_0)\) on \(\Sigma\). The auxiliary and canonical coordinates agree at both endpoints.}
		\label{fig:first_return_3d}
	\end{subfigure}
	\caption{Geometry of the first \(X\)-return for \(\mu=0.3\), \(t=3.9007\), and \(H\approx0.2903\).}
	\label{fig:first_return_geometry}
\end{figure}

\subsection{The reduced map}
\label{subsec:halfmap}
\PN The chord map of Definition~\ref{def:chord_map} allows the triangular structure of \(\Pi_X\) to be written without retaining the flight time explicitly. The two Darboux factors along an \(X\)-arc relate the initial and return crossing coordinates directly, leading to the following formula for the half-return map.

\begin{lemma}\label{lem:PX_closed_form}
\PN Let \(Z\in\CLzero\). If an \(X\)-arc with flight time \(t\) joins
\[
(x_0,y_0,0)
\quad\text{to}\quad
(x_1,y_1,0)
\]
on \(\Sigma\), then
\begin{equation}\label{eq:factor_relations}
Q_{\mu}(y_1)=e^{2\mu t}Q_{\mu}(y_0),
\qquad
x_1-Hy_1=e^{-2\mu t}(x_0-Hy_0).
\end{equation}
Consequently, for the first \(X\)-return issued from \((x_0,y_0,0)\) with \(y_0\geq0\),
\begin{equation}\label{eq:PX_closed_form}
\Pi_X(x_0,y_0)
=
\left(
H\,g(y_0)
+
\frac{Q_{\mu}(y_0)}{Q_{\mu}(g(y_0))}
(x_0-Hy_0),
\;
g(y_0)
\right).
\end{equation}
\end{lemma}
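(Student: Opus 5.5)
The plan is to integrate the two Darboux factors of Proposition~\ref{prop:zero_divergence_first_integrals} along the \(X\)-arc and then restrict to \(\Sigma\). Since \(X(f_1)=2\mu f_1\), the function \(s\mapsto f_1(\varphi_X(s,p_0))\) satisfies the scalar linear equation \(\frac{d}{ds}f_1=2\mu f_1\). Its solution is \(f_1(\varphi_X(s,p_0))=e^{2\mu s}f_1(p_0)\). Likewise, \(X(f_2)=-2\mu f_2\) gives \(f_2(\varphi_X(s,p_0))=e^{-2\mu s}f_2(p_0)\). Equivalently, one can read these identities off the spectral coordinates~\eqref{eq:spectral_coordinates}: \(\xi\) decays like \(e^{-2\mu s}\) and \(v^2+w^2\) grows like \(e^{2\mu s}\).

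Next I evaluate at \(s=0\) and \(s=t\), where both endpoints lie in \(\{z=0\}\). There the restrictions computed in Proposition~\ref{prop:zero_divergence_first_integrals} read \(f_1(x,y,0)=Q_\mu(y)\) and \(f_2(x,y,0)=x-Hy\). Substituting gives the two relations~\eqref{eq:factor_relations}. Nothing here uses the sign of \(z\) along the arc; the identities hold for the linear flow of \(X\) on all of \(\mathbb R^3\).

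For~\eqref{eq:PX_closed_form}, take the first return from \((x_0,y_0,0)\) with \(y_0\ge0\). Lemma~\ref{lem:first_return} and Definition~\ref{def:chord_map} give \(t=t^X(y_0)\) and \(y_1=g(y_0)\). The first relation in~\eqref{eq:factor_relations} then gives \(e^{2\mu t}=Q_\mu(g(y_0))/Q_\mu(y_0)\). This division is legitimate because \(Q_\mu>0\) on \(\mathbb R\), by the discriminant computation following~\eqref{eq:Q_mu}. Hence \(e^{-2\mu t}=Q_\mu(y_0)/Q_\mu(g(y_0))\). Substituting into the second relation and solving for \(x_1\) yields \(x_1=Hg(y_0)+\frac{Q_\mu(y_0)}{Q_\mu(g(y_0))}(x_0-Hy_0)\), which is the stated formula.

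I expect no real obstacle. The only points needing care are the positivity of \(Q_\mu\), which justifies the division, and the identification of the flight time with \(t^X(y_0)\), so that \(y_1\) is indeed \(g(y_0)\). The first point is the remark after~\eqref{eq:Q_mu}; the second comes from Lemma~\ref{lem:first_return}.
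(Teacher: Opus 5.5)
Your proposal is correct and matches the paper's proof: you integrate the cofactor identities \(X(f_1)=2\mu f_1\) and \(X(f_2)=-2\mu f_2\) along the arc and evaluate the restrictions \(Q_\mu(y)\) and \(x-Hy\) at the two endpoints on \(\Sigma\). You then use \(Q_\mu>0\) and \(y_1=g(y_0)\) at the first return to solve for \(x_1\), exactly as the paper does.
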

\begin{proof}
\PN On \(\Sigma\), the Darboux factors of Proposition~\ref{prop:zero_divergence_first_integrals} satisfy
\[
f_1(x,y,0)=Q_{\mu}(y),
\qquad
f_2(x,y,0)=x-Hy.
\]
Their cofactors are \(2\mu\) and \(-2\mu\), respectively, and therefore along an \(X\)-arc
\[
f_1(t)=e^{2\mu t}f_1(0),
\qquad
f_2(t)=e^{-2\mu t}f_2(0).
\]
Evaluating at the endpoints gives \eqref{eq:factor_relations}. Since \(Q_{\mu}>0\), the two identities yield
\[
x_1
=
Hy_1+
\frac{Q_{\mu}(y_0)}{Q_{\mu}(y_1)}
(x_0-Hy_0).
\]
For the first return, \(y_1=g(y_0)\), which gives \eqref{eq:PX_closed_form}.
\end{proof}

\PN A symmetric closure requires the endpoint of the \(X\)-return to be the image under \(S\) of its initial crossing point. The reduced map \(F=S\circ\Pi_X\), defined in~\eqref{eq:reduced_map}, encodes that condition. Its iterates must start in the crossing component on which successive half-returns are defined, which fixes the natural domain below.

\begin{definition}\label{def:reduced_map_domain}
\PN Define the upward crossing domain
\[
\mathcal D:=\{(x,y,0)\in\Sigma\mid x>0,\ y>0\}.
\]
On \(\mathcal D\), consider the reduced map \(F\) defined in~\eqref{eq:reduced_map}, where \(\Pi_X\) is the first \(X\)-return introduced in Definition~\ref{def:chord_map}.
\end{definition}

\PN Set
\begin{equation}\label{eq:E_definition}
\mathcal E(y):=
\frac{Q_{\mu}(y)}{Q_{\mu}(g(y))}
=
e^{-2\mu t^X(y)}.
\end{equation}
By \eqref{eq:PX_closed_form},
\begin{equation}\label{eq:F_explicit}
F(x,y)
=
\bigl(
-g(y),
-Hg(y)-\mathcal E(y)(x-Hy)
\bigr).
\end{equation}
\PN At a symmetric crossing closure, the second intersection lies in the interior of the opposite crossing component. Thus the reduced map is locally iterable, and equivariance identifies its second iterate with the full return.

\begin{proposition}\label{prop:F_square}
\PN Let \(Z\in\CLzero\), and let \(p_\ast=(x_0,y_0,0)\in\mathcal D\) be the upward crossing point of a symmetric periodic orbit \(\gamma\) of simple period. Then \(F(p_\ast)=p_\ast\), and there exists a neighborhood \(U\subset\mathcal D\) of \(p_\ast\) such that \(F(U)\subset\mathcal D\). On this neighborhood,
\[
\Pi(p)=(F\circ F)(p).
\]
Consequently,
\[
D\Pi(p_\ast)=\bigl(DF(p_\ast)\bigr)^2.
\]
\end{proposition}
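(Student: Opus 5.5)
The plan is to deduce the fixed-point property from Proposition~\ref{prop:symmetric_closure}, then obtain the neighborhood from continuity of the explicit formula \eqref{eq:F_explicit}, and finally identify the full return with $F\circ F$ through equivariance.

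\emph{Fixed point.} Since $p_\ast\in\mathcal D$, we have $y_0>0$, so $Xf(p_\ast)=y_0>0$ and the orbit leaves $p_\ast$ into $\Sigma^+$. By Lemma~\ref{lem:first_return} the $X$-arc first meets $\Sigma$ after time $t^X(y_0)$, at the point $\Pi_X(p_\ast)$. Because $\gamma\cap\Sigma=\{p_0,p_1\}$, this point is the second intersection $p_1$. Proposition~\ref{prop:symmetric_closure} then gives $p_1=S(p_\ast)$. Since $S^2=I$, this yields $F(p_\ast)=S(\Pi_X(p_\ast))=p_\ast$.

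\emph{Neighborhood.} On $\{y>0\}$ the map $F$ is given by \eqref{eq:F_explicit}. Here $g$ and $t^X$ are real-analytic by Lemma~\ref{lem:grazing_data}, and $\mathcal E=e^{-2\mu t^X}$. Hence $F$ is continuous, indeed analytic, on an open set containing $\mathcal D$. Since $F(p_\ast)=p_\ast$ lies in the open set $\mathcal D$, continuity gives a neighborhood $U\subset\mathcal D$ of $p_\ast$ with $F(U)\subset\mathcal D$.

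\emph{Identification $\Pi=F\circ F$.} Take $p\in U$. Its orbit leaves $p$ upward and reaches $\Pi_X(p)=S(F(p))$. Because $F(p)\in\mathcal D$, the point $S(F(p))=(-y',-x',0)$, with $x',y'>0$, satisfies $Xf=-x'<0$ and $Yf=-y'<0$. It therefore lies in $\Sigma^{\rm c}$, and the trajectory continues into $\Sigma^-$ under $Y$. By equivariance $Y=S\circ X\circ S$, the $Y$-arc from $S(q)$ is the $S$-image of the $X$-arc from $q$, where $q=F(p)$. So the first $Y$-return from $S(q)$ is $S(\Pi_X(q))=F(q)$. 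Composing the two half-returns, the full first return $\Pi$ to $\mathcal D$ satisfies $\Pi(p)=F(F(p))$. Differentiating at the fixed point with the chain rule gives $D\Pi(p_\ast)=DF(F(p_\ast))\,DF(p_\ast)=(DF(p_\ast))^2$.

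The main point requiring care is the equivariance step. We must check that the image under $S$ of the first $X$-return is the first $Y$-return, so that no earlier intersection with $\Sigma$ occurs. This holds because $S$ maps $\Sigma^+$ onto $\Sigma^-$ bijectively and preserves $\Sigma$, so first-hitting times are preserved. We must also ensure the landing point is a genuine crossing point and not a tangency; this is guaranteed because $F(U)$ lies in the open set $\mathcal D$.
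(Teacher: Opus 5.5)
Your proposal is correct and follows essentially the same route as the paper. The fixed point comes from Proposition~\ref{prop:symmetric_closure}, the neighborhood comes from continuity (you apply it to $F$ via \eqref{eq:F_explicit}, while the paper applies it to $\Pi_X$ landing in the downward crossing component and then uses $S$), and $\Pi=F\circ F$ follows from the equivariance identity $\Pi_Y=S\circ\Pi_X\circ S$ and the chain rule. Your extra checks, that $\Pi_X(p_\ast)$ is the second intersection point and that the intermediate landing $S(F(p))$ lies in $\Sigma^{\rm c}$, only make explicit what the paper leaves implicit.
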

\begin{proof}
\PN Proposition~\ref{prop:symmetric_closure} gives
\[
\Pi_X(p_\ast)=S(p_\ast)=(-y_0,-x_0,0).
\]
Since \(x_0,y_0>0\), this point lies in the interior of the downward crossing component. Continuity of \(\Pi_X\) gives a neighborhood \(U\subset\mathcal D\) whose image remains in that component. The involution maps this component onto \(\mathcal D\), so \(F(U)\subset\mathcal D\) and \(F(p_\ast)=p_\ast\).
Equivariance gives
\[
\Pi_Y=S\circ\Pi_X\circ S
\]
on the corresponding crossing neighborhoods. Hence, on \(U\),
\[
\Pi_Y\circ\Pi_X
=
S\circ\Pi_X\circ S\circ\Pi_X
=
F\circ F.
\]
Therefore
\[
D\Pi(p_\ast)
=
DF\bigl(F(p_\ast)\bigr)\,DF(p_\ast)
=
\bigl(DF(p_\ast)\bigr)^2.
\]
\end{proof}

\PN The fixed-point equation for \(F\) splits into the planar chord condition and the first-integral matching condition. The next lemma makes this equivalence precise.

\begin{lemma}\label{lem:closure_identity}
	\PN For \(Z\in\CLzero\) and \(p_0=(x_0,y_0,0)\in\mathcal D\),
	\[
	F(p_0)=p_0
	\quad\Longleftrightarrow\quad
	g(y_0)=-x_0
	\quad\text{and}\quad
	P_X(p_0)=P_X(S(p_0)).
	\]
\end{lemma}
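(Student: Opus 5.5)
The plan is to read the fixed-point equation componentwise from the explicit formula \eqref{eq:F_explicit}, and then to recast the second component through the factor relations \eqref{eq:factor_relations}. Write \(p_0=(x_0,y_0,0)\in\mathcal D\). By \eqref{eq:F_explicit}, \(F(p_0)=p_0\) is equivalent to the pair
\[
-g(y_0)=x_0,\qquad -Hg(y_0)-\mathcal E(y_0)(x_0-Hy_0)=y_0 .
\]
The first equation is exactly the chord condition \(g(y_0)=-x_0\). It then remains to show that, once \(g(y_0)=-x_0\) holds, the second equation is equivalent to \(P_X(p_0)=P_X(S(p_0))\).

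Assume \(g(y_0)=-x_0\). Substituting into the second equation and using \(\mathcal E(y_0)=Q_\mu(y_0)/Q_\mu(g(y_0))=Q_\mu(y_0)/Q_\mu(-x_0)\) from \eqref{eq:E_definition}, the equation becomes
\[
Hx_0-y_0=\frac{Q_\mu(y_0)}{Q_\mu(-x_0)}(x_0-Hy_0).
\]
Since \(Q_\mu>0\), this is equivalent to
\[
Q_\mu(-x_0)(Hx_0-y_0)=Q_\mu(y_0)(x_0-Hy_0).
\]
By Proposition~\ref{prop:zero_divergence_first_integrals}, the right-hand side equals \(P_X(x_0,y_0,0)\). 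Evaluating the same restriction at \(S(p_0)=(-y_0,-x_0,0)\) gives
\[
P_X(S(p_0))=Q_\mu(-x_0)\bigl(-y_0+Hx_0\bigr),
\]
which is the left-hand side. Hence, under the chord condition, the second component equation is precisely \(P_X(p_0)=P_X(S(p_0))\).

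For the converse direction, assume both \(g(y_0)=-x_0\) and \(P_X(p_0)=P_X(S(p_0))\). Running the same chain of equivalences backwards recovers both component equations, so \(F(p_0)=p_0\).

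The only delicate point is the domain of \(g\). Since \(p_0\in\mathcal D\), we have \(y_0>0\), so the first return \(t^X(y_0)\) and the chord map \(g\) of Definition~\ref{def:chord_map} are defined (Lemma~\ref{lem:first_return}), and formula \eqref{eq:F_explicit} applies. No further obstacle is expected: the argument is a direct algebraic rewriting, with the positivity of \(Q_\mu\) justifying the division.
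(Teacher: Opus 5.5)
Your proposal is correct and follows the paper's proof essentially step for step. You read the fixed-point equation componentwise from \eqref{eq:F_explicit}, substitute \(\mathcal E(y_0)=Q_\mu(y_0)/Q_\mu(-x_0)\) under the chord condition, clear the positive denominator, and identify both sides with the restriction of \(P_X\) to \(\Sigma\) at \(p_0\) and \(S(p_0)\).
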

\begin{proof}
	By \eqref{eq:F_explicit}, the equality \(F(p_0)=p_0\) is equivalent to \(g(y_0)=-x_0\) together with \(\mathcal E(y_0)(x_0-Hy_0)=Hx_0-y_0\). Under the first relation, \eqref{eq:E_definition} shows that the second condition is equivalent to
	\[
	Q_{\mu}(y_0)(x_0-Hy_0)=Q_{\mu}(-x_0)(Hx_0-y_0),
	\]
	which, by the restriction of \(P_X\) to \(\Sigma\), is \(P_X(p_0)=P_X(S(p_0))\). 
\end{proof}

\subsection{Half-period parametrization}
\label{subsec:symmetric_existence}

\PN Fix \(\mu>0\) throughout this subsection. Recall that \(t_{\mathrm g}(\mu)\in(\pi,2\pi)\) is the grazing time of Definition~\ref{def:grazing_time}. Lemma~\ref{lem:closure_identity} reduces symmetric closure to the chord relation and the first-integral matching condition. The flight time provides a natural parameter for the corresponding crossing data.

\begin{lemma}\label{lem:param_properties}
	\PN For \(t\in(\pi,t_{\mathrm g}(\mu))\), one has \(y_0(t)>0\) and \(x_0(t)-y_0(t)>\beta_{\mu}\). Moreover, \(y_0\) is strictly decreasing from \(+\infty\) to \(0\) on this interval, and \(\eta(t)\) is well defined with \(0<\eta(t)<1\).
\end{lemma}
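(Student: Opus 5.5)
The plan is to reduce every assertion to the function \(\Phi_{\mu}\) of \eqref{eq:Phi_definition} and Lemma~\ref{lem:Phi_properties}. Then \(\eta\) is handled by a direct algebraic comparison of its numerator and denominator.

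\emph{Step 1: \(y_0\).} By \eqref{eq:xy_of_t}, \(y_0(t)=\zeta_{\mu}\bigl(\Phi_{\mu}(t)-\mu\bigr)\). Lemma~\ref{lem:Phi_properties} says \(\Phi_{\mu}\) is strictly decreasing on \((\pi,2\pi)\) with \(\Phi_{\mu}(t)\to+\infty\) as \(t\downarrow\pi\). By the definition of the grazing time, \(\Phi_{\mu}(t_{\mathrm g}(\mu))=\mu\). Since \(\zeta_{\mu}>0\), it follows that \(y_0\) is strictly decreasing on \((\pi,t_{\mathrm g}(\mu))\), tends to \(+\infty\) as \(t\downarrow\pi\), and tends to \(0\) as \(t\uparrow t_{\mathrm g}(\mu)\). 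In particular \(y_0(t)>0\) on the open interval.

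\emph{Step 2: the difference \(x_0-y_0\).} Subtracting the two expressions in \eqref{eq:xy_of_t}, the \(\cos t\) terms cancel:
\[
x_0(t)-y_0(t)=\zeta_{\mu}\left(2\mu-\frac{e^{\mu t}-e^{-\mu t}}{\sin t}\right)
=\beta_{\mu}-\frac{2\zeta_{\mu}\sinh(\mu t)}{\sin t}.
\]
For \(t\in(\pi,2\pi)\) we have \(\sin t<0\) and \(\sinh(\mu t)>0\), so the last term is strictly positive. Hence \(x_0(t)-y_0(t)>\beta_{\mu}\). Combined with Step 1 this also gives \(x_0(t)>y_0(t)+\beta_{\mu}>0\).

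\emph{Step 3: \(\eta\).} Write \(N=x_0y_0+\zeta_{\mu}\) for the numerator and \(D\) for the denominator in \eqref{eq:H_of_t}. By Steps 1 and 2, \(x_0,y_0>0\), so \(N>0\). Expanding the difference gives
\[
D-N=x_0^2+y_0^2-2x_0y_0-\beta_{\mu}(x_0-y_0)=(x_0-y_0)\bigl(x_0-y_0-\beta_{\mu}\bigr).
\]
By Step 2 both factors are positive, since \(x_0-y_0>\beta_{\mu}>0\). Therefore \(D>N>0\). So \(\eta(t)=N/D\) is well defined and satisfies \(0<\eta(t)<1\).

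The only step needing any care is the factorization of \(D-N\). It is what ties the quadratic denominator to the inequality \(x_0-y_0>\beta_{\mu}\) from Step 2, which is exactly the sign information required. Everything else is immediate from Lemma~\ref{lem:Phi_properties} and the sign of \(\sin t\) on \((\pi,2\pi)\).
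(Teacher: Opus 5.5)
Your proposal is correct and follows the paper's proof essentially step by step: the same reduction of \(y_0\) to \(\Phi_{\mu}\) via Lemma~\ref{lem:Phi_properties}, the same sign argument for \(x_0-y_0>\beta_{\mu}\) (the cosine terms cancel, leaving the term with \(e^{\mu t}-e^{-\mu t}\)), and the same factorization of the difference between denominator and numerator as \((x_0-y_0)(x_0-y_0-\beta_{\mu})\). The only cosmetic difference is that the paper also writes the denominator as \(Q_{\mu}(-(x_0-y_0))+x_0y_0\) to see that it is positive, whereas you get positivity directly from the chain denominator \(>\) numerator \(>0\).
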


\begin{proof}
	\PN Since \(y_0(t)=\zeta_{\mu}(\Phi_{\mu}(t)-\mu)\), Lemma~\ref{lem:Phi_properties} and \(\Phi_{\mu}(t_{\mathrm g}(\mu))=\mu\) show that \(y_0\) is strictly decreasing from \(+\infty\) to \(0\) on \((\pi,t_{\mathrm g}(\mu))\). In particular, \(y_0(t)>0\). Moreover, \(\sin t<0\), \(e^{\mu t}-\cos t>0\), and \(e^{\mu t}-e^{-\mu t}>0\), so \eqref{eq:xy_of_t} gives \(x_0(t)-y_0(t)>2\mu\zeta_{\mu}=\beta_{\mu}\).
	
	\PN Finally, the denominator of \eqref{eq:H_of_t} can be written as \(Q_{\mu}(-(x_0-y_0))+x_0y_0>0\). Its numerator \(x_0y_0+\zeta_{\mu}\) is positive, while the difference between denominator and numerator is \((x_0-y_0)(x_0-y_0-\beta_{\mu})>0\). Hence \(0<\eta(t)<1\).
\end{proof}

\PN The preceding properties place the half-period data in the crossing domain and make the closure condition well defined. The next proposition shows that, for \(H=\eta(t)\), these data satisfy the reduced fixed-point condition and therefore generate a symmetric crossing periodic orbit.

\begin{proposition}\label{prop:symmetric_family}
	\PN Consider \(Z\in\CLzero\). Let \(\mu>0\) and \(t\in(\pi,t_{\mathrm g}(\mu))\) with \(H=\eta(t)\). Then \(p_0(t)\in\mathcal D\) is a fixed point of \(F\), and the trajectory through \(p_0(t)\) is a symmetric crossing periodic orbit of simple period, with crossing points \(p_0(t)\) and \(S(p_0(t))\) and period \(2t\).
\end{proposition}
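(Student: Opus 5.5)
The plan is to verify the two conditions of Lemma~\ref{lem:closure_identity} at \(p_0(t)=(x_0(t),y_0(t),0)\), and then lift the fixed point of \(F\) to a full periodic orbit using equivariance.

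\emph{Membership in \(\mathcal D\).} Lemma~\ref{lem:param_properties} gives \(y_0(t)>0\) and \(x_0(t)>y_0(t)+\beta_\mu>0\), so \(p_0(t)\in\mathcal D\).

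\emph{Chord condition.} By construction \(y_0(t)=\zeta_\mu(\Phi_\mu(t)-\mu)\). This is exactly the return equation \(z(t;y_0)=0\) obtained from \eqref{eq:z_planar_solution}. By Lemma~\ref{lem:first_return}, the zero of \(z(\cdot;y_0)\) in \((\pi,t_{\mathrm g}(\mu)]\) is unique and \(z>0\) before it, so \(t^X(y_0(t))=t\). We then compute \(g(y_0)=\dot z(t;y_0)\) by differentiating \eqref{eq:z_planar_solution}:
\[
\dot z=\zeta_\mu(1+\mu^2)e^{\mu t}\sin t+y_0e^{\mu t}(\mu\sin t+\cos t).
\]
Substitute \(y_0\), multiply out, and use \(\zeta_\mu(1+\mu^2)=1\) together with \(\sin^2+\cos^2=1\). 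The goal is to reduce this to \(-x_0(t)=\zeta_\mu\bigl((e^{\mu t}-\cos t)/\sin t-\mu\bigr)\). The \(e^{\mu t}\) terms should collapse to \(e^{\mu t}/\sin t\), and the \(e^{-\mu t}\) factor should cancel against \(e^{\mu t}\). This computation is routine, but it is the one place where care with signs is needed. It gives \(g(y_0(t))=-x_0(t)\).

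\emph{Matching condition.} By Proposition~\ref{prop:zero_divergence_first_integrals}, \(P_X(p_0)=Q_\mu(y_0)(x_0-Hy_0)\) and \(P_X(S p_0)=Q_\mu(-x_0)(Hx_0-y_0)\). Equating the two gives an equation linear in \(H\):
\[
H\bigl(Q_\mu(-x_0)x_0+Q_\mu(y_0)y_0\bigr)=Q_\mu(y_0)x_0+Q_\mu(-x_0)y_0 .
\]
Expand \(Q_\mu\) and factor out \(x_0+y_0>0\). The right-hand side becomes \((x_0+y_0)(x_0y_0+\zeta_\mu)\). The left coefficient becomes \((x_0+y_0)\bigl(x_0^2+y_0^2-x_0y_0-\beta_\mu(x_0-y_0)+\zeta_\mu\bigr)\). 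The solution is therefore exactly \(H=\eta(t)\), and the denominator is positive by Lemma~\ref{lem:param_properties}. Lemma~\ref{lem:closure_identity} then yields \(F(p_0)=p_0\), i.e.\ \(\Pi_X(p_0)=S(p_0)=(-y_0,-x_0,0)\).

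\emph{Periodic orbit.} The \(X\)-arc has \(z>0\) on \((0,t)\) by Lemma~\ref{lem:first_return}. It starts at a point with \(Xf=y_0>0\) and \(Yf=x_0>0\), so it lies in \(\Sigma^{\rm c}\) and leaves \(\Sigma\) upward. It lands at \(S(p_0)\), where \(Xf=-x_0<0\) and \(Yf=-y_0<0\), again a crossing point. Applying \(S\) and equivariance, the \(Y\)-arc issued from \(S(p_0)\) is the \(S\)-image of the \(X\)-arc from \(p_0\). It stays in \(z<0\) for time \(t\) and returns to \(S(S p_0)=p_0\). Concatenating the two arcs gives a closed crossing orbit meeting \(\Sigma\) exactly at \(\{p_0,S p_0\}\), and these two points are distinct since \(x_0\neq y_0\). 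The period is \(2t\), which is minimal because each arc meets \(\Sigma\) only at its endpoints. Symmetry follows from Proposition~\ref{prop:symmetric_closure}. The main obstacle I anticipate is the trigonometric simplification showing \(g(y_0(t))=-x_0(t)\).
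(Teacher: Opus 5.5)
Your proposal is correct and follows the paper's proof step for step. It obtains membership in \(\mathcal D\) from Lemma~\ref{lem:param_properties}, identifies \(t\) as the first-return time through the planar return equation and Lemma~\ref{lem:first_return}, proves the chord identity \(g(y_0(t))=-x_0(t)\), solves the matching condition for \(H=\eta(t)\), applies Lemma~\ref{lem:closure_identity}, and closes with equivariance and Proposition~\ref{prop:symmetric_closure}. You also make explicit what the paper leaves implicit: the trigonometric reduction of \(\dot z\), the common factor \(x_0+y_0\) in the matching equation, and the minimality of the period \(2t\). All three check out.
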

\begin{proof}
\PN Lemma~\ref{lem:param_properties} gives \(y_0(t)>0\) and \(x_0(t)-y_0(t)>\beta_{\mu}\), hence \(p_0(t)\in\mathcal D\). Substitution into the planar solution gives \(z(t;y_0(t))=0\), so Lemma~\ref{lem:first_return} yields \(t=t^X(y_0(t))\). Using \(y=\dot z\) at the return and \eqref{eq:xy_of_t}, one obtains
\[
g(y_0(t))
=
\zeta_{\mu}\left(\frac{e^{\mu t}-\cos t}{\sin t}-\mu\right)
=
-x_0(t).
\]
By \eqref{eq:H_of_t}, the choice \(H=\eta(t)\) satisfies the first-integral matching condition in Lemma~\ref{lem:closure_identity}. Hence \(F(p_0)=p_0\), or equivalently \(\Pi_X(p_0)=S(p_0)\). Equivariance closes the second flight with the same flight time \(t\), and Proposition~\ref{prop:symmetric_closure} gives symmetry. Thus the crossing points are \(p_0(t)\) and \(S(p_0(t))\), and the period is \(2t\).
\end{proof}
\PN The determinant and trace of the reduced-map derivative, as well as the comparison needed to exclude two-cycles, depend on how the planar landing changes with its starting coordinate. The flight time, chord map, and factor \(\mathcal E\) therefore require the differential properties established below.

\begin{lemma}\label{lem:chord_monotone}
	\PN For \(Z\in\CLzero\), the flight time \(t^{X}\) and the chord map \(g\) are strictly decreasing on \((0,\infty)\), whereas \(\mathcal E\) is strictly increasing. Moreover,
	\[
	g'(y)=\frac{y}{\mathcal E(y)g(y)},
	\qquad y>0.
	\]
\end{lemma}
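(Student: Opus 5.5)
The plan is to differentiate the implicit return relation and to use the first integral \(f_1\) for the formula for \(g'\).

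\emph{Flight time.} By Lemma~\ref{lem:first_return}, \(t^X(y)\) is the unique solution in \((\pi,t_{\mathrm g}(\mu)]\) of
\[
\Phi_{\mu}(t)=\mu+(1+\mu^2)y .
\]
The right-hand side increases strictly with \(y\), and \(\Phi_\mu\) is strictly decreasing with \(\Phi_\mu'<0\) (Lemma~\ref{lem:Phi_properties}). Hence \(t^X\) is strictly decreasing, with
\[
(t^X)'(y)=\frac{1+\mu^2}{\Phi_\mu'(t^X(y))}<0 .
\]
Since \(\mathcal E(y)=e^{-2\mu t^X(y)}\) by \eqref{eq:E_definition} and \(\mu>0\), it follows that \(\mathcal E\) is strictly increasing.

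\emph{Derivative of the chord map.} Along an \(X\)-arc, Lemma~\ref{lem:PX_closed_form} gives \(Q_\mu(g(y))=e^{2\mu t^X(y)}Q_\mu(y)\). I will differentiate this identity in \(y\):
\[
Q_\mu'(g)\,g'=2\mu (t^X)'\,e^{2\mu t^X}Q_\mu(y)+e^{2\mu t^X}Q_\mu'(y).
\]
This still involves \((t^X)'\), so I would rather compute \(g'\) directly from the flow. Write \(z(t;y)\) for the solution \eqref{eq:z_planar_solution}; then \(\partial_y z=e^{\mu t}\sin t\) and \(\partial_y \dot z=\partial_t(e^{\mu t}\sin t)\). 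At the return, \(z(t^X(y);y)\equiv0\) gives
\[
(t^X)'=-\frac{e^{\mu t}\sin t}{g(y)},
\]
using \(\dot z(t^X)=g(y)\). Then
\[
g'=\ddot z\,(t^X)'+\partial_t(e^{\mu t}\sin t).
\]
At the return point \(z=0\), so \(\ddot z=2\mu g+1\). Substituting gives
\[
g'=-\frac{(2\mu g+1)e^{\mu t}\sin t}{g}+e^{\mu t}(\mu\sin t+\cos t).
\]

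\emph{Identifying the formula \(g'=y/(\mathcal E g)\).} A cleaner route is the Wronskian. The planar system \eqref{eq:planar_subsystem} has divergence \(2\mu\), so the Jacobian of the time-\(t\) map has determinant \(e^{2\mu t}\). Consider the section map from \(\{z=0\}\) to itself, parametrized by \(y\). The standard transversal-section formula gives
\[
g'(y)\cdot(\text{normal velocity at exit})=e^{2\mu t^X}\cdot(\text{normal velocity at entry}),
\]
with normal velocity \(\dot z=y\) at entry and \(\dot z=g\) at exit. Hence
\[
g'(y)=e^{2\mu t^X}\frac{y}{g(y)}=\frac{y}{\mathcal E(y)\,g(y)} .
\]
To make this rigorous, I apply Liouville's formula to the flow-box map \((s,y)\mapsto\varphi(s,(y,0))\) and compare the determinants of its derivative at \(s=0\) and at \(s=t^X(y)\); this is the step needing the most care. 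I will check the result against the explicit expression above as a cross-check.

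\emph{Monotonicity of \(g\).} For \(y>0\), Lemma~\ref{lem:first_return} gives \(g(y)=\dot z(t^X)<0\). Together with \(\mathcal E>0\), the formula yields \(g'<0\), so \(g\) is strictly decreasing.
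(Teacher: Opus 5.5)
Your proof is correct, but it closes the key identity by a different route from the paper's.

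Both arguments reach the same intermediate expression \(g'=e^{\mu t}(\cos t+\mu\sin t)+(2\mu g+1)(t^X)'\) with \((t^X)'=-e^{\mu t}\sin t/g\). The paper then reduces this to \(e^{2\mu t}y/g\) by substituting the return condition \((y+\beta_\mu)\sin t=\zeta_\mu(\cos t+\mu\sin t-e^{-\mu t})\) and using \(\cos^2t+\sin^2t=1\). You bypass that algebra with Liouville's formula for the planar subsystem, whose divergence is the constant \(2\mu\).

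The step you flagged as delicate does go through. Write \(V\) for the planar field, \(p=(y,0)\), \(q=(g(y),0)\) and \(\tau=t^X(y)\). Differentiating \(\varphi(\tau(y),p)=q\) gives \(D\varphi_\tau e_1=(g',0)^{\top}-\tau'V(q)\), so \(\det[V(q),D\varphi_\tau e_1]=-g\,g'\), because the \(\tau'\)-term drops out. On the other hand, \(V(q)=D\varphi_\tau V(p)\) gives \(\det[V(q),D\varphi_\tau e_1]=e^{2\mu\tau}\det[V(p),e_1]=-e^{2\mu\tau}y\). Equating the two yields \(g'=y/(\mathcal E g)\) without using any formula for \((t^X)'\), only its existence. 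Your explicit cross-check expression is therefore superfluous.

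What your route buys is structural. It shows that the factor \(\mathcal E^{-1}=e^{2\mu t^X}\) in \(g'\) is the area expansion of the \((y,z)\)-flow, the same \(2\mu\) that appears as the cofactor of \(f_1\). It would also apply unchanged to any planar transversal return, with \(\exp\int\operatorname{div}\) replacing \(e^{2\mu t}\). The paper's route is more computational but stays entirely within the explicit solution \eqref{eq:z_planar_solution}.

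Your monotonicity argument for \(t^X\), via \(\Phi_\mu(t^X(y))=\mu+(1+\mu^2)y\) and \((t^X)'=(1+\mu^2)/\Phi_\mu'(t^X)\), is equivalent to the paper's implicit differentiation. The sign argument for \(g'\) and the monotonicity of \(\mathcal E\) coincide with the paper's.
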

\begin{proof}
	\PN Let \(y_0>0\), set \(t=t^{X}(y_0)\). Differentiating \eqref{eq:z_planar_solution} with respect to \(y_0\), and using \(y=\dot z\), gives
	\[
	\frac{\partial y}{\partial y_0}=e^{\mu t}(\cos t+\mu\sin t),
	\qquad
	\frac{\partial z}{\partial y_0}=e^{\mu t}\sin t.
	\]
	At the return to \(\Sigma\), one has \(\dot y=2\mu g(y_0)+1\) and \(\dot z=g(y_0)\). Implicit differentiation of \(z(t^{X}(y_0);y_0)=0\) therefore yields
	\[
	(t^{X})'(y_0)
	=
	-\frac{e^{\mu t}\sin t}{g(y_0)}
	<0,
	\]
	since \(t\in(\pi,2\pi)\), \(\sin t<0\), and \(g(y_0)<0\). Thus \(t^{X}\) is strictly decreasing.
	
	\PN From \(g(y_0)=y(t^{X}(y_0);y_0)\) and the value of \(\dot y\) at the return,
	\[
	g'(y_0)
	=
	e^{\mu t}(\cos t+\mu\sin t)+\bigl(2\mu g(y_0)+1\bigr)(t^{X})'(y_0)
	=
	e^{\mu t}\frac{(\cos t-\mu\sin t)g(y_0)-\sin t}{g(y_0)}.
	\]
	Using the return condition
	\[
	(y_0+\beta_{\mu})\sin t
	=
	\zeta_{\mu}(\cos t+\mu\sin t-e^{-\mu t})
	\]
	together with \(\cos^2 t+\sin^2 t=1\), this reduces to
	\[
	g'(y_0)
	=
	e^{2\mu t}\frac{y_0}{g(y_0)}
	=
	\frac{y_0}{\mathcal E(y_0)g(y_0)}.
	\]
	Since \(y_0>0\), \(\mathcal E(y_0)>0\), and \(g(y_0)<0\), it follows that \(g'(y_0)<0\).
	Hence \(g\) is strictly decreasing and the stated identity for \(g'\) follows.
	
	Since \(\mathcal E=e^{-2\mu t^X}\) and \(t^X\) is strictly decreasing, \(\mathcal E\) is strictly increasing.
\end{proof}

\PN Monotonicity of the chord map alone gives no amplitude control near grazing or at large crossing data. Quantitative bounds provide the missing estimates for stability and for excluding two-cycles of \(F\).

\begin{lemma}\label{lem:chord_amplitude}
	\PN For \(Z\in\CLzero\) and \(y\geq0\),
	\[
	\frac{-g(y)}{Q_{\mu}(y)}
	\leq
	\frac{\beta_{\mu}}{2\zeta_{\mu}}
	+
	\frac{1}{\sqrt{H_{\mathrm g}(\mu)\zeta_{\mu}}},
	\qquad
	x_{\mathrm g}(\mu)
	=
	\frac{\beta_{\mu}}{2}
	+
	\sqrt{\zeta_{\mu}\bigl(e^{2\mu t_{\mathrm g}(\mu)}-\zeta_{\mu}\bigr)}
	>
	2\beta_{\mu}.
	\]
	Moreover, \(-g(y)>x_{\mathrm g}(\mu)\) for every \(y>0\).
\end{lemma}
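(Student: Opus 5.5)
Three claims must be established: the closed form of \(x_{\mathrm g}(\mu)\) with the bound \(x_{\mathrm g}>2\beta_\mu\), the strict inequality \(-g(y)>x_{\mathrm g}\) for \(y>0\), and the amplitude bound on \(-g/Q_\mu\). The plan is to rely throughout on the Darboux relation of Lemma~\ref{lem:PX_closed_form}. Along a first \(X\)-return it gives
\[
Q_\mu(g(y))=e^{2\mu t^X(y)}Q_\mu(y).
\]
The flight-time information comes from Lemma~\ref{lem:first_return}, and the monotonicity from Lemma~\ref{lem:chord_monotone}.

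\emph{Closed form of \(x_{\mathrm g}\).} Set \(y=0\), where \(Q_\mu(0)=\zeta_\mu\). Lemma~\ref{lem:grazing_data} then gives \(Q_\mu(-x_{\mathrm g})=e^{2\mu t_{\mathrm g}}\zeta_\mu\). Write \(Q_\mu(w)=(w+\beta_\mu/2)^2+\zeta_\mu^2\), which follows from \(\beta_\mu^2/4=\mu^2\zeta_\mu^2\) and \(\zeta_\mu-\mu^2\zeta_\mu^2=\zeta_\mu^2\). The equation becomes
\[
(x_{\mathrm g}-\beta_\mu/2)^2=\zeta_\mu\bigl(e^{2\mu t_{\mathrm g}}-\zeta_\mu\bigr).
\]
Choose the root with \(x_{\mathrm g}>\beta_\mu/2\). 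This requires a short argument. The planar geometry suggests that the grazing landing lies beyond the focus abscissa, since \(-x_{\mathrm g}\) is to the left of \(-\beta_\mu/2\) in the \(y\)-variable. Alternatively, observe that the smaller root would lie below \(\beta_\mu/2<\beta_\mu\), which is incompatible with the later bound.

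\emph{The bound \(x_{\mathrm g}>2\beta_\mu\).} This reduces to
\[
\zeta_\mu e^{2\mu t_{\mathrm g}}>\zeta_\mu^2+\tfrac94\beta_\mu^2=\zeta_\mu^2(1+9\mu^2).
\]
Using \(t_{\mathrm g}>\pi\), it suffices to show \(e^{2\pi\mu}(1+\mu^2)>1+9\mu^2\). This is the step I expect to be the main obstacle, because for small \(\mu\) both sides are \(1+O(\mu)\) and a crude estimate fails. The left side is \(1+2\pi\mu+\dots\), which beats \(1+9\mu^2\) for small \(\mu\). For large \(\mu\) the exponential dominates. In the middle range I would compare Taylor polynomials, using \(e^{2\pi\mu}\ge 1+2\pi\mu+2\pi^2\mu^2\). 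This gives a left side \(\ge 1+2\pi\mu+(2\pi^2+1)\mu^2\), and \(2\pi^2+1>9\), so the inequality holds for all \(\mu>0\).

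\emph{Strict inequality \(-g(y)>x_{\mathrm g}\).} This follows directly from Lemma~\ref{lem:chord_monotone}: \(g\) is strictly decreasing on \((0,\infty)\) and continuous at \(0\) by Lemma~\ref{lem:grazing_data}, so \(g(y)<g(0)=-x_{\mathrm g}\).

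\emph{Amplitude bound.} Put \(u=-g(y)>0\). Since \(t^X\le t_{\mathrm g}\), we have \(Q_\mu(-u)=e^{2\mu t^X}Q_\mu(y)\le e^{2\mu t_{\mathrm g}}Q_\mu(y)=Q_\mu(y)/H_{\mathrm g}\). Using \(Q_\mu(-u)\ge(u-\beta_\mu/2)^2\), this gives
\[
u\le \frac{\beta_\mu}{2}+\sqrt{\frac{Q_\mu(y)}{H_{\mathrm g}}}.
\]
Now divide by \(Q_\mu(y)\) and use \(Q_\mu(y)\ge\zeta_\mu\) for \(y\ge0\). The first term becomes \(\beta_\mu/(2Q_\mu)\le\beta_\mu/(2\zeta_\mu)\). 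The second becomes \(1/\sqrt{H_{\mathrm g}Q_\mu}\le1/\sqrt{H_{\mathrm g}\zeta_\mu}\). This yields the stated inequality.
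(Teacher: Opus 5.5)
Your proof follows the same route as the paper. Both use the Darboux relation \(Q_\mu(g(y))=e^{2\mu t^X(y)}Q_\mu(y)\), the completed square \(Q_\mu(w)=(w+\beta_\mu/2)^2+\zeta_\mu^2\), the bounds \(t^X\le t_{\mathrm g}\) and \(Q_\mu(y)\ge\zeta_\mu\), and strict monotonicity of \(g\).

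\begin{itemize}
\item The amplitude bound and the inequality \(-g(y)>x_{\mathrm g}\) are fine as written. Using \(Q_\mu(-u)\ge(u-\beta_\mu/2)^2\) avoids choosing a root, which is slightly cleaner than the paper's explicit root formula.
\item Your estimate for \(x_{\mathrm g}>2\beta_\mu\) is correct and holds for all \(\mu>0\). The step you called the main obstacle is not one: \(e^{2\pi\mu}\ge1+2\pi\mu+2\pi^2\mu^2>1+9\mu^2\) already suffices. The paper instead combines \(e^{2\mu t_{\mathrm g}}>1+2\pi\mu\) with \(\mu\zeta_\mu\le 1/2\).
\end{itemize}

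The one step that does not hold up is the root selection in the closed form of \(x_{\mathrm g}\).
\begin{itemize}
\item The \emph{planar geometry suggests} remark is only a heuristic.
\item The alternative argument is circular. You derive \(x_{\mathrm g}>2\beta_\mu\) from the closed form with the \(+\) root, so that bound cannot be used to exclude the \(-\) root.
\end{itemize}

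The repair takes one line. First, \(x_{\mathrm g}=-g(0)>0\), since \(\dot z<0\) at the return by Lemma~\ref{lem:first_return}. Second, the other root \(\beta_\mu/2-\sqrt{\zeta_\mu(e^{2\mu t_{\mathrm g}}-\zeta_\mu)}\) is strictly negative. Indeed, \(e^{2\mu t_{\mathrm g}}>1=\zeta_\mu(1+\mu^2)\) gives \(\zeta_\mu(e^{2\mu t_{\mathrm g}}-\zeta_\mu)>\mu^2\zeta_\mu^2=(\beta_\mu/2)^2\). The paper's phrase \emph{since \(g(y)<0\), the negative root} tacitly relies on this same observation. With this fix your proof is complete.
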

\begin{proof}
\PN By \eqref{eq:factor_relations}, \(Q_{\mu}(g(y))=e^{2\mu t^{X}(y)}Q_{\mu}(y)\), and hence \(g(y)^2+\beta_{\mu}g(y)+\zeta_{\mu}-e^{2\mu t^{X}(y)}Q_{\mu}(y)=0\). Since \(g(y)<0\), the negative root gives \(-g(y)=\beta_{\mu}/2+\sqrt{e^{2\mu t^{X}(y)}Q_{\mu}(y)-\zeta_{\mu}^2}\). Therefore, using \(t^{X}(y)\leq t_{\mathrm g}(\mu)\) and \(Q_{\mu}(y)\geq\zeta_{\mu}\),
\[
\frac{-g(y)}{Q_{\mu}(y)}
\leq
\frac{\beta_{\mu}}{2\zeta_{\mu}}
+
\frac{e^{\mu t^{X}(y)}}{\sqrt{Q_{\mu}(y)}}
\leq
\frac{\beta_{\mu}}{2\zeta_{\mu}}
+
\frac{1}{\sqrt{H_{\mathrm g}(\mu)\zeta_{\mu}}}.
\]
\PN At \(y=0\), \(x_{\mathrm g}(\mu)=\beta_{\mu}/2+\sqrt{\zeta_{\mu}\bigl(e^{2\mu t_{\mathrm g}(\mu)}-\zeta_{\mu}\bigr)}\). Since \(t_{\mathrm g}(\mu)>\pi\), \(e^{2\mu t_{\mathrm g}(\mu)}>1+2\pi \mu\). Moreover, \(\mu\zeta_{\mu}=\mu/(1+\mu^2)\leq1/2\), so \(2\pi \mu>8\mu^2\zeta_{\mu}\). Hence
\[
\zeta_{\mu}\bigl(e^{2\mu t_{\mathrm g}(\mu)}-\zeta_{\mu}\bigr)
>
\zeta_{\mu}\bigl(1+2\pi \mu-\zeta_{\mu}\bigr)
>
9\mu^2\zeta_{\mu}^2
=
\left(\frac{3\beta_{\mu}}{2}\right)^2.
\]
Therefore \(x_{\mathrm g}(\mu)>2\beta_{\mu}\). Finally, the strict decrease of \(g\) gives \(-g(y)>-g(0)=x_{\mathrm g}(\mu)\) for every \(y>0\).
\end{proof}


\section{Classification of simple-period crossing orbits}\label{sec:parametrization}

\PN To classify the symmetric family for prescribed \(H\), it remains to determine the image and injectivity of \(\eta\).

\subsection{Parameter range of the symmetric family}
\PN Fix \(\mu>0\) and \(t\in(\pi,t_{\mathrm g}(\mu))\), set \(H=\eta(t)\), and let \(p_0=p_0(t)\) be the upward crossing point of the corresponding symmetric periodic orbit. Write
\[
N:=DF(p_0),\qquad \mathcal E:=\mathcal E(y_0(t))=e^{-2\mu t},\qquad \rho:=\frac{y_0}{x_0}.
\]
Its determinant and trace determine the Schur inequalities for stability and link the half-period parametrization to the spectrum of the return map.

\begin{proposition}\label{prop:invariants}
	\PN For \(\mu>0\) and \(t\in(\pi,t_{\mathrm g}(\mu))\), with \(H=\eta(t)\), the determinant and trace of the reduced-map derivative \(N\) satisfy
	\begin{equation}\label{eq:det_N}
		\det N=\rho,
	\end{equation}
	and
	\begin{equation}\label{eq:trace_N}
		\operatorname{tr}N
		=
		H\left(\frac{\rho}{\mathcal E}+\mathcal E\right)
		-
		\beta_{\mu}(1+\rho)\,
		\frac{x_0-Hy_0}{Q_{\mu}(-x_0)}.
	\end{equation}
\end{proposition}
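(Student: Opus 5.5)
The plan is to differentiate the explicit expression \eqref{eq:F_explicit} for \(F\) in the coordinates \((x,y)\) on \(\mathcal D\). Then evaluate at the fixed point \(p_0=p_0(t)\) using three inputs: the chord-derivative identity of Lemma~\ref{lem:chord_monotone}, the closure relation \(g(y_0)=-x_0\) from Proposition~\ref{prop:symmetric_family}, and the definition \eqref{eq:E_definition} of \(\mathcal E\). Since \(F(x,y)=\bigl(-g(y),\,-Hg(y)-\mathcal E(y)(x-Hy)\bigr)\), the derivative is
\[
N=\begin{pmatrix} 0 & -g'(y_0)\\ -\mathcal E(y_0) & -Hg'(y_0)-\mathcal E'(y_0)(x_0-Hy_0)+H\mathcal E(y_0)\end{pmatrix}.
\]
The maps \(g\) and \(\mathcal E\) are real-analytic on \((0,\infty)\) by Lemma~\ref{lem:grazing_data}, and \(y_0>0\) by Lemma~\ref{lem:param_properties}, so this computation is legitimate.

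\emph{Determinant.} We have \(\det N=-\mathcal E(y_0)\,g'(y_0)\). Lemma~\ref{lem:chord_monotone} gives \(g'(y_0)=y_0/(\mathcal E(y_0)g(y_0))\), hence \(\det N=-y_0/g(y_0)\). Substituting \(g(y_0)=-x_0\) yields \(\det N=y_0/x_0=\rho\), which is \eqref{eq:det_N}.

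\emph{Trace.} Using the same identity, \(-Hg'(y_0)=Hy_0/(\mathcal E x_0)=H\rho/\mathcal E\). Together with the term \(H\mathcal E\), this gives the first summand of \eqref{eq:trace_N}. It remains to show
\[
\mathcal E'(y_0)=\frac{\beta_\mu(1+\rho)}{Q_\mu(-x_0)}.
\]
For this, differentiate the quotient \(\mathcal E=Q_\mu(y)/Q_\mu(g(y))\) and substitute \(g'=y/(\mathcal E g)\). The second term then loses its factor \(\mathcal E\), leaving
\[
\mathcal E'(y)=\frac{Q_\mu'(y)-Q_\mu'(g(y))\,y/g(y)}{Q_\mu(g(y))},
\qquad Q_\mu'(w)=2w+\beta_\mu .
\]
At \(y=y_0\) the numerator is
\[
2y_0+\beta_\mu-(2g+\beta_\mu)\frac{y_0}{g}=\beta_\mu-\beta_\mu\frac{y_0}{g}=\beta_\mu(1+\rho),
\]
again using \(g=-x_0\). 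The denominator is \(Q_\mu(-x_0)\).

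The only delicate point is this last simplification. The \(2y_0\) terms cancel only after the factor \(\mathcal E\) has been eliminated through Lemma~\ref{lem:chord_monotone}. One should therefore not differentiate \(e^{-2\mu t^X}\) directly, but rather work through the quotient form of \(\mathcal E\). Everything else is bookkeeping with the closure relation.
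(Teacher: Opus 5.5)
Your proposal is correct and follows essentially the same route as the paper. The paper writes \(N=S|_\Sigma\,D\Pi_X\) from the closed form \eqref{eq:PX_closed_form}, obtains \(\det N\) from the chord identity, and differentiates the first component of \(\Pi_X\) through the quotient \(Q_\mu(y_0)/Q_\mu(g(y_0))\) together with \(g'=-\rho/\mathcal E\) and \(g(y_0)=-x_0\); your direct differentiation of \eqref{eq:F_explicit} and isolation of \(\mathcal E'(y_0)=\beta_\mu(1+\rho)/Q_\mu(-x_0)\) is only a cosmetic repackaging of that same computation. As a side benefit, your version yields the general identity \(\mathcal E'(y)=\beta_\mu[y-g(y)]/([-g(y)]Q_\mu(g(y)))\), which the paper itself uses later.
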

\begin{proof}
	\PN By Lemma~\ref{lem:PX_closed_form}, the derivative \(D\Pi_X\) is upper triangular, with diagonal entries \(\mathcal E\) and \(g'(y_0)\). At the fixed point \(p_0=(x_0,y_0,0)\), Lemma~\ref{lem:closure_identity} gives
	\[
	g(y_0)=-x_0.
	\]
	Hence Lemma~\ref{lem:chord_monotone} yields
	\[
	g'(y_0)
	=
	\frac{y_0}{\mathcal E g(y_0)}
	=
	-\frac{\rho}{\mathcal E}.
	\]
	\PN Since
	\[
	D\Pi_X=
	\begin{pmatrix}
		\mathcal E&\dfrac{\partial x_1}{\partial y_0}\\
		0&g'
	\end{pmatrix},
	\qquad
	S|_\Sigma=
	\begin{pmatrix}
		0&-1\\
		-1&0
	\end{pmatrix},
	\]
	one has
	\[
	N=S|_\Sigma D\Pi_X=
	\begin{pmatrix}
		0&-g'\\
		-\mathcal E&-\dfrac{\partial x_1}{\partial y_0}
	\end{pmatrix}.
	\]
	Therefore
	\[
	\det N=-\mathcal E g'=\rho,
	\qquad
	\operatorname{tr}N=-\frac{\partial x_1}{\partial y_0}.
	\]
	The first identity proves \eqref{eq:det_N}.
	
	\PN It remains to compute the trace. Differentiating the first component of \eqref{eq:PX_closed_form} with respect to \(y_0\), with \(x_0\) fixed, gives
	\[
	\frac{\partial x_1}{\partial y_0}
	=
	Hg'
	+
	\frac{Q_{\mu}'(y_0)(x_0-Hy_0)-HQ_{\mu}(y_0)}
	{Q_{\mu}(y_1)}
	-
	\frac{Q_{\mu}(y_0)(x_0-Hy_0)Q_{\mu}'(y_1)g'}
	{Q_{\mu}(y_1)^2}.
	\]
	At the fixed point, \(y_1=-x_0\). Using
	\[
	\frac{Q_{\mu}(y_0)}{Q_{\mu}(-x_0)}=\mathcal E,
	\qquad
	g'=-\frac{\rho}{\mathcal E},
	\]
	the preceding derivative becomes
	\[
	\frac{\partial x_1}{\partial y_0}
	=
	-\frac{H\rho}{\mathcal E}
	-H\mathcal E
	+
	\frac{x_0-Hy_0}{Q_{\mu}(-x_0)}
	\left(
	Q_{\mu}'(y_0)+\rho Q_{\mu}'(-x_0)
	\right).
	\]
	Since
	\[
	Q_{\mu}'(y_0)+\rho Q_{\mu}'(-x_0)
	=
	\beta_{\mu}(1+\rho),
	\]
	and \(\operatorname{tr}N=-\partial x_1/\partial y_0\),
	equation~\eqref{eq:trace_N} follows.
\end{proof}
\PN By Lemma~\ref{lem:param_properties}, \(0<y_0<x_0\), and hence \(0<\rho<1\). Therefore, by \eqref{eq:det_N}, the Schur criterion \cite[p.~247]{Elaydi2005} for a real \(2\times2\) matrix gives
\[
\operatorname{spec}(N)\subset\{z\in\mathbb C:|z|<1\}
\quad\Longleftrightarrow\quad
\det(I-N)>0,
\qquad
\det(I+N)>0.
\]
By Proposition~\ref{prop:F_square}, \(D\Pi(p_0)=N^2\). Hence hyperbolicity and orbital asymptotic stability follow once the two Schur inequalities above are established.

\begin{proposition}\label{teo:inequalities}
	\PN For \(\mu>0\) and \(t\in(\pi,t_{\mathrm g}(\mu))\),
	\begin{align}
		\det(I-N)&=1+\rho-\operatorname{tr}N>0, \tag{S1}\label{ineq:IminusN_positive}\\
		\det(I+N)&=1+\rho+\operatorname{tr}N>0. \tag{S2}\label{ineq:IplusN_positive}
	\end{align}
\end{proposition}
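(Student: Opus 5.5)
The plan is to turn both Schur inequalities into polynomial inequalities in the crossing data \((x_0,y_0)\) alone, by eliminating \(H\) and \(\mathcal E\). Everything needed is already at the fixed point \(p_0(t)\), which satisfies two relations. By Lemma~\ref{lem:closure_identity} and \eqref{eq:E_definition},
\[
\mathcal E=\frac{Q_{\mu}(y_0)}{Q_{\mu}(-x_0)},\qquad \mathcal E\,(x_0-Hy_0)=Hx_0-y_0 .
\]
The second relation is the matching condition that defines \(H=\eta(t)\) in \eqref{eq:H_of_t}. Together they give
\[
\frac{x_0-Hy_0}{Q_{\mu}(-x_0)}=\frac{Hx_0-y_0}{Q_{\mu}(y_0)}.
\]
Substituting this into \eqref{eq:trace_N} writes \(\operatorname{tr}N\) entirely in terms of \(x_0,y_0,H\) and the two values \(Q_{\mu}(y_0),Q_{\mu}(-x_0)\). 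Lemma~\ref{lem:param_properties} gives \(0<H<1\) and \(x_0>y_0\), so \(x_0-Hy_0>0\); the matching relation then gives \(Hx_0-y_0>0\), that is \(\rho<H<1\). These sign facts are used repeatedly below.

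\emph{Inequality \eqref{ineq:IminusN_positive}.} This is the upper bound \(\operatorname{tr}N<1+\rho\). The correction term \(-\beta_{\mu}(1+\rho)(Hx_0-y_0)/Q_{\mu}(y_0)\) is negative, so it helps here. The difficulty is the term \(H\rho/\mathcal E\), which is large because \(\mathcal E=e^{-2\mu t}\) is small. I would write
\[
\frac{H\rho}{\mathcal E}=\frac{H y_0\,Q_{\mu}(-x_0)}{x_0\,Q_{\mu}(y_0)},\qquad H\mathcal E=\frac{Hx_0-y_0}{x_0-Hy_0},
\]
then substitute \(H=\eta\) as the explicit rational function in \eqref{eq:H_of_t}. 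Multiplying by the positive common denominator \(x_0\,Q_{\mu}(y_0)\,(x_0-Hy_0)\) times the denominator of \(\eta\), \eqref{ineq:IminusN_positive} becomes positivity of a single polynomial \(\mathcal P_-(x_0,y_0;\mu)\). To prove \(\mathcal P_-\) is positive, I would expand it in the shifted variables \(u=x_0-y_0-\beta_{\mu}>0\) and \(y_0>0\). The hope is that every coefficient is positive, or at least that the negative ones are dominated by the factor \((x_0-y_0)(x_0-y_0-\beta_{\mu})\), which is exactly the gap between the denominator and numerator of \(\eta\) found in Lemma~\ref{lem:param_properties}.

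\emph{Inequality \eqref{ineq:IplusN_positive}.} Here the correction term works against us, and I expect this to be the main obstacle. The positive part \(H(\rho/\mathcal E+\mathcal E)\) must dominate
\[
\beta_{\mu}(1+\rho)\,\frac{Hx_0-y_0}{Q_{\mu}(y_0)}-(1+\rho).
\]
Three bounds are available: \(Hx_0-y_0<x_0\), \(Q_{\mu}(y_0)\ge\zeta_{\mu}\), and \(x_0=-g(y_0)\), with Lemma~\ref{lem:chord_amplitude} controlling the ratio \(-g/Q_{\mu}\). However, these crude bounds alone will not close the inequality. I would therefore first try the same clearing of denominators as above, producing a polynomial \(\mathcal P_+\). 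I would then use \(x_0>x_{\mathrm g}(\mu)>2\beta_{\mu}\) from Lemma~\ref{lem:chord_amplitude}, which exceeds the bare bound \(x_0-y_0>\beta_{\mu}\), to absorb the terms carrying a factor \(\beta_{\mu}\). If that absorption fails, the fallback is to use AM--GM on \(H(\rho/\mathcal E+\mathcal E)\ge 2H\sqrt{\rho}\), combined with \(H>\rho\), and then check only the residual region near grazing, \(y_0\to0\), directly.

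Both limits of \(t\) should also be checked as consistency tests:
\begin{itemize}
\item as \(t\downarrow\pi\), where \(y_0\to\infty\) and \(\rho\to1\);
\item as \(t\uparrow t_{\mathrm g}(\mu)\), where \(\rho\to0\).
\end{itemize}
These limits identify which inequality degenerates at each end of \(\mathcal I_\mu\), which tells us where the polynomial estimates must be sharp.
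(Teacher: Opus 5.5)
Your plan does not yet prove either inequality. For (S1) the positivity of $\mathcal P_-$ is only hoped for, and for (S2) neither the absorption step nor the AM--GM fallback is carried out.

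The missing idea is to eliminate $H$ but keep $\mathcal E$ as the variable. The matching relation gives $H=(\mathcal E x_0+y_0)/(x_0+\mathcal E y_0)$ and $x_0-Hy_0=(x_0^2-y_0^2)/(x_0+\mathcal E y_0)$. Substituting these into the trace formula yields
\[
\det(I-N)=\frac{(1-\mathcal E^2)(\mathcal E x_0^2-y_0^2)}{\mathcal E x_0(x_0+\mathcal E y_0)}+\frac{\beta_\mu(x_0-y_0)(x_0+y_0)^2}{x_0(x_0+\mathcal E y_0)Q_\mu(-x_0)}.
\]
Since $0<\mathcal E<1$, the only sign left to check is $\mathcal E x_0^2>y_0^2$. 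This follows from $\mathcal E=Q_\mu(y_0)/Q_\mu(-x_0)$ via the identity $\mathcal E x_0^2-y_0^2=(x_0+y_0)[\beta_\mu x_0y_0+\zeta_\mu(x_0-y_0)]/Q_\mu(-x_0)$. This factorization, together with $Q_\mu(-x_0)-Q_\mu(y_0)=(x_0+y_0)(x_0-y_0-\beta_\mu)$, is what would make your positive-coefficient hope come true. Without it the step remains open.

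For (S2) you rightly single out $x_0>x_{\mathrm g}(\mu)>2\beta_\mu$, and it is indispensable. On the free region $y_0>0$, $x_0-y_0>\beta_\mu$ the inequality fails: as $(x_0,y_0)\to(\beta_\mu,0)$ one gets $\det(I+N)\to2(1-\mu^2)/(1+\mu^2)$, which is negative for $\mu>1$. So no expansion in $u$ and $y_0$ alone can work.

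However, writing the correction term with $Q_\mu(y_0)$ in the denominator hides how to use that bound. Keep it as $\beta_\mu(1+\rho)(x_0-Hy_0)/Q_\mu(-x_0)$. Then $Q_\mu(-x_0)-\beta_\mu x_0=x_0(x_0-2\beta_\mu)+\zeta_\mu>0$ makes the correction smaller than $(1+\rho)(1-H\rho)$, and $\det(I+N)>H(\rho/\mathcal E+\mathcal E)+H\rho(1+\rho)>0$ follows at once. The paper does the equivalent step after eliminating $H$, using $\beta_\mu(x_0-y_0)<Q_\mu(-x_0)$.

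Two slips would also derail the computation.

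First, the matching relation gives $\mathcal E=(Hx_0-y_0)/(x_0-Hy_0)$, not $H\mathcal E$. With your formula, the term $-H\mathcal E$ in $1+\rho-\operatorname{tr}N$ becomes $-\mathcal E$. The resulting expression tends to $-k^2(1-k)^2/(1-k+k^2)<0$ as $t\downarrow\pi$. So the polynomial you would set out to prove positive is actually negative at large amplitude.

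Second, as $t\downarrow\pi$ one has $\rho\to k=e^{-\pi\mu}$ (Lemma~\ref{lem:edge_values}), not $\rho\to1$. The eigenvalues of $N$ tend to $1$ and $k$, so (S1) is the inequality that becomes sharp there. At grazing the eigenvalues tend to $0$ and $T_0\in(-1,1)$, and neither inequality degenerates.
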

\begin{proof}
	
	\PN For any \(2\times2\) matrix \(N\),
	\[
	\det(I-N)=1-\operatorname{tr}N+\det N,
	\qquad
	\det(I+N)=1+\operatorname{tr}N+\det N.
	\]
	Hence, by \eqref{eq:det_N},
	\[
	\det(I-N)=1+\rho-\operatorname{tr}N,
	\qquad
	\det(I+N)=1+\rho+\operatorname{tr}N.
	\]
	
	\PN The relation
	\[
	\mathcal E(x_0-Hy_0)=Hx_0-y_0
	\]
	gives
	\[
	H=
	\frac{\mathcal E x_0+y_0}{x_0+\mathcal E y_0},
	\qquad
	x_0-Hy_0=
	\frac{x_0^2-y_0^2}{x_0+\mathcal E y_0}.
	\]
	Using \eqref{eq:det_N} and \eqref{eq:trace_N},
	\[
	\begin{aligned}
		\det(I-N)
		&=
		1+\rho-\operatorname{tr}N\\
		&=
		1+\rho
		-
		H\left(\frac{\rho}{\mathcal E}+\mathcal E\right)
		+
		\beta_{\mu}(1+\rho)
		\frac{x_0-Hy_0}{Q_{\mu}(-x_0)}.
	\end{aligned}
	\]
	Since \(\rho=y_0/x_0\), substitution of the two identities above yields
	\[
	\det(I-N)
	=
	\frac{(1-\mathcal E^2)
		(\mathcal E x_0^2-y_0^2)}
	{\mathcal E x_0(x_0+\mathcal E y_0)}
	+
	\frac{\beta_{\mu}(x_0-y_0)(x_0+y_0)^2}
	{x_0(x_0+\mathcal E y_0)Q_{\mu}(-x_0)}.
	\]
	
	\PN Since \(g(y_0)=-x_0\), \eqref{eq:E_definition} gives
	\[
	\mathcal E=\frac{Q_{\mu}(y_0)}{Q_{\mu}(-x_0)}.
	\]
	Hence
	\[
	\mathcal E x_0^2-y_0^2
	=
	\frac{(x_0+y_0)
		\bigl[\beta_{\mu}x_0y_0+\zeta_{\mu}(x_0-y_0)\bigr]}
	{Q_{\mu}(-x_0)}.
	\]
	By Lemma~\ref{lem:param_properties}, \(y_0>0\) and \(x_0-y_0>\beta_{\mu}\), so \(x_0>y_0\) and the right-hand side is positive. Since \(\mathcal E\in(0,1)\), both terms in the preceding expression for \(\det(I-N)\) are positive, and therefore
	\[
	\det(I-N)>0.
	\]
	This proves \eqref{ineq:IminusN_positive}.
	
	\PN Likewise, \eqref{eq:det_N} and \eqref{eq:trace_N} give
	\[
	\begin{aligned}
		\det(I+N)
		&=
		1+\rho+\operatorname{tr}N\\
		&=
		1+\rho
		+
		H\left(\frac{\rho}{\mathcal E}+\mathcal E\right)
		-
		\beta_{\mu}(1+\rho)
		\frac{x_0-Hy_0}{Q_{\mu}(-x_0)}.
	\end{aligned}
	\]
	Substituting the same expressions for \(H\) and \(x_0-Hy_0\) gives
	\[
	\det(I+N)
	=
	\frac{
		(1+\mathcal E^2)(\mathcal E x_0^2+y_0^2)
		+
		2\mathcal E(1+\mathcal E)x_0y_0
	}
	{\mathcal E x_0(x_0+\mathcal E y_0)}
	-
	\frac{\beta_{\mu}(x_0-y_0)(x_0+y_0)^2}
	{x_0(x_0+\mathcal E y_0)Q_{\mu}(-x_0)}.
	\]
	Since \(g(y_0)=-x_0\) and \(y_0>0\),
	Lemma~\ref{lem:chord_amplitude} gives
	\[
	x_0>x_{\mathrm g}>2\beta_{\mu},
	\]
	and hence
	\[
	Q_{\mu}(-x_0)>\beta_{\mu}x_0.
	\]
	Together with \(0<x_0-y_0<x_0\), this implies
	\[
	\frac{\beta_{\mu}(x_0-y_0)}{Q_{\mu}(-x_0)}<1.
	\]
	Consequently,
	\begin{align*}
		\det(I+N)
		&>
		\frac{
			(1+\mathcal E^2)(\mathcal E x_0^2+y_0^2)
			+
			2\mathcal E(1+\mathcal E)x_0y_0
			-
			\mathcal E(x_0+y_0)^2
		}
		{\mathcal E x_0(x_0+\mathcal E y_0)}
		\\
		&=
		\frac{
			\mathcal E^3x_0^2
			+
			2\mathcal E^2x_0y_0
			+
			(\mathcal E^2-\mathcal E+1)y_0^2
		}
		{\mathcal E x_0(x_0+\mathcal E y_0)}
		>0.
	\end{align*}
	Thus \eqref{ineq:IplusN_positive} follows.
\end{proof}

\PN The first Schur inequality condition~\eqref{ineq:IminusN_positive},  in Proposition above, excludes the multiplier \(+1\). The same quantity controls the sign of \(\eta'(t)\), connecting spectral stability to the monotonicity needed for a one-to-one parametrization.

\begin{lemma}\label{lem:eta_monotone}
	\PN For each fixed \(\mu>0\), the function \(\eta\) is strictly decreasing on \((\pi,t_{\mathrm g}(\mu))\).
\end{lemma}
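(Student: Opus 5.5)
The plan is to differentiate the closure relation implicitly along the family, rather than differentiate \eqref{eq:H_of_t} directly, and to relate the result to the quantity \(\det(I-N)\) from Proposition~\ref{teo:inequalities}. Along \(t\in(\pi,t_{\mathrm g}(\mu))\) the point \(p_0(t)\) is a fixed point of \(F\) with parameter \(H=\eta(t)\). Write the fixed-point equation as \(\mathcal G(p,H):=F_H(p)-p=0\), where \(F_H\) is given by \eqref{eq:F_explicit}. The curve \(t\mapsto(p_0(t),\eta(t))\) lies in the zero set of \(\mathcal G\). Differentiating in \(t\) gives
\[
(N-I)\,p_0'(t)+\partial_H F_H(p_0)\,\eta'(t)=0 .
\]

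The first component of \(F\) does not depend on \(x\) or \(H\). Hence the first row of this system reads \(-x_0'=-g'(y_0)y_0'\) (after a sign check). This is automatically satisfied, because \(g(y_0(t))=-x_0(t)\) holds identically. The information therefore sits in the second row, which I will read via Cramer's rule on the full \(2\times 2\) system. Because \(\det(N-I)=\det(I-N)\neq0\) by \eqref{ineq:IminusN_positive}, the vector \(p_0'\) is determined by \(\eta'\). Conversely, since \(y_0'(t)<0\) by Lemma~\ref{lem:param_properties}, \(\eta'\) is expressed through \(y_0'\). Concretely:
\[
\partial_H F_H=\bigl(0,\,-g(y_0)+\mathcal E y_0\bigr)=\bigl(0,\,x_0+\mathcal E y_0\bigr),
\]
and solving the linear system for the \(y\)-component gives
\[
y_0'\,\det(N-I)=-\eta'\,(x_0+\mathcal E y_0)\cdot(\text{entry }(N-I)_{11})=\eta'(x_0+\mathcal E y_0),
\]
since \(N_{11}=0\). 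Hence
\[
\eta'(t)=\frac{\det(I-N)\,y_0'(t)}{x_0+\mathcal E y_0}.
\]
I still need to verify the sign conventions carefully, in particular whether the factor is \(\det(N-I)\) or \(\det(I-N)\); for a \(2\times2\) matrix these coincide.

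With this formula the conclusion is immediate. By \eqref{ineq:IminusN_positive} we have \(\det(I-N)>0\). We also have \(x_0+\mathcal E y_0>0\), and \(y_0'<0\) by Lemma~\ref{lem:param_properties}. Therefore \(\eta'(t)<0\) on the whole interval.

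The main obstacle is justifying the implicit differentiation: we need \(C^1\) dependence of \(F_H\) on \((p,H)\) near \(p_0\), and of \(p_0(t)\) on \(t\). Both follow from the explicit formulas \eqref{eq:xy_of_t} and the analyticity in Lemma~\ref{lem:grazing_data}. If the Cramer bookkeeping turns out messy, the fallback is a direct route. Use \(H=(\mathcal E x_0+y_0)/(x_0+\mathcal E y_0)\) together with \(\mathcal E=e^{-2\mu t}\), \(\mathcal E'=-2\mu\mathcal E\), and \(x_0'=g'(y_0)y_0'=-(\rho/\mathcal E)y_0'\) from Lemma~\ref{lem:chord_monotone}. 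Differentiating the quotient directly then gives \(\eta'\), and the numerator should be checked to equal a positive multiple of \(\det(I-N)\cdot y_0'\), matching the expression obtained in the proof of \eqref{ineq:IminusN_positive}.
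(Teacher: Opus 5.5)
Your proposal is correct and follows essentially the paper's argument. The paper differentiates the fixed-point identity \(F(p,\eta)=p\) along the family, after reparametrizing by \(y=y_0(t)\). It obtains \(\frac{d\eta}{dy}(x_0+\mathcal E y_0)=\det(I-N)\) and concludes with \eqref{ineq:IminusN_positive} and \(y_0'<0\). This is your Cramer's-rule identity \(\eta'(t)(x_0+\mathcal E y_0)=\det(I-N)\,y_0'(t)\), written in the variable \(t\).

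The only slip is the sign in your side remarks on the first row and in the fallback. Differentiating \(g(y_0(t))=-x_0(t)\) gives \(x_0'=-g'(y_0)y_0'=(\rho/\mathcal E)y_0'\), not \(g'(y_0)y_0'\). This does not affect your main computation, which uses only \(N_{11}=0\) and \(\partial_HF=(0,x_0+\mathcal E y_0)\).
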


\begin{proof}
	\PN Since \(y_0(t)=\zeta_{\mu}\bigl(\Phi_{\mu}(t)-\mu\bigr)\), Lemma~\ref{lem:Phi_properties} gives \(y_0'(t)<0\) on \((\pi,t_{\mathrm g}(\mu))\), while Lemma~\ref{lem:param_properties} shows that \(y_0\) maps this interval onto \((0,\infty)\). Hence the family may be reparametrized by \(y=y_0(t)\). Let \(t=t(y)\) denote the inverse parametrization and set \(x(y):=x_0(t(y))\), \(\eta(y):=\eta(t(y))\), and \(p(y):=(x(y),y)\). By Proposition~\ref{prop:symmetric_family},
	\[
	F\bigl(p(y),\eta(y)\bigr)=p(y).
	\]
	Differentiating with respect to \(y\) gives
	\begin{equation}\label{eq:turning_point_system}
		(I-N)
		\begin{pmatrix}
			x'(y)\\
			1
		\end{pmatrix}
		=
		\frac{d\eta}{dy}\,\partial_HF.
	\end{equation}
	
	\PN Since \(g\) and \(\mathcal E\) do not depend on \(H\), formula~\eqref{eq:F_explicit}, together with \(g(y)=-x\) at the fixed point, gives \(\partial_HF=(0,x+\mathcal E y)^{\top}\), with \(x+\mathcal E y>0\). Moreover, the matrix formula in the proof of Proposition~\ref{prop:invariants} and Lemma~\ref{lem:chord_monotone} give \(-g'=y/(\mathcal E x)\). The first component of \eqref{eq:turning_point_system} therefore yields \(x'(y)=y/(\mathcal E x)\). Substitution into the second component gives
	\begin{equation}\label{eq:turning_point_identity}
		\frac{d\eta}{dy}\bigl(x+\mathcal E y\bigr)=1+\rho-\operatorname{tr}N=\det(I-N).
	\end{equation}
	By \eqref{ineq:IminusN_positive}, \(d\eta/dy>0\). Since \(y_0'(t)<0\),
	\[
	\eta'(t)=\frac{d\eta}{dy}\,y_0'(t)<0.
	\]
	Thus \(\eta\) is strictly decreasing.
\end{proof}

\PN As \(t\downarrow\pi\), the crossing data become unbounded, whereas as \(t\uparrow t_{\mathrm g}(\mu)\), the parametrization approaches the grazing configuration of Lemma~\ref{lem:grazing_data}.

\begin{lemma}\label{lem:edge_values}
	\PN For the half-period parametrization \eqref{eq:xy_of_t}--\eqref{eq:H_of_t}, let \(k:=e^{-\pi \mu}\). As \(t\downarrow\pi\), one has \(x_0(t),y_0(t)\to+\infty\), \(y_0(t)/x_0(t)\to k\), and \(\eta(t)\to H_{\mathrm{crit}}(\mu)\). As \(t\uparrow t_{\mathrm g}(\mu)\), one has \(y_0(t)\to0\), \(x_0(t)\to x_{\mathrm g}(\mu)\), and \(\eta(t)\to H_{\mathrm g}(\mu)\).
\end{lemma}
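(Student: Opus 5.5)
The plan is to work directly from the explicit formulas \eqref{eq:xy_of_t}--\eqref{eq:H_of_t}, handling the two endpoints separately.

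\emph{Limit \(t\downarrow\pi\).} Write \(t=\pi+\varepsilon\). Then \(\sin t=-\sin\varepsilon\to0^-\), \(\cos t\to-1\), and \(e^{\mu t}\to e^{\pi\mu}=1/k\). The numerators satisfy
\[
\cos t-e^{-\mu t}\to-(1+k)<0,\qquad e^{\mu t}-\cos t\to 1+1/k>0 .
\]
Hence
\[
y_0(t)\sim\zeta_\mu\frac{1+k}{\sin\varepsilon}\to+\infty,\qquad
x_0(t)\sim\zeta_\mu\frac{1+1/k}{\sin\varepsilon}\to+\infty .
\]
The constant terms \(\pm\mu\zeta_\mu\) are negligible, so \(y_0/x_0\to(1+k)/(1+1/k)=k\).

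For \(\eta\), divide the numerator and denominator of \eqref{eq:H_of_t} by \(x_0^2\). The terms \(\zeta_\mu/x_0^2\) and \(\beta_\mu(x_0-y_0)/x_0^2\) tend to \(0\), which gives
\[
\eta\to\frac{k}{1+k^2-k}=\frac{1}{k^{-1}+k-1}=\frac{1}{2\cosh(\pi\mu)-1}=H_{\mathrm{crit}}(\mu).
\]
This endpoint is routine.

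\emph{Limit \(t\uparrow t_{\mathrm g}(\mu)\).} First, \(y_0(t)=\zeta_\mu(\Phi_\mu(t)-\mu)\to\zeta_\mu(\Phi_\mu(t_{\mathrm g})-\mu)=0\) by the definition of \(t_{\mathrm g}\). Next, \(x_0(t)=-g(y_0(t))\), as shown in the proof of Proposition~\ref{prop:symmetric_family}. By continuity of \(g\) at \(0\) (Lemma~\ref{lem:grazing_data}), \(x_0(t)\to -g(0)=x_{\mathrm g}(\mu)\). Equivalently, \(x_0\) is a continuous expression in \(t\) on \((\pi,2\pi)\) and can be evaluated at \(t_{\mathrm g}\).

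Passing to the limit in \eqref{eq:H_of_t} with \(y_0=0\) and \(x_0=x_{\mathrm g}\) gives
\[
\eta\to\frac{\zeta_\mu}{x_{\mathrm g}^2-\beta_\mu x_{\mathrm g}+\zeta_\mu}=\frac{\zeta_\mu}{Q_\mu(-x_{\mathrm g})}.
\]
The denominator is positive since \(Q_\mu>0\), so no degeneracy arises. Lemma~\ref{lem:grazing_data} identifies this quotient with \(H_{\mathrm g}(\mu)\).

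\emph{Main obstacle.} The only care needed is the bookkeeping of signs of \(\sin t\) near \(\pi\) and the dominant-term extraction in the ratio \(y_0/x_0\). Near \(t_{\mathrm g}\), the key point is to justify that \(x_0(t)\) really equals \(-g(y_0(t))\) along the family, so that the grazing identity for \(H_{\mathrm g}\) applies. Both were already established earlier, so the proof is essentially a limit computation.
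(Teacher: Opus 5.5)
Your proof is correct and follows the paper's argument: near $t=\pi$ you use the same leading-order expansion, giving $y_0/x_0\to k$ and $\eta\to k/(1-k+k^2)=H_{\mathrm{crit}}(\mu)$, and at the grazing end you pass to the limit in \eqref{eq:H_of_t} and invoke Lemma~\ref{lem:grazing_data}. The only small difference is in obtaining $x_0(t)\to x_{\mathrm g}(\mu)$: you use the identity $x_0(t)=-g(y_0(t))$ together with continuity of $g$ at $0$, whereas the paper evaluates $x_0$ at $t_{\mathrm g}$ directly as $-e^{\mu t_{\mathrm g}}\sin t_{\mathrm g}$ and matches it with $g(0)=e^{\mu t_{\mathrm g}}\sin t_{\mathrm g}$ from \eqref{eq:z_planar_solution}; both routes are valid.
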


\begin{proof}
	\PN Let \(t=\pi+\varpi\), with \(\varpi\downarrow0\). From \eqref{eq:xy_of_t},
	\[
	y_0(t)=\frac{\zeta_{\mu}(1+k)}{\varpi}+O(1),
	\qquad
	x_0(t)=\frac{\zeta_{\mu}(1+k^{-1})}{\varpi}+O(1),
	\qquad
	\frac{y_0(t)}{x_0(t)}\to k.
	\]
	Substitution into \eqref{eq:H_of_t} gives \(\eta(t)\to k/(1-k+k^2)=H_{\mathrm{crit}}(\mu)\).
	
	\PN As \(t\uparrow t_{\mathrm g}(\mu)\), Lemma~\ref{lem:param_properties} gives \(y_0(t)\to0\). Using \(\Phi_{\mu}(t_{\mathrm g}(\mu))=\mu\) in \eqref{eq:xy_of_t} gives \(x_0(t)\to-e^{\mu t_{\mathrm g}}\sin t_{\mathrm g}\). On the other hand, \eqref{eq:z_planar_solution} at \(y_0=0\) gives \(g(0)=e^{\mu t_{\mathrm g}}\sin t_{\mathrm g}\). Hence \(x_0(t)\to-g(0)=x_{\mathrm g}(\mu)\), and \eqref{eq:H_of_t} together with Lemma~\ref{lem:grazing_data} yields \(\eta(t)\to H_{\mathrm g}(\mu)\).
\end{proof}

\PN The strict monotonicity of \(\eta\) and the endpoint limits above now determine the parameter range and complete the half-period parametrization.

\begin{proposition}\label{prop:eta_bijection}
	\PN For each fixed \(\mu>0\), the function \(\eta\) defined by~\eqref{eq:H_of_t} is a strictly decreasing bijection
	\[
	\eta:(\pi,t_{\mathrm g}(\mu))
	\longrightarrow
	\bigl(H_{\mathrm g}(\mu),H_{\mathrm{crit}}(\mu)\bigr).
	\]
	In particular,
	\[
	0<H_{\mathrm g}(\mu)<H_{\mathrm{crit}}(\mu)<1.
	\]
\end{proposition}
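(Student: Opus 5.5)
The plan is to combine the three preceding lemmas. Continuity of \(\eta\) on \((\pi,t_{\mathrm g}(\mu))\) is immediate, since by Lemma~\ref{lem:param_properties} the denominator in \eqref{eq:H_of_t} is positive and \(x_0,y_0\) are real-analytic there (\(\sin t\neq0\)). Lemma~\ref{lem:eta_monotone} gives strict decrease, so \(\eta\) is injective. Its image is therefore an open interval whose endpoints are the one-sided limits at the ends of the domain. By Lemma~\ref{lem:edge_values}, the limit as \(t\downarrow\pi\) is \(H_{\mathrm{crit}}(\mu)\) and the limit as \(t\uparrow t_{\mathrm g}(\mu)\) is \(H_{\mathrm g}(\mu)\).

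Since \(\eta\) is decreasing, \(\eta(t)\) lies strictly between these limits for every interior \(t\). Pick any such \(t\) to get \(H_{\mathrm g}(\mu)<\eta(t)<H_{\mathrm{crit}}(\mu)\), which gives the strict inequality \(H_{\mathrm g}<H_{\mathrm{crit}}\). The intermediate value theorem then shows that every value of \(\bigl(H_{\mathrm g}(\mu),H_{\mathrm{crit}}(\mu)\bigr)\) is attained, so \(\eta\) is a bijection onto \(\mathcal I_\mu\).

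The outer inequalities are checked directly. Positivity: \(H_{\mathrm g}(\mu)=e^{-2\mu t_{\mathrm g}(\mu)}>0\) by \eqref{eq:H_g_definition}. Upper bound: \(\mu>0\) gives \(\cosh(\pi\mu)>1\), so \(2\cosh(\pi\mu)-1>1\) and hence \(H_{\mathrm{crit}}(\mu)<1\). Alternatively, both bounds follow from \(0<\eta<1\) in Lemma~\ref{lem:param_properties}, but that lemma only yields non-strict bounds for the limits, so the direct check is the cleaner route for the strict ones.

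There is essentially no obstacle here; the analytic work sits in Lemmas~\ref{lem:eta_monotone} and~\ref{lem:edge_values}. The one point to state explicitly is that strict monotonicity forces the interior values to lie strictly inside \((H_{\mathrm g},H_{\mathrm{crit}})\). This is what makes the interval nondegenerate and rules out the endpoints themselves being attained.
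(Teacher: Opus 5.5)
Your proposal is correct and follows the paper's proof essentially step for step: continuity from the explicit formula with positive denominator (Lemma~\ref{lem:param_properties}), strict decrease from Lemma~\ref{lem:eta_monotone}, the endpoint limits from Lemma~\ref{lem:edge_values}, and the bounds \(0<H_{\mathrm g}(\mu)\) and \(H_{\mathrm{crit}}(\mu)<1\) read directly off \eqref{eq:H_g_definition} and \eqref{eq:H_crit_definition}. You also state explicitly that strict monotonicity keeps interior values strictly between the two limits, which is what gives \(H_{\mathrm g}(\mu)<H_{\mathrm{crit}}(\mu)\); the paper leaves this step implicit.
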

\begin{proof}
	\PN Lemma~\ref{lem:eta_monotone} shows that \(\eta\) is strictly decreasing on \((\pi,t_{\mathrm g}(\mu))\). Moreover, the explicit formula~\eqref{eq:H_of_t}, together with the positivity of its denominator from Lemma~\ref{lem:param_properties}, shows that \(\eta\) is continuous. By Lemma~\ref{lem:edge_values},
	\[
	\lim_{t\downarrow\pi}\eta(t)=H_{\mathrm{crit}}(\mu),
	\qquad
	\lim_{t\uparrow t_{\mathrm g}(\mu)}\eta(t)=H_{\mathrm g}(\mu).
	\]
	Hence \(H_{\mathrm g}(\mu)<H_{\mathrm{crit}}(\mu)\) and
	\[
	\eta\bigl((\pi,t_{\mathrm g}(\mu))\bigr)
	=
	\bigl(H_{\mathrm g}(\mu),H_{\mathrm{crit}}(\mu)\bigr).
	\]
	Strict decrease gives injectivity, while the displayed image gives surjectivity. Finally, \eqref{eq:H_g_definition} and \eqref{eq:H_crit_definition} show that both endpoints belong to \((0,1)\).
\end{proof}
\subsection{Exclusion of asymmetric periodic orbits}\label{subsec:no_twocycle}

\PN The preceding subsection completely determines the fixed points of \(F\) that generate the symmetric simple-period family. The full periodicity problem, however, is initially
\[
\Pi(p)=p,
\]
or equivalently \(F^2(p)=p\) on the corresponding crossing domains. Thus the symmetric classification is complete only if \(F^2(p)=p\) implies \(F(p)=p\).

\PN Indeed, let \(\gamma\) be a crossing periodic orbit of simple period, with consecutive crossings \(p_0\in\mathcal D\) and \(p_1\in S(\mathcal D)\), and set \(q_0:=S(p_1)=F(p_0)\). By equivariance, the return from \(p_1\) to \(p_0\) gives \(F(q_0)=p_0\), and hence \(F^2(p_0)=p_0\). Conversely, a two-cycle of \(F\) produces, by equivariance, the corresponding simple-period crossing orbit. Moreover, Proposition~\ref{prop:symmetric_closure} gives
\[
\gamma\text{ symmetric}
\quad\Longleftrightarrow\quad
F(p_0)=p_0.
\]
Thus asymmetric simple-period periodic orbits correspond to two-cycles of \(F\). It remains to show that such two-cycles do not exist.

\PN To exclude two-cycles of \(F\), the two corresponding crossing equations will be compared after separating the flight factor from the chord map. For \(y\geq0\), let \(g\) be the planar chord map of Definition~\ref{def:chord_map} and set
\[
h(y):=-\frac{g(y)}{\mathcal E(y)},
\qquad
m(y):=-g(y)+\mathcal E(y)y.
\]
Since \(t^X\) and \(g\) are real-analytic and \(Q_{\mu}>0\), the functions \(\mathcal E\), \(h\), and \(m\) are real-analytic on \((0,\infty)\). The factor relation \eqref{eq:factor_relations} gives the identity for \(\mathcal E\). The functions \(m\) and \(h\) are chosen so that subtraction of the two-cycle equations separates their dependence on \(H\) into a difference of \(m\) and the remaining crossed terms into a difference of \(h\). This reduction allows the mean value theorem to convert the two-cycle condition into a comparison between \(m'\) and \(h'\). Since \(g(y)<0\), one also has \(h(y)>0\). Equation~\eqref{eq:F_explicit} gives the corresponding reduced map without changing its crossing domain.

\PN By Lemma~\ref{lem:chord_monotone}, \(g'(y)<0\) and \((t^X)'(y)<0\) for \(y>0\). Since \(\mathcal E(y)=e^{-2\mu t^X(y)}\), one has \(\mathcal E'(y)>0\). Therefore \(m'(y)=-g'(y)+\mathcal E'(y)y+\mathcal E(y)>0\), so \(m\) is strictly increasing on \((0,\infty)\).

\PN The monotonicity above and the quantitative bounds of Lemma~\ref{lem:chord_amplitude} provide the estimate for \(h'\). The next lemma supplements this estimate with the uniform scalar bound needed to close the two-cycle argument.

\begin{lemma}\label{lem:twocycle_estimates}
	\PN For \(Z\in\CLzero\) and every \(y>0\),
	\[
	\mathcal E(y)(-h'(y))
	<
	\beta_{\mu}\frac{-g(y)}{Q_{\mu}(y)}.
	\]
	Moreover, if
	\[
	\mathcal K(\mu)
	:=
	2\mu^2\zeta_{\mu}e^{-2\pi \mu}
	+
	\frac{2\mu}{\sqrt{1+\mu^2}}
	e^{-\mu(2\pi-t_{\mathrm g}(\mu))},
	\]
	then \(\mathcal K(\mu)<1\) for every \(\mu>0\).
\end{lemma}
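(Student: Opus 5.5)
The plan is to prove the two assertions separately: the inequality for \(h'\) by a direct derivative computation reduced to one flight-time parameter, and the bound \(\mathcal K(\mu)<1\) by an elementary one-variable analysis driven by a lower bound on \(2\pi-t_{\mathrm g}(\mu)\).

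\emph{The estimate for \(h'\).} I would differentiate \(h=-g/\mathcal E\) using only Lemma~\ref{lem:chord_monotone}. That lemma gives \(g'=y/(\mathcal E g)\). The same proof gives \((t^X)'=-e^{\mu t}\sin t/g\) with \(t=t^X(y)\), and hence \(\mathcal E'=-2\mu\mathcal E\,(t^X)'=2\mu\mathcal E e^{\mu t}\sin t/g\). Substituting these yields
\[
\mathcal E(y)\bigl(-h'(y)\bigr)=\frac{y}{\mathcal E(y)g(y)}+2\mu e^{\mu t}\lvert\sin t\rvert .
\]
Here the first term is strictly negative, because \(y>0\) and \(g<0\). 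It therefore suffices to show
\[
2\mu e^{\mu t}\lvert\sin t\rvert+\frac{y}{\mathcal E g}\le\beta_{\mu}\frac{-g}{Q_{\mu}(y)}=2\mu\zeta_{\mu}\frac{-g}{Q_{\mu}(y)} .
\]
If needed, I would drop the negative term and prove the stronger inequality \(e^{\mu t}\lvert\sin t\rvert\,Q_{\mu}(y)\le\zeta_{\mu}(-g)\).

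To check this, I would parametrize by \(t\in(\pi,t_{\mathrm g})\), exactly as in \eqref{eq:xy_of_t}. Then \(y=y_0(t)\) and \(-g=x_0(t)=\zeta_{\mu}(e^{\mu t}-\cos t)/\lvert\sin t\rvert+\mu\zeta_{\mu}\). I would also use \(Q_{\mu}(y)=e^{-2\mu t}Q_{\mu}(g)\) from \eqref{eq:factor_relations}. After multiplying by \(\sin^2 t\), everything becomes an explicit expression in \(e^{\pm\mu t}\), \(\cos t\) and \(\sin t\). I expect this trigonometric–exponential inequality to be the main obstacle. My first attempt would be to write \(Q_{\mu}(y)=\zeta_{\mu}^2+(y+\mu\zeta_{\mu})^2\) with \(y+\mu\zeta_{\mu}=\zeta_{\mu}\Phi_{\mu}(t)\), and to use the monotonicity argument from Lemma~\ref{lem:Phi_properties}, namely that \(e^{-\mu t}(\cos t+\mu\sin t)\) is increasing. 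If the crude version fails, I would keep the term \(y/(\mathcal E g)\), which is of the same order as \(y\), and compare coefficients of \(y^2\) and \(y\) separately.

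\emph{The bound \(\mathcal K(\mu)<1\).} The first term is at most \(\mu\cdot\tfrac{2\mu}{1+\mu^2}e^{-2\pi\mu}\le\mu e^{-2\pi\mu}\le 1/(2\pi e)\). For the second term I would prove a lower bound on \(s_{\mathrm g}:=2\pi-t_{\mathrm g}(\mu)\). Since \(\Phi_{\mu}\) is decreasing, \(t_{\mathrm g}\le 2\pi-s_0\) holds as soon as \(\Phi_{\mu}(2\pi-s_0)\le\mu\). This inequality is equivalent to
\[
e^{-\mu(2\pi-s_0)}\ge\cos s_0-\mu\sin s_0 .
\]
The right-hand side is nonpositive once \(\tan s_0\ge1/\mu\). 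So the choice \(s_0=\operatorname{arccot}\mu\) works, and more generally one may take \(s_0\) slightly smaller when \(\mu\) is small. This gives
\[
\frac{2\mu}{\sqrt{1+\mu^2}}\,e^{-\mu s_{\mathrm g}}\le\frac{2\mu}{\sqrt{1+\mu^2}}\,e^{-\mu\operatorname{arccot}\mu}.
\]
I would then verify that the sum of the two bounds stays below \(1\) by splitting the range of \(\mu\). For small \(\mu\), the prefactor \(2\mu\) makes both terms small. For large \(\mu\), \(\operatorname{arccot}\mu\) is close to \(\pi/2\), but the crude bound then gives only about \(2e^{-1}\) for the second term. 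There I would instead use \(t_{\mathrm g}\to\pi\), i.e.\ \(s_{\mathrm g}\ge\pi-\arctan(1/\mu)\), which follows from \(\tan t_{\mathrm g}\approx1/\mu\) with \(\cos t_{\mathrm g}<0\). On the intermediate range I would bound the resulting one-variable function by elementary calculus.
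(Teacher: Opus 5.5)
Both halves have a genuine gap. For the estimate on $h'$, your formula $\mathcal E(-h')=y/(\mathcal E g)+2\mu e^{\mu t}\lvert\sin t\rvert$ is correct. The ``stronger inequality'' $e^{\mu t}\lvert\sin t\rvert\,Q_{\mu}(y)\le\zeta_{\mu}(-g)$ that you would try first is false for every $y>0$. In the half-period parametrization, $Q_{\mu}(y_0)=\zeta_{\mu}^2\bigl(1+\Phi_{\mu}(t)^2\bigr)$ and $x_0+y_0=\zeta_{\mu}(e^{\mu t}+e^{-\mu t}-2\cos t)/\lvert\sin t\rvert$. These give the exact identity
\[
e^{\mu t}\lvert\sin t\rvert\,Q_{\mu}(y)=\zeta_{\mu}\bigl(y-g(y)\bigr),\qquad t=t^X(y),
\]
which is equivalent to the formula for $\mathcal E'$ in the proof of Lemma~\ref{lem-global-sector-quotient}. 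So the negative term $y/(\mathcal E g)$ cannot be dropped. With $x=-g(y)$ and $\mathcal E=Q_{\mu}(y)/Q_{\mu}(-x)$, the claim is exactly equivalent to $\beta_{\mu}x<Q_{\mu}(-x)$, i.e.\ $x^2-2\beta_{\mu}x+\zeta_{\mu}>0$. This is not automatic, since the discriminant has the sign of $3\mu^2-1$. What closes it is the amplitude bound $-g(y)>x_{\mathrm g}(\mu)>2\beta_{\mu}$ of Lemma~\ref{lem:chord_amplitude}, which your plan never invokes. The paper reaches the same point without trigonometry. It writes $h=-g\,Q_{\mu}(g)/Q_{\mu}(y)$ and differentiates logarithmically using only $g'=y/(\mathcal E g)$. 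The $y$-term then carries the factor $g^2+2\beta_{\mu}g+\zeta_{\mu}$, which is positive by that bound and is discarded. ``Comparing coefficients of $y^2$ and $y$'' cannot replace this step, because $g$, $\mathcal E$ and $t$ all vary with $y$.

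For $\mathcal K(\mu)<1$, your first-term bound $1/(2\pi e)$ is right, and so is the idea of bounding $2\pi-t_{\mathrm g}$ from below. But your criterion is sign-flipped. Since $\sin(2\pi-s_0)=-\sin s_0<0$, the condition $\Phi_{\mu}(2\pi-s_0)\le\mu$ is equivalent to $e^{-\mu(2\pi-s_0)}\le\cos s_0+\mu\sin s_0$. Under your version every $s_0$ near $\pi$ would qualify, forcing $t_{\mathrm g}$ near $\pi$ for all $\mu$. With the correct criterion, $s_0=\operatorname{arccot}\mu$ fails for small $\mu$; in fact $2\pi-t_{\mathrm g}(\mu)\to0$ as $\mu\to0^+$. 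Your large-$\mu$ substitute is reversed: $\cos t_{\mathrm g}-\mu\sin t_{\mathrm g}=e^{-\mu t_{\mathrm g}}>0$ forces $t_{\mathrm g}>\pi+\arctan(1/\mu)$ for every $\mu>0$, i.e.\ $2\pi-t_{\mathrm g}<\pi-\arctan(1/\mu)$. (Also, $\operatorname{arccot}\mu\to0$, not $\pi/2$, as $\mu\to\infty$.) So no valid lower bound is established for $\mu$ beyond roughly $1/2$, where the trivial bound $e^{-\mu(2\pi-t_{\mathrm g})}<1$ is too weak. The missing choice is $s_0=2\arctan\mu$, for which $\cos s_0+\mu\sin s_0=1>e^{-\mu(2\pi-s_0)}$. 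The corrected criterion then holds strictly and gives $\mu(2\pi-t_{\mathrm g})>2\mu\arctan\mu\ge2r^2$ with $r=\mu/\sqrt{1+\mu^2}$. Hence the second term of $\mathcal K$ is below $2re^{-2r^2}\le e^{-1/2}$ uniformly in $\mu$, and no case splitting is needed.
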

\begin{proof}
\PN Since \(h(y)=-g(y)Q_{\mu}(g(y))/Q_{\mu}(y)\), logarithmic differentiation, together with Lemma~\ref{lem:chord_monotone}, gives
\[
\frac{h'(y)}{h(y)}
=
\frac{1}{Q_{\mu}(y)}
\left[
y\left(
1+\frac{2\beta_{\mu}}{g(y)}
+\frac{\zeta_{\mu}}{g(y)^2}
\right)
-\beta_{\mu}
\right].
\]
By Lemma~\ref{lem:chord_amplitude}, \(-g(y)>2\beta_{\mu}\), and hence \(g(y)^2+2\beta_{\mu}g(y)+\zeta_{\mu}>0\). Therefore the term multiplied by \(y\) above is positive, and
\[
-h'(y)
=
\frac{h(y)}{Q_{\mu}(y)}
\left[
\beta_{\mu}
-
y\left(
1+\frac{2\beta_{\mu}}{g(y)}
+\frac{\zeta_{\mu}}{g(y)^2}
\right)
\right]
<
\beta_{\mu}\frac{h(y)}{Q_{\mu}(y)}.
\]
Multiplying by \(\mathcal E(y)>0\) and using \(\mathcal E(y)h(y)=-g(y)\) gives the first inequality stated in Lemma~\ref{lem:twocycle_estimates}.

\PN It remains to prove the bound for \(\mathcal K(\mu)\). Set \(\theta_{\mathrm g}=2\pi-t_{\mathrm g}(\mu)\in(0,\pi)\). The defining equation for \(t_{\mathrm g}(\mu)\) becomes
\[
\cos\theta_{\mathrm g}+\mu\sin\theta_{\mathrm g}
=
e^{-\mu(2\pi-\theta_{\mathrm g})}.
\]
Set \(\theta_0:=2\arctan \mu\) and \(\psi(\theta):=\cos\theta+\mu\sin\theta-e^{-\mu(2\pi-\theta)}\). Since \(\cos\theta_0+\mu\sin\theta_0=1\),
\[
\psi(\theta_0)
=
1-e^{-\mu(2\pi-\theta_0)}
>0,
\qquad
\psi(\pi)
=
-1-e^{-\pi \mu}
<0.
\]
By uniqueness of the grazing root, \(\theta_{\mathrm g}>\theta_0=2\arctan \mu\). The first term of \(\mathcal K(\mu)\) satisfies
\[
2\mu^2\zeta_{\mu}e^{-2\pi \mu}
=
\frac{2\mu^2}{1+\mu^2}e^{-2\pi \mu}
<
2\mu^2e^{-2\pi \mu}
\leq
\frac{2}{\pi^2e^2}
<
\frac1{18}.
\]
For the second term, using \(\theta_{\mathrm g}>2\arctan \mu\), \(\arctan \mu\geq \mu/(1+\mu^2)\), and \(r:=\mu/\sqrt{1+\mu^2}\in(0,1)\), this gives
\[
\frac{2\mu}{\sqrt{1+\mu^2}}e^{-\mu\theta_{\mathrm g}}
<
\frac{2\mu}{\sqrt{1+\mu^2}}e^{-2\mu\arctan \mu}
\leq
2re^{-2r^2}
\leq
e^{-1/2}
<
\frac23,
\]
where \(2re^{-2r^2}\) attains its maximum at \(r=1/2\). Therefore
\[
\mathcal K(\mu)
<
\frac1{18}+\frac23
<1.
\]
\end{proof}
\PN The bounds in Lemma~\ref{lem:twocycle_estimates} make the balance equations for a nontrivial two-cycle incompatible. This rules out the remaining asymmetric simple-period alternative.

\begin{proposition}\label{prop:all_simple_symmetric}
\PN For every \(Z\in\CLzero\), the reduced map \(F\) has no periodic orbit of minimal period two. Consequently, every crossing periodic orbit of \(Z\) of simple period is symmetric.
\end{proposition}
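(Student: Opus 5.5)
Suppose, to reach a contradiction, that \(F\) has a two-cycle \(p_1=(x_1,y_1,0)\), \(p_2=(x_2,y_2,0)\) in \(\mathcal D\) with \(F(p_1)=p_2\), \(F(p_2)=p_1\) and \(p_1\neq p_2\). The plan is to rewrite the two fixed-point equations using \(m\) and \(h\), subtract them, and apply the mean value theorem. This turns the two-cycle condition into a scalar identity in which one side exceeds \(1\) and the other is bounded by \(\mathcal K(\mu)<1\), using Lemma~\ref{lem:twocycle_estimates}.

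\textbf{Rewriting the cycle equations.} By \eqref{eq:F_explicit} the cycle gives
\[
x_2=-g(y_1),\quad y_2=-Hg(y_1)-\mathcal E(y_1)(x_1-Hy_1),
\]
\[
x_1=-g(y_2),\quad y_1=-Hg(y_2)-\mathcal E(y_2)(x_2-Hy_2).
\]
First, \(y_1\neq y_2\): otherwise the first components give \(x_1=x_2\), hence \(p_1=p_2\).

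Next, substitute \(x_1=-g(y_2)\) into the second equation and \(x_2=-g(y_1)\) into the fourth. Write \(\mathcal E_j=\mathcal E(y_j)\). This gives
\[
y_2=H\,m(y_1)+\mathcal E_1 g(y_2),\qquad y_1=H\,m(y_2)+\mathcal E_2 g(y_1).
\]

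\textbf{Subtracting and applying the mean value theorem.} The crossed terms factor as
\[
\mathcal E_1 g(y_2)-\mathcal E_2 g(y_1)=\mathcal E_1\mathcal E_2\bigl(h(y_1)-h(y_2)\bigr).
\]
Subtracting the two equations and dividing by \(y_1-y_2\neq0\), the mean value theorem yields points \(s,\sigma\) strictly between \(y_1\) and \(y_2\) with
\[
1+H\,m'(s)=\mathcal E_1\mathcal E_2\,\bigl(-h'(\sigma)\bigr).
\]
Since \(H>0\) and \(m'>0\) on \((0,\infty)\), the left side is strictly greater than \(1\).

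\textbf{Bounding the right side by \(\mathcal K(\mu)\).} This is the delicate step: the \(\mathcal E\) factors are evaluated at \(y_1\) and \(y_2\), while \(h'\) is evaluated at \(\sigma\). Since \(\mathcal E\) is increasing (Lemma~\ref{lem:chord_monotone}) and \(\sigma\) lies between \(y_1\) and \(y_2\), we have \(\mathcal E(\sigma)\geq\min(\mathcal E_1,\mathcal E_2)\). Hence
\[
\mathcal E_1\mathcal E_2\le \max(\mathcal E_1,\mathcal E_2)\,\mathcal E(\sigma).
\]
Because \(t^X>\pi\), \(\mathcal E<e^{-2\pi\mu}\) everywhere. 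Combining this with the first inequality of Lemma~\ref{lem:twocycle_estimates} and then Lemma~\ref{lem:chord_amplitude} gives
\[
\mathcal E_1\mathcal E_2\bigl(-h'(\sigma)\bigr)<e^{-2\pi\mu}\,\beta_\mu\frac{-g(\sigma)}{Q_\mu(\sigma)}\le e^{-2\pi\mu}\Bigl(\frac{\beta_\mu^2}{2\zeta_\mu}+\frac{\beta_\mu}{\sqrt{H_{\mathrm g}\zeta_\mu}}\Bigr).
\]
Now use \(\beta_\mu^2/(2\zeta_\mu)=2\mu^2\zeta_\mu\), \(\beta_\mu/\sqrt{\zeta_\mu}=2\mu/\sqrt{1+\mu^2}\) and \(H_{\mathrm g}^{-1/2}=e^{\mu t_{\mathrm g}}\). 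The bound becomes exactly \(\mathcal K(\mu)\), which is less than \(1\) by Lemma~\ref{lem:twocycle_estimates}. This contradicts the left side being greater than \(1\), so \(F\) has no two-cycle.

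\textbf{Consequence for simple-period orbits.} Let \(\gamma\) be a crossing periodic orbit of simple period. By the discussion preceding the proposition, its upward crossing point satisfies \(F^2(p_0)=p_0\). Since two-cycles are excluded, \(F(p_0)=p_0\), and Proposition~\ref{prop:symmetric_closure} shows that \(\gamma\) is symmetric.
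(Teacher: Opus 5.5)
Your proof is correct and follows the paper's argument essentially step for step: the same rewriting via \(m\) and \(h\), subtraction, mean value theorem, and bound by \(\mathcal K(\mu)\) through Lemmas~\ref{lem:twocycle_estimates} and~\ref{lem:chord_amplitude}. The only difference is cosmetic: you use the symmetric comparison \(\mathcal E_1\mathcal E_2\le\max(\mathcal E_1,\mathcal E_2)\,\mathcal E(\sigma)\) where the paper orders \(a<b\) and factors the product as \(\frac{\mathcal E(a)}{\mathcal E(\xi_2)}\,\mathcal E(b)\,\mathcal E(\xi_2)\). Multiplying your comparison by \(-h'(\sigma)\) needs \(-h'(\sigma)>0\), which you should state; it follows at once from your identity \(\mathcal E_1\mathcal E_2(-h'(\sigma))=1+Hm'(s)>0\).
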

\begin{proof}
\PN Suppose that \(F\) has a periodic orbit of minimal period two,
\[
F(p_0)=q_0,
\qquad
F(q_0)=p_0,
\]
with \(p_0,q_0\in\mathcal D\). Let \(a>0\) and \(b>0\) be their second coordinates. The first components give
\[
p_0=(-g(b),a,0),
\qquad
q_0=(-g(a),b,0),
\]
while the second components give
\begin{equation}\label{eq:twocycle_system}
\begin{aligned}
b&=-Hg(a)-\mathcal E(a)\bigl(-g(b)-Ha\bigr),\\
a&=-Hg(b)-\mathcal E(b)\bigl(-g(a)-Hb\bigr).
\end{aligned}
\end{equation}
If \(a=b\), then \(p_0=q_0\). After interchanging the points if necessary, assume \(a<b\).

\PN By the definition of \(m\), system~\eqref{eq:twocycle_system} becomes
\[
Hm(a)=b-\mathcal E(a)g(b),
\qquad
Hm(b)=a-\mathcal E(b)g(a).
\]
Using \(h=-g/\mathcal E\) and subtracting,
\[
(b-a)
+
\mathcal E(a)\mathcal E(b)\bigl(h(b)-h(a)\bigr)
=
-H\bigl(m(b)-m(a)\bigr).
\]
Applying the mean value theorem separately to \(m\) and \(h\), there exist
\(\xi_1,\xi_2\in(a,b)\) such that
\[
m(b)-m(a)=m'(\xi_1)(b-a),
\qquad
h(b)-h(a)=h'(\xi_2)(b-a).
\]
Substituting these identities into the preceding equation and dividing by
\(b-a>0\) gives
\begin{equation}\label{eq:twocycle_balance}
\mathcal E(a)\mathcal E(b)\bigl(-h'(\xi_2)\bigr)
=
1+Hm'(\xi_1)>1,
\end{equation}
since \(H>0\) and \(m'>0\).

\PN Since \(\mathcal E\) is strictly increasing and \(a<\xi_2\),
\[
0<
\frac{\mathcal E(a)}{\mathcal E(\xi_2)}
<1.
\]
Moreover, \(b>0\) and Lemma~\ref{lem:first_return} give \(t^X(b)>\pi,\) and \(\mathcal E(b)<e^{-2\pi \mu}.\)
Using Lemma~\ref{lem:twocycle_estimates},
\[
\begin{aligned}
\mathcal E(a)\mathcal E(b)\bigl(-h'(\xi_2)\bigr)
=
\frac{\mathcal E(a)}{\mathcal E(\xi_2)}
\mathcal E(b)
\mathcal E(\xi_2)\bigl(-h'(\xi_2)\bigr)&<
\frac{\mathcal E(a)}{\mathcal E(\xi_2)}
\mathcal E(b)\beta_{\mu}
\frac{-g(\xi_2)}{Q_{\mu}(\xi_2)}\\
&<
\mathcal E(b)\beta_{\mu}
\frac{-g(\xi_2)}{Q_{\mu}(\xi_2)}.
\end{aligned}
\]
Lemma~\ref{lem:chord_amplitude} therefore yields
\[
\mathcal E(a)\mathcal E(b)\bigl(-h'(\xi_2)\bigr)
<
e^{-2\pi \mu}\beta_{\mu}
\left(
\frac{\beta_{\mu}}{2\zeta_{\mu}}
+
\frac{1}{\sqrt{H_{\mathrm g}(\mu)\zeta_{\mu}}}
\right)=
\mathcal K(\mu)
<1,
\]
where the equality follows from \(\beta_{\mu}=2\mu\zeta_{\mu}\) and \(H_{\mathrm g}(\mu)=e^{-2\mu t_{\mathrm g}(\mu)}\), while the final inequality follows from Lemma~\ref{lem:twocycle_estimates}.
This contradicts \eqref{eq:twocycle_balance}. Hence \(F\) has no periodic orbit of minimal period two. The correspondence established above then implies that every crossing periodic orbit of simple period is symmetric.
\end{proof}
\PN Proposition~\ref{prop:all_simple_symmetric} excludes points of minimal period two of \(F\), so \(F^2(p)=p\) reduces to \(F(p)=p\) in the simple-period crossing class. The fixed-point parametrization therefore covers every orbit in that class and gives part~\textup{(a)} of Theorem~\ref{teo:main1}.

\begin{proof}[Proof of part~\textup{(a)} of Theorem~\ref{teo:main1}]
	\PN Proposition~\ref{prop:symmetric_family} gives, for every \(t\in(\pi,t_{\mathrm g}(\mu))\) with \(H=\eta(t)\), a symmetric crossing periodic orbit with crossing points \(p_0(t)\) and \(S(p_0(t))\) and period \(2t\). Proposition~\ref{prop:eta_bijection} shows that \(\eta:(\pi,t_{\mathrm g}(\mu))\to\mathcal I_{\mu}\) is a bijection, so each \(H\in\mathcal I_{\mu}\) determines one member of this symmetric family.
	
\PN Conversely, Proposition~\ref{prop:all_simple_symmetric} shows that every crossing periodic orbit of simple period is symmetric. Let \(p_0=(x_0,y_0,0)\in\mathcal D\) be its upward crossing point and set \(t=t^X(y_0)\). Symmetry gives \(\Pi_X(p_0)=S(p_0)\), hence \(g(y_0)=-x_0\). The planar return relations then give \(y_0=y_0(t)\) and \(x_0=x_0(t)\), while Lemma~\ref{lem:closure_identity} and \eqref{eq:H_of_t} give \(H=\eta(t)\). Thus every crossing periodic orbit of simple period belongs to the family of Proposition~\ref{prop:symmetric_family}. Since \(\eta\) is bijective, for each \(H\in\mathcal I_{\mu}\) there is a unique \(t=\eta^{-1}(H)\). The formulas in~\eqref{eq:xy_of_t} then determine a unique upward crossing point \(p_0(t)\), and uniqueness of the sidewise flows determines a unique periodic orbit.
\end{proof}


\section{Stability}\label{sec:stability}

\PN The exclusion of asymmetric simple-period cycles completes the classification, so the stability analysis can now be carried out along the fixed-point family of the reduced map \(F\). Fix \(\mu>0\) and \(t\in(\pi,t_{\mathrm g}(\mu))\), set \(H=\eta(t)\), and consider the corresponding crossing periodic orbit with crossing point \(p_0=p_0(t)\). Recall from~\eqref{eq:dif_reduced_map} that \(N(t)=DF(p_0(t))\). The argument \(t\) is suppressed whenever no ambiguity can arise.

\PN For a crossing periodic orbit of simple period, the two nontrivial Floquet multipliers are the eigenvalues of the derivative of the Poincar\'e return map \cite{guckenheimer1983}. By Proposition~\ref{prop:F_square},
\[
D\Pi(p_0)=N^2.
\]
Let \(\lambda_1\) and \(\lambda_2\) be the roots of the characteristic polynomial of \(N\). Then the two nontrivial Floquet multipliers are \(\lambda_1^2\) and \(\lambda_2^2\). Therefore orbital asymptotic stability follows once \(\operatorname{spec}(N)\subset\{z\in\mathbb C:|z|<1\}\).

\begin{proof}[Proof of part~\textup{(b)} of Theorem~\ref{teo:main1}]
\PN Equation~\eqref{eq:det_N} and Lemma~\ref{lem:param_properties} give \(0<\det N=\rho<1\), while Proposition~\ref{teo:inequalities} gives \(\det(I-N)>0\) and \(\det(I+N)>0\). Hence the Schur criterion yields \(\operatorname{spec}(N)\subset\{\lambda\in\mathbb C:|\lambda|<1\}\). Since \(D\Pi(p_0)=N^2\), the spectral mapping theorem shows that the two nontrivial Floquet multipliers are the squares of the eigenvalues of \(N\), and therefore both have modulus strictly smaller than \(1\). Thus \(\gamma_{(\mu,H)}\) is hyperbolic and orbitally asymptotically stable.
	
\PN It remains to prove isolation. Since \(D\Pi(p_0)\) is Schur, there exists an equivalent norm \(\|\cdot\|_*\) on the local section such that \(\|D\Pi(p_0)\|_*<1\). By continuity, a closed ball \(B_r:=\{p:\|p-p_0\|_*\le r\}\) can be chosen inside the domain of the local return map with
\[
\sup_{q\in B_r}\|D\Pi(q)\|_*<1.
\]
Convexity of \(B_r\), the mean-value inequality, and \(\Pi(p_0)=p_0\) give
\[
\|\Pi(p)-p_0\|_*
\le\left(\sup_{q\in B_r}\|D\Pi(q)\|_*\right)\|p-p_0\|_*
<\|p-p_0\|_*
\qquad(p\in B_r\setminus\{p_0\}).
\]
Thus \(\Pi(B_r)\subset B_r\), and the distance to \(p_0\) strictly decreases along every other orbit. Such an orbit cannot be periodic, so \(p_0\) is the only periodic point of \(\Pi\) in \(B_r\).

\PN Choose an open neighborhood \(V_0\subset B_r\) of \(p_0\) in \(\Sigma\). The two crossings of \(\gamma_{(\mu,H)}\) are transverse. After removing sufficiently small neighborhoods of the crossing points, the remaining closed subarcs of the two flight arcs are compact and disjoint from \(\Sigma\). Transverse crossing neighborhoods and finitely many sidewise flow boxes can therefore be chosen to form a tubular neighborhood \(\mathcal U\) of \(\gamma_{(\mu,H)}\) such that every periodic orbit contained in \(\mathcal U\) intersects \(V_0\), with successive intersections generated by \(\Pi\). If \(\widetilde\gamma\subset\mathcal U\) is periodic, then \(\Pi^m(p)=p\) for some \(p\in V_0\) and \(m\ge1\). Since \(V_0\subset B_r\), the preceding argument gives \(p=p_0\), and uniqueness of the sidewise flows yields \(\widetilde\gamma=\gamma_{(\mu,H)}\). Thus \(\gamma_{(\mu,H)}\) is isolated, completing the proof.
\end{proof}

\PN Proposition~\ref{prop:all_simple_symmetric} excludes asymmetric periodic orbits within the simple-period class, whereas the preceding isolation argument is local and excludes any other periodic orbit in a sufficiently small neighborhood of each classified cycle, independently of its period.

\subsection{Consequences of the reduced-map analysis}

\subsubsection{Spectral consequences}

\PN Equation~\eqref{eq:turning_point_identity}, obtained in the proof of Lemma~\ref{lem:eta_monotone}, gives a spectral interpretation of the half-period parametrization. Since \(y_0'(t)\neq0\) and \(x+\mathcal E y>0\),
\[
\eta'(t)=0
\quad\Longleftrightarrow\quad
\det(I-N)=0
\quad\Longleftrightarrow\quad
1\in\operatorname{spec}(N).
\]
Hence a turning point of \(H=\eta(t)\) corresponds to the reduced-map derivative acquiring the eigenvalue \(+1\).

\begin{corollary}\label{cor:local_bifurcation_exclusion}
\PN Fix \(\mu>0\) and \(t\in(\pi,t_{\mathrm g}(\mu))\), set \(H=\eta(t)\), and let \(\gamma_{(\mu,H)}\) be the corresponding crossing limit cycle. Then \(N=DF(p_0(t))\) has no eigenvalue on the unit circle, excluding in particular the \(+1\) turning-point condition and the \(-1\) period-doubling condition for the reduced map. Since \(D\Pi(p_0(t))=N^2\), no nontrivial Floquet multiplier lies on the unit circle, so the corresponding \(+1\), period-doubling, and Neimark--Sacker spectral conditions for the Poincar\'e map are excluded.
\end{corollary}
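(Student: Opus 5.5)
The plan is to read the corollary directly off the Schur analysis already carried out in the proof of part~(b) of Theorem~\ref{teo:main1}, and then translate it to the Poincar\'e map by the spectral mapping theorem. Fix \(\mu>0\), \(t\in(\pi,t_{\mathrm g}(\mu))\) and \(H=\eta(t)\), and let \(N=N(t)\).

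\textbf{Step 1: the spectrum of \(N\) lies in the open unit disk.} By \eqref{eq:det_N} and Lemma~\ref{lem:param_properties}, \(\det N=\rho\in(0,1)\). By Proposition~\ref{teo:inequalities}, \(\det(I-N)>0\) and \(\det(I+N)>0\). The Schur criterion for real \(2\times2\) matrices then gives \(\operatorname{spec}(N)\subset\{|z|<1\}\), exactly as in the proof of part~(b). In particular no eigenvalue of \(N\) lies on the unit circle.

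It is still worth recording the two real boundary points explicitly, since they are the bifurcation conditions named in the statement:
\begin{itemize}
\item \(+1\in\operatorname{spec}(N)\) would force \(\det(I-N)=0\), contradicting \eqref{ineq:IminusN_positive}. By \eqref{eq:turning_point_identity} this is precisely the turning-point condition \(\eta'(t)=0\).
\item \(-1\in\operatorname{spec}(N)\) would force \(\det(I+N)=0\), contradicting \eqref{ineq:IplusN_positive}. This is the period-doubling condition for \(F\).
\item A nonreal pair \(e^{\pm i\omega}\) on the circle would have product \(\det N=1\), contradicting \(\rho<1\).
\end{itemize}

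\textbf{Step 2: transfer to the Poincar\'e map.} By Proposition~\ref{prop:F_square}, \(D\Pi(p_0(t))=N^2\). The spectral mapping theorem then says the nontrivial Floquet multipliers are \(\lambda_j^2\). Since \(|\lambda_j|<1\), we get \(|\lambda_j^2|<1\), so no multiplier lies on the unit circle. This excludes the \(+1\), the \(-1\) (period-doubling), and the Neimark--Sacker (\(|\lambda_j^2|=1\), nonreal) spectral conditions.

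There is no real obstacle here. The one subtle point is that a \(-1\) eigenvalue of \(N\) would only produce a \(+1\) multiplier for \(\Pi\). This is why the statement separates "period-doubling for the reduced map" from the conditions on \(\Pi\). Strict interiority of the whole spectrum of \(N\) covers both cases at once.
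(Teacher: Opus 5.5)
Your proposal is correct and follows the paper's own argument. It takes Schur stability of \(N\) from the proof of part~(b) of Theorem~\ref{teo:main1}, via \(\det N=\rho\in(0,1)\) together with \eqref{ineq:IminusN_positive}--\eqref{ineq:IplusN_positive}, and transfers it to the Poincar\'e map through \(D\Pi(p_0(t))=N^2\) from Proposition~\ref{prop:F_square}; your explicit check of \(+1\), \(-1\), and a nonreal pair on the circle (including the link to \(\eta'(t)=0\) via \eqref{eq:turning_point_identity}) only spells out what the paper leaves implicit.
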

\begin{proof}
\PN The Schur stability of \(N\) was established in the proof of part~\textup{(b)} of Theorem~\ref{teo:main1}. Together with Proposition~\ref{prop:F_square}, which gives \(D\Pi(p_0(t))=N^2\), this yields the stated spectral exclusions.
\end{proof}

\subsubsection{Spectral behavior at the boundary}\label{subsec:edges}

\PN We now examine the spectra at both ends of the half-period interval. At the upper boundary \(t\downarrow\pi\), retain the factor \(k\) from the half-period limits. Lemma~\ref{lem:edge_values} gives
\[
\rho\to k,
\qquad
H\to H_{\mathrm{crit}}=\frac{k}{1-k+k^2},
\qquad
x_0\to+\infty.
\]
Since \(y_0/x_0=\rho\), it follows that
\[
\frac{x_0-Hy_0}{Q_{\mu}(-x_0)}\to0.
\]
Therefore \eqref{eq:trace_N} and \eqref{eq:det_N} give
\[
\operatorname{tr}N\to1+k,
\qquad
\det N\to k.
\]
The characteristic polynomial of \(N\) consequently converges to
\[
(\lambda-1)(\lambda-k).
\]
Hence the eigenvalues of \(N\) converge to \(1\) and \(k\), while those of \(N^2\) converge to \(1\) and \(k^2\). Thus the upper boundary \(H_{\mathrm{crit}}(\mu)\) is approached through the \(+1\) spectral boundary as the amplitude becomes unbounded.

\begin{proposition}\label{prop:grazing_edge}
\PN Fix \(\mu>0\). Along the classified family \(H=\eta(t)\), let \(N(t)=DF(p_0(t))\). As \(t\uparrow t_{\mathrm g}(\mu)\),
\[
\det N(t)\to0,
\qquad
\operatorname{tr}N(t)\to
T_0:=H_{\mathrm g}(\mu)\bigl(H_{\mathrm g}(\mu)-2\mu x_{\mathrm g}(\mu)\bigr),
\qquad
T_0\in(-1,1).
\]
Consequently, the eigenvalues of \(N(t)\) converge to \(0\) and \(T_0\).
\end{proposition}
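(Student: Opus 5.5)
The plan is to pass to the limit directly in the closed formulas \eqref{eq:det_N} and \eqref{eq:trace_N} of Proposition~\ref{prop:invariants}, using the grazing limits of Lemma~\ref{lem:edge_values} and the identity of Lemma~\ref{lem:grazing_data}. Along the family we have \(\mathcal E=e^{-2\mu t}\), and this converges to \(e^{-2\mu t_{\mathrm g}(\mu)}=H_{\mathrm g}(\mu)>0\) as \(t\uparrow t_{\mathrm g}(\mu)\). Lemma~\ref{lem:edge_values} gives \(y_0(t)\to0\), \(x_0(t)\to x_{\mathrm g}(\mu)>0\) and \(H=\eta(t)\to H_{\mathrm g}(\mu)\). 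Hence \(\rho=y_0/x_0\to0\), and by \eqref{eq:det_N} we get \(\det N(t)\to0\) immediately.

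For the trace, I treat the two terms of \eqref{eq:trace_N} separately.

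The first term is \(H(\rho/\mathcal E+\mathcal E)\). Since \(\rho/\mathcal E\to0/H_{\mathrm g}=0\), it converges to \(H_{\mathrm g}^2\).

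The second term is \(\beta_\mu(1+\rho)(x_0-Hy_0)/Q_\mu(-x_0)\). It converges to \(\beta_\mu x_{\mathrm g}/Q_\mu(-x_{\mathrm g})\), by continuity of \(Q_\mu\) and positivity of \(Q_\mu\). Lemma~\ref{lem:grazing_data} gives \(Q_\mu(-x_{\mathrm g})=\zeta_\mu/H_{\mathrm g}\). Together with \(\beta_\mu=2\mu\zeta_\mu\), the second term therefore converges to \(2\mu x_{\mathrm g}H_{\mathrm g}\).

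Subtracting the two limits gives \(\operatorname{tr}N(t)\to H_{\mathrm g}^2-2\mu x_{\mathrm g}H_{\mathrm g}=T_0\).

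It remains to locate \(T_0\) in \((-1,1)\). I do not expect to obtain this by passing the Schur inequalities of Proposition~\ref{teo:inequalities} to the limit, since strict inequalities may degenerate there. Instead I will estimate \(T_0\) directly.

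For the upper bound, \(x_{\mathrm g}>0\) and \(0<H_{\mathrm g}<1\) (Proposition~\ref{prop:eta_bijection}) give \(T_0<H_{\mathrm g}^2<1\).

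For the lower bound, the key step is to show \(2\mu x_{\mathrm g}H_{\mathrm g}=\beta_\mu x_{\mathrm g}/Q_\mu(-x_{\mathrm g})<1\). This follows by repeating the estimate used in the proof of (S2). By Lemma~\ref{lem:chord_amplitude}, \(x_{\mathrm g}>2\beta_\mu\), so
\[
Q_\mu(-x_{\mathrm g})=x_{\mathrm g}(x_{\mathrm g}-\beta_\mu)+\zeta_\mu>x_{\mathrm g}\beta_\mu .
\]
Consequently \(T_0>H_{\mathrm g}^2-1>-1\). I expect this lower bound to be the only step needing care, and it reduces to the amplitude bound \(x_{\mathrm g}>2\beta_\mu\).

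Finally, the characteristic polynomial \(\lambda^2-\operatorname{tr}N(t)\,\lambda+\det N(t)\) converges coefficientwise to \(\lambda^2-T_0\lambda=\lambda(\lambda-T_0)\). Continuous dependence of the roots of a monic polynomial on its coefficients then gives convergence of the eigenvalues of \(N(t)\) to \(0\) and \(T_0\).
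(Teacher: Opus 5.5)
Your proposal is correct and follows the paper's proof essentially step for step. You pass to the limit in \eqref{eq:det_N} and \eqref{eq:trace_N} using Lemma~\ref{lem:edge_values} and \(H_{\mathrm g}=\zeta_\mu/Q_\mu(-x_{\mathrm g})\), then use \(x_{\mathrm g}>2\beta_\mu\) from Lemma~\ref{lem:chord_amplitude} to get \(0<2\mu H_{\mathrm g}x_{\mathrm g}<1\), hence \(H_{\mathrm g}^2-1<T_0<H_{\mathrm g}^2\), and finish with coefficientwise convergence of the characteristic polynomial; your explicit remark that \(\mathcal E=e^{-2\mu t}\to H_{\mathrm g}>0\), so \(\rho/\mathcal E\to0\), only spells out a step the paper leaves implicit.
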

\begin{proof}
\PN By Lemma~\ref{lem:edge_values}, \(y_0\to0\), \(x_0\to x_{\mathrm g}\), and \(H\to H_{\mathrm g}\), and hence \(\rho\to0\). Equation~\eqref{eq:det_N} therefore gives \(\det N(t)\to0\). Passing to the limit in \eqref{eq:trace_N} gives
\[
\operatorname{tr}N(t)
\to
H_{\mathrm g}^{2}
-
\frac{\beta_{\mu}x_{\mathrm g}}{Q_{\mu}(-x_{\mathrm g})}
=
H_{\mathrm g}\bigl(H_{\mathrm g}-2\mu x_{\mathrm g}\bigr)
=T_0,
\]
where \(\beta_{\mu}=2\mu\zeta_{\mu}\) and \(H_{\mathrm g}=\zeta_{\mu}/Q_{\mu}(-x_{\mathrm g})\).

\PN It remains to locate \(T_0\). By Lemma~\ref{lem:chord_amplitude}, \(x_{\mathrm g}>2\beta_{\mu}\), and therefore
\[
Q_{\mu}(-x_{\mathrm g})-\beta_{\mu}x_{\mathrm g}
=
x_{\mathrm g}^2-2\beta_{\mu}x_{\mathrm g}+\zeta_{\mu}
>0.
\]
Consequently,
\[
0<
2\mu H_{\mathrm g}x_{\mathrm g}
=
\frac{\beta_{\mu}x_{\mathrm g}}{Q_{\mu}(-x_{\mathrm g})}
<1.
\]
Since \(0<H_{\mathrm g}<1\), it follows that \(H_{\mathrm g}^2-1<T_0<H_{\mathrm g}^2\), and hence \(T_0\in(-1,1)\).

The characteristic polynomial of \(N(t)\) consequently converges to
\[
\lambda^2-T_0\lambda
=
\lambda(\lambda-T_0),
\]
so the eigenvalues of \(N(t)\) converge to \(0\) and \(T_0\).
\end{proof}

\PN Toward grazing, Proposition~\ref{prop:grazing_edge} gives eigenvalue limits \(0\) and \(T_0\), with \(|T_0|<1\). Since \(D\Pi(p_0(t))=N(t)^2\), the eigenvalues of the Poincar\'e derivative converge to \(0\) and \(T_0^2\), both strictly inside the unit disk. Thus no unit-circle spectral boundary is approached at grazing. At \(t=t_{\mathrm g}(\mu)\), however, the limiting orbit is tangent to \(\Sigma\) at \((x_{\mathrm g},0,0)\in L_X\) and \((0,-x_{\mathrm g},0)\in L_Y\), so the standard saltation matrix is no longer defined at these grazing contacts \cite{dieci2011,diBernardo2008}. The grazing boundary is therefore geometric rather than spectral.

\PN Corollary~\ref{cor:local_bifurcation_exclusion} excludes unit-circle spectrum at every interior cycle. The boundary limits show that the only unit-circle spectral boundary approached along the admissible family is \(+1\), at the infinite-amplitude limit \(t\downarrow\pi\).

\subsection{Persistence under sidewise \texorpdfstring{\(C^1\)}{C1} perturbations}\label{subsec:C1_persistence}

\PN Hyperbolicity of the classified crossing cycles also yields their local persistence under sidewise \(C^1\) perturbations. Related persistence results for crossing periodic solutions in two-zone piecewise-smooth systems are given in \cite{GouveiaLlibreNovaesPessoa2016}, while the product-space formulation for Filippov systems treats the two sidewise vector fields as independent perturbation components \cite{GomideTeixeira2020}. The corresponding continuation statement for the cycles considered here is the following.

\begin{corollary}\label{cor:C1_persistence}
	\PN Let \(Z\in\CLzero\) with \(H\in\mathcal I_{\mu}\), and let \(\gamma_{(\mu,H)}\) be the corresponding crossing limit cycle. For every sufficiently small sidewise \(C^1\) perturbation \(\widetilde Z=(\widetilde X,\widetilde Y)\) of \(Z\) on a compact neighborhood of \(\gamma_{(\mu,H)}\), with the switching manifold \(\Sigma\) kept fixed, there exists a unique crossing limit cycle \(\widetilde\gamma\) of simple period near \(\gamma_{(\mu,H)}\). Moreover, \(\widetilde\gamma\) is hyperbolic and orbitally asymptotically stable.
\end{corollary}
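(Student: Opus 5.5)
The plan is to realize \(\gamma_{(\mu,H)}\) as a hyperbolic fixed point of a return map defined on a fixed section, and then to apply the implicit function theorem to that map as a function of the perturbation. Set \(p_0=p_0(t)\) with \(t=\eta^{-1}(H)\), and \(p_1=S(p_0)\). Take small disks \(V_0\ni p_0\) and \(V_1\ni p_1\) in \(\Sigma\), chosen inside \(\Sigma^{\rm c}\). This is possible because both points lie in the open crossing region: \(x_0,y_0>0\). The unperturbed half-returns \(\Pi_X:V_0\to\Sigma\) and \(\Pi_Y:V_1\to\Sigma\) are defined by transverse hitting. Lemma~\ref{lem:first_return} gives \(\dot z(t^X)<0\) at the landing point, and equivariance gives the same for \(Y\). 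Hence, by the implicit function theorem applied to \(z(\varphi_X(\tau,p))=0\), the hitting time is \(C^1\) in \(p\).

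For a perturbation \(\widetilde X\) that is \(C^1\)-close to \(X\) on a compact neighborhood \(K\) of the upper arc, I will use continuous dependence of flows on the vector field in the \(C^1\) topology. The flow \(\varphi_{\widetilde X}\) and its spatial derivative are then \(C^0\)-close to those of \(X\) on the relevant compact time interval. The perturbed hitting time \(\widetilde\tau(p)\) is obtained from the same transversality condition, because \(\widetilde X f\) stays close to \(Xf\), which is nonzero at the landing. It follows that \(\widetilde\Pi_{\widetilde X}\) is defined on \(V_0\) and is \(C^1\)-close to \(\Pi_X\). One must also check that the perturbed arc does not touch \(\Sigma\) earlier. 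This holds because, by Lemma~\ref{lem:first_return}, \(z>0\) on the open flight interval: away from its endpoints the arc stays a positive distance from \(\Sigma\), and near its endpoints the normal velocity is bounded away from zero. The same argument applies to \(\widetilde Y\). Consequently the full return \(\widetilde\Pi=\widetilde\Pi_{\widetilde Y}\circ\widetilde\Pi_{\widetilde X}\) is \(C^1\)-close to \(\Pi=F\circ F\) on \(V_0\), by Proposition~\ref{prop:F_square}. I do not assume that \(\widetilde Z\) is symmetric, so \(F\) itself is not used for the perturbed system; only the composite return is.

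By part~(b) of Theorem~\ref{teo:main1}, \(D\Pi(p_0)=N^2\) has spectral radius less than \(1\), so \(I-D\Pi(p_0)\) is invertible. The implicit function theorem, or equivalently the contraction estimate used in the isolation proof, then produces a unique fixed point \(\widetilde p\) of \(\widetilde\Pi\) in a fixed ball \(B_r\) around \(p_0\), depending continuously on the perturbation. Concretely, I will use the adapted norm \(\|\cdot\|_*\) from the isolation proof. The bound \(\sup_{B_r}\|D\Pi\|_*<1\) persists for \(\widetilde\Pi\) by \(C^1\)-closeness, and \(\widetilde\Pi(B_r)\subset B_r\) once \(\|\widetilde\Pi(p_0)-p_0\|_*\) is small. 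The map \(\widetilde\Pi\) is therefore a contraction of \(B_r\). This gives existence and uniqueness of its fixed point in \(B_r\), and also gives \(|\mathrm{spec}\,D\widetilde\Pi(\widetilde p)|<1\). The resulting orbit \(\widetilde\gamma\) meets \(\Sigma\) exactly twice, at \(\widetilde p\) and at \(\widetilde\Pi_{\widetilde X}(\widetilde p)\), both in \(\Sigma^{\rm c}\). It is therefore a simple-period crossing cycle, and it is hyperbolic and orbitally asymptotically stable. The local isolation argument from part~(b) then shows that \(\widetilde\gamma\) is a limit cycle.

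For uniqueness near \(\gamma_{(\mu,H)}\), I reuse the tubular-neighborhood construction from the isolation proof. For small perturbations the same flow boxes remain transverse, so every periodic orbit of \(\widetilde Z\) contained in \(\mathcal U\) meets \(V_0\subset B_r\) and corresponds to a periodic point of \(\widetilde\Pi\). Contraction then forces that point to be \(\widetilde p\). I expect the main obstacle to be this \(C^1\)-closeness step for the return maps, namely excluding spurious early or tangential contacts with \(\Sigma\) uniformly over all perturbations. I will handle it as described above, by splitting each arc into compact interior pieces kept away from \(\Sigma\) and short end pieces controlled by the normal velocity.
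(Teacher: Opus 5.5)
Your proposal is correct and follows the paper's route. The paper likewise builds transverse first-crossing maps by the implicit function theorem, shows that the composite return $\widetilde\Pi_{\widetilde Y}\circ\widetilde\Pi_{\widetilde X}$ converges in $C^1$ to $\Pi$, and invokes persistence of the attracting hyperbolic fixed point $p_0$ from Theorem~\ref{teo:main1}(b); your adapted-norm contraction is an explicit version of that persistence step, and your handling of early contacts and of uniqueness via the tubular neighborhood spells out details the paper states only briefly.
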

\begin{proof}
\PN Let \(p_0=(x_0,y_0,0)\) and \(p_1=S(p_0)=(-y_0,-x_0,0)\) be the crossing points of \(\gamma_{(\mu,H)}\). Since both crossings are transverse and the interiors of the two flight arcs are disjoint from \(\Sigma\), the implicit function theorem \cite{KrantzParks2002} applied to the corresponding hitting equations gives, for every sufficiently small sidewise \(C^1\) perturbation, \(C^1\) first-crossing maps
	\[
	\widetilde\Pi_X:V_0\longrightarrow V_1,
	\qquad
	\widetilde\Pi_Y:V_1\longrightarrow V_0,
	\qquad
	\widetilde\Pi_X\longrightarrow\Pi_X,
	\qquad
	\widetilde\Pi_Y\longrightarrow\Pi_Y
	\quad\text{in }C^1.
	\]
Their composition defines the perturbed Poincar\'e map
\[
\Pi_{\widetilde Z}
=
\widetilde\Pi_Y\circ\widetilde\Pi_X
:V_0\longrightarrow V_0,
\qquad
\Pi_{\widetilde Z}\longrightarrow\Pi
\quad\text{in }C^1.
\]

\PN By Theorem~\ref{teo:main1}\textup{(b)}, \(p_0\) is an attracting hyperbolic fixed point of \(\Pi\). Persistence of hyperbolic fixed points under \(C^1\) perturbations \cite{guckenheimer1983} therefore gives a unique fixed point \(\widetilde p_0\in V_0\) near \(p_0\), with \(\widetilde p_0\to p_0\) and \(D\Pi_{\widetilde Z}(\widetilde p_0)\to D\Pi(p_0)\). For sufficiently small perturbations, \(\widetilde p_0\) remains attracting and hyperbolic.
	
\PN Set \(\widetilde p_1:=\widetilde\Pi_X(\widetilde p_0)\). Since \(\widetilde\Pi_Y(\widetilde p_1)=\widetilde p_0\), the corresponding \(\widetilde X\)- and \(\widetilde Y\)-flights form a periodic orbit \(\widetilde\gamma\). The first-crossing construction preserves transversality and the absence of intermediate intersections with \(\Sigma\), so \(\widetilde\gamma\) has simple period. Since \(\widetilde p_0\) is attracting and hyperbolic, so is \(\widetilde\gamma\), and the local uniqueness of \(\widetilde p_0\) gives the uniqueness of the nearby crossing limit cycle.
\end{proof}

\section{Global convergence in the crossing--sliding domain}\label{sec-global-crossing}

\PN The Floquet analysis establishes local attraction of the classified cycle, but does not determine the extent of its basin. For the same unperturbed zero-divergence family, we first seek a Lyapunov estimate for the reduced return dynamics and then control trajectories with sliding under \eqref{eq:nonsingular_sliding}. The discrete invariance mechanism has classical precedents \cite{LaSalle1976,Elaydi2005}; the sector estimate and centered energy used below are constructed for this system.

\subsection{Chord geometry and the restoring sector}

\PN Fix \(\mu>0\) and \(H\in\mathcal I_\mu\). The reduced map already uses the chord map \(g\), the flight factor \(\mathcal E\), and \(m(y)=-g(y)+\mathcal E(y)y\) from the two-cycle analysis of Section~\ref{sec:parametrization}. For \(y>0\), let \(t(y)\) be the inverse of \(y_0(t)\), as in the proof of Lemma~\ref{lem:eta_monotone}. Since \(g(y)<0\), one has \(m(y)>0\). At the classified fixed point \((-g(y),y)\) with \(H=\eta(t(y))\), equation~\eqref{eq:F_explicit} gives
\[
\eta(t(y))=\frac{y-\mathcal E(y)g(y)}{m(y)}.
\]
The factor \(k\), already fixed by the half-period limits, is the common upper bound in the secant and energy estimates below. Proposition~\ref{prop:eta_bijection} and Lemma~\ref{lem:param_properties} show that \(y\mapsto\eta(t(y))\) is a strictly increasing bijection from \((0,\infty)\) onto \(\mathcal I_\mu\). With \(t=\eta^{-1}(H)\), its unique solution of \(\eta(t(y))=H\) is the already classified \(y_0(t)\); on the reduced section the fixed point is represented by
\[
(x_0(t),y_0(t))=(-g(y_0(t)),y_0(t)).
\]

\PN The weighted chord \(\chi(y):=-\mathcal E(y)g(y)\), for \(y\ge0\), controls the derivatives entering the restoring force. Its strict convexity and limiting slope give the bound used below in the sector estimate. The same bound also controls chord secants weighted by the flight factor at one endpoint.

\begin{lemma}\label{lem-global-chord-energy}
\PN For \(x,y>0\) with \(x\ne y\),
\[
0<\mathcal E(x)\frac{g(y)-g(x)}{x-y}<k,
\qquad 0<-\mathcal E(y)g'(y)<k.
\]
On the chord \(x=-g(y)\),
\[
\chi'(y)=\frac{\beta_\mu x^2+y(x^2+\zeta_\mu)}{xQ_\mu(-x)}>0,
\]
\[
\chi''(y)=
\frac{\zeta_\mu(x+y)[(x-y)(x^2+\zeta_\mu)+2\beta_\mu xy]}
{x^3Q_\mu(-x)Q_\mu(y)}>0,
\qquad 0<\chi'(y)<k.
\]

\end{lemma}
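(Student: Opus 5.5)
The plan is to work on the chord $x=-g(y)$, $y>0$, and compute $\chi'$ and $\chi''$ first. After that, the two secant-type bounds will follow from convexity of $\chi$ and monotonicity of $\mathcal E$.

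\textbf{Derivatives of $\chi$.} By \eqref{eq:E_definition}, $\mathcal E(y)=Q_\mu(y)/Q_\mu(g(y))$, so
\[
\chi(y)=xQ_\mu(y)/Q_\mu(-x),\qquad x=-g(y).
\]
Lemma~\ref{lem:chord_monotone} gives
\[
x'(y)=-g'(y)=y/(\mathcal E x)=yQ_\mu(-x)/(xQ_\mu(y)).
\]
I will differentiate $\log\chi=\log x+\log Q_\mu(y)-\log Q_\mu(-x)$ and substitute this $x'$. Then I multiply by $\chi$ and simplify with $Q_\mu'(w)=2w+\beta_\mu$ and $Q_\mu(-x)=x^2-\beta_\mu x+\zeta_\mu$. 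The expected cancellation yields the stated formula
\[
\chi'=\frac{\beta_\mu x^2+y(x^2+\zeta_\mu)}{xQ_\mu(-x)}.
\]
It is positive because $x,y>0$. Differentiating this expression once more, again with $dx/dy=yQ_\mu(-x)/(xQ_\mu(y))$, should give the displayed $\chi''$ after clearing denominators. This polynomial identity is the main computational obstacle. I would organize it by writing $\chi'=A(x)+yB(x)$ with $A=\beta_\mu x/Q_\mu(-x)$ and $B=(x^2+\zeta_\mu)/(xQ_\mu(-x))$, so that
\[
\chi''=B+(A'+yB')x'.
\]
Then I would factor out $(x+y)$. Positivity follows from $x>y$: by Lemma~\ref{lem:chord_amplitude}, $x>x_{\mathrm g}>2\beta_\mu$, and a fixed-point-free version of Lemma~\ref{lem:param_properties} gives $x-y>0$ along the chord. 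I still need to check this via $Q_\mu(x)$ versus $Q_\mu(y)$. Since $Q_\mu(-x)=e^{2\mu t}Q_\mu(y)$ with $e^{2\mu t}>e^{2\pi\mu}$, it follows that $x^2>y^2+\beta_\mu(x+y)$, hence $x>y$.

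\textbf{Bound $\chi'<k$.} Since $\chi''>0$, $\chi'$ is strictly increasing, so $\chi'(y)<\lim_{y\to\infty}\chi'(y)$. As $y\to\infty$ we have $t\downarrow\pi$, and Lemma~\ref{lem:edge_values}, adapted along the chord via $\mathcal E\to k^2$, gives $y/x\to k$. The displayed formula then gives
\[
\chi'\sim\frac{yx^2}{x^3}\to k.
\]
To justify $y/x\to k$ on the chord, not only on the fixed-point family: the chord relation is exactly $x_0(t),y_0(t)$ in \eqref{eq:xy_of_t} with $t=t^X(y)$, because the fixed-point family was defined by $g(y_0)=-x_0$. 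So Lemma~\ref{lem:edge_values} applies verbatim.

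\textbf{Remaining inequalities.} For the derivative bound, $-\mathcal E(y)g'(y)=y/x$, which is positive. On the chord, $y/x$ is the function $\rho$ along $t\in(\pi,t_{\mathrm g})$. I plan to show $y/x<k$ using
\[
\mathcal E(y)=e^{-2\mu t}<k^2 \quad\text{and}\quad x^2\mathcal E>y^2,
\]
the latter from the identity $\mathcal E x^2-y^2>0$ in the proof of Proposition~\ref{teo:inequalities}, which holds along the chord. These give $y/x<\sqrt{\mathcal E}<k$.

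For the secant with $x\neq y$, the mean value theorem gives
\[
\frac{g(y)-g(x)}{x-y}=-g'(\xi),\qquad \xi\text{ between }x\text{ and }y,
\]
which is positive. If $x<\xi$, monotonicity of $\mathcal E$ gives
\[
\mathcal E(x)(-g'(\xi))<\mathcal E(\xi)(-g'(\xi))<k.
\]
If $x>\xi$, this comparison fails. In that case I would instead use
\[
\mathcal E(x)(g(y)-g(x))=\chi(x)-\mathcal E(x)(-g(y)),
\]
together with $\mathcal E(x)\le\mathcal E$-ratio bounds and convexity of $\chi$. Concretely, I expect
\[
\chi(x)-\mathcal E(x)\chi(y)/\mathcal E(y)\le\chi(x)-\chi(y)<k(x-y)
\]
when $y<x$, since $\mathcal E(x)/\mathcal E(y)\ge1$. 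This is the delicate case, and I would verify it carefully.
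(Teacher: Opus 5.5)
Your proposal is correct and follows essentially the paper's route. The derivatives of $\chi$ come from $-g'=yQ_\mu(-x)/(xQ_\mu(y))$, and positivity of $\chi''$ comes from $x-y>\beta_\mu$, i.e.\ $Q_\mu(-x)>Q_\mu(y)$. The bound $\chi'<k$ follows from monotonicity of $\chi'$ together with the limiting slope $k$, which you take from $\lim\chi'$ where the paper uses $\chi(y)/y\to k$. In the case $x>y$ you use exactly the paper's comparison $\mathcal E(x)[g(y)-g(x)]<\chi(x)-\chi(y)<k(x-y)$, which holds strictly because $\mathcal E(x)/\mathcal E(y)>1$ and $\chi(y)>0$.

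The remaining deviations are minor. You obtain $-\mathcal E(y)g'(y)=y/x<\sqrt{\mathcal E(y)}<k$ directly, whereas the paper reads it off from $-\mathcal E g'<\chi'<k$. You handle $x<y$ by the mean value theorem and monotonicity of $\mathcal E$, whereas the paper's $\chi$-secant comparison treats both orderings uniformly.
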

\begin{proof}

\PN Put \(x=-g(y)\). By \eqref{eq:E_definition}, \eqref{eq:factor_relations}, and Lemma~\ref{lem:first_return},
\[
\mathcal E(y)
=
\frac{Q_\mu(y)}{Q_\mu(-x)}
<
k^2
<
1.
\]
Therefore \(Q_\mu(-x)>Q_\mu(y)\). Since
\[
Q_\mu(-x)-Q_\mu(y)
=(x+y)(x-y-\beta_\mu),
\]
one has \(x-y>\beta_\mu\). Lemma~\ref{lem:chord_monotone} gives
\[
-g'(y)
=
\frac{yQ_\mu(-x)}{xQ_\mu(y)}.
\]
Differentiating \(\chi(y)=-\mathcal E(y)g(y)\) with this identity yields the two displayed derivative formulas. Their numerators are positive, so \(\chi\) is strictly increasing and strictly convex.

\PN The return equation in Lemma~\ref{lem:first_return} gives \(t^X(y)\downarrow\pi\) as \(y\to\infty\). Hence \(\mathcal E(y)\to k^2\), and the factor relation gives \(-g(y)/y\to1/k\). Thus \(\chi(y)/y\to k\). Strict convexity now implies \(0<\chi'(y)<k\) for every \(y>0\).

\PN It remains to compare the endpoint-weighted secants with those of \(\chi\). If \(x>y\), monotonicity of \(\mathcal E\) gives
\[
\mathcal E(x)[g(y)-g(x)]<\chi(x)-\chi(y)<k(x-y).
\]
If \(x<y\), the same argument with the endpoints reversed gives
\[
\mathcal E(x)[g(x)-g(y)]<\chi(y)-\chi(x)<k(y-x).
\]
At coincident endpoints, \(-\mathcal E(y)g'(y)<\chi'(y)<k\). Positivity follows from the strict increase of \(-g\).

\end{proof}

\PN The bound on \(\chi'\) controls one part of the restoring force. A comparison with the derivative of \(m\) is still needed to obtain a strict sector throughout \(\mathcal I_\mu\). The next lemma supplies this comparison without imposing an additional parameter condition.

\PN The restoring force is
\begin{equation}\label{eq-global-restoring}
f_H(y)=y+\chi(y)-Hm(y),\qquad y\ge0.
\end{equation}
For \(y>0\), the fixed-point identity above gives
\[
f_H(y)=m(y)\bigl(\eta(t(y))-H\bigr).
\]
Consequently, with \(t=\eta^{-1}(H)\),
\[
f_H(y)(y-y_0(t))>0\quad(y>0,\ y\ne y_0(t)),
\qquad f_H(0)=x_{\mathrm g}(H_{\mathrm g}-H)<0.
\]
Dependence of \(x_{\mathrm g}\) and \(H_{\mathrm g}\) on \(\mu\) is suppressed in this section.

\begin{lemma}\label{lem-global-sector-quotient}
\PN For \(y\ge0\), the quotient
\[
\mathscr S_k(y)=\frac{-2kg'(y)+\chi'(y)-1}{m'(y)}
\]
is strictly increasing and has limit \(H_{\mathrm{crit}}(\mu)\) as \(y\to\infty\). In particular, \(\mathscr S_k(y)<H_{\mathrm{crit}}(\mu)\) for every finite \(y\).
\end{lemma}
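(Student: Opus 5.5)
The plan is to make everything in \(\mathscr S_k\) explicit along the chord \(x=-g(y)\), so that the quotient becomes an elementary function of the pair \((x,y)\), with \(k=e^{-\pi\mu}\) a fixed constant.

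\emph{Rational expressions along the chord.} By \eqref{eq:E_definition} and \eqref{eq:factor_relations}, \(\mathcal E(y)=Q_\mu(y)/Q_\mu(-x)\). The proof of Lemma~\ref{lem-global-chord-energy} gives \(-g'(y)=yQ_\mu(-x)/(xQ_\mu(y))\), and \(\chi'(y)\) is the closed form stated in that lemma. Logarithmic differentiation of \(\mathcal E=Q_\mu(y)/Q_\mu(g(y))\) gives
\[
\frac{\mathcal E'}{\mathcal E}=\frac{Q_\mu'(y)}{Q_\mu(y)}+\frac{(2x-\beta_\mu)(-g')}{Q_\mu(-x)}.
\]
Hence \(m'=-g'+\mathcal E'y+\mathcal E\) is also rational in \((x,y)\), with a positive denominator. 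Substituting these expressions writes
\[
\mathscr S_k=\frac{\mathcal A(x,y)}{\mathcal B(x,y)},\qquad \mathcal B=m'>0,
\]
where \(\mathcal A\) and \(\mathcal B\) are rational in \(x,y\) and linear in \(k\).

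\emph{The limit.} As \(y\to\infty\), the proof of Lemma~\ref{lem-global-chord-energy} gives \(x/y\to1/k\), \(\mathcal E\to k^2\), \(-g'\to1/k\) and \(\chi'\to k\). Moreover \(t^X(y)-\pi=O(1/y)\) by the expansion in Lemma~\ref{lem:edge_values}, and \((t^X)'(y)=O(1/y^2)\) by the formula for \((t^X)'\) in Lemma~\ref{lem:chord_monotone}. Since \(\mathcal E'=-2\mu(t^X)'\mathcal E\), this gives \(\mathcal E'(y)y\to0\). Therefore
\[
\mathscr S_k(y)\to\frac{2+k-1}{1/k+k^2}=\frac{k(1+k)}{1+k^3}=\frac{k}{1-k+k^2}=H_{\mathrm{crit}}(\mu).
\]
Once strict monotonicity is proved, the final clause \(\mathscr S_k<H_{\mathrm{crit}}(\mu)\) is immediate.

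\emph{Monotonicity.} Since \(dx/dy=-g'(y)>0\) is itself explicit, I would prove
\[
\frac{d}{dy}\mathscr S_k=\frac{\mathcal A_y\mathcal B-\mathcal A\mathcal B_y}{\mathcal B^2}>0,
\qquad \frac{d}{dy}=\partial_y+(-g')\,\partial_x .
\]
Clearing the positive denominators \(x\), \(Q_\mu(y)\) and \(Q_\mu(-x)\) reduces this to a polynomial inequality \(P(x,y;k,\mu)>0\). This inequality must hold on the region reached by the chord, namely \(x-y>\beta_\mu\), \(x>x_{\mathrm g}>2\beta_\mu\) (Lemma~\ref{lem:chord_amplitude}) and \(\mathcal E<k^2\).

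To prove \(P>0\), I would first try to expand in the shifted variables \(u=x-y-\beta_\mu>0\) and \(v=x-2\beta_\mu>0\). The aim is to show that every coefficient is nonnegative, using \(\zeta_\mu,\beta_\mu>0\) and \(k\in(0,1)\), with \(k\) appearing only through the term \(-2kg'\). If some coefficients are negative, the fallback is to group terms into the already-established positive blocks: the numerators of \(\chi'\) and \(\chi''\), the factor \(Q_\mu(-x)-\beta_\mu x>0\), and \(g(y)^2+2\beta_\mu g(y)+\zeta_\mu>0\) from Lemma~\ref{lem:twocycle_estimates}.

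A second fallback is to parametrize instead by the half-period \(t\in(\pi,t_{\mathrm g}(\mu))\), via \eqref{eq:xy_of_t}, and show that \(t\mapsto\mathscr S_k\) is strictly decreasing by the same sign analysis. This route mirrors how \(\Phi_\mu'\) was controlled in Lemma~\ref{lem:Phi_properties}.

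\emph{Main obstacle.} The sign of this derivative is where I expect the main difficulty. The term \(\mathcal E''\), hidden in \(m''\), introduces second-order chord information; \(\chi''\) is available from Lemma~\ref{lem-global-chord-energy}, but the remaining terms, including \(g''\), must be computed by differentiating \(-g'(y)=yQ_\mu(-x)/(xQ_\mu(y))\) along the chord. Positivity is expected to rest on the inequalities \(x-y>\beta_\mu\) and \(x>2\beta_\mu\) rather than on any parameter restriction.
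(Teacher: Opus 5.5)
\textbf{There is a genuine gap: the monotonicity step is never proved.}

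Your limit computation is correct. There is a sign slip in your formula for $\mathcal E'$. Since $\frac{d}{dy}\log Q_\mu(g(y))=(2x-\beta_\mu)(-g')/Q_\mu(-x)$, the correct relation is $\mathcal E'/\mathcal E=Q_\mu'(y)/Q_\mu(y)-(2x-\beta_\mu)(-g')/Q_\mu(-x)$, which simplifies to $\mathcal E'=\beta_\mu(x+y)/(xQ_\mu(-x))$. With your plus sign, $y\mathcal E'\to4k^2\neq0$, contradicting your own limit argument. Your rational form of $m'$, and hence your polynomial $P$, would then be wrong.

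The real gap is monotonicity, which is the whole content of the lemma. You do not prove $P>0$. You only describe an expansion you hope will have nonnegative coefficients, plus fallbacks in case it does not.

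The constraints you list ($x-y>\beta_\mu$, $x>2\beta_\mu$, $\mathcal E<k^2$, with $k\in(0,1)$ treated as free) describe a two-dimensional region on which the structure that actually works is lost. Write $y=rx$ and let $x\to\infty$. Your rational expression then gives $\mathscr S_k\to(2k-r+r^2)/(1+r^3)$. This equals $H_{\mathrm{crit}}(\mu)$ at $r=k$, and its $r$-derivative there is $(-1+2k-4k^3-k^4)/(1+k^3)^2<0$. So for $r$ slightly below $k$ and $x$ large, at points satisfying all your constraints, one has $\mathscr S_k>H_{\mathrm{crit}}(\mu)$. On the chord this never happens.

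\textbf{The missing idea.} That bound is exactly what the paper needs before it can say anything about monotonicity. Set $A=-2kg'+\chi'-1$ and $B=m'$. Instead of deciding the sign of the nonlinear combination $A'B-AB'$, the paper proves two things:
\begin{itemize}
\item the \emph{linear} comparison $-2kg''+\chi''>H_{\mathrm{crit}}(\mu)\,m''$;
\item $m''>0$.
\end{itemize}
From the comparison, $-2kg'+\chi'-H_{\mathrm{crit}}m'$ is strictly increasing. Your own limits show it tends to $1$, so it is $<1$ at every finite $y$, which is exactly $\mathscr S_k<H_{\mathrm{crit}}$. The bound therefore comes first, and it is obtained globally, by following the actual chord out to $y=\infty$. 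Only then does monotonicity follow:
\[
\mathscr S_k'=\frac{-2kg''+\chi''-\mathscr S_k m''}{m'}>\frac{(H_{\mathrm{crit}}-\mathscr S_k)m''}{m'}>0 .
\]
Your plan runs this logic in reverse.

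The paper's proof of the linear comparison also uses two inputs absent from your list:
\begin{itemize}
\item the sharp intercept bound $-g(y)>(y+\beta_\mu(1+k))/k$, which comes from strict convexity of $-g$ and its asymptote. It is used to replace $\beta_\mu$ by $(-kg-y)/(1+k)$, and it is stronger than what $\mathcal E<k^2$ gives.
\item the parameter inequality $(1-k)^2\zeta_\mu>k\beta_\mu^2$, which holds because $k=e^{-\pi\mu}$; it reduces to $\sinh(\pi\mu/2)>\mu\sqrt{\zeta_\mu}$. A coefficientwise argument with $k$ free in $(0,1)$ throws away exactly this coupling.
\end{itemize}
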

\begin{proof}
\PN By Lemma~\ref{lem:chord_monotone} and~\eqref{eq:E_definition},
\[
g'(y)=\frac{yQ_\mu(g(y))}{g(y)Q_\mu(y)},\qquad
\mathcal E'(y)=\frac{\beta_\mu[y-g(y)]}{[-g(y)]Q_\mu(g(y))}>0,
\]
and differentiation gives
\[
-g''(y)=\frac{\zeta_\mu Q_\mu(g(y))[g(y)^2-y^2]}
{[-g(y)]^3Q_\mu(y)^2}>0.
\]
In particular, \(m'=-g'+\mathcal E+y\mathcal E'>0\), including at \(y=0\).
Expanding~\eqref{eq:xy_of_t} as \(t\downarrow\pi\) gives
\[
-g(y)=\frac yk+\frac{\beta_\mu(1+k)}k+O(y^{-1}),\qquad
\mathcal E(y)=k^2-\frac{\beta_\mu k^2(1+k)}y+O(y^{-2}).
\]
Since \(-g'=-y/(\mathcal E g)\to1/k\), strict convexity of \(-g\) gives
\(0\le-g'<1/k\). Hence \(-g(y)-y/k\) decreases strictly to
\(\beta_\mu(1+k)/k\), so
\[
-g(y)>\frac{y+\beta_\mu(1+k)}k,\qquad
0\le y<-kg(y),\qquad \beta_\mu<\frac{-kg(y)-y}{1+k}.
\]
Also,
\[
(1-k)^2\zeta_\mu-k\beta_\mu^2
=4k\zeta_\mu\left[\sinh^2\!\left(\frac{\pi\mu}{2}\right)-\mu^2\zeta_\mu\right]>0,
\]
because \(\sinh(\pi\mu/2)>\pi\mu/2>\mu\sqrt{\zeta_\mu}\).

\PN We next prove the comparison
\(-2kg''+\chi''>H_{\mathrm{crit}}m''\).
Differentiating \(y\mathcal E(y)\) gives
\[
(y\mathcal E)''=
\frac{\beta_\mu[y-g(y)]^2[-2\beta_\mu yg(y)-\zeta_\mu(2g(y)+y)]}
{[-g(y)]^3Q_\mu(g(y))Q_\mu(y)}>0,
\]
so \(m''=-g''+(y\mathcal E)''>0\).
The derivative formula from Lemma~\ref{lem-global-chord-energy} reads
\[
\chi''(y)=
\frac{\zeta_\mu[y-g(y)][-(g(y)+y)(g(y)^2+\zeta_\mu)-2\beta_\mu yg(y)]}
{[-g(y)]^3Q_\mu(g(y))Q_\mu(y)},
\]
while
\[
-g''(y)=\frac{\zeta_\mu[y-g(y)]}{[-g(y)]^3Q_\mu(g(y))Q_\mu(y)}
       \frac{[-g(y)-y]Q_\mu(g(y))}{\mathcal E(y)}.
\]
The identity preceding the estimates is
\[
\begin{aligned}
&\frac{[-g(y)]^3Q_\mu(g(y))Q_\mu(y)(1-k+k^2)}{\zeta_\mu[y-g(y)]}
  [-2kg''+\chi''-H_{\mathrm{crit}}m'']\\
&\quad=\left[\frac{1-2k+2k^2}{k}+1-k+k^2\right]
 [-g(y)-y][g(y)^2+\zeta_\mu]\\
&\qquad{}+\beta_\mu\Bigl\{-\frac{1-2k+2k^2}{k}g(y)[g(y)+y]
 -2(1-k+k^2)yg(y)\\
&\hspace{44mm}{}+k[y-g(y)][2g(y)+y]\Bigr\}\\
&\qquad{}+\frac{2k\beta_\mu^2}{\zeta_\mu}yg(y)[y-g(y)]\\
&\qquad{}+k(1-2k+2k^2)[-g(y)-y]Q_\mu(g(y))
 \left[\frac1{\mathcal E(y)}-\frac1{k^2}\right].
\end{aligned}
\]
The final term is positive: \(1-2k+2k^2=k^2+(1-k)^2>0\),
\(-g(y)-y>0\), and \(\mathcal E<k^2\). It may therefore be dropped
to obtain a strict lower bound. Since \(yg(y)[y-g(y)]\le0\),
replacing \(k\beta_\mu^2/\zeta_\mu\) by its upper bound
\((1-k)^2\) lowers the expression again.
The coefficient of the remaining term linear in \(\beta_\mu\) is negative:
\[
\begin{aligned}
&-\frac{1-2k+2k^2}{k}g(y)[g(y)+y]
  -2(1-k+k^2)yg(y)\\
&\qquad{}+k[y-g(y)][2g(y)+y]\\
&\quad=-\frac{1-k}{k}[k^2+(1-k)^2+3k^3]g(y)^2\\
&\qquad\quad{}-[-kg(y)-y]
 \left[-\left(\frac{1-k^2}{k}+3k^2\right)g(y)+ky\right]<0.
\end{aligned}
\]
Thus replacing \(\beta_\mu\) by its upper bound
\([-kg(y)-y]/(1+k)\) lowers the expression once more. Finally,
the positive term
\(\bigl[(1-2k+2k^2)/k+1-k+k^2\bigr][-g(y)-y]\zeta_\mu\)
may be discarded. Collecting the remaining terms gives
\begin{equation}\label{eq-global-sector-curvature}
\begin{aligned}
&\frac{[-g(y)]^3Q_\mu(g(y))Q_\mu(y)(1-k+k^2)}{\zeta_\mu[y-g(y)]}
  [-2kg''+\chi''-H_{\mathrm{crit}}m'']\\
&\quad>\frac{y-g(y)}{k^2(1+k)}\Bigl\{
 [-g(y)][-kg(y)-y]\bigl[1-k+k^2+k^2(1-k)^2\bigr]\\
&\hspace{36mm}{}-yg(y)(1-k)\bigl[(1-k)^2(1+k)+2k^4\bigr]\\
&\hspace{36mm}{}+k^3y[-kg(y)-y]\Bigr\}>0.
\end{aligned}
\end{equation}
All three terms in braces are nonnegative, and the first is strictly positive
because \(-g(y)>0\) and \(-kg(y)-y>0\). This proves the comparison.

\PN As \(y\to\infty\), the chord identities and
Lemma~\ref{lem-global-chord-energy} give
\(-g'\to1/k\), \(\chi'\to k\), and \(m'\to k^{-1}+k^2\).
Since \(H_{\mathrm{crit}}=(1+k)/(k^{-1}+k^2)\), the strictly increasing
expression \(-2kg'+\chi'-H_{\mathrm{crit}}m'\) has limit \(1\).
It is therefore strictly less than \(1\) at every finite \(y\ge0\), which gives
\(\mathscr S_k<H_{\mathrm{crit}}\). Differentiating the quotient now gives
\[
\mathscr S_k'
=\frac{-2kg''+\chi''-\mathscr S_k m''}{m'}
>\frac{(H_{\mathrm{crit}}-\mathscr S_k)m''}{m'}>0.
\]
The derivative limits above also give
\[
\lim_{y\to\infty}\mathscr S_k(y)
=\frac{1+k}{k^{-1}+k^2}=H_{\mathrm{crit}}.
\]
All formulas extend to \(y=0\), where \(g(0)<0\),
\(\mathcal E(0)>0\), and \(m'(0)>0\).
\end{proof}

\PN Write \((x_0,y_0)=(x_0(t),y_0(t))\) for the fixed point and define
\[
c=H(1-k+k^2)<k,\qquad b_*=y_0-cx_0,\qquad
a_H(y)=y+cg(y)-b_*.
\]
Then \(a_H(y_0)=0\) and \(a_H'>1-c/k>0\). Lemma~\ref{lem-global-sector-quotient} gives
\begin{equation}\label{eq-global-sector}
\begin{aligned}
2a_H'-f_H'
&=\left(1-\frac ck\right)(1-\chi')
+\frac ck m'[H_{\mathrm{crit}}-\mathscr S_k]\\
&>\left(1-\frac ck\right)(1-k)>0.
\end{aligned}
\end{equation}
Both \(f_H\) and \(2a_H-f_H\) vanish at \(y_0\). By the fixed-point identity for \(y>0\) and \(f_H(0)<0\), \(f_H\) has the sign of \(y-y_0\) on \([0,\infty)\), while the positive derivative in \eqref{eq-global-sector} gives the same sign to \(2a_H-f_H\). Thus
\begin{equation}\label{eq-global-Phi}
\Phi_H(y):=f_H(y)[2a_H(y)-f_H(y)]>0\quad(y\ne y_0),
\qquad \Phi_H(y_0)=0.
\end{equation}
This strict sector converts the restoring force into a nonnegative loss.

\subsection{A centered energy and the reachable cone}

\PN To make the sector loss appear in a flight balance, define a primitive on \([x_{\mathrm g},\infty)\) by
\begin{equation}\label{eq-global-Theta}
\Theta_H'(-g(y))=2cy+2\mathcal E(y)[f_H(y)-a_H(y)],\qquad
\Theta_H(x_0)=0.
\end{equation}
The chord is a bijection onto this interval, so the definition determines a \(C^1\) function, which is \(C^2\) for \(x>x_{\mathrm g}\). No finite value of \(\Theta_H''(x_{\mathrm g})\) is required. The crossing energy is
\begin{equation}\label{eq-global-energy}
\mathcal V_H(x,y)=\Theta_H(x)+y^2-2(cx+b_*)y+y_0^2,
\qquad x\ge x_{\mathrm g},\quad y\in\mathbb R.
\end{equation}
Allowing negative \(y\) records a landing before sliding; it does not extend the crossing domain of \(F\).

\PN The energy must have a lower bound independently of the convergence it will prove. The chord asymptotes give
\[
\frac{f_H(y)}y\to(1+k)\left(1-\frac ck\right),\qquad
\frac{a_H(y)}y\to1-\frac ck,
\]
and hence, using \(y/[-g(y)]\to k\) in \eqref{eq-global-Theta},
\[
\frac{\Theta_H'(-g(y))}{-g(y)}
\longrightarrow 2[k^4+ck(1-k^2)].
\]
Since \(-g(y)\to\infty\), l'Hôpital's rule gives
\[
\frac{\Theta_H(x)}{x^2}\longrightarrow k^4+ck(1-k^2).
\]
Subtracting \(c^2\) from this limit gives
\[
k^4+ck(1-k^2)-c^2=(k-c)(k^3+c)>0.
\]
Completion of the square in \eqref{eq-global-energy} therefore yields constants \(\epsilon>0\), \(C\) such that
\begin{equation}\label{eq-global-coercivity}
\mathcal V_H(x,y)\ge(y-cx-b_*)^2+\epsilon x^2-C
\qquad(x\ge x_{\mathrm g}).
\end{equation}
Thus its sublevel sets are bounded and it is bounded below.

\begin{proposition}\label{prop-global-crossing-dissipation}
\PN Let \(\mu>0\), \(H\in\mathcal I_\mu\), and set
\[
\begin{aligned}
\Lambda_{\mu,H}&=2\mu Hk^2[\pi(1-k+k^2)-1],\\
d_{\mu,H}&=2\mu Hk^2[\pi(1-k+k^2)-1-2k]>0.
\end{aligned}
\]
For \(y>0\),
\begin{equation}\label{eq-global-weighted-curvature}
\frac12\Theta_H''(-g(y))>\mathcal E(y)^2+\Lambda_{\mu,H}.
\end{equation}
On the cone
\[
\mathscr C_H=\{(x,y):x\ge x_{\mathrm g},\ 0\le y\le Hx\},
\]
the first normalized landing satisfies
\begin{equation}\label{eq-global-dissipation}
\mathcal V_H(F(x,y))-\mathcal V_H(x,y)
\le-\Phi_H(y)-d_{\mu,H}[x+g(y)]^2.
\end{equation}
The landing coordinate may be negative. The inequality is strict outside \((x_0,y_0)\).
\end{proposition}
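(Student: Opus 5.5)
The plan is to expand the energy difference exactly along the first landing and show it splits into a sector part, a vanishing linear part and a quadratic remainder. Fix \((x,y)\in\mathscr C_H\) and put \(X_1=-g(y)\) and \(u=x+g(y)=x-X_1\). By \eqref{eq:F_explicit} the landing is \(F(x,y)=(X_1,Y_1)\) with
\[
Y_1=-Hg(y)-\mathcal E(y)(x-Hy).
\]
Using \(m=-g+\mathcal E y\), \(\chi=-\mathcal E g\) and \eqref{eq-global-restoring}, this becomes \(Y_1=y-f_H(y)-\mathcal E(y)u\). So the landing is affine in \(u\), and its value at \(u=0\) is \(y-f_H(y)\). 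I will then expand \(\mathcal V_H(F(x,y))-\mathcal V_H(x,y)\) in powers of \(u\), using \eqref{eq-global-energy} and \(cX_1+b_*=y-a_H(y)\).

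\emph{Order zero.} The \(\Theta_H\) terms cancel. The rest is \((y-f_H)^2-y^2+2(y-a_H)f_H=-f_H(2a_H-f_H)=-\Phi_H(y)\).

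\emph{Order one.} The coefficient of \(u\) is
\[
-\Theta_H'(X_1)+2\mathcal E(y)\bigl[f_H(y)-a_H(y)\bigr]+2cy,
\]
which vanishes identically by the defining relation \eqref{eq-global-Theta}. This is exactly why \(\Theta_H\) was chosen that way.

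\emph{Order two.} What remains is
\[
\mathcal E(y)^2u^2-\bigl[\Theta_H(X_1+u)-\Theta_H(X_1)-\Theta_H'(X_1)u\bigr].
\]
By Taylor's formula with integral remainder, the bracket equals \(u^2\int_0^1(1-s)\Theta_H''(X_1+su)\,ds\). If \(u\) is negative and the segment approaches \(x_{\mathrm g}\), I will instead use the \(C^1\) version of the mean-value bound, since no finite \(\Theta_H''(x_{\mathrm g})\) is available. Hence \eqref{eq-global-dissipation} reduces to the statement that \(\tfrac12\Theta_H''\) on the segment between \(X_1\) and \(x\) dominates \(\mathcal E(y)^2+d_{\mu,H}\).

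The first step, and the one I expect to be the main obstacle, is \eqref{eq-global-weighted-curvature}. I will differentiate \eqref{eq-global-Theta} in \(y\), which gives
\[
\Theta_H''(-g(y))\,(-g'(y))=2c+2\mathcal E'(f_H-a_H)+2\mathcal E(f_H'-a_H').
\]
Into this I substitute \(-g'=yQ_\mu(-x)/(xQ_\mu(y))\) and the formula for \(\mathcal E'\) from the proof of Lemma~\ref{lem-global-sector-quotient}, together with \(f_H'=1+\chi'-Hm'\) and \(a_H'=1+cg'\). I then compare the result with \(2\mathcal E^2\). The tools are the bounds \(0<-\mathcal E g'<k\) and \(0<\chi'<k\) from Lemma~\ref{lem-global-chord-energy}, \(\mathcal E<k^2\), and \(c=H(1-k+k^2)<k\). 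The term proportional to \(c\), divided by \(-g'<1/k\), should produce the \(\pi\)-factor through the flight-time estimate \(t^X>\pi\), by way of \(\mathcal E'/\mathcal E=-2\mu(t^X)'\). What I would try first is to write everything as a rational expression in \(x=-g(y)\) and \(y\), as was done for \eqref{eq-global-sector-curvature}. I would then drop positive terms and replace \(\beta_\mu\) and \(\mathcal E\) by their extremal bounds until only the constant \(2\mu Hk^2[\pi(1-k+k^2)-1]\) survives.

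The second step handles the mismatch between \(\mathcal E(y)^2\) and the curvature at intermediate points \(X_1+su=-g(y')\), where \(y'\) lies between \(y\) and \(g^{-1}(-x)\). Since \(\mathcal E\) is increasing, \(\mathcal E(y')^2\ge\mathcal E(y)^2\) when \(y'\ge y\), that is, when \(u<0\), and then there is no loss. When \(u>0\) the difference \(\mathcal E(y)^2-\mathcal E(y')^2\le 2k^2\bigl(\mathcal E(y)-\mathcal E(y')\bigr)\) has to be absorbed. I would bound it linearly in \(u\) by the secant estimate of Lemma~\ref{lem-global-chord-energy} combined with \(\mathcal E'=-2\mu(t^X)'\mathcal E\). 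The cone restriction \(y\le Hx\) keeps the corresponding factor bounded by \(H\), so the loss is at most \(4\mu Hk^3\). This is exactly \(\Lambda_{\mu,H}-d_{\mu,H}\), and it leaves the coefficient \(d_{\mu,H}\). The positivity \(d_{\mu,H}>0\) is itself a parameter fact I will verify separately, from \(H<H_{\mathrm{crit}}=k/(1-k+k^2)\) together with an elementary inequality in \(k=e^{-\pi\mu}\).

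Finally, strictness follows from the sign properties. By \eqref{eq-global-Phi}, \(\Phi_H(y)>0\) whenever \(y\ne y_0\). If \(y=y_0\) but \((x,y)\ne(x_0,y_0)\), then \(u=x-x_0\ne0\) and \(d_{\mu,H}u^2>0\). The possibility \(Y_1<0\) never enters, since \(\mathcal V_H\) is defined for all real \(y\).
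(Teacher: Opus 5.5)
Your reduction is correct and is the paper's. The landing \(Y_1=y-f_H(y)-\mathcal E(y)u\), the order-zero term \(-\Phi_H(y)\), the cancellation of the linear term by \eqref{eq-global-Theta}, the Taylor remainder and the strictness argument all match. The gap is \eqref{eq-global-weighted-curvature}, which carries the proposition and which you only sketch. Carrying out your differentiation on the chord \(x=-g(y)\), with \(\varrho=y/x\), \(R=\mathcal E'(y)x\) and \(\mathcal E x'=\varrho\), gives
\[
\tfrac12\Theta_H''(x)-\mathcal E^2=\frac{H\mathcal B+2\mathcal E R+R(b_*+cx)/x}{x'},\qquad
\mathcal B=(1-k+k^2)-\mathcal E^2-(k-k^2)\varrho-R(1+2\mathcal E\varrho).
\]
Your plan is missing two ingredients.

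\emph{The sign of \(b_*\).} The term \(R(b_*+cx)/x\) has no sign a priori, since \(b_*=y_0-cx_0\) may be negative. The paper needs \(b_*>-cx_{\mathrm g}\), which follows from \(-g(y_0)<x_{\mathrm g}+y_0/k\).

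\emph{Coupling through the flight time.} More seriously, replacing \(\mathcal E\) by its extremal value term by term cannot give the stated \(\Lambda_{\mu,H}\). Bounding \(\mathcal E^2<k^4\), \(\varrho<k\), \(R<2\mu k\) and \(2\mathcal E\varrho R<4\mu k^4\) separately only gives \(\mathcal B>2\mu k[\pi(1-k+k^2)-1-2k^3]\). After subtracting the segment loss \(4\mu Hk^3\), the surviving coefficient \(2\mu Hk^2[\pi(1-k+k^2)-1-2k-2k^3]\) is negative for \(k\) near \(1\), i.e. for small \(\mu\), so your argument breaks down there. The missing idea is that \(\mathcal E\) and \(R\) are tied to the same flight time. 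Put \(\theta=t^X(y)-\pi\). Then \(R=2\mu\sqrt{\mathcal E}\sin\theta\), and \(\varrho<\sqrt{\mathcal E}\) because \(\mathcal Ex^2-y^2=\beta_\mu(\mathcal Ex+y)+\zeta_\mu(1-\mathcal E)>0\). Since \(\mathcal E^2e^{4\mu\theta}=k^4\), this gives \(k^4-\mathcal E^2>4\mu\theta\mathcal E^2>2\mathcal E\varrho R\). The cross term is thus absorbed by the deficit of \(\mathcal E^2\) below \(k^4\). What remains is \(\mathcal B>(1-k+k^2)(1-k^2)-2\mu k>2\mu k[\pi(1-k+k^2)-1]\), using \(1-k^2=2k\sinh(\pi\mu)>2\pi\mu k\). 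Then \(1/x'>k\) finishes. So the \(\pi\) enters through \(1-k^2\), while the \(\mathcal E'\)-contribution \(R<2\mu k\) produces the \(-1\), not the \(\pi\) as you suggest. This coupling is transcendental in \(\theta\), and a purely rational treatment in \((x,y)\) loses it.

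\emph{Segment comparison.} You have the cases reversed. Since \(-g\) is increasing, a point \(s=-g(y')\) between \(-g(y)\) and \(x\) has \(y'\ge y\) when \(u>0\), and then there is no loss. It has \(y'<y\) when \(u<0\), i.e. \(x_{\mathrm g}\le x<-g(y)\), and this is the case that needs absorption and the cone. For \(v=(-g)^{-1}(s)\le r\le y\) one has \(\mathcal E'(r)=R(r)/[-g(r)]<2\mu k/s\). Hence \(\mathcal E(y)^2-\mathcal E(v)^2\le 4\mu k^3(y-v)/s\le 4\mu k^3y/x\le 4\mu Hk^3\), using \(s\ge x\) and \(y\le Hx\). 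As written, you declare loss-free exactly the case that loses.

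Finally, \(d_{\mu,H}>0\) does not use \(H<H_{\mathrm{crit}}\). It is the \(H\)-independent inequality \(\pi(1-k+k^2)-1-2k>0\) on \((0,1)\). Its minimum, near \(k=\tfrac12+\tfrac1\pi\), is only about \(0.04\), which shows how little room there is for a cruder curvature constant.
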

\begin{proof}
\PN On the chord \(x=-g(y)\), put \(\varrho=y/x\), \(R=\mathcal E'(y)x\), and \(\theta=t^X(y)-\pi\in(0,\pi)\). Differentiating the flight time gives
\[
R=2\mu\sqrt{\mathcal E}\sin\theta<2\mu k.
\]
The factor relation gives
\[
\mathcal E x^2-y^2
=\beta_\mu(\mathcal E x+y)+\zeta_\mu(1-\mathcal E)>0,
\]
so \(0<\varrho<\sqrt{\mathcal E}<k\). Since \(\mathcal E^2e^{4\mu\theta}=k^4\),
\[
k^4-\mathcal E^2
>4\mu\theta\mathcal E^2
>4\mu\sin\theta\mathcal E^2
>2\mathcal E\varrho R.
\]
This comparison retains the dependence of the flight factor and chord ratio on the same flight time.

\PN Set
\[
\mathcal B=(1-k+k^2)-\mathcal E^2
-(k-k^2)\varrho-R(1+2\mathcal E\varrho).
\]
The preceding bounds and \(1-k^2=2k\sinh(\pi\mu)>2\pi\mu k\) give
\[
\mathcal B>(1-k+k^2)(1-k^2)-2\mu k
>2\mu k[\pi(1-k+k^2)-1].
\]
Moreover, \(-g(y_0)<x_{\mathrm g}+y_0/k\) implies \(b_*>-cx_{\mathrm g}\). Differentiating \eqref{eq-global-Theta} and using \(\mathcal E x'=\varrho\) gives
\[
\frac12\Theta_H''(x)-\mathcal E^2
=\frac{H\mathcal B+2\mathcal E R+R(b_*+cx)/x}{x'}.
\]
Every term after \(H\mathcal B\) in the numerator is positive, and \(1/x'>k\); this proves \eqref{eq-global-weighted-curvature}. Finally,
\[
\pi(1-k+k^2)-1-2k
=\pi\left(k-\frac12-\frac1\pi\right)^2
+\frac{3\pi}{4}-2-\frac1\pi>0,
\]
because \(\pi>25/8\) makes the last three terms greater than \(19/800\). Thus \(d_{\mu,H}=\Lambda_{\mu,H}-4\mu Hk^3>0\).

\PN For a launch \((x,y)\) with \(x\ge-g(y)\), monotonicity of \(\mathcal E\) and \eqref{eq-global-weighted-curvature} give \(\Theta_H''(s)/2>\mathcal E(y)^2+\Lambda_{\mu,H}\) along the intervening segment. If \(x_{\mathrm g}\le x<-g(y)\), put \(v=(-g)^{-1}(s)\le y\) for an interior point \(s\in[x,-g(y)]\). Since \(\mathcal E'(r)<2\mu k/[-g(r)]\),
\[
\mathcal E(y)^2-\mathcal E(v)^2
\le\frac{4\mu k^3}{s}(y-v)
\le4\mu Hk^3.
\]
The last step uses \(s\ge x\) and the cone condition \(y\le Hx\). Hence in both cases \(\Theta_H''(s)/2>\mathcal E(y)^2+d_{\mu,H}\). Taylor's formula with integral remainder, with endpoint limits allowed by continuity of \(\Theta_H'\), yields
\[
\Theta_H(x)-\Theta_H(-g(y))-\Theta_H'(-g(y))[x+g(y)]
\ge[\mathcal E(y)^2+d_{\mu,H}][x+g(y)]^2.
\]
Using \eqref{eq:F_explicit}, direct expansion of \eqref{eq-global-energy} gives
\[
\begin{aligned}
\mathcal V_H(F(x,y))-\mathcal V_H(x,y)
={}&-\Phi_H(y)+\mathcal E(y)^2[x+g(y)]^2\\
&-\bigl[\Theta_H(x)-\Theta_H(-g(y))
-\Theta_H'(-g(y))[x+g(y)]\bigr].
\end{aligned}
\]
This proves \eqref{eq-global-dissipation}, including negative landings.

\end{proof}

\PN The preceding dissipation estimate extends the simple-period classification to the full crossing dynamics. Every forward crossing trajectory for which the successive return dynamics remains defined is forced toward the classified cycle. In particular, crossing periodic orbits with a higher number of intersections with \(\Sigma\) are excluded. The resulting global convergence is stated in the following theorem.

\begin{theorem}\label{thm-global-crossing}
\PN Let \(\mu>0\), \(H\in\mathcal I_\mu\), \(t=\eta^{-1}(H)\), and
\[
\mathcal D_\infty
=
\{p\in\mathcal D \mid F^n(p)\in\mathcal D
\text{ for every }n\ge0\}.
\]
Then
\[
F^n(p)\to(x_0(t),y_0(t))
\]
for every \(p\in\mathcal D_\infty\). Hence every forward trajectory represented in \(\mathcal D_\infty\) converges orbitally to the classified simple-period cycle. This cycle is the unique crossing-only periodic orbit. In particular, no crossing-only periodic orbit with a higher number of crossings exists. The same convergence holds for any chain of flights starting at \(x>0\), \(y\ge0\), for which all subsequent landing ordinates remain nonnegative. Such a chain has only finitely many zero-duration contacts with folds.
\end{theorem}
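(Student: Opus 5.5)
The plan is a discrete Lyapunov/LaSalle argument for \(F\) built on the centered energy \(\mathcal V_H\) of \eqref{eq-global-energy} and the dissipation inequality \eqref{eq-global-dissipation}. The first step is to check that every point of \(\mathcal D_\infty\) lies in, or after one iterate enters, the cone \(\mathscr C_H\). For \(p=(x,y)\in\mathcal D\) with \(F(p)\in\mathcal D\), formula \eqref{eq:F_explicit} gives \(F(p)=(-g(y),\,y')\) with \(-g(y)\ge x_{\mathrm g}\), by Lemma~\ref{lem:chord_amplitude}. It also gives
\[
y'=-Hg(y)-\mathcal E(y)(x-Hy)\le -Hg(y)+\mathcal E(y)Hy\le H(-g(y)),
\]
where the last step needs \(x-Hy\ge0\). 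When \(x<Hy\) I would instead use \(y'\le H(-g(y))+\mathcal E(y)(Hy-x)\) and absorb the excess. My first try is to show directly that \(x\ge Hy\) along any landing whose predecessor lies in \(\mathcal D\). If that fails, I would simply start the argument from \(F(p)\), noting that \(x'=-g(y)\) and that the next ordinate is controlled by the same formula.

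Once all iterates \(p_n=F^n(p)\), \(n\ge1\), lie in \(\mathscr C_H\), the sequence \(\mathcal V_H(p_n)\) is nonincreasing by \eqref{eq-global-dissipation}. The coercivity estimate \eqref{eq-global-coercivity} makes it bounded below and confines \((p_n)\) to a bounded subset of \(\{x\ge x_{\mathrm g}\}\). Telescoping then gives
\[
\sum_n\bigl(\Phi_H(y_n)+d_{\mu,H}[x_n+g(y_n)]^2\bigr)<\infty .
\]
Since \(\Phi_H\) is continuous, vanishes only at \(y_0\) by \eqref{eq-global-Phi}, and \(y_n\) is bounded, every limit point of \(y_n\) equals \(y_0\), so \(y_n\to y_0\). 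Moreover \(x_n+g(y_n)\to0\), so \(x_n\to -g(y_0)=x_0\). Hence \(F^n(p)\to(x_0(t),y_0(t))\). Orbital convergence of the flow trajectory follows from continuous dependence of the \(X\)- and \(Y\)-flights on their transverse initial points near \(p_0\) and \(S(p_0)\), where the return times stay near \(t\).

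Uniqueness among crossing-only periodic orbits follows next. Any such orbit with \(2n\) crossings yields, through equivariance as in Subsection~\ref{subsec:no_twocycle}, a periodic point of \(F\) in \(\mathcal D_\infty\). This point converges to \((x_0,y_0)\), so it equals the fixed point, and Theorem~\ref{teo:main1}(a) identifies the orbit with \(\gamma_{(\mu,H)}\).

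For chains starting at \(x>0\), \(y\ge0\) with nonnegative landings, the same telescoping applies, since \eqref{eq-global-dissipation} allows \(y=0\) and \(\mathcal V_H\) is defined there. A zero-duration fold contact corresponds to \(y_n=0\), where \(\Phi_H(0)=f_H(0)[2a_H(0)-f_H(0)]>0\). Summability of \(\Phi_H(y_n)\) together with \(y_n\to y_0>0\) therefore permits only finitely many such \(n\).

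I expect the main obstacle to be the cone invariance, that is, the condition \(y\le Hx\) needed in the proof of Proposition~\ref{prop-global-crossing-dissipation}. The inequality \(\mathcal E(y)Hy\le H\cdot 0\)-type bounds do not come for free, and points of \(\mathcal D\) with large \(y/x\) must be handled by showing they land inside \(\mathscr C_H\) after one step. The very first launch may lie outside the cone, but only finitely many initial steps matter for convergence.
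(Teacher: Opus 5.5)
\textbf{Gap: entering the cone \(\mathscr C_H\).} The second half of your argument coincides with the paper's proof. That includes the monotonicity of \(\mathcal V_H\) on \(\mathscr C_H\), coercivity \eqref{eq-global-coercivity}, summability of \(\Phi_H(y_n)\) and \([x_n+g(y_n)]^2\), the fact that \(\Phi_H\) vanishes only at \(y_0\), orbital convergence by continuity of the flights, and uniqueness via periodic points of \(F\). Your finiteness of fold contacts via \(\Phi_H(0)>0\) and summability is an equally valid variant of the paper's appeal to \(y_0>0\).

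The genuine gap is the step you flag as the main obstacle, and it is not a technicality. Inequality \eqref{eq-global-dissipation} is only proved on \(\mathscr C_H\), and its proof uses \(y\le Hx\). Without entering the cone, nothing in the second half of the argument can start.

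Both routes you suggest fail. Writing \(w=x-Hy\), formula \eqref{eq:F_explicit} gives the exact identity
\[
y_1-Hx_1=-\mathcal E(y_0)\,w_0 .
\]
So \(F(p)\in\mathscr C_H\) if and only if \(w_0\ge0\). A launch with \(y_0>x_0/H\) therefore never lands in the cone after one step.

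Nor is \(x_1\ge Hy_1\) automatic after one landing. Let \(x_0\downarrow0\) and then \(y_0\to\infty\), and use the asymptotics \(-g(y)/y\to1/k\) and \(\mathcal E(y)\to k^2\). These give \(w_1/y_0\to(1-H^2)/k-H^2k^2\), which is negative when \(H^2(1+k^3)>1\). That occurs for \(H\in\mathcal I_\mu\) close to \(H_{\mathrm{crit}}\) whenever \(k^3+k>1\), i.e.\ for small \(\mu\). So restarting from \(F(p)\) does not place you in the cone either.

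Separately, your displayed chain is wrong as written. The bound \(-Hg(y)+\mathcal E(y)Hy\le -Hg(y)\) fails because \(\mathcal E(y)Hy>0\); the correct statement is the identity above.

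\textbf{The missing step: track \(w_n\) over several iterates.} From \eqref{eq:F_explicit},
\[
w_{n+1}=(1-H^2)\bigl[-g(y_n)\bigr]+H\mathcal E(y_n)\,w_n .
\]
While \(w_n<0\), you have \(H\mathcal E<1\) and \(-g(y_n)\ge x_{\mathrm g}\) by Lemma~\ref{lem:chord_amplitude}. These give \(w_{n+1}-w_n>(1-H^2)x_{\mathrm g}\), a uniform positive increment. Hence there is a finite first index \(N\) with \(w_N\ge0\).

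The identity above then gives \(y_{N+1}\le Hx_{N+1}\), with \(x_{N+1}=-g(y_N)\ge x_{\mathrm g}\), so \(p_{N+1}\in\mathscr C_H\). The cone is forward invariant under nonnegative landings, because \(y\le Hx\) forces \(w\ge(1-H^2)x>0\). This is exactly how the paper's proof opens, and the same recursion handles the chains with \(y_n\ge0\). With this step inserted, the rest of your proposal goes through.
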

\begin{proof}
\PN We first show that every forward orbit generated by a point in
\(\mathcal D_\infty\) eventually enters the cone. Put \(w_n=x_n-Hy_n\). The landing formula gives
\[
w_{n+1}=(1-H^2)[-g(y_n)]+H\mathcal E(y_n)w_n.
\]
While \(w_n<0\), one has \(w_{n+1}-w_n>(1-H^2)x_{\mathrm g}>0\), since \(H\mathcal E<1\). Thus there is a first index \(N\) with \(w_N\ge0\). Since the orbit remains in \(\mathcal D_\infty\), its next landing is positive and satisfies \(0<y_{N+1}\le Hx_{N+1}\) and \(x_{N+1}=-g(y_N)\ge x_{\mathrm g}\). Subsequent nonnegative landings stay in \(\mathscr C_H\).

\PN For every \(n\ge N\), Proposition~\ref{prop-global-crossing-dissipation} implies that \(\mathcal V_H(x_n,y_n)\) is nonincreasing. Its bounded sublevel sets, established in \eqref{eq-global-coercivity}, therefore keep the orbit bounded. Summing \eqref{eq-global-dissipation} gives
\[
\sum_n\Phi_H(y_n)<\infty,\qquad
\sum_n[x_n+g(y_n)]^2<\infty.
\]
In particular, \(\Phi_H(y_n)\to0\). The continuous function \(\Phi_H\) has only the zero \(y_0\) on \([0,\infty)\). Every accumulation point of the bounded sequence \(y_n\) is therefore \(y_0\), so \(y_n\to y_0\). The first component of \(F\) gives \(x_{n+1}=-g(y_n)\to x_0\).

\PN Any periodic orbit of the reduced map contained in \(\mathcal D_\infty\) must therefore be the fixed point. A crossing-only periodic trajectory of the Filippov system induces a periodic orbit of the reduced map, possibly after doubling the discrete period because of the symmetry normalization. It must consequently coincide with the classified simple-period crossing cycle. Hence no crossing-only periodic orbit with a higher number of crossings can exist.

\PN For a crossing trajectory represented by \(p\in\mathcal D_\infty\), the reduced crossing data converge to \((x_0(t),y_0(t))\). Continuity of \(t^X\), of the first-crossing maps, and of the sidewise flows on the finite flight intervals gives uniform convergence of the successive flight arcs after a phase shift. This is orbital convergence to \(\gamma_{(\mu,H)}\).

\PN The cone argument and the energy estimate also apply when some landing ordinates are zero. Since the limiting ordinate satisfies \(y_0>0\), all sufficiently late landing ordinates are strictly positive. Consequently, only finitely many zero-duration contacts with folds can occur. This extension concerns chains of flights without positive sliding segments. Trajectories containing such segments require the separate interface analysis developed next.
\end{proof}

\subsection{Robust uniqueness within the simple-period crossing class under
Whitney \texorpdfstring{\(C^1\)}{C1} perturbations}
\label{subsec:global_C1_uniqueness}

\PN The local continuation in Corollary~\ref{cor:C1_persistence} guarantees that the hyperbolic crossing cycle survives small sidewise \(C^1\) perturbations, but does not exclude the appearance of additional simple-period crossing cycles away from it. The global crossing dissipation obtained above provides precisely this additional control. Since such a conclusion concerns the whole crossing domain, perturbations are considered in the Whitney \(C^1\) topology on each side of \(\Sigma\), with the switching manifold fixed.

\begin{corollary}\label{cor:global_C1_uniqueness}
\PN Let \(Z\in\CLzero\) with \(H\in\mathcal I_\mu\), and let \(\gamma_{(\mu,H)}\) be the crossing limit cycle given by Theorem~\ref{teo:main1}. There exists a Whitney \(C^1\) neighborhood \(\mathscr U_Z\) of \(Z\), with the topology taken on each side of \(\Sigma\), such that, for every \(\widetilde Z=(\widetilde X,\widetilde Y)\in\mathscr U_Z\), the continuation \(\widetilde\gamma\) supplied by Corollary~\ref{cor:C1_persistence} is the unique simple-period crossing limit cycle of \(\widetilde Z\). Moreover, \(\widetilde\gamma\) remains hyperbolic and orbitally asymptotically stable.
\end{corollary}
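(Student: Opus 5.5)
The argument splits the simple-period crossing cycles of \(\widetilde Z\) into those near \(\gamma_{(\mu,H)}\), those of bounded amplitude far from it, and those of large amplitude. The near ones are handled by Corollary~\ref{cor:C1_persistence}, and the other two groups are excluded using the global structure of Section~\ref{sec-global-crossing}.

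\emph{Near the cycle.} Corollary~\ref{cor:C1_persistence} gives a tubular neighborhood \(\mathcal U\) of \(\gamma_{(\mu,H)}\) and a \(C^1\)-small constraint on \(\widetilde Z\) on a compact set. Under these, \(\widetilde\gamma\) is the only periodic orbit inside \(\mathcal U\), and it is hyperbolic and attracting. The Whitney neighborhood \(\mathscr U_Z\) will first be restricted to meet this constraint.

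\emph{Bounded amplitude.} Fix a large compact set \(K_R=\{|p|\le R\}\). A simple-period crossing cycle of \(Z\) is exactly a point \(p_0\in\mathcal D\), \(p_1\in S(\mathcal D)\) with \(\Pi_Y(\Pi_X(p_0))=p_0\) and both flights transverse at their endpoints. By Theorem~\ref{teo:main1}(a) and Proposition~\ref{prop:all_simple_symmetric}, the only such configuration is \(p_0(t)\). Suppose there were perturbations \(\widetilde Z_n\to Z\) in \(C^1\) on \(K_R\) with simple-period crossing cycles \(\widetilde\gamma_n\subset K_R\) lying outside \(\mathcal U\). Their crossing points and flight times are bounded, since flight times are at most about \(2\pi\) by Lemma~\ref{lem:first_return}. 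Passing to a subsequence, the cycles converge to a closed \(Z\)-trajectory made of one \(X\)-arc and one \(Y\)-arc. This limit is either a simple-period crossing cycle, which must be \(\gamma_{(\mu,H)}\) and contradicts \(\widetilde\gamma_n\not\subset\mathcal U\), or a degenerate object. The degenerate cases are an arc tangent to \(\Sigma\) at a fold, which gives the grazing orbits at \(y_0=0\), or an arc through the two-fold. To exclude these I would use the strict dissipation inequality \eqref{eq-global-dissipation}. The limit configuration is a periodic point of \(F\) (or \(F^2\)) on the closure of \(\mathcal D\), including the boundary ordinate \(y=0\), where Theorem~\ref{thm-global-crossing} already permits zero-duration fold contacts. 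The energy \(\mathcal V_H\) strictly decreases away from \((x_0,y_0)\), so no periodic chain other than the fixed point exists even on this closure. The two-fold case has \(x=y=0\), which lies outside the domain \(x\ge x_{\mathrm g}>0\) reached after one flight, so it cannot appear.

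\emph{Large amplitude, the main obstacle.} Compactness fails at infinity, and here the Whitney topology is essential. I would choose the allowed \(C^1\) error \(\varepsilon(p)\) to decay fast as \(|p|\to\infty\). Under the linear rescaling \(p\mapsto p/|p|\), the perturbed half-maps \(\widetilde\Pi_X\) on large data are then \(C^1\)-close to the unperturbed ones, uniformly in relative terms. The unperturbed reduced map at infinity is asymptotically linear, with derivative tending to the matrix with spectrum \(\{1,k\}\) computed in Subsection~\ref{subsec:edges}. The margin must come from the quantitative energy decrease. Concretely, \(\mathcal V_H\) is coercive by \eqref{eq-global-coercivity}, and I would extract from \eqref{eq-global-dissipation} a lower bound of order \(\Phi_H(y)+d_{\mu,H}[x+g(y)]^2\ge\delta|p|^2\)-type loss along any two-step return whose data are large. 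This seems plausible since \(\Phi_H(y)\sim\) const\(\cdot y^2\) by the asymptotics of \(f_H,a_H\). A relative perturbation of the return of size \(\varepsilon(p)|p|\) changes \(\mathcal V_H\) by \(O(\varepsilon(p)|p|^2)\), so for \(\varepsilon\) small the energy still strictly decreases over one period of the return, and no large cycle closes. The delicate point is that at infinity the multiplier \(1\) makes the decrease degenerate in the direction \(y/x\to k\). I would therefore check that \(\Phi_H\) grows quadratically rather than subquadratically; \eqref{eq-global-sector} giving \(2a_H'-f_H'\) bounded below together with \(f_H'\to(1+k)(1-c/k)>0\) should supply this. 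Combining the three regions gives uniqueness for \(\widetilde Z\in\mathscr U_Z\), and hyperbolicity and stability are inherited from Corollary~\ref{cor:C1_persistence}.
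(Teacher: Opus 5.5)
Your argument is correct in outline, but away from $\gamma_{(\mu,H)}$ it is organized differently from the paper's.

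\emph{How the two routes differ.} The paper never passes to limits. It uses the unperturbed energy $\mathcal V_H$ directly as a strict Lyapunov function for the perturbed return $\Pi_{\widetilde Z}$ outside the neighborhood $U$ of $p_0$. The loss is bounded away from zero on compact parts of the crossing domain, and coercivity handles the noncompact part. Whitney $C^1$ closeness of the first-crossing maps then preserves $\mathcal V_H(\Pi_{\widetilde Z}(p))<\mathcal V_H(p)$, so no fixed point lies outside $U$. You use $\mathcal V_H$ quantitatively only at large amplitude, which matches the paper's coercivity step. On $K_R$ you argue by contradiction instead: far-away perturbed cycles converge to a closed $Z$-chain, and Lemma~\ref{lem:first_return} forces its arcs to be full half-returns from points with $y\ge0$. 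The chain version of Theorem~\ref{thm-global-crossing}, together with $\Phi_H(0)>0$ and $x\ge x_{\mathrm g}$ at landings, then excludes grazing and two-fold limits.

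\emph{What each route buys.} Your route needs only $C^0$ convergence of trajectories on compacta in the bounded region. It also treats crossings arbitrarily close to the fold lines $L_X$, $L_Y$ explicitly. The paper's appeal to $C^1$ dependence of first-crossing maps ``wherever the crossings remain transverse'' does not by itself cover perturbed cycles that approach the folds. In exchange, the paper's route is a single direct mechanism and is in principle quantitative about the size of $\mathscr U_Z$.

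\emph{First correction: there is no multiplier $1$ at infinity.} For fixed $H\in\mathcal I_\mu$ the degeneracy you anticipate is absent. The spectrum $\{1,k\}$ in Subsection~\ref{subsec:edges} is the limit of $N(t)$ along the family as $t\downarrow\pi$, where $H=\eta(t)\to H_{\mathrm{crit}}$. For fixed $H$, in the region $x=O(y)$ relevant to crossing cycles, $DF$ tends to $\begin{pmatrix}0&k^{-1}\\ -k^2&H(k^{-1}+k^2)\end{pmatrix}$. Its determinant is $k$ and its trace is $H(k^{-1}+k^2)<H_{\mathrm{crit}}(k^{-1}+k^2)=1+k$, so it is Schur stable. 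This agrees with the quadratic growth of $\Phi_H$, whose leading coefficient is proportional to $(1-c/k)^2$ and degenerates only as $H\to H_{\mathrm{crit}}$. The check you propose therefore succeeds, but the worry motivating it is unfounded.

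\emph{Second correction: the cone restriction.} \eqref{eq-global-dissipation} is proved only on the cone $\mathscr C_H$. At large amplitude you must first place the crossing data of the perturbed two-step chain in $\mathscr C_H$, or within the perturbation error of it. In your limit step this is already handled, because Theorem~\ref{thm-global-crossing} includes the cone-entry argument. The increment $w_{n+1}-w_n>(1-H^2)x_{\mathrm g}$ for $w=x-Hy<0$, from the proof of Theorem~\ref{thm-global-crossing}, survives small perturbations and does this for a two-periodic chain. The margin $d_{\mu,H}>0$ leaves room to enlarge the cone slightly. The paper's proof is equally silent on this point.
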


\begin{proof}
\PN By Corollary~\ref{cor:C1_persistence}, there is a neighborhood \(U\) of the crossing point \(p_0\) in which every sufficiently small sidewise \(C^1\) perturbation has a unique fixed point \(\widetilde p_0\) of the perturbed Poincar\'e map \(\Pi_{\widetilde Z}\). This fixed point is hyperbolic and attracting and generates the continued simple-period crossing cycle \(\widetilde\gamma\).

\PN It remains to exclude fixed points of \(\Pi_{\widetilde Z}\) outside \(U\). The proof of Theorem~\ref{thm-global-crossing} provides a strict Lyapunov decrease for the unperturbed crossing return away from \(p_0\). More precisely, the associated loss is continuous, vanishes only at \(p_0\), and is coercive on the admissible crossing cone. Hence, after \(U\) is fixed, the decrease is separated from zero on every compact subset of the crossing domain disjoint from \(U\), while the coercive estimate controls the noncompact part.

\PN First-crossing maps depend continuously in \(C^1\) on the two sidewise vector fields wherever the crossings remain transverse. The Whitney \(C^1\) topology on each side of \(\Sigma\) therefore allows \(\mathscr U_Z\) to be chosen so that the strict Lyapunov decrease persists throughout the crossing domain outside \(U\). Consequently, if \(p\notin U\) belongs to the domain of the perturbed simple-period return, then
\[
\mathcal V_H\!\left(\Pi_{\widetilde Z}(p)\right)<\mathcal V_H(p).
\]
Such a point cannot be fixed by \(\Pi_{\widetilde Z}\).

\PN Thus every fixed point of the perturbed simple-period crossing return lies in \(U\), where Corollary~\ref{cor:C1_persistence} gives the unique fixed point \(\widetilde p_0\). Hence \(\widetilde\gamma\) is the unique simple-period crossing limit cycle of \(\widetilde Z\). Its hyperbolicity and orbital asymptotic stability follow from the local continuation result.
\end{proof}

\PN Corollary~\ref{cor:global_C1_uniqueness} strengthens local persistence by showing that, under sufficiently small perturbations in the Whitney \(C^1\) topology on each side of \(\Sigma\), the continued cycle remains the unique simple-period crossing limit cycle. The global asymptotic classification additionally relies on the geometric conditions governing sliding and on the characterization of the exceptional set of initial conditions whose trajectories reach the two-fold in finite time.

\subsection{Transient sliding and the energy of fold launches}

\PN In this subsection we work with \(Z_\kappa\) in \eqref{eq:zero_divergence_residual_fields}, with \(\kappa\le0\). Sliding follows the Filippov convention \cite{Filippov,teixeira1990,diBernardo2008}. All flight quantities and energy functions remain those defined above, since the normalized return map is unchanged. A flight launched in the cone may land in attractive sliding. Its landing energy is controlled, but the outgoing fold can have amplitude below \(x_{\mathrm g}\), outside the domain of \(\Theta_H\). A separate quadratic value at fold launches will permit comparison across this interface and supply a fixed loss at each subsequent emission.

\PN Only two boundary values are needed:
\[
\Delta_0=-f_H(0)=(H-H_{\mathrm g})x_{\mathrm g}>0,\qquad
L_0=-a_H(0)=cx_{\mathrm g}+b_*.
\]
The sector estimate gives
\begin{equation}\label{eq-global-fold-loss}
2L_0>\Delta_0>0,\qquad
\Phi_H(0)=\Delta_0(2L_0-\Delta_0)>0.
\end{equation}
Equation~\eqref{eq-global-Theta} also gives
\[
\Theta_H'(x_{\mathrm g})=2H_{\mathrm g}(L_0-\Delta_0).
\]
These values depend on the parameters, not on the initial condition.

\PN Two parameter margins are needed for the fold comparison. One controls a lower barrier for the sliding exit; the other compensates for the weaker barrier at large entries. Both hold throughout \(\mathcal I_\mu\).

\begin{lemma}\label{lem-global-fold-margins}
\PN For \(\mu>0\) and \(H\in\mathcal I_\mu\),
\begin{equation}\label{eq-global-fold-margins}
8\mu H_{\mathrm g}x_{\mathrm g}<1-H_{\mathrm g}^2,
\qquad
\Lambda_{\mu,H}>H_{\mathrm g}(H-2H_{\mathrm g}).
\end{equation}
Consequently,
\[
(H_{\mathrm g}^2+\Lambda_{\mu,H})x_{\mathrm g}
>H_{\mathrm g}\Delta_0>2H_{\mathrm g}(\Delta_0-L_0).
\]
\end{lemma}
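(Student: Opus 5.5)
The plan is to prove the two margins in \eqref{eq-global-fold-margins} separately, using only the grazing data of Lemmas~\ref{lem:grazing_data}, \ref{lem:chord_amplitude} and \ref{lem:twocycle_estimates}, and then to derive the final chain algebraically.

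\emph{Deducing the final chain.} From the second margin,
\[
(H_{\mathrm g}^2+\Lambda_{\mu,H})x_{\mathrm g}
>\bigl(H_{\mathrm g}^2+H_{\mathrm g}H-2H_{\mathrm g}^2\bigr)x_{\mathrm g}
=H_{\mathrm g}(H-H_{\mathrm g})x_{\mathrm g}=H_{\mathrm g}\Delta_0 .
\]
The second inequality, \(H_{\mathrm g}\Delta_0>2H_{\mathrm g}(\Delta_0-L_0)\), is equivalent to \(2L_0>\Delta_0\), which is \eqref{eq-global-fold-loss}. So the corollary chain is immediate once the margins are proved.

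\emph{First margin.} I would write everything in terms of the grazing angle \(\theta_{\mathrm g}=2\pi-t_{\mathrm g}\in(2\arctan\mu,\pi)\) from the proof of Lemma~\ref{lem:twocycle_estimates}. By Lemma~\ref{lem:edge_values} and Lemma~\ref{lem:grazing_data},
\[
x_{\mathrm g}=-e^{\mu t_{\mathrm g}}\sin t_{\mathrm g}=e^{\mu t_{\mathrm g}}\sin\theta_{\mathrm g},
\qquad
H_{\mathrm g}=e^{-2\mu t_{\mathrm g}} .
\]
With \(s=\mu t_{\mathrm g}>\pi\mu\), the claim becomes
\[
8\mu\sin\theta_{\mathrm g}\,e^{-s}<1-e^{-4s}.
\]
The crude route uses \(\sin\theta_{\mathrm g}\le1\) and \(\mu<s/\pi\), reducing the claim to the one-variable inequality \(8se^{-s}/\pi<1-e^{-4s}\). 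This is tight near \(s\approx1\), where the two sides are about \(0.94\) and \(0.98\), so I expect it to need a careful elementary check.

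This step is the main obstacle. If the crude bound fails somewhere, I would keep more information, as follows.
\begin{itemize}
\item Use \(\sin\theta_{\mathrm g}<\sin(2\arctan\mu)=\beta_\mu\) when \(2\arctan\mu\ge\pi/2\), that is, for \(\mu\ge1\).
\item Otherwise, use the grazing equation \(\cos\theta_{\mathrm g}+\mu\sin\theta_{\mathrm g}=e^{-\mu(2\pi-\theta_{\mathrm g})}\) to express \(\sin\theta_{\mathrm g}\) in terms of \(\mu\).
\item Split into the ranges \(\mu\le\mu_1\) and \(\mu\ge\mu_1\). For small \(\mu\) the right-hand side behaves like \(4\mu t_{\mathrm g}\ge4\pi\mu\), which dominates \(8\mu\). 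For large \(\mu\) the factor \(e^{-s}\) makes the left-hand side small.
\end{itemize}

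\emph{Second margin.} Since \(H<H_{\mathrm{crit}}<1\) by Proposition~\ref{prop:eta_bijection}, we have \(H_{\mathrm g}(H-2H_{\mathrm g})\le HH_{\mathrm g}(1-2H_{\mathrm g})\). Dividing by \(2\mu H>0\), it therefore suffices to show
\[
2\mu k^2\bigl[\pi(1-k+k^2)-1\bigr]>H_{\mathrm g}(1-2H_{\mathrm g}).
\]
The bracket is positive by the estimate in Proposition~\ref{prop-global-crossing-dissipation}, so the case \(H_{\mathrm g}\ge1/2\) is trivial. When \(H_{\mathrm g}<1/2\), we have \(2\mu t_{\mathrm g}>\ln2\), hence \(\mu>\ln2/(4\pi)\) since \(t_{\mathrm g}<2\pi\). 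In that case I would use \(H_{\mathrm g}<e^{-2\pi\mu}=k^2\), which makes it enough to show \(2\mu[\pi(1-k+k^2)-1]>1-2H_{\mathrm g}\). This follows from the bound \(\pi(1-k+k^2)-1>2k\) proved there, combined with a direct check on the remaining range of \(\mu\); there \(1-2H_{\mathrm g}<1\) and \(1-k+k^2\ge3/4\).
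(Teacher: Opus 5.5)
Your deduction of the final chain from the two margins is correct and is the paper's argument. For the first margin, your reduction to $8\mu\sin\theta_{\mathrm g}e^{-s}<1-e^{-4s}$ is right. Your crude inequality $8s/\pi<e^{s}-e^{-3s}$ in fact holds for every $s>0$: the difference vanishes at $0$ and has a single positive local minimum, about $0.097$ near $s\approx0.83$. However, you leave it unproved, and a slope argument cannot give it, since $\min_s(e^s+3e^{-3s})=4/\sqrt3<8/\pi$. The paper closes this step by extracting more from the grazing equation. Since $\cos t_{\mathrm g}-\mu\sin t_{\mathrm g}=e^{-\mu t_{\mathrm g}}>0$, one gets $t_{\mathrm g}>\pi+\arctan(1/\mu)\ge5\pi/4$ when $\mu\le1$. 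Hence $z=\mu t_{\mathrm g}>5\pi\mu/4$ and $e^z-e^{-3z}\ge4z/\sqrt3>5\pi\mu/\sqrt3>8\mu$. The case $\mu\ge1$ follows from a Taylor bound.

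The genuine gap is in the second margin. Your reduction to $2\mu k^2[\pi(1-k+k^2)-1]>H_{\mathrm g}(1-2H_{\mathrm g})$ is valid (you divide by $H$, not by $2\mu H$). The next step, however, replaces the prefactor $H_{\mathrm g}$ by its upper bound $k^2$, and this discards too much. The resulting target $2\mu[\pi(1-k+k^2)-1]>1-2H_{\mathrm g}$ is false on a whole interval of $\mu$. At $\mu=0.2$, for instance, $k\approx0.5335$ gives a left side of about $0.544$. Meanwhile $t_{\mathrm g}(0.2)\approx4.892$ gives $H_{\mathrm g}\approx0.141$ and a right side of about $0.717$. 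The failure persists roughly for $0.10<\mu<0.31$, so no direct check can rescue it. Your other tools ($1-2H_{\mathrm g}<1$ and $1-k+k^2\ge3/4$) only reach $\mu\ge2/(3\pi-4)$.

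The missing idea is to bound $H_{\mathrm g}(H-2H_{\mathrm g})$ as a concave quadratic in $H_{\mathrm g}$, by its maximum $H^2/8$, instead of bounding $H_{\mathrm g}$ linearly. Dividing by $H$ and using $H<H_{\mathrm{crit}}=k/(1-k+k^2)$ then reduces the margin on $\log2/(4\pi)\le\mu\le2/(3\pi-4)$ to $16\mu k(1-k+k^2)[\pi(1-k+k^2)-1]>1$. The paper proves this by strict log-concavity in $\mu$ plus explicit evaluation at the two endpoints. It also first notes that the difference is affine in $H$, so only $H=H_{\mathrm{crit}}$ matters. Your case $H_{\mathrm g}\ge1/2$ and your large-$\mu$ bound then cover the outer ranges, as in the paper.
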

\begin{proof}
\PN Put \(z=\mu t_{\mathrm g}\). The grazing formula gives \(x_{\mathrm g}\le e^z\) and \(H_{\mathrm g}=e^{-2z}\), so the first inequality follows from \(e^z-e^{-3z}>8\mu\). The derivative of \(e^z-e^{-3z}\) has minimum \(4/\sqrt3\). For \(0<\mu\le1\), the grazing equation gives \(t_{\mathrm g}>\pi+\arctan(1/\mu)\ge5\pi/4\), and hence the difference is greater than \(5\pi\mu/\sqrt3>8\mu\). For \(\mu\ge1\), it is greater than \(e^z-1>z+z^2/2+z^3/6>12\mu\), using \(z>\pi\mu\). This proves the first margin.

\PN For fixed \(\mu\), the difference
\(\Lambda_{\mu,H}-H_{\mathrm g}(H-2H_{\mathrm g})\) is affine in \(H\).
At \(H=H_{\mathrm g}\) it equals
\(\Lambda_{\mu,H_{\mathrm g}}+H_{\mathrm g}^2>0\).
It therefore suffices to check the other endpoint:
\[
\Lambda_{\mu,H_{\mathrm{crit}}}
>H_{\mathrm g}(H_{\mathrm{crit}}-2H_{\mathrm g}).
\]
Put \(\mu_a=\log2/(4\pi)\) and \(\mu_b=2/(3\pi-4)\); then \(0<\mu_a<1/10<\mu_b\). If \(\mu\le\mu_a\), the bound \(H_{\mathrm g}>k^4\ge1/2\) gives \(H_{\mathrm g}(H_{\mathrm{crit}}-2H_{\mathrm g})<0\). If \(\mu\ge\mu_b\), then \(1-k+k^2\ge3/4\) gives \(\Lambda_{\mu,H_{\mathrm{crit}}}\ge H_{\mathrm{crit}}k^2>H_{\mathrm{crit}}H_{\mathrm g}\). Both cases imply the required inequality.

\PN On the remaining interval, the identity
\[
H_{\mathrm g}(H_{\mathrm{crit}}-2H_{\mathrm g})
=\frac{H_{\mathrm{crit}}^2}{8}
 -2\left(H_{\mathrm g}-\frac{H_{\mathrm{crit}}}{4}\right)^2
\le\frac{H_{\mathrm{crit}}^2}{8}
\]
reduces the claim to
\begin{equation}\label{eq-global-intermediate-margin}
16\mu k(1-k+k^2)[\pi(1-k+k^2)-1]>1.
\end{equation}
Indeed, this inequality gives \(\Lambda_{\mu,H_{\mathrm{crit}}}/H_{\mathrm{crit}}>H_{\mathrm{crit}}/8\), the required upper bound at the endpoint.

\PN The left-hand side of \eqref{eq-global-intermediate-margin} is
strictly log-concave as a function of \(\mu>0\). Indeed, differentiating
with respect to \(\pi\mu\), using \(k=e^{-\pi\mu}\), gives
\[
\begin{aligned}
&\frac{d^2}{d(\pi\mu)^2}
 \log\!\left(16\mu k(1-k+k^2)
       [\pi(1-k+k^2)-1]\right)\\
&\quad\le-\frac1{(\pi\mu)^2}
+k(4k-1)\left[
 \frac1{1-k+k^2}
 +\frac{\pi}{\pi(1-k+k^2)-1}\right].
\end{aligned}
\]
The bracket is at most
\(4/3+\pi/(3\pi/4-1)<4\), since
\(1-k+k^2\ge3/4\) and \(\pi>8/3\).
If \(k\le1/4\), the displayed upper bound is negative.
If \(k>1/4\), then
\[
(\pi\mu)^2k<(1-k)^2,\qquad
\frac14-(1-k)^2(4k-1)
=(2k-1)^2\left(\frac54-k\right)\ge0.
\]
The first inequality is \(2\sinh(\pi\mu/2)>\pi\mu\).
Hence the upper bound is negative in this case as well.

\PN It remains to check the two endpoints. At
\(\mu_a=\log2/(4\pi)\), one has \(k=2^{-1/4}>5/6\) and
\(1-k+k^2>6/7\). The elementary bounds
\(\log2>2/3\) and \(25/8<\pi<22/7\) give
\[
16\mu_a k(1-k+k^2)[\pi(1-k+k^2)-1]
>
\frac{4(2/3)}{22/7}\frac56\frac67
\left[\frac{25}{8}\frac67-1\right]
=\frac{235}{231}>1.
\]
At \(\mu_b=2/(3\pi-4)\), the same bounds give
\(1/3<\mu_b<2/5\) and
\(\pi\mu_b<2\pi/5<4/3<\log4\).
Thus \(k>1/4\) and \(1-k+k^2\ge3/4\), so
\[
16\mu_b k(1-k+k^2)[\pi(1-k+k^2)-1]
>
16\frac13\frac14\frac34
\left[\frac{25}{8}\frac34-1\right]
=\frac{43}{32}>1.
\]
Strict log-concavity now proves
\eqref{eq-global-intermediate-margin} on \([\mu_a,\mu_b]\).
Adding \(H_{\mathrm g}^2\) to the second margin proves the first comparison in the consequence; the second follows from \(2L_0>\Delta_0\).
\end{proof}

\PN Lemma~\ref{lem:finite-sliding-exit} below proves that every off-diagonal sliding entry reaches a regular exit in finite physical time. Its exit amplitude is constrained between an upper bound and two lower barriers. The local lower barrier retains the actual entry amplitude up to \(2x_{\mathrm g}\); for larger entries, the weaker barrier will be combined with the excess curvature of \(\Theta_H\).

\begin{lemma}\label{lem-global-sliding-barriers}
\PN Let \(p=(x,y,0)\in\Sigma^{\rm s}\), with \(x+y\ne0\), be a sliding entry for \(Z_\kappa\) with \(\kappa\le0\) and \(H\in\mathcal I_\mu\). Write \(x_{\rm out}>0\) for the nonzero coordinate magnitude at its regular exit fold. Then
\[
0<x_{\rm out}<\max(x,-y)+H\min(x,-y).
\]
If \(x\ge x_{\mathrm g}\) and \(-H_{\mathrm g}x_{\mathrm g}<y<0\), then
\[
x_{\rm out}>x_{\mathrm g}+\frac{y}{H_{\mathrm g}}.
\]
If also \(x\le2x_{\mathrm g}\), the stronger bound holds:
\[
x_{\rm out}>x+\frac{y}{H_{\mathrm g}}>0.
\]

\end{lemma}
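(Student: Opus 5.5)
The plan is to work entirely in the sliding coordinates \(s=x-y>0\), \(d=x+y\) of Lemma~\ref{lem:nonsingular_sliding_parameter}, where the Filippov field of \(Z_\kappa\) is \eqref{eq:residual_sliding_sd} and strict sliding is \(|d|<s\).

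\emph{Step 1: finite-time regular exit.} Since \(\kappa\le0\) and \(s^2-d^2>0\), we have \(\dot s\le H-1<0\), so \(s\) decreases at least linearly. The line \(d=0\) is invariant, so an entry with \(x+y\ne0\) keeps the sign of \(d\). Moreover \(\dot d/d=(1+H)/s-2\mu\ge-2\mu\), so \(|d(\tau)|\ge|d(0)|e^{-2\mu\tau}\) stays bounded away from zero on bounded time intervals. Hence the trajectory reaches \(\{|d|=s\}\) before \(s\) can reach \(0\). The exit is therefore a regular fold point of \(L_X\) (\(d=s\), \(y=0\)) or of \(L_Y\) (\(d=-s\), \(x=0\)), and in either case \(x_{\rm out}=s_{\rm exit}=|d_{\rm exit}|\).

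\emph{Step 2: upper bound.} At entry, \(\max(x,-y)=(s+|d|)/2\) and \(\min(x,-y)=(s-|d|)/2\). I would use
\[
u=(1+H)s+(1-H)|d|,
\]
which equals \(2\max(x,-y)+2H\min(x,-y)\) at entry and \(2x_{\rm out}\) at exit. Differentiating and dropping the nonpositive \(\mu\)- and \(\kappa\)-terms gives
\[
\dot u\le(1+H)(H-1)+(1-H)(1+H)\frac{|d|}{s}=(1-H^2)\Bigl(\frac{|d|}{s}-1\Bigr)<0,
\]
which yields the strict upper bound. Positivity of \(x_{\rm out}\) is immediate from Step 1.

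\emph{Step 3: lower barriers.} In the regime \(x\ge x_{\mathrm g}\) and \(-H_{\mathrm g}x_{\mathrm g}<y<0\) we have \(d=x+y>0\), so the exit is on \(L_X\). For the stronger bound I would use
\[
w=\Bigl(1+\frac1{H_{\mathrm g}}\Bigr)d-\Bigl(\frac1{H_{\mathrm g}}-1\Bigr)s.
\]
At entry this equals \(2\bigl(x+y/H_{\mathrm g}\bigr)\), and at exit it equals \(2x_{\rm out}\). It therefore suffices to show \(\dot w>0\). Using \(-\dot s\ge1-H+2\mu d^2/s\), one gets
\[
\dot w\ge\Bigl(1+\frac1{H_{\mathrm g}}\Bigr)d\Bigl(\frac{1+H}{s}-2\mu\Bigr)+\Bigl(\frac1{H_{\mathrm g}}-1\Bigr)\Bigl(1-H+\frac{2\mu d^2}{s}\Bigr).
\]
The only negative contribution is \(-2\mu(1+1/H_{\mathrm g})d\). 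By Step 2, \(d\le s\) stays bounded along the segment by roughly \(\max(x,-y)+H\min(x,-y)\), which is of order \(2x_{\mathrm g}\) when \(x\le2x_{\mathrm g}\). The plan is to absorb the negative term into the positive ones using the first margin \(8\mu H_{\mathrm g}x_{\mathrm g}<1-H_{\mathrm g}^2\) of Lemma~\ref{lem-global-fold-margins}, together with \(H>H_{\mathrm g}\) so that \(1-H\) and the \((1+H)/s\) term are compared against \(H_{\mathrm g}\). The factor \(8\) in that margin is presumably what the constant \(2x_{\mathrm g}\) is calibrated to produce.

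For the weaker bound when \(x>2x_{\mathrm g}\), I would use the same functional \(w\) and argue by comparison. Since \(\dot w>0\) can fail for large \(d\), I would follow the trajectory until \(d\) first drops to \(2x_{\mathrm g}\) (or directly to exit), and apply the local estimate only from that time on. Alternatively, I would use monotonicity of exit amplitude with respect to the entry point along the invariant flow to compare with an entry having \(x=x_{\mathrm g}\). That comparison gives \(x_{\rm out}>x_{\mathrm g}+y/H_{\mathrm g}\).

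\emph{Main obstacle.} I expect the positivity of \(\dot w\) to be the main difficulty. It involves the ratio \(d/s\) together with the size of \(d\), and it must be made uniform over \(H\in\mathcal I_\mu\) using only the margin in Lemma~\ref{lem-global-fold-margins}. Handling large entries rigorously via the hand-off at \(d=2x_{\mathrm g}\) is the secondary difficulty.
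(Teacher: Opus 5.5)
Your Steps 1 and 2 are correct and essentially follow the paper. Step 1 is a slightly shorter version of Lemma~\ref{lem:finite-sliding-exit}. For $d>0$ your $u$ equals $2(x-Hy)$, which is the quantity the paper shows decreases along sliding.

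The gap is in Step 3. Your $w$ is the right functional (it is twice the paper's $x+y/H_{\mathrm g}$). However, you try to prove $\dot w>0$ along the entire sliding arc using only a size bound on $d$, and that does not close. $\dot w$ has the sign of
\[
(1+H_{\mathrm g}H)x+(H_{\mathrm g}+H)y-2\mu(x+y)(H_{\mathrm g}x-y)+(1-H_{\mathrm g})\kappa xy .
\]
For $\kappa=0$ this is negative at every sliding point with $y=-x/2$ and $\mu x>2$. Such points satisfy your bound $d<s<2x_{\mathrm g}$ whenever $\mu x_{\mathrm g}>3/2$; for example, at $\mu=1$ one has $x_{\mathrm g}\approx37$.

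The margin $8\mu H_{\mathrm g}x_{\mathrm g}<1-H_{\mathrm g}^2$ controls the quadratic term only where two facts are already known:
\begin{itemize}
\item $H_{\mathrm g}x+y>0$, so that $(x+y)(H_{\mathrm g}x-y)<2H_{\mathrm g}x^2$;
\item $x\le2x_{\mathrm g}$.
\end{itemize}
Neither is available a priori along the arc. The condition $H_{\mathrm g}x+y>0$ is just $w>0$, which is what you are trying to prove. The only bound on $x$ from Step 2 is $x<x_{\rm in}-Hy_{\rm in}$, and this can exceed $2x_{\mathrm g}$.

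The missing idea is to use $w$ as a one-sided barrier at the level $w_{\rm in}$, not as a globally monotone quantity.

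First show that $\{y\ge y_{\rm in}\}$ is forward invariant until exit. For $y_{\rm in}\le y<0$,
\[
y'=(x+y)(1+2\mu y)-(1-H)y+\kappa xy>0,
\]
because the margin gives $2\mu(-y)<2\mu H_{\mathrm g}x_{\mathrm g}<1$.

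Now set $w_{\rm in}=x_{\rm in}+y_{\rm in}/H_{\mathrm g}$ and look at the barrier set $\{x+y/H_{\mathrm g}=w_{\rm in},\ y\ge y_{\rm in}\}$. On it you get, for free, $x\le x_{\rm in}\le2x_{\mathrm g}$ and $H_{\mathrm g}x+y=H_{\mathrm g}w_{\rm in}>0$. With primes denoting derivatives along the desingularized sliding field, this gives
\[
H_{\mathrm g}\bigl(x+y/H_{\mathrm g}\bigr)'\ge x\bigl(1-H_{\mathrm g}^2-4\mu H_{\mathrm g}x\bigr)\ge x\bigl(1-H_{\mathrm g}^2-8\mu H_{\mathrm g}x_{\mathrm g}\bigr)>0 .
\]
So the orbit leaves the level $w_{\rm in}$ immediately and can never return to it, which gives $x_{\rm out}>w_{\rm in}$.

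For $x_{\rm in}>2x_{\mathrm g}$ no hand-off is needed. Take the lower level $w_{\rm in}=x_{\mathrm g}+y_{\rm in}/H_{\mathrm g}$, which lies below the entry value; on its barrier set $x\le x_{\mathrm g}$, so the same estimate applies.

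Your proposed hand-off at $d=2x_{\mathrm g}$ would only bound $x_{\rm out}$ in terms of the state at the hand-off time, not in terms of $y_{\rm in}$. Your alternative, comparing exit amplitudes by monotonicity in the entry point, can be made rigorous, with two caveats. It works only once the invariance of $\{y\ge y_{\rm in}\}$ makes the orbits graphs over $y\in[y_{\rm in},0]$. It also still depends on the strong bound.
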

\begin{proof}
\PN Use the common normalized coordinates \(x>0>y\) of \(\Sigma^{\rm s}\). Multiplying the physical sliding field by \(s=x-y>0\) preserves its oriented trajectories; with \(dt/d\tau=s\) and primes denoting derivatives in \(\tau\), it gives
\begin{equation}\label{eq-global-sliding-pq}
\begin{aligned}
x'&=Hx+y-2\mu x(x+y)-\kappa xy,\\
y'&=x+Hy+2\mu y(x+y)+\kappa xy.
\end{aligned}
\end{equation}
The sign of \(x+y\) is preserved, as also follows from \eqref{eq:residual_sliding_sd}. Since \(\kappa\le0\) and \(xy<0\), one has \(\kappa xy\ge0\). For \(x+y>0\), also \(x+Hy>0\), and therefore
\[
(x-Hy)'=(1-H^2)y-2\mu(x+Hy)(x+y)
-(1+H)\kappa xy<0.
\]
The exit has \(y=0\), giving the upper bound. The other case follows by the involution \(S\).

\PN The two lower barriers use the same outward-pointing boundary.
Write the entry coordinates as \(x_{\rm in},y_{\rm in}\), with
\(x_{\rm in}\ge x_{\mathrm g}\) and
\(-H_{\mathrm g}x_{\mathrm g}<y_{\rm in}<0\), and put
\[
w=x+\frac{y}{H_{\mathrm g}},\qquad
w_{\rm in}=
\begin{cases}
x_{\rm in}+y_{\rm in}/H_{\mathrm g},&x_{\rm in}\le2x_{\mathrm g},\\
x_{\mathrm g}+y_{\rm in}/H_{\mathrm g},&x_{\rm in}>2x_{\mathrm g}.
\end{cases}
\]
Then \(w_{\rm in}>0\) and \(w\ge w_{\rm in}\) at entry.
The sign of \(x+y\) stays positive. While
\(y_{\rm in}\le y<0\), Lemma~\ref{lem-global-fold-margins}
gives \(2\mu(-y)<1\); together with \(\kappa xy\ge0\), this yields
\[
y'=(x+y)(1+2\mu y)-(1-H)y+\kappa xy>0.
\]
On either boundary \(w=w_{\rm in}\), \(y\ge y_{\rm in}\), one has
\(x\le2x_{\mathrm g}\). There
\(H_{\mathrm g}x+y>0\) and
\(0<(x+y)(H_{\mathrm g}x-y)\le2H_{\mathrm g}x^2\), so
\[
\begin{aligned}
H_{\mathrm g}w'
&=(1-H_{\mathrm g}^2)x+(H_{\mathrm g}+H)(H_{\mathrm g}x+y)
 -2\mu(x+y)(H_{\mathrm g}x-y)
 +(1-H_{\mathrm g})\kappa xy\\
&\ge x(1-H_{\mathrm g}^2-4\mu H_{\mathrm g}x)\\
&\ge x(1-H_{\mathrm g}^2-8\mu H_{\mathrm g}x_{\mathrm g})>0.
\end{aligned}
\]
Thus the region \(w\ge w_{\rm in}\), \(y\ge y_{\rm in}\)
is invariant until \(y=0\), with strict separation from
\(w=w_{\rm in}\) after entry. The positive barrier excludes the
diagonal and the origin. Lemma~\ref{lem:finite-sliding-exit}
gives a regular exit with \(x_{\rm out}>w_{\rm in}\),
proving both lower bounds.

\end{proof}

\PN Define the fold value directly on the entire half-line \(x\ge0\):
\begin{equation}\label{eq-global-fold-polynomial}
J_{\mathrm f}(x)=\Theta_H(x_{\mathrm g})+y_0^2
+2H_{\mathrm g}(L_0-\Delta_0)(x-x_{\mathrm g})
+H_{\mathrm g}^2(x-x_{\mathrm g})^2.
\end{equation}
This quadratic matches the value and derivative of the trace \(\mathcal V_H(x,0)\) at \(x_{\mathrm g}\). Integrating \eqref{eq-global-weighted-curvature} twice, using \(\mathcal E\ge H_{\mathrm g}\), gives
\begin{equation}\label{eq-global-Theta-P}
\mathcal V_H(x,0)\ge J_{\mathrm f}(x)+\Lambda_{\mu,H}(x-x_{\mathrm g})^2
\qquad(x\ge x_{\mathrm g}).
\end{equation}
The fold value is therefore available below \(x_{\mathrm g}\), where \(\Theta_H\) is not defined.

\PN A fold emission has normalized landing \(F(x_{\rm out},0)=(x_{\mathrm g},Hx_{\mathrm g}-H_{\mathrm g}x_{\rm out})\). Equations~\eqref{eq-global-fold-loss} and \eqref{eq-global-fold-polynomial} give
\begin{equation}\label{eq-global-fold-emission}
\mathcal V_H(x_{\mathrm g},Hx_{\mathrm g}-H_{\mathrm g}x_{\rm out})
=J_{\mathrm f}(x_{\rm out})-\Phi_H(0).
\end{equation}
The identity holds for any landing sign. Its physical use requires \(x_{\rm out}>0\); the polynomial value at zero selects no continuation from the two-fold.

\PN A uniform loss at emission becomes useful only if the preceding sliding interface does not increase the assigned fold value. The required comparison is between \(J_{\mathrm f}\) at the exit and \(\mathcal V_H\) at the landing. The lower barriers in Lemma~\ref{lem-global-sliding-barriers} and the curvature margin provide this comparison.

\begin{proposition}\label{prop-global-sliding-interface}
\PN For a sliding entry \((x,y,0)\) of \(Z_\kappa\), with \(\kappa\le0\), \(x\ge x_{\mathrm g}\), \(y<0\), and a regular exit of amplitude \(x_{\rm out}\),
\begin{equation}\label{eq-global-interface}
J_{\mathrm f}(x_{\rm out})<\mathcal V_H(x,y).
\end{equation}
For a direct landing of duration zero on a regular fold,
\[
J_{\mathrm f}(x)\le\mathcal V_H(x,0).
\]

\end{proposition}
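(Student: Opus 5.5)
The direct-landing claim is the quickest part, so I would dispose of it first. For \(x\ge x_{\mathrm g}\), inequality \eqref{eq-global-Theta-P} gives \(\mathcal V_H(x,0)\ge J_{\mathrm f}(x)+\Lambda_{\mu,H}(x-x_{\mathrm g})^2\ge J_{\mathrm f}(x)\), because \(\Lambda_{\mu,H}>0\). This requires the landing amplitude to satisfy \(x\ge x_{\mathrm g}\), which holds for every normalized landing, since \(-g(y)\ge x_{\mathrm g}\) by Lemma~\ref{lem:chord_amplitude}.

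For the sliding entry, write
\[
\mathcal V_H(x,y)=\mathcal V_H(x,0)+y^2-2(cx+b_*)y .
\]
Since \(y<0\) and \(cx+b_*\ge cx_{\mathrm g}+b_*=L_0>0\), the added terms are at least \(2L_0(-y)+y^2>0\). Together with \eqref{eq-global-Theta-P}, this gives
\[
\mathcal V_H(x,y)\ge J_{\mathrm f}(x)+\Lambda_{\mu,H}(x-x_{\mathrm g})^2+2L_0|y|+y^2 .
\]
The task is then to show that \(J_{\mathrm f}(x_{\rm out})\) lies strictly below this quantity. Because \(J_{\mathrm f}\) is a convex quadratic, I will compare \(J_{\mathrm f}(x_{\rm out})\) with \(J_{\mathrm f}\) at the endpoints allowed by Lemma~\ref{lem-global-sliding-barriers}. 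Its upper bound \(x_{\rm out}<\max(x,-y)+H\min(x,-y)\) controls large exits, and the lower barriers control small ones. The derivative of \(J_{\mathrm f}\) at \(x_{\mathrm g}\) is \(2H_{\mathrm g}(L_0-\Delta_0)\), which may be negative, so \(J_{\mathrm f}\) can decrease on part of the range. That is why the lower barriers are needed.

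I will organize the argument by cases on the entry.

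\textbf{Case (i): \(y\le -H_{\mathrm g}x_{\mathrm g}\).} The term \(2L_0|y|+y^2\) is large, and I expect it to dominate \(J_{\mathrm f}(x_{\rm out})-J_{\mathrm f}(x)\). Using the upper bound on \(x_{\rm out}\), the growth of \(J_{\mathrm f}\) is at most of order \(2H_{\mathrm g}(L_0-\Delta_0)\,\delta+H_{\mathrm g}^2\delta^2\) in the excess \(\delta\). Since \(H_{\mathrm g}<1\) and \(H<1\), this should be beaten by \(|y|\)-terms, though it needs checking when \(-y>x\), where \(\max(x,-y)=-y\).

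\textbf{Case (ii): \(-H_{\mathrm g}x_{\mathrm g}<y<0\) and \(x\le 2x_{\mathrm g}\).} The strong barrier gives \(x_{\rm out}>x+y/H_{\mathrm g}\), and the upper bound gives \(x_{\rm out}<x+H|y|\). So \(x_{\rm out}\) lies in an interval around \(x\) of width of order \(|y|/H_{\mathrm g}\). Convexity bounds \(J_{\mathrm f}(x_{\rm out})\) by its values at the endpoints. At \(x+y/H_{\mathrm g}\), the linear term changes by \(-2(L_0-\Delta_0)y\le 2L_0|y|\) when \(L_0-\Delta_0>0\), and the quadratic term changes by \(y^2\) plus a cross term. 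That cross term must be absorbed by \(\Lambda_{\mu,H}(x-x_{\mathrm g})^2\) together with \(2\Delta_0|y|>0\), so I expect a strict margin of size \(2\Delta_0|y|\).

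\textbf{Case (iii): \(-H_{\mathrm g}x_{\mathrm g}<y<0\) and \(x>2x_{\mathrm g}\).} Only the weak barrier \(x_{\rm out}>x_{\mathrm g}+y/H_{\mathrm g}\) is available. I would then use \(\Lambda_{\mu,H}(x-x_{\mathrm g})^2\ge\Lambda_{\mu,H}x_{\mathrm g}^2\) together with the consequence of Lemma~\ref{lem-global-fold-margins}, namely \((H_{\mathrm g}^2+\Lambda_{\mu,H})x_{\mathrm g}>2H_{\mathrm g}(\Delta_0-L_0)\). This should cover the possible dip of \(J_{\mathrm f}\) below \(J_{\mathrm f}(x)\) near the lower end, whose depth is at most of order \(H_{\mathrm g}^2x_{\mathrm g}^2\) plus a linear term.

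The main obstacle is case (iii), and case (ii) when \(L_0<\Delta_0\): there \(J_{\mathrm f}\) decreases initially, so a low exit could in principle lower the fold value. I would first write \(J_{\mathrm f}(x_{\mathrm g}+s)-J_{\mathrm f}(x)\) explicitly as a quadratic in \(s\) and \(x-x_{\mathrm g}\). I would then verify that its minimum over the admissible \(s\) is below \(\Lambda_{\mu,H}(x-x_{\mathrm g})^2+2L_0|y|+y^2\), using exactly the two margins of \eqref{eq-global-fold-margins}. These margins appear to have been designed for precisely this comparison.
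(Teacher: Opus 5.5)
\textbf{Gap: the reduced target you commit to is too weak.} Your architecture is the paper's. You lower-bound $\mathcal V_H(x,y)$ through \eqref{eq-global-Theta-P}, use convexity of $J_{\mathrm f}$ to reduce to the endpoints supplied by Lemma~\ref{lem-global-sliding-barriers}, and split the lower endpoint into the same three cases: $0$, $x+y/H_{\mathrm g}$, and $x_{\mathrm g}+y/H_{\mathrm g}$ with the margin of Lemma~\ref{lem-global-fold-margins}. The direct-landing part is correct. The problem is that you replace $-2(cx+b_*)y=2\bigl(L_0+c(x-x_{\mathrm g})\bigr)|y|$ by $2L_0|y|$. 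You also omit a term in the increment of $J_{\mathrm f}$. With $u=x-x_{\mathrm g}$,
\[
J_{\mathrm f}(x+\delta)-J_{\mathrm f}(x)=2H_{\mathrm g}(L_0-\Delta_0)\delta+2H_{\mathrm g}^2u\delta+H_{\mathrm g}^2\delta^2 .
\]
At the upper barrier $x-Hy$ one has $\delta=H|y|$. This barrier applies whenever $-x\le y<0$, so in all of your cases (ii), (iii) and part of (i). The cross term $2H_{\mathrm g}^2Hu|y|$ must then be absorbed by what remains of your bound, namely $\Lambda_{\mu,H}u^2+(1-H_{\mathrm g}^2H^2)y^2$ plus a multiple of $|y|$.

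That quadratic part is indefinite as soon as $H_{\mathrm g}^4H^2>\Lambda_{\mu,H}(1-H_{\mathrm g}^2H^2)$. This happens for small $\mu$: there $\Lambda_{\mu,H}=O(\mu)$ and $1-H_{\mathrm g}^2H^2=O(\mu)$, while $H_{\mathrm g},H\to1$. Taking $u\approx H_{\mathrm g}^2H|y|/\Lambda_{\mu,H}$ and $|y|$ not too small, $J_{\mathrm f}(x-Hy)$ exceeds your lower bound for $\mathcal V_H(x,y)$. The barrier lemma gives no information keeping $x_{\rm out}$ away from $x-Hy$. So the convexity argument cannot be closed from your reduced target.

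\textbf{The fix.} Keep the term $2c(x-x_{\mathrm g})|y|$ and use $c>H_{\mathrm g}^2$. This follows from $H>H_{\mathrm g}$ and $1-k+k^2>k^2>H_{\mathrm g}$. The paper's estimate
\[
J_{\mathrm f}(x-Hy)-\mathcal V_H(x,y)\le-(1-H_{\mathrm g}^2H^2)y^2+2(c-H_{\mathrm g}^2H)(x-x_{\mathrm g})y+2\bigl[(1-H_{\mathrm g}H)L_0+H_{\mathrm g}H\Delta_0\bigr]y<0
\]
then closes this endpoint.

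\textbf{Pieces left open.} Two parts of your sketch are not addressed.
\begin{itemize}
\item The subcase $y\le-x$, with upper endpoint $-y+Hx$. The paper shows $\partial_y\bigl[J_{\mathrm f}(-y+Hx)-\mathcal V_H(x,y)\bigr]>0$ and starts from the negative value at $y=-x$.
\item The lower endpoint $0$ in your case (i), which you never compare. The paper uses monotonicity in $y$ to check it at $y=-H_{\mathrm g}x_{\mathrm g}$. That check again uses the $c$-term, through $\Delta_0-L_0<\Delta_0/2<cx_{\mathrm g}$.
\end{itemize}

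\textbf{Sign errors at the low end.} Your description of the danger there is inverted. When $\Delta_0>L_0$ the vertex of $J_{\mathrm f}$ lies to the right of $x_{\mathrm g}$, so a small exit makes $J_{\mathrm f}$ larger, not smaller. That excess is what Lemma~\ref{lem-global-fold-margins} controls, via $x-x_{\mathrm g}>x_{\mathrm g}$. At $x+y/H_{\mathrm g}$, the linear part changes by $2(L_0-\Delta_0)y$, not $-2(L_0-\Delta_0)y$. The cross term $2H_{\mathrm g}(x-x_{\mathrm g})y$ is negative and needs no absorbing. The margin there is therefore at least $2(2L_0-\Delta_0)|y|$, not $2\Delta_0|y|$.
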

\begin{proof}
\PN The zero-duration statement is \eqref{eq-global-Theta-P}. For \(y<0\), that estimate gives
\[
\mathcal V_H(x,y)\ge J_{\mathrm f}(x)+y^2-2(cx+b_*)y
+\Lambda_{\mu,H}(x-x_{\mathrm g})^2.
\]
Convexity of \(J_{\mathrm f}\) reduces the comparison to two endpoint values. The upper endpoint is the barrier \(\max\{x,-y\}+H\min\{x,-y\}\) from Lemma~\ref{lem-global-sliding-barriers}; the lower endpoint is zero or one of its two lower barriers. We check those endpoint values once, then apply convexity to the exit amplitude. If \(-x\le y<0\), the upper endpoint is \(x-Hy\), and
\[
\begin{aligned}
J_{\mathrm f}(x-Hy)-\mathcal V_H(x,y)
\le{}&-(1-H_{\mathrm g}^2H^2)y^2
+2(c-H_{\mathrm g}^2H)(x-x_{\mathrm g})y\\
&+2[(1-H_{\mathrm g}H)L_0+H_{\mathrm g}H\Delta_0]y<0.
\end{aligned}
\]
Here \(H_{\mathrm g}H<1\) and \(c>H_{\mathrm g}^2\), since \(H>H_{\mathrm g}\) and \(1-k+k^2>H_{\mathrm g}\). If \(y\le-x\), the upper endpoint is \(-y+Hx\). Using \(x\ge x_{\mathrm g}\) and \(y\le-x\) gives
\[
\frac12\partial_y[J_{\mathrm f}(-y+Hx)-\mathcal V_H(x,y)]
\ge(1-H_{\mathrm g}^2H)x+H_{\mathrm g}\Delta_0+(1-H_{\mathrm g})L_0>0.
\]
The difference is negative at \(y=-x\) by the preceding case, and remains negative for smaller \(y\).

\PN For the lower endpoint there are three cases. If \(y\le-H_{\mathrm g}x_{\mathrm g}\), use zero. The difference \(J_{\mathrm f}(0)-\mathcal V_H(x,y)\) increases with \(y\); at \(y=-H_{\mathrm g}x_{\mathrm g}\), it is at most
\[
-H_{\mathrm g}^2(x-x_{\mathrm g})^2
+2H_{\mathrm g}(\Delta_0-L_0-cx_{\mathrm g})(x-x_{\mathrm g})
-2H_{\mathrm g}x_{\mathrm g}(2L_0-\Delta_0)<0.
\]
Indeed, \(\Delta_0-L_0<\Delta_0/2<cx_{\mathrm g}\), using \(c\ge3H/4\).
If \(-H_{\mathrm g}x_{\mathrm g}<y<0\) and \(x\le2x_{\mathrm g}\), use the local endpoint \(x+y/H_{\mathrm g}\):
\[
J_{\mathrm f}(x+y/H_{\mathrm g})-\mathcal V_H(x,y)
\le2[(H_{\mathrm g}+c)(x-x_{\mathrm g})+2L_0-\Delta_0]y<0.
\]
For the remaining case \(x>2x_{\mathrm g}\), use \(x_{\mathrm g}+y/H_{\mathrm g}\) and retain the excess curvature:
\[
\begin{aligned}
J_{\mathrm f}(x_{\mathrm g}+y/H_{\mathrm g})-\mathcal V_H(x,y)
\le{}&-(H_{\mathrm g}^2+\Lambda_{\mu,H})(x-x_{\mathrm g})^2
+2H_{\mathrm g}(\Delta_0-L_0)(x-x_{\mathrm g})\\
&+2[2L_0-\Delta_0+c(x-x_{\mathrm g})]y<0.
\end{aligned}
\]
Indeed, the first two terms are strictly less than
\[
(x-x_{\mathrm g})
\bigl[-(H_{\mathrm g}^2+\Lambda_{\mu,H})x_{\mathrm g}
      +2H_{\mathrm g}(\Delta_0-L_0)\bigr]<0
\]
by \(x-x_{\mathrm g}>x_{\mathrm g}\) and Lemma~\ref{lem-global-fold-margins}.
The remaining term is negative because \(2L_0-\Delta_0+c(x-x_{\mathrm g})>0\)
and \(y<0\). In every case both endpoint values lie below
\(\mathcal V_H(x,y)\); Lemma~\ref{lem-global-sliding-barriers}
places the exit strictly between them, so convexity proves
\eqref{eq-global-interface}.

\end{proof}

\begin{lemma}\label{lem:finite-sliding-exit}
\PN Let \(0<H<1\), \(\mu>0\), and \(\kappa\le0\). Every trajectory of the sliding field of \(Z_\kappa\) starting in strict attractive sliding leaves that region in finite physical time. If \(d=x+y=0\), it reaches the two-fold; if \(d\ne0\), it reaches a regular exit fold.
\end{lemma}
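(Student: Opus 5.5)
The argument runs in the \((s,d)\) coordinates of \eqref{eq:residual_sliding_sd}, where \(s=x-y>0\), \(d=x+y\), and strict attractive sliding is the open sector \(|d|<s\). Its boundary consists of the two rays \(d=\pm s\), that is, \(y=0\) or \(x=0\), which carry the regular folds, together with the two-fold at the origin. The first step is the invariance of the sign of \(d\). The \(d\)-equation has the form \(\dot d=d\,(\cdot)\) with a coefficient that is continuous while \(s>0\). Hence \(d\equiv0\) is invariant, and \(d\ne0\) keeps its sign for as long as the trajectory stays in the sector. By the symmetry \(S\), which exchanges \(x\) and \(-y\) and maps the sliding field to itself, it suffices to treat \(d\ge0\).

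The second step is a uniform decrease of \(s\). The proof of Lemma~\ref{lem:nonsingular_sliding_parameter} shows that \(\kappa\le0\) and \(s^2-d^2>0\) give \(\dot s\le H-1<0\) throughout the open sector. Therefore \(s(t)\le s(0)-(1-H)t\). Since \(s>0\) inside the sector, the trajectory must leave the open sector before time \(s(0)/(1-H)\). The solution cannot blow up before that, because \(s\) is decreasing and bounded below by \(0\), and \(|d|<s\) keeps \(d\) bounded. The only possible singularity is \(s\to0\).

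The third step identifies the exit point. If \(d=0\), the motion is \(\dot s=H-1\), so \(s\) reaches \(0\) at the finite time \(s(0)/(1-H)\). The trajectory arrives at the origin, which is the two-fold. If \(d>0\), the trajectory leaves through the boundary ray \(d=s\), which is the line \(y=0\) with \(x>0\). This is a regular point of \(L_X\), provided the exit is not at the origin.

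The main obstacle is to rule out \(s\to0\) and \(d\to0\) simultaneously, so that the trajectory does not reach the two-fold tangentially from inside. The plan is to use the ratio \(r=d/s\in(0,1)\). From \eqref{eq:residual_sliding_sd},
\[
\dot r=\frac{\dot d\,s-d\,\dot s}{s^2}
=\frac{r}{s}\Bigl[(1+H)-(H-1)+2\mu s(r^2-1)-\tfrac{\kappa}{2}s(1-r^2)\Bigr].
\]
This simplifies to
\[
\dot r=\frac{r}{s}\bigl[2-(1-r^2)s(2\mu+\kappa/2)\bigr].
\]
For small \(s\) the bracket is close to \(2\), so \(\dot r>0\) with a rate of order \(r/s\). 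Since \(s\) decreases at least linearly to \(0\), \(\int dt/s\) diverges. This forces \(r\) to reach \(1\), that is, the boundary \(d=s\), while \(s\) is still positive.

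To make this rigorous, I would write the equations in the reparametrized time \(d\tau=dt/s\) of \eqref{eq-global-sliding-pq}. In that time the origin is a hyperbolic node-like equilibrium with linearization \((x',y')=(Hx+y,\,x+Hy)\). Its eigenvalues are \(H+1\) along the diagonal direction \(d\) and \(H-1\) along the antidiagonal direction \(s\). Orbits with \(d\ne0\) are repelled from the \(s\)-axis near the origin, so only the invariant line \(d=0\) converges to the two-fold. Hence a trajectory with \(d\ne0\) reaches \(y=0\) at some \(x>0\), which is a regular exit fold, in finite \(\tau\) and therefore in finite physical time.
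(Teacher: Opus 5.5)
Your final, desingularized argument is correct, but it follows a different route from the paper.

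\textbf{The paper's route.} It stays in physical time and argues by contradiction. If \(d\ne0\) and no fold is reached, then \(s\to0\) at a finite time \(T\). The two-sided bound \(-(1-H)-(2\mu+|\kappa|/2)s\le\dot s\le-(1-H)\) then gives \(s(t)\le C(T-t)\); its lower half uses \(|d|<s\) to control \(d^2/s\) and \((s^2-d^2)/(2s)\). Hence \(\int^T dt/s=\infty\). The explicit solution \(|d(t)|=|d(0)|\exp\bigl((1+H)\int_0^t du/s-2\mu t\bigr)\) therefore blows up, contradicting \(|d|<s\to0\).

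\textbf{Your route.} You pass to the time \(\tau\) of \eqref{eq-global-sliding-pq}. There the two-fold is an equilibrium with eigenvalues \(H+1\) along \(d\) and \(H-1\) along \(s\), and the invariant diagonal \(d=0\) is its local stable manifold. In this time the divergence of \(\int dt/s\) is automatic: the polynomial field cannot reach its equilibrium in finite \(\tau\), so approach to the two-fold forces \(\tau\to\infty\). The stable manifold theorem then replaces the explicit growth of \(|d|\). Your route exposes the geometry behind the later limit \(R_\kappa\to0\) near the diagonal. The paper's route is entirely elementary and needs no invariant-manifold theory.

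\textbf{Three slips to fix.}

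First, the equilibrium is a hyperbolic \emph{saddle}, not ``node-like''. Your conclusion depends on exactly this, since at a node every nearby orbit would converge.

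Second, the claim that \(\int dt/s\) diverges \emph{because} \(s\) decreases at least linearly has the inequality backwards. The bound \(\dot s\le-(1-H)\) yields \(s(t)\ge(1-H)(T-t)\), which is only an upper bound on \(1/s\). Divergence needs \(s(t)\le C(T-t)\), that is, a bound on \(|\dot s|\). That is precisely the paper's lower estimate, or alternatively your \(\tau\to\infty\) observation.

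Third, on the diagonal one has \(\dot s=H-1+\kappa s/2\), which equals \(H-1\) only when \(\kappa=0\). In general the two-fold is reached in time at most \(s(0)/(1-H)\), not exactly that time.
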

\begin{proof}
On strict attractive sliding, the coordinates \(s=x-y>0\) and \(d=x+y\) satisfy \eqref{eq:residual_sliding_sd}. In particular, \(\dot s\le-(1-H)\), while the equation for \(d\) preserves its sign. If \(d=0\), then \(s\) reaches zero in physical time at most \(s(0)/(1-H)\), so the trajectory reaches the two-fold. Suppose \(d\ne0\) and no fold is reached. The bound \(\dot s\le-(1-H)\) forces \(s\to0\) at a finite maximal time \(T\). Since \(|d|<s\), one has \(d^2/s\le s\) and \((s^2-d^2)/(2s)\le s/2\), so
\[
-(1-H)-\left(2\mu+\frac{|\kappa|}{2}\right)s\le\dot s\le-(1-H).
\]
Writing \(C_0=2\mu+|\kappa|/2>0\), integration of the lower inequality with \(s(T)=0\) gives
\[
s(t)\le\frac{1-H}{C_0}\left(e^{C_0(T-t)}-1\right)\le C(T-t)
\]
near \(T\). Therefore \(\int_{T-\delta}^{T}dt/s(t)\ge C^{-1}\int_{T-\delta}^{T}dt/(T-t)=\infty\) for any sufficiently small \(\delta>0\). The equation for \(d\) would give
\[
|d(t)|
=
|d(0)|\exp\left((1+H)\int_0^t\frac{du}{s(u)}-2\mu t\right)
\longrightarrow\infty,
\]
contradicting \(|d|<s\to0\). Thus every off-diagonal sliding orbit reaches a regular fold in finite time.
\end{proof}

\PN The two regular exit states are \((x_{\rm out},0,0)\) and \((0,-x_{\rm out},0)\), with \(x_{\rm out}>0\). The involution \(S\) normalizes the second to the first.

\PN To state the resulting scope without choosing a continuation at the two-fold, a trajectory is called regular in this subsection when it concatenates smooth flights, Filippov sliding in \(\Sigma^{\rm s}\) \cite{Filippov,diBernardo2008}, and the outgoing smooth characteristic at each visible exit fold. It is stopped upon reaching the two-fold, whose local geometry is discussed in \cite{jeffrey2009}. A visible grazing of a trajectory already in a half-space is continued along its incident smooth field.

\PN Consider the event domain
\[
\mathscr A_H=
\{(x,y):x\ge x_{\mathrm g},\ 0<y\le Hx\}
\cup\{(x,0):x>0\},
\]
and assign
\begin{equation}\label{eq-global-event-energy}
\mathscr E_H(x,y)=
\begin{cases}
\mathcal V_H(x,y),&y>0,\\
J_{\mathrm f}(x),&y=0.
\end{cases}
\end{equation}
The possible downward jump at a fold is consistent with the interface comparison. Only a balance at successive launches will be used.

\begin{proposition}\label{prop-global-event-dissipation}
\PN For \(\kappa\le0\) and \(H\in\mathcal I_\mu\), every event from \(\mathscr A_H\) has its next launch in \(\mathscr A_H\).
The energy \(\mathscr E_H\) is bounded below there and has bounded
sublevel sets. Successive launches \(p_n=(x_n,y_n)\) and \(p_{n+1}\) satisfy
\begin{equation}\label{eq-global-event-crossing}
\mathscr E_H(p_{n+1})-\mathscr E_H(p_n)
\le-\Phi_H(y_n)-d_{\mu,H}[x_n+g(y_n)]^2\le0
\qquad(y_n>0),
\end{equation}
and
\begin{equation}\label{eq-global-event-fold}
\mathscr E_H(p_{n+1})-\mathscr E_H(p_n)
\le-\Phi_H(0)<0
\qquad(y_n=0).
\end{equation}
Consequently, if a finite sequence of events contains \(n\) fold launches,
\begin{equation}\label{eq-global-fold-count}
n\Phi_H(0)\le\mathscr E_H(p_0)-\inf_{\mathscr A_H}\mathscr E_H,
\end{equation}
so every regular trajectory starting in \(\mathscr A_H\) has only finitely many fold
launches. The bound depends on its initial state.
\end{proposition}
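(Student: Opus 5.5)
The proof splits the events by the type of launch and then assembles the energy balance from the earlier estimates.

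\emph{Launches with \(y_n>0\).} Here \(p_n\in\mathscr C_H\), and the next \(X\)-flight lands, after normalization by \(S\), at \(F(p_n)=(x',y')\), with \(x'=-g(y_n)\ge x_{\mathrm g}\) by Lemma~\ref{lem:chord_amplitude}.
\begin{itemize}
\item If \(y'>0\), the cone invariance follows from the computation \(w_{n+1}=(1-H^2)[-g(y_n)]+H\mathcal E(y_n)w_n\) in the proof of Theorem~\ref{thm-global-crossing}. Since \(w_n\ge0\) on the cone, \(w_{n+1}\ge0\), so \(y'\le Hx'\) and \(p_{n+1}=F(p_n)\in\mathscr A_H\). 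Estimate \eqref{eq-global-event-crossing} is then exactly \eqref{eq-global-dissipation}.
\item If \(y'=0\), the landing is a zero-duration fold contact. Then \(p_{n+1}=(x',0)\), and Proposition~\ref{prop-global-sliding-interface} gives \(J_{\mathrm f}(x')\le\mathcal V_H(x',0)\). Combined with \eqref{eq-global-dissipation}, which is valid for nonpositive landings, this yields \eqref{eq-global-event-crossing}.
\item If \(y'<0\), the landing lies in \(\Sigma^{\rm s}\). Lemma~\ref{lem:finite-sliding-exit} supplies an exit in finite time, provided the entry is off the diagonal (trajectories reaching the two-fold are stopped). Using \(S\) if needed, the next launch is \((x_{\rm out},0)\) with \(x_{\rm out}>0\). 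Proposition~\ref{prop-global-sliding-interface} gives \(J_{\mathrm f}(x_{\rm out})<\mathcal V_H(x',y')\), and \eqref{eq-global-dissipation} completes the chain.
\end{itemize}
One point needs care: the landing coordinate after normalization must be the one to which the interface comparison applies. By the sliding geometry, landing with \(y'<0\) means \(x'>0>y'\) in normalized coordinates, which matches the hypothesis \(x\ge x_{\mathrm g}\), \(y<0\) of Proposition~\ref{prop-global-sliding-interface}.

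\emph{Fold launches \(p_n=(x_n,0)\), \(x_n>0\).} The outgoing characteristic from a visible fold is the \(X\)-flight with \(y_0=0\). Its flight time is \(t_{\mathrm g}\) and it lands at \(F(x_n,0)=(x_{\mathrm g},Hx_{\mathrm g}-H_{\mathrm g}x_n)\), by \eqref{eq:F_explicit} with \(\mathcal E(0)=H_{\mathrm g}\). Identity \eqref{eq-global-fold-emission} gives \(\mathcal V_H(\text{landing})=J_{\mathrm f}(x_n)-\Phi_H(0)\).
\begin{itemize}
\item If the landing ordinate is positive, it is at most \(Hx_{\mathrm g}\), so the landing lies in \(\mathscr A_H\). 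This gives \eqref{eq-global-event-fold} with \(\mathscr E_H=\mathcal V_H\).
\item If the ordinate is zero or negative, apply Proposition~\ref{prop-global-sliding-interface} exactly as in the previous case, with entry \(x=x_{\mathrm g}\). This transfers the bound to the next fold launch, again with loss \(\Phi_H(0)\), which is positive by \eqref{eq-global-fold-loss}.
\end{itemize}

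\emph{Lower bound and sublevel sets.} On the \(y>0\) part of \(\mathscr A_H\), \(x\ge x_{\mathrm g}\), so \eqref{eq-global-coercivity} applies. On the \(y=0\) part, \(J_{\mathrm f}\) is a quadratic in \(x\) with positive leading coefficient \(H_{\mathrm g}^2\), hence bounded below with bounded sublevel sets on \(x>0\). Taking the minimum of the two bounds gives the claim.

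\emph{Fold count.} Summing the event inequalities over a finite sequence of events, all terms are nonpositive, and each fold launch contributes at least \(\Phi_H(0)\). The telescoped sum is bounded by \(\mathscr E_H(p_0)-\inf\mathscr E_H\), which gives \eqref{eq-global-fold-count} and hence finiteness of fold launches.

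The main obstacle is bookkeeping of the sliding case: verifying that every landing with negative ordinate really is an entry satisfying the hypotheses of Proposition~\ref{prop-global-sliding-interface}, namely \(x\ge x_{\mathrm g}\) and an exit amplitude \(x_{\rm out}>0\). For fold emissions this holds because the landing abscissa is exactly \(x_{\mathrm g}\). For cone launches it holds because \(-g(y_n)\ge x_{\mathrm g}\).
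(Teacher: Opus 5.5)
Your proposal follows the paper's proof essentially step for step. Crossing launches split into positive, zero and negative landings, handled by \eqref{eq-global-dissipation}, \eqref{eq-global-Theta-P} and Proposition~\ref{prop-global-sliding-interface}; fold launches are handled by \eqref{eq-global-fold-emission} together with the same interface comparison; and the conclusion comes from coercivity plus telescoping. Your checks that the normalized sliding entry satisfies \(x\ge x_{\mathrm g}\), \(y<0\), and that diagonal entries are stopped, are also what the paper implicitly relies on.

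One deduction in the cone-invariance step is invalid as written and needs repair. From \(w_{n+1}=x_{n+1}-Hy_{n+1}\ge0\) you only get \(y_{n+1}\le x_{n+1}/H\). Since \(H<1\), this is weaker than the required \(y_{n+1}\le Hx_{n+1}\), so ``\(w_{n+1}\ge0\), so \(y'\le Hx'\)'' does not follow.

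The correct argument reads the second component of \eqref{eq:F_explicit} as
\[
y_{n+1}=Hx_{n+1}-\mathcal E(y_n)\,w_n .
\]
On \(\mathscr C_H\) one has \(w_n=x_n-Hy_n\ge(1-H^2)x_n>0\), so \(y_{n+1}<Hx_{n+1}\) directly. This is exactly the paper's remark that a cone launch has \(x-Hy>0\), so \eqref{eq:F_explicit} places every positive landing in the cone. With this one-line fix, the rest of your argument goes through as stated.
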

\begin{proof}
\PN A crossing launch in the cone satisfies \(x-Hy>0\), so
\eqref{eq:F_explicit} places every positive landing in the cone.
A zero or negative landing leads to a fold launch, after finite sliding
when needed. A fold emission has first coordinate \(x_{\mathrm g}\);
a positive second coordinate is at most \(Hx_{\mathrm g}\).
Thus the event domain is invariant.

\PN For \(y_n>0\), Proposition~\ref{prop-global-crossing-dissipation}
controls the landing energy. A direct fold landing uses
\eqref{eq-global-Theta-P}, while a negative landing followed by a
regular sliding exit uses Proposition~\ref{prop-global-sliding-interface}.
These three cases give \eqref{eq-global-event-crossing}.
For \(y_n=0\), \eqref{eq-global-fold-emission} and the same interface
comparison give \eqref{eq-global-event-fold}; at a zero landing use
\(J_{\mathrm f}(x_{\mathrm g})=\mathcal V_H(x_{\mathrm g},0)\).
A positive sliding segment adds a strict loss.

\PN The positive quadratic leading coefficient of \(J_{\mathrm f}\)
and \eqref{eq-global-coercivity} bound \(\mathscr E_H\) below and make
its sublevel sets bounded. Summing the event inequalities proves
\eqref{eq-global-fold-count}. Since \(\Phi_H(0)>0\), infinitely many
fold launches are impossible.
\end{proof}

\begin{theorem}\label{thm-global-hybrid}
\PN Let \(Z\in\CLequi\) have zero divergence, normalized parameters \(\mu>0\) and \(H\in\mathcal I_\mu\), and satisfy \eqref{eq:nonsingular_sliding}. Let \(\gamma_Z\) denote the corresponding crossing cycle in the original coordinates, with the notation fixed above from the reductions of Proposition~\ref{prop:canonical_form} and Theorem~\ref{teo:main1}. A regular trajectory initiated at a crossing point, in attractive sliding, or at a regular exit fold has one of two alternatives: it reaches the two-fold in finite physical time, or it has only finitely many noncrossing contacts and converges orbitally to \(\gamma_Z\). In particular, every such trajectory has finitely many positive sliding episodes and is eventually strictly crossing.

The same alternatives hold from the first transverse encounter with \(\Sigma\) for a trajectory initiated in either open half-space outside the stable affine line of the corresponding saddle-focus. If a regular visible grazing occurs first, the statement uses the characteristic continuation along the incident field specified above.
\end{theorem}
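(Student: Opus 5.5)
The plan is to work in the Filippov-preserving normal form \(Z_\kappa\) of Proposition~\ref{prop:canonical_form}. By Lemma~\ref{lem:nonsingular_sliding_parameter}, condition~\eqref{eq:nonsingular_sliding} there is equivalent to \(\kappa\le0\). The common affine reduction carries sliding trajectories, folds, the two-fold and finiteness of physical time to the original system. The sidewise shears \(T_\pm^\kappa\) identify crossing flights and their endpoints with those of \(Z_0\). It therefore suffices to prove the alternative for \(Z_\kappa\), \(\kappa\le0\), and to transport orbital convergence to \(\gamma_Z\) as explained after Lemma~\ref{lem:nonsingular_sliding_parameter}. Using the involution \(S\), every launch is normalized to the upper field. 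A crossing point with \(y>0\), or a regular exit fold \((x_{\rm out},0,0)\) with \(x_{\rm out}>0\), then becomes a point of the reduced section with \(x>0\) and \(y\ge0\).

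The first step brings every initial state into the event domain \(\mathscr A_H\) in finitely many events, unless the two-fold is reached. A state in attractive sliding either has \(d=0\), in which case it reaches the two-fold in finite time by Lemma~\ref{lem:finite-sliding-exit}, or it exits at a regular fold with \(x_{\rm out}>0\), which lies in \(\mathscr A_H\). A crossing launch \((x,y)\) with \(x>0\), \(y>0\) but outside the cone \(\mathscr C_H\) is handled by the \(w_n=x_n-Hy_n\) argument from the proof of Theorem~\ref{thm-global-crossing}. The increment \(w_{n+1}-w_n>(1-H^2)x_{\mathrm g}\) forces \(w_N\ge0\) after finitely many crossings. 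Alternatively, some landing is nonpositive, which produces a fold launch (directly or after finite sliding) or the two-fold. After one further flight the launch has \(x\ge x_{\mathrm g}\) and so lies in \(\mathscr A_H\). For an initial point in an open half-space off the stable line \(W^s(q_X)\), the spectral coordinates~\eqref{eq:spectral_coordinates} give \(R\) growing like \(e^{\mu t}\) with rotation. The \((y,z)\) projection therefore leaves the focus and reaches \(z=0\) in finite time, since the planar subsystem is an unstable focus. The first encounter is then transverse (giving a crossing point or a sliding entry) or a visible grazing handled by the stated convention, which reduces to the cases already treated.

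The second step applies Proposition~\ref{prop-global-event-dissipation}. From a launch in \(\mathscr A_H\), the energy \(\mathscr E_H\) is nonincreasing along successive launches, and each fold launch costs at least \(\Phi_H(0)>0\). Inequality~\eqref{eq-global-fold-count} then shows that only finitely many fold launches occur. Every sliding episode that does not end at the two-fold ends in a fold launch, and every zero-duration fold contact is also a fold launch. Hence only finitely many noncrossing contacts occur, and either the two-fold is reached or the trajectory is eventually crossing-only with all landings positive. In the latter case the tail lies in \(\mathcal D_\infty\), so Theorem~\ref{thm-global-crossing} gives \(F^n(p)\to(x_0(t),y_0(t))\) and orbital convergence to \(\gamma_{(\mu,H)}\).

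I expect the main obstacle to be the bookkeeping needed to make these two steps exhaustive. Every event must be one of: a crossing landing, a zero landing at a regular fold, a negative landing followed by sliding ending at a regular exit, or arrival at the two-fold. In particular, a negative landing with \(d=0\) must lie exactly on the diagonal, and that case has to be routed to the two-fold alternative. A further point to check is that Zeno accumulation of events in finite time cannot occur. Each crossing flight has duration at least \(\pi\) by Lemma~\ref{lem:first_return}, so infinitely many events take infinite time. Finally, the \(S\)-normalization must be checked to preserve both \(\mathscr E_H\)-bookkeeping and the physical time.
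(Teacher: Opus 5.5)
Your proposal is correct and follows essentially the same route as the paper. The steps match one for one: reduction to \(Z_\kappa\) with \(\kappa\le0\), Lemma~\ref{lem:finite-sliding-exit} for sliding states, reaching \(\mathscr A_H\) at a fold launch, Proposition~\ref{prop-global-event-dissipation} to bound fold launches, Theorem~\ref{thm-global-crossing} for the crossing-only tail, the lower bound \(\pi\) on flight times against Zeno accumulation, and the growing oscillation of \(z\) off the stable line for half-space data.

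The differences are minor. You rerun the \(w_n\) cone-entry argument inline, whereas the paper sends a crossing-only trajectory directly to Theorem~\ref{thm-global-crossing} and starts the event balance at the first fold. Also, a transverse encounter from a half-space can land on the fold boundary \(x=0\); your zero-landing event already covers this case.
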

\begin{proof}

\PN By Proposition~\ref{prop:canonical_form}, it is enough to work with \(Z_\kappa\) in common normalized coordinates. Lemma~\ref{lem:nonsingular_sliding_parameter} gives \(\kappa\le0\), and the common affine reduction and positive time scaling preserve the selected Filippov trajectories and finiteness of physical times. Lemma~\ref{lem:finite-sliding-exit} shows that every trajectory in strict attractive sliding reaches either an exit fold or the two-fold in finite time. If a fold is reached, the event balance begins there. For an initial crossing point, either the trajectory remains crossing, in which case Theorem~\ref{thm-global-crossing} applies, or its first noncrossing contact is a direct fold landing or an entry into attractive sliding. In the latter case, sliding reaches a fold or the two-fold in finite time. Accordingly, every trajectory that continues beyond this stage enters the event balance at a fold.

\PN Proposition~\ref{prop-global-event-dissipation} now gives only finitely many fold launches. Every regular positive sliding episode and every zero-duration fold landing contributes a launch, so both types of noncrossing contact are finite.

\PN After the last such contact, all landings are positive and remain in the cone. The resulting tail belongs to \(\mathcal D_\infty\), and Theorem~\ref{thm-global-crossing} gives convergence of the section data to \((x_0,y_0)\) for the unchanged section map. Each eventual smooth arc is the inverse image, under the corresponding fixed shear, of the canonical arc; hence the crossing convergence also gives orbital convergence to \(\gamma_{(\mu,H)}^\kappa\), and then to \(\gamma_Z\) in the original coordinates. No continuation at the two-fold is used in this argument.

\PN Each event includes a smooth flight of duration greater than \(\pi\) in normalized time, and every sliding segment has finite duration. The positive constant time scaling preserves a positive lower bound for flight durations in original time, so an infinite sequence of events cannot accumulate in finite physical time. If a trajectory reaches the two-fold, only finitely many events precede that arrival. Its final landing lies on the diagonal, and the subsequent sliding segment reaches the origin in finite time by Lemma~\ref{lem:finite-sliding-exit}. Thus any trajectory reaching the two-fold does so after finitely many events and in finite physical time.

\PN For the upper field \(X_\kappa\), the \((y,z)\)-subsystem is identical to that of the canonical member, and therefore
\[
z(t)
=
\zeta_\mu+e^{\mu t}(A\cos t+B\sin t).
\]
Outside the stable affine line one has \((A,B)\ne(0,0)\). Choose \(t_0\ge0\) with \(A\cos t_0+B\sin t_0<0\); then \(z(t_0+2\pi n)\to-\infty\), so a trajectory starting above \(\Sigma\) encounters it in finite forward time. If the first contact is a regular visible grazing, continuation in the incident field gives a later transverse encounter. At a transverse encounter from above one has \(y<0\), and the point lies in crossing, attractive sliding, or their fold boundary. The lower half-space follows by symmetry. The preceding argument now applies unless the trajectory reaches the two-fold first.
\end{proof}

\PN For the trajectories covered by Theorem~\ref{thm-global-hybrid}, finite-time arrival at the two-fold is the only obstruction to attraction by \(\gamma_Z\). The stable affine lines are incorporated separately into the exceptional set when arbitrary initial conditions are considered. It remains to characterize the initial conditions whose trajectories reach the two-fold. The energy estimate first restricts the fold launches that can lead to the two-fold, and the corresponding initial conditions are then traced through the analytic return and exit maps.

\begin{proposition}\label{prop-terminal-null}
\PN Under the hypotheses of Theorem~\ref{thm-global-hybrid}, let \(\mathcal T_Z\subset\mathbb R^3\) be the set of initial points whose regular trajectories reach the two-fold in finite forward time. Then \(\mathcal T_Z\) is contained in a countable union of real-analytic submanifolds of dimension at most two. In particular,
\[
\operatorname{Leb}_3(\mathcal T_Z)=0,
\qquad \dim_{\mathrm H}\mathcal T_Z=2.
\]
\end{proposition}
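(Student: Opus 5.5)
Work in the normalized coordinates of $Z_\kappa$ with $\kappa\le0$, since the common affine reduction and positive time scaling of Proposition~\ref{prop:canonical_form} are real-analytic diffeomorphisms that preserve real-analytic submanifolds, their dimensions and Lebesgue-null sets. The plan is to trace the two-fold backwards through the finitely many events that Theorem~\ref{thm-global-hybrid} guarantees precede arrival, and to show that each backward stage maps analytic sets of dimension at most one on $\Sigma$ to analytic sets of dimension at most two in $\mathbb R^3$.

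\textbf{The terminal set on $\Sigma$.} By Lemma~\ref{lem:finite-sliding-exit}, a sliding trajectory reaches the two-fold exactly when it starts on the diagonal $d=x+y=0$ of $\Sigma^{\rm s}$, because the equation for $d$ preserves its sign. So the terminal set on $\Sigma$ is the half-line $\{x=-y>0\}$ together with the origin. By the proof of Theorem~\ref{thm-global-hybrid}, any trajectory reaching the two-fold does so after finitely many events, the last of which is a landing on this diagonal.

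\textbf{Backward tracing.} Define $\mathcal T^{(0)}$ as this diagonal, and let $\mathcal T^{(n+1)}$ be the set of launch states whose next event lands in $\mathcal T^{(n)}$. There are two kinds of launch.
\begin{itemize}
\item \emph{Crossing launches.} These use the reduced map $F$ of \eqref{eq:F_explicit}. It is real-analytic by Lemma~\ref{lem:grazing_data}, and its derivative has determinant $\rho\neq0$ by Proposition~\ref{prop:invariants}, or more directly, $\det D\Pi_X=\mathcal E g'\ne0$ by Lemma~\ref{lem:chord_monotone}. So $F$ is a local analytic diffeomorphism, and preimages of analytic curves are countable unions of analytic curves; a countable atlas suffices.
\item \emph{Fold launches.} These are parametrized by the single real $x_{\rm out}$. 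Their images form an analytic one-parameter family, so they contribute at most countably many analytic curves. For the backward step across sliding, the sliding exit map from entries $(x,y)$ with $d\ne0$ to $x_{\rm out}$ is analytic away from the diagonal, since it is the hitting time of $y=0$ or $x=0$ by an analytic flow with a transverse exit at a regular fold. Its level sets are analytic curves in $\Sigma^{\rm s}$, so the entries leading to a prescribed fold set again form countably many analytic curves.
\end{itemize}
Induction on $n$ gives $\bigcup_n\mathcal T^{(n)}\subset\Sigma$ as a countable union of analytic curves and points.

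\textbf{Lifting to $\mathbb R^3$.} Each curve on $\Sigma$ is saturated by the backward smooth flight of $X_\kappa$ or $Y_\kappa$ up to the previous section crossing. The map $(s,q)\mapsto\varphi(-s,q)$ is analytic and, by transversality on crossing and sliding arcs, it is an immersion onto analytic surfaces. Initial points in the open half-spaces reach $\Sigma$ by Theorem~\ref{thm-global-hybrid}, apart from the stable affine lines, which are analytic curves and are added to the union. Initial points on $\Sigma$ contribute dimension at most two. Hence $\mathcal T_Z$ is a countable union of analytic submanifolds of dimension at most two, and it is Lebesgue null.

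For the Hausdorff dimension, equality to $2$ needs a lower bound as well. It is enough to exhibit one genuine surface inside $\mathcal T_Z$. A natural candidate is the union of backward $X_\kappa$-flights ending on the diagonal of sliding: a flight from the upper half-space landing at $(s,-s,0)$, with $y=-s<0$, is a transverse arrival. Its backward flow over short times sweeps an embedded two-dimensional surface, all of whose points reach the two-fold.

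\textbf{Main obstacle.} The delicate step is controlling the non-generic boundary events.
\begin{itemize}
\item Zero-duration fold contacts, grazing landings with $y=0$, and the grazing flight from $y_0=0$ could make the backward maps singular.
\item Exits of sliding near the diagonal make the exit map non-analytic as $d\to0$.
\end{itemize}
I would handle these by stratifying: treat each such case separately as a lower-dimensional analytic set, namely fold lines and the grazing curve, adding at most countably many further analytic pieces. The key fact is that these exceptional strata are themselves analytic curves in $\Sigma$, so the countable-union structure survives.
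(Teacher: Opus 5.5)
Your overall architecture matches the paper: the diagonal as terminal set, pullback through crossing maps as local analytic diffeomorphisms, lifting by the backward flow, and the surface of backward flights ending on the diagonal for \(\dim_{\mathrm H}\ge2\). The backward induction, however, has a genuine gap at the fold launches.

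The exit map from a sliding sector to the fold line is a submersion from a two-dimensional set onto a one-dimensional one. So your claim that ``the entries leading to a prescribed fold set again form countably many analytic curves'' holds only when the prescribed set of fold amplitudes is countable. If that set contained an interval, its preimage would be an open subset of \(\Sigma^{\rm s}\), and its lift to \(\mathbb R^3\) would have positive measure. Your induction hypothesis, a countable union of analytic curves and points in \(\Sigma\), does not give this countability. The fold-emission landings \(F(x,0)=(x_{\mathrm g},Hx_{\mathrm g}-H_{\mathrm g}x)\) sweep a segment. Nothing in your argument prevents a curve of \(\mathcal T^{(n)}\) from containing a piece of that segment. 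Nor does it prevent the emission curve, composed with later crossing and exit maps, from being constant at a bad amplitude. Saying that fold launches ``contribute at most countably many analytic curves'' is precisely the failure mode, not the conclusion you need. The boundary strata you name as the main obstacle are not where the difficulty lies.

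The paper obtains the missing countability from two ingredients. The first is an energy barrier. Set \(B_\dagger=\mathcal V_H(x_{\mathrm g},-x_{\mathrm g})\); every diagonal landing has energy at least \(B_\dagger\). The event inequalities of Proposition~\ref{prop-global-event-dissipation} prevent a launch with energy below \(B_\dagger\) from ever producing such a landing. The fold emission reaches the diagonal only at \(x_\dagger=(1+H)x_{\mathrm g}/H_{\mathrm g}\). For \(x<x_\dagger\), the next launch has energy below \(B_\dagger\). For \(x>x_\dagger\), the landing enters sliding with \(x+y<0\) and goes directly to a new fold. Hence, after its first fold, a trajectory that reaches the two-fold performs only fold-to-fold reinjections \(x\mapsto R_\kappa(x)\) until it hits \(x_\dagger\).

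The second ingredient is that \(R_\kappa\) is real-analytic on \((x_\dagger,\infty)\) and nonconstant, because \(R_\kappa(x)>0\) and \(R_\kappa(x)\to0\) as \(x\downarrow x_\dagger\). Its fibres are therefore discrete, and the backward orbit \(\mathcal K\) of \(x_\dagger\) under \(R_\kappa\) is countable. Only with \(\mathcal K\) countable does the submersion argument give countably many analytic curves in the sliding sectors. After that, your crossing pullback and lifting steps go through.

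To keep your route without the energy barrier, you would need a separate nondegeneracy argument. It would have to show that, along every itinerary issued from the fold line, the analytic function detecting arrival on the diagonal or at a bad fold amplitude does not vanish identically.
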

\begin{proof}
\PN Work with \(Z_\kappa\), \(\kappa\le0\), in the common normalized coordinates of Proposition~\ref{prop:canonical_form}. By Lemma~\ref{lem:finite-sliding-exit}, a sliding episode reaches the two-fold only along the diagonal \(y=-x\). By \eqref{eq:F_explicit} and Lemma~\ref{lem:grazing_data}, every normalized landing from a crossing or fold launch has first coordinate at least \(x_{\mathrm g}\). Set
\[
B_\dagger=\mathcal V_H(x_{\mathrm g},-x_{\mathrm g}).
\]
The function \(x\mapsto\mathcal V_H(x,-x)\) is strictly convex for \(x>x_{\mathrm g}\), by \eqref{eq-global-weighted-curvature}, and its right derivative at \(x=x_{\mathrm g}\) is
\[
\left.\frac{d}{dx}\mathcal V_H(x,-x)\right|_{x=x_{\mathrm g}}
=
2\bigl[(1+H_{\mathrm g})L_0+
(1+c-H_{\mathrm g}H+H_{\mathrm g}^2)x_{\mathrm g}\bigr]>0.
\]
Here \eqref{eq-global-Theta} and \eqref{eq-global-fold-loss} give the endpoint value and \(L_0>0\), while \(0<H_{\mathrm g}<H<1\). Hence every diagonal landing has energy at least \(B_\dagger\). A launch in \(\mathscr A_H\) with \(\mathscr E_H<B_\dagger\) therefore cannot lead to the two-fold. Indeed, the event inequalities \eqref{eq-global-event-crossing}--\eqref{eq-global-event-fold} keep the launch energy below \(B_\dagger\), while \eqref{eq-global-dissipation} and \eqref{eq-global-fold-emission} bound the next landing energy by the launch energy. Arrival at the two-fold would require a final diagonal sliding episode and hence a diagonal landing with energy at least \(B_\dagger\), which is impossible. A direct arrival at the origin from a launch is excluded by Lemma~\ref{lem:first_return}, since every complete flight lasts more than \(\pi\).

\PN The fold emission
\[
F(x,0)=(x_{\mathrm g},Hx_{\mathrm g}-H_{\mathrm g}x)
\]
singles out the amplitude
\begin{equation}\label{eq-terminal-fold-threshold}
x_\dagger=\frac{(1+H)x_{\mathrm g}}{H_{\mathrm g}}.
\end{equation}
If \(0<x<x_\dagger\), the landing ordinate satisfies
\[
-x_{\mathrm g}<y<Hx_{\mathrm g}<x_{\mathrm g},
\]
and
\[
B_\dagger-\mathcal V_H(x_{\mathrm g},y)
=
(x_{\mathrm g}+y)(x_{\mathrm g}-y+2L_0)>0.
\]
A positive landing therefore lies in the protected sublevel of \(\mathscr C_H\). If the landing ordinate is zero, then
\[
J_{\mathrm f}(x_{\mathrm g})
=
\mathcal V_H(x_{\mathrm g},0)
<
B_\dagger.
\]
If it is negative, Proposition~\ref{prop-global-sliding-interface} gives a regular fold exit with energy below \(B_\dagger\). None of these cases can lead to the two-fold. At \(x=x_\dagger\), the landing lies on the diagonal and reaches the two-fold through sliding. For \(x>x_\dagger\), one has \(x_{\mathrm g}+y<0\), so the landing enters sliding and reaches another regular fold. Consequently, once a trajectory that eventually reaches the two-fold has reached its first fold, its subsequent evolution consists of fold reinjections until the amplitude \(x_\dagger\) is attained.

\PN For \(x>x_\dagger\), let \(R_\kappa(x)>0\) denote the exit amplitude of the sliding trajectory starting from
\[
(x_{\mathrm g},Hx_{\mathrm g}-H_{\mathrm g}x,0).
\]
The desingularized field \eqref{eq-global-sliding-pq} is analytic and transverse to the exit boundary. At the exit point \((0,-R_\kappa(x),0)\), its first component is \(-R_\kappa(x)\ne0\). The physical exit time is finite by Lemma~\ref{lem:finite-sliding-exit}. During the sliding episode, \(s\) remains bounded below by the positive exit amplitude, and therefore the corresponding desingularized exit time is finite as well. Analytic dependence of the flow and the analytic implicit function theorem \cite{Teschl2012,KrantzParks2002} imply that \(R_\kappa\) is real-analytic on \((x_\dagger,\infty)\). Moreover,
\begin{equation}\label{eq-terminal-reinjection-limit}
\lim_{x\downarrow x_\dagger}R_\kappa(x)=0.
\end{equation}
To prove this limit, follow the diagonal trajectory corresponding to \(x=x_\dagger\) up to a time before its arrival at the two-fold for which \(s<\varepsilon/2\). By continuous dependence, trajectories with initial amplitude sufficiently close to \(x_\dagger\) remain in sliding up to that time and satisfy \(s<\varepsilon\). Equation~\eqref{eq:residual_sliding_sd} then implies that \(s\) continues to decrease, so the corresponding exit amplitudes are smaller than \(\varepsilon\). Hence \(R_\kappa\) is not constant.

\PN Since \(R_\kappa\) is nonconstant and real-analytic, every point fibre is discrete in \((x_\dagger,\infty)\) and therefore countable. Define
\[
\mathcal K_0=\{x_\dagger\},\qquad
\mathcal K_{n+1}
=
\{x>x_\dagger:R_\kappa(x)\in\mathcal K_n\},
\qquad
\mathcal K=\bigcup_{n\ge0}\mathcal K_n.
\]
Induction shows that each \(\mathcal K_n\), and therefore \(\mathcal K\), is countable. These are precisely the fold amplitudes whose successive reinjections eventually reach \(x_\dagger\). Conversely, every trajectory starting from a regular fold and reaching the two-fold must attain \(x_\dagger\) after finitely many reinjections, because arrival at the two-fold occurs in finite physical time and every complete smooth flight has a positive lower bound on its duration. Thus \(\mathcal K\) contains all fold amplitudes associated with trajectories that reach the two-fold. The argument uses only analyticity and nonconstancy of \(R_\kappa\), so critical points of \(R_\kappa\) cause no difficulty.

\PN Consider now initial points in an off-diagonal sliding sector. The map that assigns to each such point its regular exit amplitude is analytic. It is a submersion because the desingularized field is transverse to the exit folds, with normal component \(x\) at \((x,0,0)\) and \(-x\) at \((0,-x,0)\) for \(x>0\). Locally, exit amplitude and elapsed flow time provide analytic coordinates in which this map is the projection onto the amplitude coordinate. By the local submersion theorem \cite{Lee2012}, the inverse image of each amplitude in \(\mathcal K\) is therefore an analytic curve. Together with the diagonal \(y=-x\), these curves contain all sliding points whose trajectories reach the two-fold. The corresponding regular exit folds form a countable set, and second countability yields a countable collection of charts covering these sets.

\PN Crossing points whose trajectories subsequently reach the two-fold are obtained by pulling back the curves and points described above through the finite crossing itinerary preceding the first sliding contact. By \eqref{eq:F_explicit} and Lemma~\ref{lem:chord_monotone},
\[
\det DF(x,y)
=
-\mathcal E(y)g'(y)
=
\frac{y}{-g(y)}>0
\qquad (y>0).
\]
Analytic dependence of the flow and the implicit function theorem at each transverse endpoint \cite{Teschl2012,KrantzParks2002} imply that the return time is analytic. Hence \(F\), and every finite composition of the corresponding crossing maps, is locally an analytic diffeomorphism. Local inverse images therefore preserve the dimensions of the curves and points obtained above. Taking all finite crossing itineraries, all relevant charts, and their symmetric counterparts gives a countable cover of the set of section points whose trajectories reach the two-fold by analytic curves and points.

\PN It remains to lift this description from the switching manifold to the two open half-spaces. For an initial point whose first encounter with \(\Sigma\) is transverse to the incident vector field, the landing map is an analytic submersion of rank two. Locally, the landing point together with the backward flight time provides analytic coordinates. By the local submersion theorem \cite{Lee2012}, the inverse images of the section curves and points obtained above are analytic surfaces and curves. Initial conditions producing a visible grazing are covered separately. In the upper half-space they belong locally to the image of
\[
(x,t)\longmapsto
\varphi_{X_\kappa}(-t,(x,0,0)),
\qquad x\in\mathbb R,\quad t>0.
\]
This map is an analytic immersion of dimension two because the tangent vector \((1,0,0)\) to the fold line and the vector \(X_\kappa(x,0,0)\) are linearly independent, the latter having second component \(1\). Its image therefore admits a countable cover by embedded analytic surface patches. The same construction covers direct smooth arrivals at the two-fold, and the lower half-space follows by symmetry. Consequently, \(\mathcal T_Z\) is contained, in normalized coordinates, in a countable union of real-analytic submanifolds of dimension at most two. Such a union has zero three-dimensional Lebesgue measure \cite{Lee2012,Falconer2014}.

\PN The dimension bound is attained. Fix \(x>0\) and consider, for \(x\) in a sufficiently small interval and \(t>0\) sufficiently small,
\[
\varphi_{X_\kappa}(-t,(x,-x,0)).
\]
These points lie in the upper half-space and first reach \(\Sigma\) on the diagonal \(y=-x\), from which the sliding trajectory reaches the two-fold. At \(t=0\), the derivative with respect to \(x\) is the diagonal tangent \((1,-1,0)\), while the derivative with respect to \(t\) is the negative flow vector, whose normal component is \(x>0\). The parametrization therefore has rank two. Restricting the parameter domain gives an embedded analytic surface contained in the set of initial points whose trajectories reach the two-fold. Hence
\[
\dim_{\mathrm H}\mathcal T_Z\ge2.
\]
The countable cover by submanifolds of dimension at most two gives the reverse inequality, so
\[
\dim_{\mathrm H}\mathcal T_Z=2.
\]
Finally, the common invertible affine reduction transports both the countable analytic cover and the Hausdorff-dimension conclusion to the original system \(Z\). This completes the proof.
\end{proof}

\begin{theorem}\label{thm-almost-global}
\PN Let \(Z\in\CLequi\) have zero divergence, normalized parameters \(\mu>0\) and \(H\in\mathcal I_\mu\), and satisfy \eqref{eq:nonsingular_sliding}. There exists a set \(\mathcal N_Z\subset\mathbb R^3\) of zero three-dimensional Lebesgue measure such that every initial condition \(p\in\mathbb R^3\setminus\mathcal N_Z\) has a regular trajectory defined for all \(t\ge0\), with only finitely many noncrossing contacts, and
\begin{equation}\label{eq-almost-global-attraction}
\lim_{t\to\infty}\operatorname{dist}\bigl(\varphi_Z(t,p),\gamma_Z\bigr)=0.
\end{equation}
Consequently, \(\omega(p)=\gamma_Z\) for every \(p\in\mathbb R^3\setminus\mathcal N_Z\). The crossing cycle \(\gamma_Z\) is orbitally asymptotically stable.
\end{theorem}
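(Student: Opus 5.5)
The plan is to assemble the exceptional set from three null pieces and then apply Theorem~\ref{thm-global-hybrid} to every remaining initial condition. Work throughout in the common normalized coordinates of Proposition~\ref{prop:canonical_form} with \(Z_\kappa\), \(\kappa\le0\) (Lemma~\ref{lem:nonsingular_sliding_parameter}); the common affine reduction is invertible, so it maps null sets to null sets. Define \(\mathcal N_Z\) as the union of:
\begin{itemize}
\item the set \(\mathcal T_Z\) of Proposition~\ref{prop-terminal-null};
\item the two stable affine lines \(W^s(q_X)\) and \(S(W^s(q_X))\) of the saddle-foci, together with the equilibria themselves;
\item the escaping region \(\Sigma^{\rm e}\), the closed fold lines \(L_X\cup L_Y\), and all points of the two open half-spaces whose forward smooth flight first meets \(\Sigma\) in \(\overline{\Sigma^{\rm e}}\).
\end{itemize}
No continuation is selected in escaping, which is why the third set must be discarded.

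Next I would show that each piece has zero Lebesgue measure. For \(\mathcal T_Z\) this is Proposition~\ref{prop-terminal-null}. The stable lines are one-dimensional. Planes and lines in \(\Sigma\) are null in \(\mathbb R^3\). For the third piece, I will show it is empty or null. Above \(\Sigma\) a transverse arrival has \(Xf=y<0\). Escaping requires \(y<0<x\) with \(Yf=x\)? By Definition~\ref{def:switching_regions}, escaping is \(Yf<0<Xf\), i.e.\ \(x<0<y\). This is incompatible with arrival from above, where \(y<0\), and symmetrically from below. So arrivals are transverse into crossing or attractive sliding, except for those reaching the fold lines. Arrivals exactly on the fold lines are handled in one of two ways:
\begin{itemize}
\item visible grazings are continued along the incident field, as allowed in Theorem~\ref{thm-global-hybrid};
\item points whose first encounter occurs at the two-fold already lie in \(\mathcal T_Z\).
\end{itemize}
The backward images of the fold lines form a two-dimensional analytic immersed family, as in the proof of Proposition~\ref{prop-terminal-null}, hence a null set. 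I would include it in \(\mathcal N_Z\) for safety.

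For \(p\notin\mathcal N_Z\), the cases are as follows. If \(p\) lies in an open half-space off the stable line, the last part of the proof of Theorem~\ref{thm-global-hybrid} gives a first encounter with \(\Sigma\) in finite time. If \(p\in\Sigma\), it is a crossing point, a point of attractive sliding, or a regular exit fold. In every case Theorem~\ref{thm-global-hybrid} applies. Since \(p\notin\mathcal T_Z\), the two-fold alternative is excluded. Hence the trajectory has finitely many noncrossing contacts and converges orbitally to \(\gamma_Z\).

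It remains to check that the trajectory is defined for all \(t\ge0\). Each event contains a flight of normalized duration greater than \(\pi\), and sliding segments are finite, so events cannot accumulate in finite time. The eventually crossing tail is therefore global. Orbital convergence of the flight arcs after phase shifts gives \(\operatorname{dist}(\varphi_Z(t,p),\gamma_Z)\to0\). Together with the fact that \(\gamma_Z\) is invariant and compact, this yields \(\omega(p)=\gamma_Z\). Orbital asymptotic stability follows from Theorem~\ref{teo:main1}(b), transported through the fixed shears and the affine reduction.

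The main obstacle I expect is the measure-zero bookkeeping for initial points in the open half-spaces that first hit the fold lines or the boundary of the sliding region. One must make sure that no positive-measure family arrives at a configuration where the regular continuation is undefined. I would handle this with the immersion argument \((x,t)\mapsto\varphi_{X_\kappa}(-t,(x,0,0))\) already used in Proposition~\ref{prop-terminal-null}.
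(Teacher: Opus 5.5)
Your proposal is correct and follows the paper's proof. The exceptional set is built from $\mathcal T_Z$, the two stable affine lines and a null subset of $\Sigma$, and then Theorem~\ref{thm-global-hybrid}, the lower bound $\pi$ on flight durations and Theorem~\ref{teo:main1}(b) give convergence, existence for all $t\ge0$, and orbital asymptotic stability.

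The only difference is bookkeeping. The paper simply puts all of $\Sigma$ into $\mathcal N_Z$, which makes your treatment of $\Sigma^{\rm e}$ and the fold lines unnecessary, whereas your version also covers crossing and sliding initial points. The grazing preimages you discard are null, but they need not be removed, since Theorem~\ref{thm-global-hybrid} already continues visible grazings along the incident field.
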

\begin{proof}
\PN Let \(\ell_X\) and \(\ell_Y\) be the stable affine lines of the two saddle-foci in the original coordinates. Set
\[
\mathcal N_Z=\Sigma\cup\ell_X\cup\ell_Y\cup\mathcal T_Z.
\]
The switching plane and the two lines have zero three-dimensional measure, as does \(\mathcal T_Z\) by Proposition~\ref{prop-terminal-null}. Every point outside this union lies in an open half-space, outside the corresponding stable line, and its trajectory does not reach the two-fold in finite forward time. Theorem~\ref{thm-global-hybrid} therefore gives finitely many noncrossing contacts and orbital convergence to \(\gamma_Z\). The lateral affine fields have complete smooth flows; finitely many sliding episodes end in finite time, and the positive lower bound on complete flight times excludes finite-time accumulation of crossings. After the last noncrossing contact the trajectory converges to the cycle and stays bounded, so its regular continuation exists for all forward time. Orbital asymptotic stability is already given by Theorem~\ref{teo:main1} and the fixed coordinate changes.
\end{proof}

\PN The set \(\mathcal N_Z\) provides a zero-Lebesgue-measure exceptional cover sufficient for the global asymptotic classification, while the actual basin of \(\gamma_Z\) may be larger. Under the hypotheses of Theorem~\ref{thm-global-hybrid}, sliding is transient and periodic orbits containing sliding are excluded. The classification is formulated for the regular Filippov trajectories selected throughout the paper, with evolution stopped at the two-fold and without selecting a continuation in the escaping region.

\section{The oscillatory region in parameter space}\label{sec:edges}
\PN Within the symmetric zero-divergence subclass \(\CLzero\), Theorem~\ref{teo:main1} identifies \(\mathcal I_{\mu}\) for each \(\mu>0\). As \(\mu\) varies, these intervals form the \emph{simple-period oscillatory region}
\begin{equation}\label{eq:oscillatory_region}
\mathcal B_{\mathrm{osc}}
:=
\bigcup_{\mu>0}\{\mu\}\times\mathcal I_{\mu}
=
\bigl\{(\mu,H)\mid \mu>0,\ H_{\mathrm g}(\mu)<H<H_{\mathrm{crit}}(\mu)\bigr\}.
\end{equation}

\PN For the canonical member \(\kappa=0\), the convergence conclusions hold throughout \(\mathcal B_{\mathrm{osc}}\) without an additional energy condition. For a general system in the symmetric zero-divergence class, the almost-global conclusion also requires \eqref{eq:nonsingular_sliding}.

\PN To complete the description of \(\mathcal B_{\mathrm{osc}}\), it remains to determine the limiting behavior of its two boundary curves.

\begin{proposition}\label{prop:role_of_mu}
	\PN The boundary functions satisfy
	\[
	\lim_{\mu\to0^+}H_{\mathrm g}(\mu)
	=
	\lim_{\mu\to0^+}H_{\mathrm{crit}}(\mu)
	=
	1,
	\qquad
	\lim_{\mu\to\infty}H_{\mathrm g}(\mu)
	=
	\lim_{\mu\to\infty}H_{\mathrm{crit}}(\mu)
	=
	0.
	\]
\end{proposition}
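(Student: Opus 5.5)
The plan is to treat the two boundary curves separately. \(H_{\mathrm{crit}}\) is explicit, and \(H_{\mathrm g}\) is handled through bounds on the grazing time \(t_{\mathrm g}(\mu)\in(\pi,2\pi)\).

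For \(H_{\mathrm{crit}}(\mu)=1/(2\cosh(\pi\mu)-1)\), continuity of \(\cosh\) gives \(H_{\mathrm{crit}}\to1/(2-1)=1\) as \(\mu\to0^+\). Since \(\cosh(\pi\mu)\to\infty\), also \(H_{\mathrm{crit}}\to0\) as \(\mu\to\infty\).

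For \(H_{\mathrm g}(\mu)=e^{-2\mu t_{\mathrm g}(\mu)}\), Theorem~\ref{teo:main1} (equivalently Lemma~\ref{lem:Phi_properties} with \(\Phi_\mu(t_{\mathrm g})=\mu\)) places \(t_{\mathrm g}(\mu)\) in \((\pi,2\pi)\). This gives the two-sided bound
\[
e^{-4\pi\mu}<H_{\mathrm g}(\mu)<e^{-2\pi\mu}.
\]
Both bounds tend to \(1\) as \(\mu\to0^+\) and to \(0\) as \(\mu\to\infty\), so squeezing finishes the argument. No finer asymptotics of \(t_{\mathrm g}\) are needed, even though one could note from the proof of Lemma~\ref{lem:twocycle_estimates} that \(2\pi-t_{\mathrm g}>2\arctan\mu\).

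As a consistency check, Proposition~\ref{prop:eta_bijection} gives \(H_{\mathrm g}<H_{\mathrm{crit}}<1\). For \(\mu\to0^+\) it therefore suffices to show \(H_{\mathrm g}\to1\), and for \(\mu\to\infty\) it suffices to show \(H_{\mathrm{crit}}\to0\). The two remaining limits then follow by squeezing. There is no genuine obstacle in this proof; the only point requiring a citation is the a priori interval \(t_{\mathrm g}(\mu)\in(\pi,2\pi)\) for every \(\mu>0\), which is already established.
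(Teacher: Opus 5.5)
Your proof is correct and follows essentially the same route as the paper. The \(H_{\mathrm{crit}}\) limits come from the explicit formula, and the \(H_{\mathrm g}\) limits come from \(t_{\mathrm g}(\mu)\in(\pi,2\pi)\) through the bounds \(e^{-4\pi\mu}<H_{\mathrm g}(\mu)<e^{-2\pi\mu}\); the paper writes out only the upper bound and uses the lower one implicitly for \(\mu\to0^+\).
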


\begin{proof}
	\PN The limits for \(H_{\mathrm{crit}}\) follow directly from~\eqref{eq:H_crit_definition}. Since \(\pi<t_{\mathrm g}(\mu)<2\pi\) and \(H_{\mathrm g}(\mu)=e^{-2\mu t_{\mathrm g}(\mu)}\), one has \(H_{\mathrm g}(\mu)\to1\) as \(\mu\to0^+\) and \(0<H_{\mathrm g}(\mu)<e^{-2\pi \mu}\), which gives \(H_{\mathrm g}(\mu)\to0\) as \(\mu\to\infty\).
\end{proof}

\PN Proposition~\ref{prop:role_of_mu} shows how the two boundary curves coalesce in the small- and large-\(\mu\) limits. Figure~\ref{fig:banda} displays the resulting simple-period oscillatory region.

\begin{figure}[H] \centering
\includegraphics[width=0.45\textwidth]{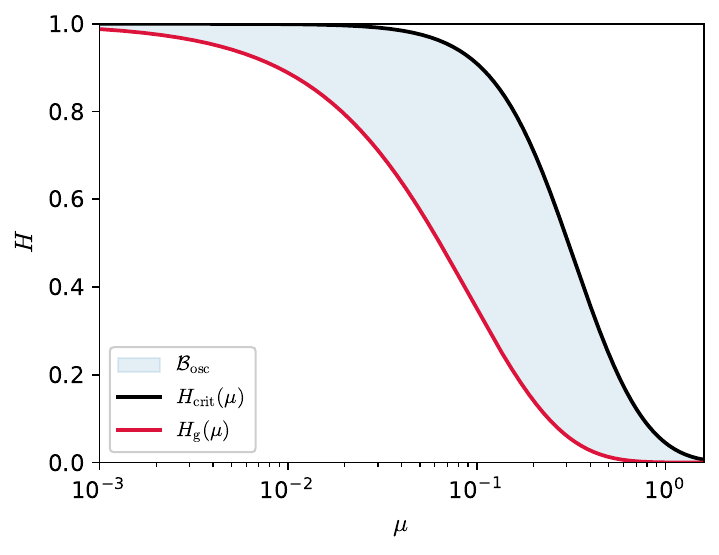} 
\caption{The simple-period oscillatory region \(\mathcal B_{\mathrm{osc}}\) on a logarithmic \(\mu\)-axis. The lower boundary \(H_{\mathrm g}(\mu)\) corresponds to grazing, while the upper boundary \(H_{\mathrm{crit}}(\mu)\) is approached in the infinite-amplitude limit.}
\label{fig:banda}
\end{figure}

\PN For any \((\mu,H)\in\mathcal B_{\mathrm{osc}}\), Proposition~\ref{prop:eta_bijection} gives a unique \(t=\eta^{-1}(H)\in(\pi,t_{\mathrm g}(\mu))\). The crossing coordinates are then obtained from~\eqref{eq:xy_of_t}, so \(p_0=(x_0(t),y_0(t),0)\), the second crossing is \(S(p_0)\), and the period is \(T=2t\). The corresponding canonical orbit is reconstructed from the \(X\)-arc issued from \(p_0\) and its symmetric image, while \eqref{eq:det_N} and \eqref{eq:trace_N} determine the eigenvalues of \(N(t)\), whose squares are the two nontrivial Floquet multipliers.
 Table~\ref{tab:cycles} gives representative cycles obtained directly from this parametrization.

\begin{table}[H]
	\centering
	\caption{Representative cycles of the canonical system obtained from the half-period parametrization with \(t=(\pi+t_{\mathrm g}(\mu))/2\), together with their nontrivial Floquet multipliers \(\mu_1\) and \(\mu_2\).}
	\label{tab:cycles}
	\small
	\begin{tabular}{cccccccc}
		\hline
		\(\mu\) & \(t\) & \(T\) & \(x_0\) & \(y_0\) & \(H\) & \(\mu_1\) & \(\mu_2\)\\
		\hline
		0.100 & 4.1921 & 8.3842 & 2.4016 & 1.2186 & 0.77071 & \(0.17492+0.47631i\) & \(0.17492-0.47631i\)\\
		0.300 & 3.9007 & 7.8015 & 5.5377 & 1.1053 & 0.29030 & \(-0.07005+0.18690i\) & \(-0.07005-0.18690i\)\\
		0.548 & 3.7181 & 7.4361 & 12.4281 & 0.9454 & 0.09294 & \(-0.01816+0.07387i\) & \(-0.01816-0.07387i\)\\
		1.000 & 3.5412 & 7.0823 & 46.0350 & 0.7213 & 0.01651 & \(0.04464\) & \(0.00550\)\\
		\hline
	\end{tabular}
\end{table}

\PN Figure~\ref{fig:cycle_exe1} illustrates the limit cycle associated with the admissible pair \(\mu=0.548\), \(H=0.09294\). 

\begin{figure}[H] \centering \includegraphics[width=0.4\textwidth]{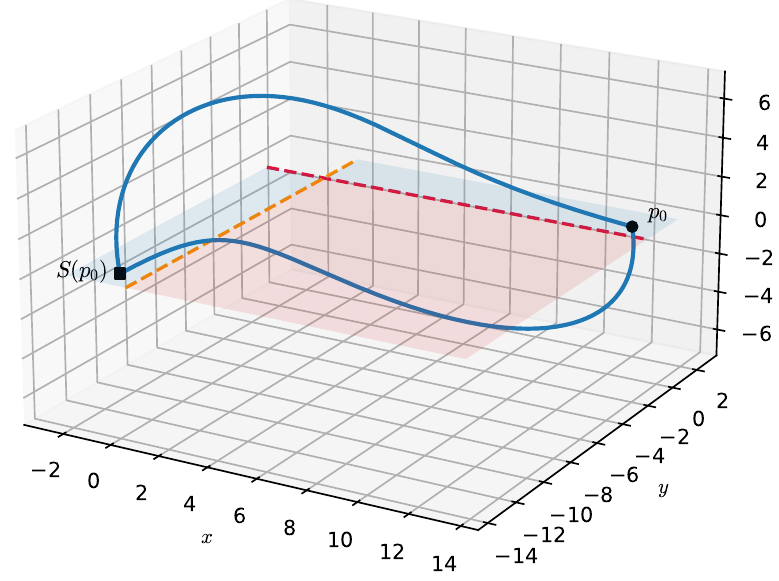}
	\caption{Symmetric limit cycle of the canonical system, with crossing regions in blue and the sliding region in red.} \label{fig:cycle_exe1} 
\end{figure}

\section{Concluding remarks}

\PN For every \(\mu>0\), the symmetric zero-divergence class has one symmetric simple-period crossing cycle when \(H\in\mathcal I_\mu\), and none outside this interval. The half-period parametrizes this family and determines the crossing points, period, and spectral data. Each cycle is hyperbolic and orbitally asymptotically stable. Stability remains strict toward grazing, whereas the \(+1\) spectral boundary is approached only at infinite amplitude. Each cycle persists locally under small \(C^1\) perturbations, and its uniqueness within the simple-period crossing class is preserved under sufficiently small perturbations in the Whitney \(C^1\) topology.

\PN The centered energy establishes convergence for crossing-only trajectories and excludes crossing-only periodic orbits with a higher number of crossings. Hence the classified simple-period cycle is the unique crossing-only periodic orbit. Under~\eqref{eq:nonsingular_sliding}, every attractive sliding episode ends in finite time at an exit fold or at the two-fold. If the trajectory does not reach the two-fold, only finitely many sliding episodes occur, the trajectory eventually becomes crossing-only, and the classified cycle is its \(\omega\)-limit set. The initial conditions whose trajectories reach the two-fold are contained in countably many analytic surfaces and curves. Together with the switching plane and the stable affine lines, they form a set of zero three-dimensional Lebesgue measure. Consequently, the classified cycle governs the asymptotic oscillatory dynamics for every initial condition in \(\mathbb R^3\), with the exception of a set of zero Lebesgue measure. This yields the global asymptotic classification established in this work. The exceptional set provides a zero-measure cover sufficient for the global classification, while the actual basin may be larger. The classification concerns the Filippov trajectories selected throughout the paper, with evolution stopped at the two-fold and with no continuation selected in the escaping region. 


\section*{Funding} \PN No funding was received to assist with the preparation of this manuscript.

\section*{Declaration of competing interest}
\PN The author declares no competing financial interests or personal relationships that could have appeared to influence the work reported in this paper.

\section*{Declaration on the use of generative AI.}
\PN During the development of this work, GPT-6 Astra (OpenAI) was used as an AI-assisted research tool to explore and refine the formulation and proof strategy underlying the global crossing dissipation and global attraction arguments. All mathematical arguments, computations, statements, and proofs resulting from this process were subsequently verified and critically audited by the author. The author takes full responsibility for the correctness, interpretation, and presentation of the results in the manuscript.

\section*{Acknowledgments}

\PN The author thanks Professor Armengol Gasull, Universitat Autònoma de Barcelona, for suggesting the use of first integrals in the search for limit cycles during the XII Workshop on Dynamical Systems, held at IBILCE/UNESP in São José do Rio Preto, Brazil, from November 6 to 9, 2023. This suggestion proved useful in the development of the closure analysis.

\bibliographystyle{elsarticle-num-names}
\bibliography{referencial}

\end{document}